\documentclass[a4paper, 12pt]{article}

\usepackage[top = 2.5cm, bottom = 2.5cm, left = 2.4cm, right = 2.4cm]{geometry}
\usepackage{amsfonts, amsmath, amsthm, amssymb, mathtools, cases, bbm, array}
\usepackage{lmodern, indentfirst, setspace, changepage, fancyhdr}
\usepackage{tikz, pgfplots, float, subcaption}
\usepackage{thmtools, thm-restate}
\usepackage[title]{appendix}
\usepackage[english]{babel}
\usepackage{cite}

\usepackage{hyperref}
\hypersetup{
	colorlinks = true,
	linkcolor = blue,
	citecolor = blue,
}

\newtheorem{proposition}{Proposition}[section]
\newtheorem{corollary}[proposition]{Corollary}
\newtheorem{theorem}[proposition]{Theorem}
\newtheorem{lemma}[proposition]{Lemma}
\theoremstyle{definition}
\newtheorem{assumption}[proposition]{Assumption}

\newtheorem{definition}[proposition]{Definition}

\newtheorem{remark}[proposition]{Remark}

\newcommand{\norm}[1]{\left|\left|#1\right|\right|}
\newcommand{\floor}[1]{\left\lfloor#1\right\rfloor}
\newcommand{\ceil}[1]{\left\lceil#1\right\rceil}

\newcommand{\angled}[2]{\left\langle#1,#2\right\rangle}

\newcommand{\defeq}{\vcentcolon=}
\newcommand{\eqdef}{=\vcentcolon}
\newcommand{\scdot}{{}\cdot{}}
\newcommand{\e}{\mathrm{e}}

\newcommand{\ind}[1]{\mathbbm{1}_{\left\{#1\right\}}}
\newcommand{\map}[3]{#1 : #2 \longrightarrow #3}
\newcommand{\asetc}[2]{\left\{#1 : #2\right\}}
\newcommand{\asetr}[2]{\left(#1 : #2\right)}
\newcommand{\setc}[2]{\{#1 : #2\}}
\newcommand{\supp}{\mathrm{supp}}

\newcommand{\bgamma}{\boldsymbol{\gamma}}
\newcommand{\bzeta}{\boldsymbol{\zeta}}

\newcommand{\bmu}{\boldsymbol{\mu}}
\newcommand{\bnu}{\boldsymbol{\nu}}
\newcommand{\bxi}{\boldsymbol{\xi}}

\newcommand{\br}{\boldsymbol{r}}

\newcommand{\bu}{\boldsymbol{u}}
\newcommand{\bv}{\boldsymbol{v}}
\newcommand{\bw}{\boldsymbol{w}}
\newcommand{\bx}{\boldsymbol{x}}
\newcommand{\by}{\boldsymbol{y}}

\newcommand{\bA}{\boldsymbol{A}}

\newcommand{\bD}{\boldsymbol{D}}

\newcommand{\bF}{\boldsymbol{F}}

\newcommand{\bX}{\boldsymbol{X}}
\newcommand{\bY}{\boldsymbol{Y}}
\newcommand{\bZ}{\boldsymbol{Z}}

\newcommand{\calC}{\mathcal{C}}

\newcommand{\calF}{\mathcal{F}}
\newcommand{\calG}{\mathcal{G}}
\newcommand{\calH}{\mathcal{H}}

\newcommand{\calL}{\mathcal{L}}
\newcommand{\calM}{\mathcal{M}}

\newcommand{\calR}{\mathcal{R}}
\newcommand{\calS}{\mathcal{S}}
\newcommand{\calT}{\mathcal{T}}

\newcommand{\indc}{\mathbbm{1}}

\newcommand{\N}{\mathbbm{N}}
\newcommand{\Q}{\mathbbm{Q}}
\newcommand{\R}{\mathbbm{R}}
\newcommand{\Z}{\mathbbm{Z}}

\newcommand{\expect}{E\expectarg}

\DeclarePairedDelimiterX{\expectarg}[1]{[}{]}{%
	\ifnum\currentgrouptype=16 \else\begingroup\fi
	\activatebar#1
	\ifnum\currentgrouptype=16 \else\endgroup\fi
}

\newcommand{\cprob}{P\probarg}

\DeclarePairedDelimiterX{\probarg}[1]{(}{)}{%
	\ifnum\currentgrouptype=16 \else\begingroup\fi
	\activatebar#1
	\ifnum\currentgrouptype=16 \else\endgroup\fi
}
\newcommand{\innermid}{\nonscript\;\delimsize\vert\nonscript\;}
\newcommand{\activatebar}{%
	\begingroup\lccode`\~=`\|
	\lowercase{\endgroup\let~}\innermid 
	\mathcode`|=\string"8000
}

\usetikzlibrary{automata, arrows, positioning, calc, external, babel, backgrounds, matrix, shapes}
\usepgfplotslibrary{fillbetween}
\pgfplotsset{
	compat = 1.16,
	ticklabel style = {font = \footnotesize},
	every axis/.append style = {
		grid style = {dashed, gray, opacity = 0.2},
		label style = {font = \footnotesize}, 
		width = \columnwidth,
		height = 0.618 * 1 * \columnwidth
	}
}

\definecolor{britishracinggreen}{rgb}{0.0, 0.26, 0.15}
\definecolor{bostonuniversityred}{rgb}{0.8, 0.0, 0.0}
\definecolor{ceruleanblue}{rgb}{0.16, 0.32, 0.75}
\definecolor{airforceblue}{rgb}{0.36, 0.54, 0.66}
\definecolor{cadmiumgreen}{rgb}{0.0, 0.42, 0.24}
\definecolor{ao(english)}{rgb}{0.0, 0.5, 0.0}
\definecolor{coolblack}{rgb}{0.0, 0.18, 0.39}
\definecolor{byzantine}{rgb}{0.74, 0.2, 0.64}
\definecolor{alizarin}{rgb}{0.82, 0.1, 0.26}
\definecolor{arsenic}{rgb}{0.23, 0.27, 0.29}
\definecolor{cobalt}{rgb}{0.0, 0.28, 0.67}
\definecolor{amber}{rgb}{1.0, 0.75, 0.0}

\title{Measure-valued fluid limits for partial service queues\\with correlated patience and service times\vspace{\baselineskip}}

\author{
	\begin{tabular}{ccccc}
		\normalsize Diego Goldsztajn & \hspace{5mm} & \normalsize Fernando Paganini & \hspace{5.mm} & \normalsize Andres Ferragut \\ 
		\scriptsize Universidad ORT Uruguay & \hspace{5mm} & \scriptsize Universidad ORT Uruguay & \hspace{5mm} & \scriptsize Universidad ORT Uruguay \\
		\scriptsize\texttt{goldsztajn@ort.edu.uy} & \hspace{5mm} & \scriptsize\texttt{paganini@ort.edu.uy} & \hspace{5mm} & \scriptsize\texttt{ferragut@ort.edu.uy} \\
	\end{tabular}
}

\date{\vspace{\baselineskip} August 24, 2026}

\begin{document}

	
\maketitle

\noindent\rule{\textwidth}{1pt}

\vspace{2\baselineskip}

\onehalfspacing

\begin{adjustwidth}{0.8cm}{0.8cm}
	\begin{center}
		\textbf{Abstract}
	\end{center}
	
	\vspace{0.3\baselineskip}
	
	\noindent
	We consider a many-server queue where tasks with general patience and service times arrive as a renewal process. Contrary to the standard assumptions, we allow for preemption and abandonment during service, relevant in applications such as electric vehicle charging and anytime algorithms in cloud computing. Moreover, we do not assume independence of patience and service times, which has been a critical assumption in the literature. We describe the state of the system using a discrete measure on a two-dimensional orthant, such that the coordinates of its atoms represent the attained service and time in the system of tasks. Under mild assumptions, we derive the fluid limit as the number of servers approaches infinity and the arrival rate of tasks grows proportionally. The limit is given by a measure-valued integral transport equation that we solve explicitly when the initial condition is the null measure. We also prove that all solutions, regardless of the initial condition, converge to the same fixed point, which admits a closed-form expression. Our results focus on the preemptive Last-Come-First-Served (LCFS) policy, which has been identified as practically appealing in recent work.
	
	\vspace{\baselineskip}
	
	\small{\noindent \textit{Key words:} law of large numbers limits, fluid limits, measure-valued processes, transport equations, many-server queues, partial service queues.}
	
	\vspace{0.3\baselineskip}
	
	\small{\noindent The research presented in this paper was supported by ANII-Uruguay under fellowship PD\_NAC\_2024\_1\_182118 and AFOSR under grant FA9550-23-1-0350.} 
\end{adjustwidth}

\newpage


\section{Introduction}
\label{sec: introduction}

The present paper considers many-server queues where tasks with general and possibly correlated patience and service times arrive as renewal processes. A standard assumption in the study of similar models has been that tasks do not abandon after their service has started. While this is natural in call centers and some computer systems, there exist several modern applications where tasks may abandon during service and still benefit from the partial service received. A clear example is parking lots for charging electric vehicles, which behave as many-server queues because the power supplied by the grid limits the maximum number of vehicles that can be charged simultaneously. In these systems, vehicles typically abandon with a partially charged battery that makes the vehicle functional and can be completed elsewhere. Other applications include the execution of anytime tasks in the cloud and the updating of mobile IoT devices at wireless access points.

We model these \emph{partial service queues} regarding tasks as particles that move over the two-dimensional orthant. The coordinates of a particle represent the attained service time and time spent in the system so far. The former increases at unit rate only while the task is being served, whereas the latter always increases at unit rate. Arriving tasks appear at the origin since they have not received any service nor spent any time in the system, and tasks disappear due to service completion or abandonment. The former situation occurs if the attained service reaches the service time required by the task, whereas the latter situation happens if the time in the system reaches the patience. The overall state of the system is represented by a discrete measure with a unit-mass atom at each particle.

We consider the many-server limiting regime where the number of servers grows to infinity and the arrival rate of tasks grows proportionally; this scaling regime offers far greater tractability than the situation where the number of servers is fixed. We further normalize the state of the system by the number of servers, and prove a measure-valued fluid limit: the sequence of processes describing partial service queues is tight, and the limit of every convergent subsequence solves a two-dimensional integral transport equation with discontinuous feedback. For the null initial condition, corresponding to an initially empty system, we solve the equation explicitly, and in general we obtain a structural result. We show that solutions can be expressed as the superposition of two measure-valued functions: one represents tasks that were present at time zero, whereas the other is the solution with null initial condition, and represents tasks arriving after time zero. We then prove that all solutions converge to a single fixed point that admits a closed-form expression.

\subsection{Background and motivation}
\label{sub: background and motivation}

Measure-valued limits originate in the study of weakly interacting particle systems. The limiting dynamics were introduced by McKean in \cite{mckean1966class,mckean1967propagation} by proving propagation of chaos. A law of large numbers for the empirical measure process itself, together with a martingale and tightness framework close to that adopted here, were established by Oelschläger in the foundational paper \cite{oelschlager1984martingale}. That literature deals largely with closed systems of exchangeable particles with diffusive dynamics, whereas queueing systems are open, with arrivals and departures, and typically exhibit discontinuous dynamics. Measure-valued state descriptors were brought into queueing theory by Doytchinov, Lehoczky and Shreve in \cite{doytchinov2001real}, and Gromoll, Puha and Williams derived the first measure-valued fluid limit  in \cite{gromoll2002fluid}, both for single-server queues. In the more pertinent context of many-server queues, Kaspi and Ramanan obtained the fluid limit of the First-Come-First-Served (FCFS) policy without abandonment in \cite{kaspi2011law}, and Kang and Ramanan extended it to the model with abandonment in \cite{kang2010fluid}.

The work of Kaspi, Kang and Ramanan laid the foundations for proving measure-valued fluid limits of many-server queues, and has been extended in multiple ways. In particular, Zhang treated the residual service and patience times in \cite{zhang2013fluid}, instead of the attained counterparts, and Walsh Zu\~niga considered the attained descriptor in the situation where the service requirement and patience distributions are not absolutely continuous, in \cite{zuniga2014fluid}. Atar, Kaspi and Shimkin proved the fluid limit of multi-class many-server queues with abandonment in \cite{atar2014fluid}. Also, Atar, Biswas and Kaspi used a measure-valued Skorohod map to derive the fluid limit of the Earliest-Deadline-First (EDF) policy in \cite{atar2018law}. More recently, Puha and Ward obtained fluid limits for multi-class many-server queues and a broad class of priority policies in \cite{puha2022fluid}, and Aveklouris, Puha and Ward extended the results of Kaspi, Kang and Ramanan to bipartite queueing systems, called matching queues, in \cite{aveklouris2024fluid}.

In a classic many-server queue, a task that reaches a server is served uninterruptedly until completion, as preemption and abandonment during service are not allowed in any of the papers mentioned above. In contrast, the partial service queues that we consider here allow for both, a seemingly subtle difference with fundamental consequences for describing how the system evolves over time. In the classic model, time in the system is irrelevant for tasks in service, as they do not abandon, and the attained service of tasks in the queue is zero since preemption is not possible. As a result, the state of the system can be described using two one-dimensional discrete measures in tandem: one for the waiting times of tasks in the queue and another one for the attained services of tasks being served. Instead, in the partial service queues that we consider, both the time in system and attained service are relevant during service, and tasks in the queue may have received service since preemption is possible. So in order to describe how a partial service queue evolves over time, we must simultaneously keep track of both the attained service and time in system of all tasks.

Including this fundamental difference in the model, our paper extends the theory of measure-valued fluid limits for many-server queues in the following ways.
\begin{enumerate}
	\item \emph{Abandonment and preemption.} The literature considers nonpreemptive policies and assumes that tasks are served uninterruptedly until completion once they start being served. Instead, we allow for preemption and abandonment during service, which is important for modeling several modern applications.
	
	\item \emph{State-descriptor.} As noted earlier, the literature has relied on two one-dimensional discrete measures to describe the state of the system. One represents the \emph{potential} waiting times of tasks; the term potential is used since tasks may have started service or even left. The other one describes the attained service of tasks currently in service, and the model does not keep track of which potential waiting time and attained service correspond to the same task. In contrast, we use a two-dimensional measure to keep track of the attained service and time in system simultaneously.
	
	\item \emph{Independence assumption.} The literature assumes that the service requirement and patience of each task are independent, which implies that the potential waiting times process can be analyzed separately from the attained services process. This crucial property substantially simplifies the analysis and is the reason for using potential waiting times, instead of effective waiting times, and for assuming independence. Our two-dimensional state-descriptor allows for correlated service requirements and patience times. Analyzing many-server queues where such a correlation exists has been identified as an open problem by Moyal and Perry in \cite{moyal2022many}.
	
	\item \emph{Limiting transport equation.} A key element in the fluid limits of many-server queues has been the one-dimensional transport equation obtained by Kaspi and Ramanan in \cite{kaspi2011law}. The fluid limits in \cite{atar2014fluid,atar2018law,kang2010fluid,puha2022fluid,aveklouris2024fluid,kaspi2011law} are systems of equations involving one or more transport equations of this kind which are complexly coupled. In contrast, our fluid limit is a novel two-dimensional transport equation. Unlike the transport equation obtained by Kaspi and Ramanan, the term associated with departures in our equation depends on a discontinuous feedback that reflects the service policy. In addition, our departures term involves a two-dimensional hazard rate vector field instead of the one-dimensional hazard rate function that appears in \cite{kaspi2011law}.
\end{enumerate}

Gromoll, Robert and Zwart also used a two-dimensional state descriptor and considered correlated patience and service times in \cite{gromoll2008fluid}, where they obtained the measure-valued fluid limit of a Processor-Sharing (PS) system with abandonment. However, their state descriptor is based on residual patience and service times, and thus behaves significantly differently from the state descriptors considered in this paper and by Kang, Kaspi and Ramanan in \cite{kaspi2011law, kang2010fluid}. Unlike our state descriptor, the one based on residual times is typically impossible to observe in practice and cannot be used to make decisions for controlling the system, as it requires knowing the patience times and service requirements in advance. Here these quantities are assumed unknown and not included in the filtration that represents the history of the system. Therefore, the analysis of departures involves martingale techniques and a hazard rate vector field, whereas none of these elements are present in \cite{gromoll2008fluid}.

While parts of our analysis are fairly independent of the service policy, other parts are specific to the preemptive Last-Come-First-Served (LCFS) policy that we consider. Particularly, the characterization of subsequential limits and a large part of the analysis of the limiting transport equation. Our motivation for considering the preemptive LCFS policy comes from a remarkable property observed by Ferragut, Goldsztajn and Paganini in \cite{ferragut2025last}. Specifically, the performance of the preemptive EDF and LCFS policies appears to be the same in the many-server regime, in the sense that the distribution of the attained service of tasks at the time of departure is the same. The former policy is popular for providing need-based fairness, but has the disadvantage of requiring deadline information, which is often unavailable or difficult to estimate. In contrast, preemptive LCFS has the important advantage of not requiring any prior information about the tasks.

The property observed in \cite{ferragut2025last} was obtained by analyzing fluid models for stationary systems, which were postulated without proving a fluid limit. For preemptive LCFS, the fluid model of \cite{ferragut2025last} is a two-dimensional transport equation similar to the one obtained here, except that the equation in \cite{ferragut2025last} does not involve the time variable since it describes a system in equilibrium. The present paper establishes a rigorous connection between the preemptive LCFS partial service queue and a time-dependent transport equation, proving a measure-valued fluid limit. Moreover, we show that all solutions of this transport equation converge to a single fixed point which solves the stationary transport equation of \cite{ferragut2025last}.





\subsection{Organization of the paper}

The rest of Section \ref{sec: introduction} introduces standard notation and terminology. Section \ref{sec: model description} defines stochastic processes that model partial service queues, introduces the limiting regime that we consider and states our assumptions. Section \ref{sec: main results} states our main results, including the measure-valued fluid limit and several results pertaining to the limiting transport equation. The fluid limit is proved in Section \ref{sec: proof of the fluid limit} and the limiting transport equation is analyzed in Section \ref{sec: analysis of the fluid problem}. Some auxiliary results are proved in Appendices \ref{app: construction of sample paths}, \ref{app: measurability properties} and \ref{app: proofs of auxiliary results}.

\subsection{Notation and terminology}
\label{sub: notation and terminology}

We use the symbols $P$ and $E$ to denote the probability of events and the expectation of functions, respectively. The underlying probability space to which these symbols refer will always be clear from the context or explicitly indicated. The natural numbers $\N$ include zero, the set of strictly positive integers is denoted by $\Z_+$ and $\R_+ \defeq [0, \infty)$. We let $x \wedge y \defeq \min\{x, y\}$ and $x^+ \defeq \max\{x, 0\}$ for all $x, y \in \R$.

\subsubsection{Function spaces}

Let $A \subset \R^d$ and $\map{f}{A}{\R}$. If they exist, then $\nabla f$ and $\partial_i f$ denote the gradient and the partial derivative with respect to the $i$th variable, respectively. Let $e_i$ denote the $i$th vector in the canonical basis of $\R^d$. Then we let
\begin{equation*}
	\partial_i^- f(x) \defeq \lim_{\delta \downarrow 0} \frac{f(x - \delta e_i) - f(x)}{\delta} \quad \text{and} \quad \partial_i^+ f(x) \defeq \lim_{\delta \downarrow 0} \frac{f(x + \delta e_i) - f(x)}{\delta}
\end{equation*}
denote the lateral partial derivatives of $f$ when the limits are well-defined and exist. In this paper, the set $A$ will typically be the nonnegative orthant $\R_+^2$ or $\R_+^3$.

The space of continuous functions on $A$ is denoted by $C(A)$. If $A$ is open in $\R^d$, then $C^k(A)$ denotes the space of $k$-times continuously differentiable functions. We often consider the space $C^1(\R_+^d)$ defined in the standard way. Namely, it consists of the functions $f$ such that $\partial_i f$ and $\partial_i^+ f$ exist and are continuous in $\setc{x \in \R_+^d}{x_i > 0}$ and $\R_+^d$, respectively. We use the subscripts $b$ and $c$ to indicate that the functions in the space under consideration are bounded and compactly supported, respectively. The support of $f$ is the closure in $A$ of the set $\setc{x \in A}{f(x) \neq 0}$ with respect to the relative topology inherited from $\R^d$, and we denote the support of $f$ by $\supp(f)$. We endow $C_b(A)$, $C_c(A)$, $C_b^k(A)$ and $C_c^k(A)$ with the uniform topology, induced by the standard supremum norm:
\begin{equation*}
	\norm{f}_\infty \defeq \sup_{x \in A} |f(x)| \quad \text{for all} \quad f \in C_b(A).
\end{equation*}

The total variation over $[0, x]$ of a function $\map{f}{\R_+}{\R}$ is defined as
\begin{equation*}
	TV[f](x) \defeq \sup \asetc{\sum_{i = 1}^n \left|f(x_i) - f(x_{i - 1})\right|}{n \in \N\ \text{and}\ 0 = x_0 < x_1 < \dots < x_n = x}.
\end{equation*}
Given a set $A \subset \R^d$ and a function $\map{f}{A}{\R}$, we write $f \in L_{loc}^1(A)$ if the function $f$ is locally integrable with respect to the Lebesgue measure.

\subsubsection{Measure spaces}

We consider possibly signed measures defined on the Borel $\sigma$-algebra of $\R_+^d$. The space of such measures that are Radon is denoted by $\calM(\R_+^d)$, and the subset of nonnegative measures is denoted by $\calM^+(\R_+^d)$; their subsets of finite measures are denoted by adding the subscript $F$. Since the underlying space is the Euclidean space $\R_+^d$, the Radon measures are the Borel measures $\mu$ with locally finite total variation $|\mu|$. We let $\delta_x$ denote the Dirac delta at the point $x \in \R_+^d$, and if $\mu \in \calM(\R_+^d)$ and $\map{f}{\R_+^d}{\R}$ is integrable, then
\begin{equation*}
	\angled{f}{\mu} \defeq \int_{\R_+^d} f(x)\mu(dx).
\end{equation*}

We endow $\calM^+(\R_+^d)$ with the vague topology. Specifically, a sequence of measures $\mu^n$ converges vaguely to the measure $\mu$ as $n \to \infty$ if and only if we have
\begin{equation*}
	\lim_{n \to \infty} \angled{f}{\mu^n} = \angled{f}{\mu} \quad \text{for all} \quad f \in C_c\left(\R_+^d\right).
\end{equation*}
On the other hand, we equip $\calM_F^+(\R_+^d)$ with the weak topology: $\mu^n \to \mu$ weakly as $n \to \infty$ if and only if the above limit holds for all $f \in C_b(\R_+^d)$. By \cite[Theorem 8.9.4]{bogachev2007measure}, this space is Polish, i.e., separable with a complete metric, the Bounded-Lipschitz metric defined as:
\begin{equation*}
	d_{BL}(\mu, \nu) \defeq \sup_{f \in \calL} \left|\angled{f}{\mu} - \angled{f}{\nu}\right|,
\end{equation*}
where $\calL$ is the set of all $\map{f}{\R_+^d}{[-1, 1]}$ with unit Lipschitz constant.

\subsubsection{Stochastic processes}

For stochastic processes, and other deterministic functions that depend on time, we typically use boldface notation and write the time variable as a subscript.

Let $\bX$ be a real-valued process with a given stochastic basis. If the total variation process $TV[\bX]$ is locally integrable, then the compensator of $\bX$ is the predictable process $\bX^c$ such that the process $\bX - \bX^c$ is a local martingale; see \cite[Theorem I.3.18]{jacod2013limit} for details. Given two real-valued locally square-integrable martingales $\bX$ and $\bY$, their predictable quadratic covariation is the predictable process $\angled{\bX}{\bY}$ such that $\bX\bY - \angled{\bX}{\bY}$ is a local martingale, as in \cite[Theorem I.4.2]{jacod2013limit}. The quadratic covariation of two semimartingales $\bX$ and $\bY$ is denoted by $[\bX, \bY]$ and defined as in \cite[Section I.4e]{jacod2013limit}.

If $S$ is a metric space and $\map{f}{[0, \infty)}{S}$ is a function, then
\begin{equation*}
	f_{t^-} \defeq \lim_{s \uparrow t} f_s \quad \text{and} \quad f_{t^+} \defeq \lim_{s \downarrow t} f_s
\end{equation*}
are the left-limit of $f$ at $t > 0$ and the right-limit of $f$ at $t \geq 0$, respectively. The function is c\`adl\`ag if the left-limits exist for all $t > 0$ and the right-limits exist and $f_{t^+} = f_t$ for all $t \geq 0$. The space of c\`adl\`ag functions is denoted by $D_S[0, \infty)$ and is endowed with the standard Skorohod-$J_1$ topology, which makes it Polish when the space $S$ is Polish. The sample paths of the processes that we consider are c\`adl\`ag functions with values in the Polish space $S = \R$ or $S = \calM_F^+(\R_+^d)$. We write $\bX^n \Rightarrow \bX$ to indicate that the processes $\bX^n$ converge weakly to $\bX$ as $n \to \infty$ in the Skorohod-$J_1$ topology. Sometimes we use the topology of uniform convergence over compact sets, but this is explicitly indicated.

\section{Model description}
\label{sec: model description}

Consider a service system with $M \in \N$ tasks present at time zero and new tasks arriving as a renewal process $\calR$. The tasks already present in the system at time zero are indexed by the nonpositive integers $i \in \{1 - M, \dots, 0\}$, whereas the new tasks are indexed by the positive integers $i \in \Z_+$. The arrival time of task $i$ is denoted by $a_i$ for all $i \geq 1$ and $a_i \defeq 0$ for all $i \leq 0$. Each task $i$ requires $b_i$ units of service time and cannot stay in the system for more than $c_i$ units of time. In addition, the random vectors $(b_i, c_i)$ are independent of the arrival process $\calR$, mutually independent over $i$ and identically distributed.

\begin{remark}
	\label{rem: codomains of arrival process, service requirements and patiences}
	The process $\calR$ takes values in the space of c\`adl\`ag functions from $\R_+$ to $\N$, has jumps of unit size and is nondecreasing. In particular, the number of arrivals in any finite interval is finite surely. Also, the random vectors $(b_i, c_i)$ take values in the positive orthant $(0, \infty)^2$. Thus, each task stays a positive amount of time in the system surely.
\end{remark}

The amounts of time that task $i$ has spent in service and in the system by time $t$ are denoted by $\bx_i(t)$ and $\by_i(t)$, respectively. The system behaves as a partial service queue, which means that tasks may abandon even while receiving service. More precisely, task $i$ leaves the system due to completion or abandonment. The former situation occurs when the attained service $\bx_i$ reaches the service time $b_i$ required by the task, whereas the latter situation happens when the time in system $\by_i$ reaches the patience $c_i$ of the task. Hence, $d_i \defeq \inf \setc{t \geq a_i}{\bx_i(t) \geq b_i\ \text{or}\ \by_i(t) \geq c_i}$ is the departure time of task $i$.

We regard tasks as particles moving over the two-dimensional orthant $\R_+^2$, with the coordinates $(\bx_i, \by_i)$ of task $i$ given by the current attained service and time in the system. Arriving tasks appear at the origin, since they have not received any service or spent any time in the system, and task $i$ disappears at time $d_i$, upon completion or abandonment. We describe the state of the system at time $t$ through the finite discrete measure
\begin{equation}
	\label{eq: state of the system}
	\bmu_t(dx, dy) \defeq \sum_{i = 1 - M}^{\calR(t)} \ind{t < d_i}\delta_{\left(\bx_i(t), \by_i(t)\right)}(dx, dy) \in \calM_F^+(\R_+^2).
\end{equation}

Naturally, the time in the system of each task $i$ always increases at unit rate, but the evolution of the attained service depends on the service policy. We assume that the system has $N$ servers, each capable of working at unit rate, and we consider the preemptive LCFS policy that always serves the $N$ tasks with the least time in the system. We denote the service rate of task $i$ at time $t$ by $r(\bmu_t, \by_i(t))$, where the function $r$ is defined as:
\begin{equation}
	\label{eq: service rate function}
	r\left(\mu, y\right) \defeq \ind{y \leq y_*(\mu)}, \quad \text{with} \quad y_*(\mu) \defeq \inf\asetc{y \geq 0}{\mu\left(\R_+ \times [0, y]\right) \geq N},
\end{equation}
for all $\mu \in \calM_F^+(\R_+^2)$ and $y \in \R_+$. The threshold $y_*(\bmu_t)$ is infinite if the system has fewer tasks than servers, and otherwise indicates the $N$th smallest time in the system across the present tasks. The preemptive LCFS policy assigns one of the $N$ unit-rate servers to each task $i$ with time in the system $\by_i$ smaller than or equal to the current threshold $y_*(\bmu)$. While we focus on the preemptive LCFS policy, other policies can be modeled by defining the service rate function $r$ in \eqref{eq: service rate function} suitably in terms of $\mu$, $x$ and $y$.

The arrival process $\calR$, the random vectors $(b_i, c_i)$ and the initial state $\bmu_0$ can be defined on a common probability space $(\Omega, \calF, P)$. The other random variables and processes are constructed in terms of these stochastic primitives in Appendix \ref{app: construction of sample paths}, such that:
\begin{equation}
	\label{eq: attained service and sojourn time evolution}
	\bx_i(t) = \bx_i(0) + \int_{a_i \wedge t}^{d_i \wedge t} r\left(\bmu_s, \by_i(s)\right)ds \quad \text{and} \quad \by_i(t) = \by_i(0) + d_i \wedge t - a_i \wedge t.
\end{equation}
We assume that $\bx_i(0) < b_i$ and $\by_i(0) < c_i$ for tasks with $i \leq 0$, which are already present in the system at time zero; otherwise, these tasks would leave the system immediately. For tasks with $i \geq 1$, which arrive after time zero, we let $\bx_i(0) \defeq 0 \eqdef \by_i(0)$ since these tasks have not received any service or spent any time in the system when they arrive. The time in the system of task $i$ is constant before the arrival time, increases at unit rate between the arrival and departure times and remains constant after the departure time. The attained service behaves similarly, except that increases at a possibly time-varying rate between the arrival and departure times of the task; this rate is the service rate $r(\bmu, \by_i)$ of task $i$.

Consider the $\sigma$-algebras defined as
\begin{equation*}
	\tilde{\calF}_t = \sigma\asetr{\bx_i(s), \by_i(s)}{i \geq 1, 0 \leq s \leq t} \quad \text{for all} \quad t \geq 0.
\end{equation*}
Fix the complete and right-continuous filtration $\setc{\calF_t}{t \geq 0}$ induced by $\setc{\tilde \calF_t}{t \geq 0}$. The right-derivatives $\partial_t^+\by_i$ are adapted by the right-continuity of the filtration, and therefore
\begin{align*}
		&a_i = \inf \asetc{t \geq 0}{\partial_t^+\by_i(t) > 0} \quad \text{and} \quad d_i = \inf \asetc{t \geq a_i}{\partial_t^+\by_i(t) = 0}
\end{align*}
are stopping times by \cite[Proposition 2.1.5]{ethier2009markov}. In particular, this implies that $\calR$ and $\bmu$ are adapted. Moreover, because the functions $\bx_i$ and $\by_i$ are continuous, the measure-valued process $\map{\bmu}{[0, \infty)}{\calM_F^+(\R_+^2)}$ is c\`adl\`ag with respect to the weak topology.

\subsection{Limiting regime and standing assumptions}
\label{sub: limiting regime and standing assumptions}

In order to obtain greater tractability, we consider a limiting regime where the number of servers goes to infinity and the arrival rate of tasks grows proportionally. Formally, we fix a sequence of stochastic primitives $(\calR^N, \setc{(b_i^N, c_i^N)}{i \in \Z}, \bmu_0^N)$, where the superscript $N \geq 1$ indicates the number of servers, and construct the corresponding measure-valued processes $\bmu^N$ as in Appendix \ref{app: construction of sample paths}. We assume that the stochastic primitives satisfy the assumptions listed below, which specify the limiting regime, and our goal is to derive and characterize the limit as $N \to \infty$ of the normalized processes $\bar\bmu^N \defeq \bmu^N / N$.

\begin{assumption}
	\label{ass: standing assumptions 1}
	The above sequence of stochastic primitives is defined on a common probability space $(\Omega, \calF, P)$ for all $N \geq 1$, and the following properties hold.
	\begin{enumerate}
		\item[(a)] There exists $\bar M \geq 0$ such that $E[M^N] \leq \bar M N$  for all $N \geq 1$, i.e., the ratio of the initial number of tasks to the number of servers is bounded on average.
		
		\item[(b)] The sequence of random measures $\setc{\bar\bmu_0^N}{N \geq 1}$ is tight in $\calM_F^+(\R_+^2)$.
		
		\item[(c)] There exists a set of probability one $\Theta_0 \in \calF$, such that for all $\omega \in \Theta_0$, there exists a nonnegative function $\theta_0(\omega) \in L_{loc}^1(\R_+)$ satisfying that
		\begin{equation*}
			\limsup_{N \to \infty} \bar\bmu_0^N\left(\omega, \R_+ \times \left[y_1, y_2\right]\right) \leq \int_{y_1}^{y_2} \theta_0(\omega, t)dt \quad \text{for all} \quad y_1, y_2 \in \R_+.
		\end{equation*}
		If the initial conditions $\bar\bmu_0^N$ converge as $N \to \infty$, then this implies that the $y$-marginal of the limit is almost surely an absolutely continuous measure.
		
		\item[(d)] There exists a renewal process $\calR_0$ such that $\calR^N(t) = \calR_0(Nt)$ for all $t \geq 0$, which implies that the arrival rate is proportional to the number of servers.
		
		\item[(e)] The holding time distribution of the renewal process $\calR_0$ has density $\map{f_0}{\R_+}{\R_+}$ and mean $1 / \lambda_0$ for some $\lambda_0 > 0$, which is the normalized arrival rate of tasks.
		
		\item[(f)] For all $i \in \Z$ and $N \geq 1$, we have $(b_i^N, c_i^N) = (b_i^1, c_i^1)$, i.e., these random vectors do not depend on $N$. Moreover, $(b_i^1, c_i^1)$ has density $\map{g}{\R_+^2}{\R_+}$ for all $i \in \Z$.
	\end{enumerate}
\end{assumption}

Assumptions (d) and (e) imply that the arrival rate of tasks scales with $N$. However, the sequence of arriving tasks remains the same by (f), in the sense that the $i$th arriving task has the same service requirement and patience for all indices $N$. Assumption (c) implies that the distribution of the time in the system across the tasks already present at time zero becomes absolutely continuous as $N \to \infty$. If the system is initially empty and we consider a new origin of time at $t = t_0$, then the measures $\bar\bmu_{t_0}^N$ satisfy (c) with the function defined almost surely as $\theta_0 (t) \defeq \lambda_0$ for all $t \geq 0$. Indeed, the times in the system $\by_i^N(t_0) = t_0 - a_i^N$ of tasks present in the system at time $t = t_0$ are a subset of the reflected and shifted arrival times $a_i^N$, which occur at rate $N \lambda_0$. Therefore, it is natural to assume that the $y$-marginals of the initial states $\bar \bmu_0^N$ satisfy assumption (c).

We assume that tasks arrive as a renewal process to simplify the exposition, but the fluid limit can be extended in a straightforward way to the situation where the arrival process is a time-inhomogeneous Poisson process; the analysis of the limiting transport equation, however, relies on the arrival rate of tasks being constant over time. Another assumption that simplifies the exposition is that the distributions of the interarrival time, service requirement and patience have densities, which still covers a large class of distributions of interest. As noted in Section \ref{sec: introduction}, we do not assume independence of the service requirement and patience of tasks, which has been a critical assumption in the literature.

Consider the complementary cumulative distribution function
\begin{equation*}
	\bar F_0(t) \defeq \int_t^\infty f_0(s)ds \quad \text{for all} \quad t \geq 0,
\end{equation*}
and let $\kappa_0 \defeq \sup \setc{t \geq 0}{\bar F_0(t) > 0}$, which may be infinite. Then the holding times of $\calR_0$ are strictly shorter than $\kappa_0$ with probability one. For the system with $N$ servers,
\begin{equation*}
	f^N(t) = Nf_0\left(N t\right) \quad \text{and} \quad \bar F^N(t) = \int_{Nt}^\infty f_0(s)ds
\end{equation*}
are the density and complementary cumulative distribution function of the interarrival time distribution by (d) of Assumption \ref{ass: standing assumptions 1}. In addition, the interarrival times are strictly shorter than $\kappa^N \defeq \kappa_0 / N$ with probability one and the arrival rate is $\lambda^N = N \lambda_0$.

The following definition introduces the survival function and hazard rate vector field associated with $g$. The latter appears in the transport equation arising from the fluid limit, and the former plays an important role in the analysis of this transport equation.

\begin{definition}
	\label{def: hazard rate}
	The survival function associated with $g$ is defined as
	\begin{equation*}
		S_g(x, y) \defeq \int_{[x, \infty) \times [y, \infty)} g(u, v)dudv \quad \text{for all} \quad x, y \in \R_+.
	\end{equation*}
	Consider the open set $U_g \defeq \setc{(x, y) \in \R_+^2}{S_g(x, y) > 0}$. If the survival function $S_g$ is differentiable, then we define the hazard rate vector field $\map{h_g}{\R_+^2}{\R_+^2}$ as:
	\begin{equation*}
		h_g(x, y) \defeq -\nabla \left[\log S_g\right](x, y) \quad \text{if} \quad (x, y) \in U_g \quad \text{and} \quad h_g(x, y) \defeq 0 \quad \text{otherwise}.
	\end{equation*}
	The first and second components of $h_g$ are denoted by $h_g^x$ and $h_g^y$, respectively.
\end{definition}

The hazard rate vector field is a natural two-dimensional generalization of the hazard rate function, introduced in \cite{johnson1975vector}. We impose the following regularity conditions.

\begin{assumption}
	\label{ass: standing assumptions 2}
	The following properties hold.
	\begin{enumerate}
		\item[(a)] The density $g$ is locally bounded in $U_g$, which means that for each $z \in U_g$ there exists a relatively open subset $U$ of $\R_+^2$ such that $z \in U \subset U_g$ and $g$ is bounded inside $U$. Also, $f_0$ is locally bounded in the interval $[0, \kappa_0)$, which is defined analogously.
		
		\item[(b)] The survival function $S_g$ is continuously differentiable. In particular, note that this implies that the hazard rate vector field $h_g$ is continuous in $U_g$.
	\end{enumerate}
\end{assumption}

It is possible that $g$ is discontinuous and unbounded and $h_g$ is unbounded. For example,
\begin{equation*}
	g(x, y) = \frac{1}{2\sqrt{1 - x}}\ind{x < 1, y < 1}, \quad h_g^x(x, y) = \frac{1}{2(1 - x)} \quad \text{and} \quad h_g^y = \frac{1}{1 - y}
\end{equation*}
have these properties; in this example, $U_g = [0, 1) \times [0, 1)$. Assuming that the survival function is continuously differentiable is important for the analysis of the transport equation that arises from the fluid limit. In addition, having a continuous hazard rate vector field simplifies some arguments used to prove the fluid limit.

\section{Statements of main results}
\label{sec: main results}

Consider the following version of \eqref{eq: service rate function} for normalized state descriptors:
\begin{equation*}
	\bar r\left(\mu, y\right) \defeq \ind{y \leq \bar y_*(\mu)} \quad \text{with} \quad \bar y_*(\mu) \defeq \inf\asetc{y \geq 0}{\mu\left(\R_+ \times [0, y]\right) \geq 1}.
\end{equation*}
Using the superscript $N$ to denote the threshold and rate function of the system with $N$ servers, we note that $\bar y_*(\bar \bmu^N) = y_*^N(\bmu^N)$ and $\bar r(\bar \bmu^N) = r^N(\bmu^N)$. Hence, the above definition allows to express the service rate of tasks in terms of the normalized state descriptor $\bar \bmu^N$. We now define the measure-valued functions that may arise as limits of $\bar\bmu^N$.

\begin{definition}
	\label{def: fluid problem}
	The set of solutions of the \emph{fluid problem} is denoted by $\calS^+$. We say that a measure-valued function $\bmu \in D_{\calM_F^+(\R_+^2)}[0, \infty)$ is a solution of the fluid problem, and thus belongs to $\calS^+$, if it satisfies the following conditions.
	\begin{enumerate}
		\item[(a)] We have $\bmu_t(U_g^c) = 0$ for all $t \geq 0$, where we use the notation $U_g^c \defeq \R_+^2 \setminus U_g$.
		
		\item[(b)] The $y$-marginal of $\bmu_t$ is absolutely continuous with respect to the Lebesgue measure for all $t \geq 0$; this is the measure $A \mapsto \bmu_t(\R_+ \times A)$ defined for Borel sets $A \subset \R_+$.
		
		\item[(c)] The following integral is finite for all $t \geq 0$:
		\begin{equation*}
			\int_0^t \angled{\bar r(\bmu_s)h_g^x + h_g^y}{\bmu_s}ds = \int_0^t\int_{U_g} \left[\bar r\left(\bmu_s, y\right)h_g^x(x, y) + h_g^y(x, y)\right]\bmu_s(dx, dy)ds < \infty.
		\end{equation*}
		
		\item[(d)] The following equation holds for all $t \geq 0$ and $\varphi \in C_c^1(\R_+^3)$:
		\begin{equation}
			\label{eq: transport equation}
			\begin{split}
				\angled{\varphi_t}{\bmu_t} &= \angled{\varphi_0}{\bmu_0} + \lambda_0 \int_0^t \varphi_s(0, 0)ds - \int_0^t \angled{\varphi_s\left[\bar r(\bmu_s)h_g^x + h_g^y\right]}{\bmu_s}ds \\
				&+ \int_0^t \angled{\partial_t\varphi_s + \bar r\left(\bmu_s\right)\partial_x\varphi_s + \partial_y\varphi_s}{\bmu_s}ds.
			\end{split}
		\end{equation}
	\end{enumerate}
	We remark that the values of the test function $\varphi$ are denoted by $\varphi_t(x, y)$, with the time variable as a subscript. Also, the integrals with respect to $\bmu_s$ only involve the variables $x$ and $y$ of the integrand and keep the time variable $s$ fixed. We establish in Appendix \ref{app: measurability properties} that the iterated integrals on the right-hand side of \eqref{eq: transport equation} are well-defined. Specifically, we show that the inner integrals are Borel measurable with respect to $s$.
\end{definition}

The measure $\bmu_t$ represents the distribution of the attained service and time in the system for a continuous population of tasks, and \eqref{eq: transport equation} describes how this distribution evolves over time; since $\bmu_t$ is singular for the solutions, as will be shown in Section \ref{sec: analysis of the fluid problem}, we adopt a weak formulation as in \cite{kaspi2011law}. The first term on the right-hand side of \eqref{eq: transport equation} captures the attained service times and times in the system at time zero, and the second term indicates that new tasks are injected at the origin at rate $\lambda_0$. The third term corresponds to tasks disappearing at a spatially varying rate; here $\bar r(\bmu_s, y) h_g^x(x, y)$ represents the rate at which tasks with attained service $x$ and time in the system $y$ complete service, whereas $h_g^y(x, y)$ represents the abandonment rate. The last term of \eqref{eq: transport equation} captures the evolution of attained service times and times in the system as a result of time passing and the service of tasks progressing. This movement is governed by the velocity field $\bv_t(x, y) \defeq [\bar r(\bmu_t, y)\ 1]^\intercal$.

For analyzing the FCFS many-server queue without abandonment, Kaspi and Ramanan considered a similar transport equation in \cite{kaspi2011law}. However, the variable in their equation is a one-dimensional measure instead of a two-dimensional measure as in \eqref{eq: transport equation}, and the term associated with departures depends linearly on this variable, whereas the nonlinear feedback term $\bar r(\bmu)$ appears in \eqref{eq: transport equation}. The transport equation in \cite{kaspi2011law} is also used by Kang and Ramanan in \cite{kang2010fluid} to analyze the same model with abandonment. There the fluid limit is given by two of these transport equations, coupled with each other and with several other equations which represent mass balance of auxiliary processes, such as the number of tasks in the system or queue. Our model admits the more streamlined characterization \eqref{eq: transport equation}.

\subsection{Measure-valued fluid limit}
\label{sub: measure-valued fluid limit}

We derive the limit as $N \to \infty$ of the processes $\bar\bmu^N$ in Section \ref{sec: proof of the fluid limit}, establishing that the preemptive LCFS partial service queue is characterized by the fluid problem in the limiting regime described in Section \ref{sub: limiting regime and standing assumptions}. Formally, we prove the following fluid limit.

\begin{theorem}
	\label{the: measure-valued fluid limit}
	Each subsequence of $\setc{\bar\bmu^N}{N \geq 1}$ has a further subsequence that converges weakly in $D_{\calM_F^+(\R_+^2)}[0, \infty)$ as $N \to \infty$ to a measure-valued process $\bar \bmu$ that solves the fluid problem introduced in Definition \ref{def: fluid problem} with probability one.
\end{theorem}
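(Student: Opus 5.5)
The proof follows the martingale and tightness framework of Kaspi and Ramanan, adapted to the two-dimensional state descriptor and to the discontinuous feedback $\bar r$. The starting point is an exact semimartingale decomposition of $\angled{\varphi_t}{\bar\bmu_t^N}$ for $\varphi \in C_c^1(\R_+^3)$. Each task moves deterministically along $(\bx_i, \by_i)$ according to \eqref{eq: attained service and sojourn time evolution}. Applying the chain rule along these paths and summing over tasks expresses $\angled{\varphi_t}{\bar\bmu_t^N}$ as the sum of four terms:
\begin{itemize}
	\item the initial term $\angled{\varphi_0}{\bar\bmu_0^N}$;
	\item the arrival term $N^{-1}\sum_{i \leq \calR^N(t)} \varphi_{a_i}(0,0)$;
	\item minus the departure term $N^{-1}\sum_i \ind{d_i \leq t}\varphi_{d_i}(\bx_i(d_i), \by_i(d_i))$;
	\item the drift term $\int_0^t \angled{\partial_t\varphi_s + \bar r(\bar\bmu_s^N)\partial_x\varphi_s + \partial_y\varphi_s}{\bar\bmu_s^N}ds$.
\end{itemize}

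The departure term is handled by compensation. Since $(b_i, c_i)$ is not in the filtration, a departure occurs when the path $(\bx_i, \by_i)$, which moves with velocity $(r, 1)$, first exits the region $[0, b_i) \times [0, c_i)$. Given the path so far, the conditional intensity of this exit is $r\,h_g^x + h_g^y$ at the current point, which is the defining property of the hazard rate vector field. The compensator of the departure term is therefore $\int_0^t \angled{\varphi_s[\bar r(\bar\bmu_s^N)h_g^x + h_g^y]}{\bar\bmu_s^N}ds$. To make this rigorous, I would condition on $\calF_{a_i}$ and use the independence of $(b_i, c_i)$ from the past. The arrival term splits into $\lambda_0\int_0^t\varphi_s(0,0)ds$ plus an error that vanishes by the functional law of large numbers for $\calR_0$, using Assumption \ref{ass: standing assumptions 1}(d)--(e). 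The remaining martingales have predictable quadratic variation of order $1/N$ times the compensator, so they vanish in probability by Doob's inequality. This requires bounding the expected compensator. I would localize with $\varphi$ compactly supported inside $U_g$, where $h_g$ is bounded by continuity (Assumption \ref{ass: standing assumptions 2}), and then extend by approximation. The expected total mass is bounded by $E[M^N]/N + \lambda_0 t + o(1)$.

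Tightness in $D_{\calM_F^+(\R_+^2)}[0,\infty)$ comes from Jakubowski's criterion. Compact containment follows from two facts: the total mass is bounded by $\bar\bmu_0^N(\R_+^2) + \calR^N(t)/N$, and particles move with speed at most $\sqrt2$. Together with the tightness of $\bar\bmu_0^N$ in Assumption \ref{ass: standing assumptions 1}(b), this confines the mass up to time $t$ to a compact set, up to small mass. For each $\varphi$ in a countable dense family, the real processes $\angled{\varphi}{\bar\bmu^N}$ are tight by the decomposition above. Their oscillations are controlled by the Lipschitz drift, the arrival counts and the compensated departures, which gives C-tightness. Any subsequential limit therefore has continuous paths, and Skorokhod representation lets me work with almost sure convergence.

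Identifying the limit comes next. Condition (a) of Definition \ref{def: fluid problem} holds because prelimit particles outside $U_g$ depart almost surely. Condition (b), the absolute continuity of the $y$-marginal, follows by bounding $\bar\bmu_t^N(\R_+\times[y_1,y_2])$ by the initial mass in $[y_1 - t, y_2 - t]$ plus arrivals during a time window of length $y_2 - y_1$. This uses Assumption \ref{ass: standing assumptions 1}(c) and the fact that $y$ moves at unit speed, giving a density of at most $\theta_0(\cdot - t) + \lambda_0$. Condition (c) follows from Fatou's lemma applied to the compensator bound.

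The main obstacle is passing to the limit in the nonlinear terms containing $\bar r(\bar\bmu_s^N, y) = \ind{y \leq \bar y_*(\bar\bmu_s^N)}$, which is discontinuous in both arguments. I would prove that $\bar y_*(\bar\bmu_s^N) \to \bar y_*(\bar\bmu_s)$ for almost every $s$. The key input is the absolutely continuous $y$-marginal from condition (b), which makes $y \mapsto \bar\bmu_s(\R_+\times[0,y])$ continuous. The threshold is then continuous at $\bar\bmu_s$ wherever this function strictly crosses $1$. Where it is flat at level $1$, the marginal puts no mass on the interval, so the ambiguity in the threshold carries no mass. Once the thresholds converge, the integrand $\bar r\,\psi$ converges at every point of the limit support except the line $y = \bar y_*$, and this line is null for $\bar\bmu_s$. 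A portmanteau-type argument, together with the bound on the uniform density near that line, then gives convergence of $\angled{\bar r(\bar\bmu_s^N)\psi}{\bar\bmu_s^N}$ for continuous compactly supported $\psi$. This is first done for $\psi$ supported in $U_g$, after which truncation near $\partial U_g$ is removed using condition (c) and dominated convergence in $s$. The final step is to combine these limits to obtain \eqref{eq: transport equation} for the countable family of test functions, and then for all $\varphi \in C_c^1(\R_+^3)$ by density.
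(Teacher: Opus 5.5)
Your architecture is the paper's. It has the exact prelimit decomposition, the compensator with intensity $\bar r h_g^x + h_g^y$, Doob's inequality with predictable quadratic variation of order $1/N$, Jakubowski's criterion, Skorohod representation and the marginal bound $\theta_0(\cdot - t) + \lambda_0$. It also uses the observation that the ambiguity in $\bar y_*$ carries no mass, which is Lemma~\ref{lem: limiting threshold} and Proposition~\ref{prop: limits of integrals involving r}. Your C-tightness remark is a genuine simplification: jumps of $\angled{\phi}{\bar\bmu^N}$ are $O(1/N)$, so the limit is weakly continuous and the continuity set $\calC$ becomes unnecessary.

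Two claims need correcting.
\begin{enumerate}
	\item The thresholds need not converge for a.e.\ $s$. If $y_* < \infty$ and $S_g(y_*, y_*) = 0$, the $y$-distribution function of the null-initial-condition solution is flat at level one on $[y_*, \infty)$ for every $t \geq y_*$. The prelimit thresholds can then oscillate between values near $y_*$ and $\infty$, so only your fallback statement is true.
	\item Conditioning on $\calF_{a_i}$ does not give the intensity, because the path of task $i$ after $a_i$ depends on later arrivals and departures. You need the conditional law of $(b_i, c_i)$ given $\calF_s$ on $\{d_i > s\}$, together with control of other events in short windows, as in Lemma~\ref{lem: hazard rate terms} and Theorem~\ref{the: compensator of departures-driven process}.
\end{enumerate}

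The genuine gap is the boundary of $U_g$, where $h_g$ may be unbounded. Your argument for (a) fails. Almost surely no prelimit particle is ever outside $U_g$, since $(b_i, c_i) \in U_g$ a.s., $U_g$ is a down-set and $(\bx_i, \by_i) \leq (b_i, c_i)$ componentwise. Yet a weak limit of measures carried by the relatively open set $U_g$ can still charge the closed set $U_g^c$. What is needed is $\limsup_N \bar\bmu_t^N(C_\varepsilon) \to 0$ as $\varepsilon \to 0$, where $C_\varepsilon$ is the closed $\varepsilon$-neighbourhood of $U_g^c$.

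The same uniform-in-$N$ control is missing for departures, in two places.
\begin{itemize}
	\item C-tightness of $\angled{\phi}{\bar\bmu^N}$ requires $\lim_{\delta \to 0} \limsup_N E[\bar\bD_{t + \delta}^N(|\phi|) - \bar\bD_t^N(|\phi|)] = 0$. In general a separating family must contain $\phi$ that do not vanish near $U_g^c$, where the compensator intensity may be unbounded, so these increments are not $O(\delta)$.
	\item When you remove the truncation in the hazard term, condition (c) and dominated convergence control only the limit side. They do not control $\sup_N E[\bar\bD_t^{N, c}((1 - \psi)|\varphi|)]$.
\end{itemize}

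The missing idea is monotonicity. $U_g^c$ is an upper set, and the position of task $i$, including at departure, lies componentwise below $(b_i, c_i)$. So being within $\varepsilon$ of $U_g^c$ forces $(b_i, c_i)$ to be within $\varepsilon$ of $U_g^c$. Likewise, lying outside a finite union $A_\varepsilon \subset U_g$ of rectangles $[0, x_0] \times [0, y_0]$ forces $(b_i, c_i)$ to lie outside $A_\varepsilon$. Since the $(b_i, c_i)$ are i.i.d., this yields the following.
\begin{itemize}
	\item Glivenko--Cantelli gives $\bar\bmu_t(C_\varepsilon) \leq [\bar\bmu_0(\R_+^2) + \lambda_0 t]\int_{C_\varepsilon} g$ (Proposition~\ref{prop: support of fluid limit}).
	\item Together with $E[\bD^N] = E[\bD^{N, c}]$, it gives $E[\bar\bD_t^{N, c}((1 - \psi)|\varphi|)] \leq \varepsilon(\bar M + E[\bar\calR^N(t)])\norm{\varphi}_\infty$, with $h_g^x + h_g^y \leq K_\varepsilon$ on the support of $\psi$ (Lemma~\ref{lem: bounds for integrals of hazard rate}).
	\item Hence short-window bounds of order $\varepsilon + \delta K_\varepsilon$ follow (Lemma~\ref{lem: proof of condition K2}), as does the prelimit half of the truncation argument (Proposition~\ref{prop: limit of hazard rate term}).
\end{itemize}
The same identity, $E[\bD_T^{N, c}(\varphi^2)] = E[\bD_T^N(\varphi^2)] \leq \norm{\varphi}_\infty^2 E[M^N + \calR^N(T)]$, also makes your localization for the martingale estimate unnecessary.
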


The preceding theorem proves the relative compactness of the sequence $\setc{\bar\bmu^N}{N \geq 1}$ and characterizes all subsequential limits in terms of the fluid problem. In principle, the subsequential limits could be nonunique since we do not show that solutions of the fluid problem are unique with respect to the initial condition. However, we formally establish in Theorem \ref{the: solution of transport equation for null initial condition} of Section \ref{sub: properties of the fluid problem} that the solution is unique when the initial condition is the null measure. As a result, Theorems \ref{the: measure-valued fluid limit} and \ref{the: solution of transport equation for null initial condition} yield the next corollary.

\begin{corollary}
	\label{cor: law of large numbers limit with null initial condition}
	If $\bar\bmu_0^N$ converges weakly to the null measure in $\calM_F^+(\R_+^2)$ as $N \to \infty$, then $\bar\bmu^N$ converges weakly in $D_{\calM_F^+(\R_+^2)}[0, \infty)$ to the unique measure-valued function $\bar\bmu$ that solves the fluid problem and satisfies that the initial condition $\bar\bmu_0$ is the null measure.
\end{corollary}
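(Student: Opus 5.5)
The corollary follows from Theorem \ref{the: measure-valued fluid limit}, the uniqueness in Theorem \ref{the: solution of transport equation for null initial condition}, and the standard subsequence principle for weak convergence in a metric space. Write $\bar\bmu^\star$ for the unique solution of the fluid problem with null initial condition, given by Theorem \ref{the: solution of transport equation for null initial condition}. It is a deterministic element of the Polish space $D_{\calM_F^+(\R_+^2)}[0, \infty)$. It suffices to show that every subsequence of $\setc{\bar\bmu^N}{N \geq 1}$ has a further subsequence converging weakly to the point mass at $\bar\bmu^\star$. Indeed, weak convergence of probability laws on a metric space is metrizable (for instance, by the Prokhorov metric), so a sequence with this property converges.

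Fix a subsequence. By Theorem \ref{the: measure-valued fluid limit}, it has a further subsequence $\bar\bmu^{N_k}$ converging weakly to a process $\bar\bmu$ that lies in $\calS^+$ almost surely. Next I identify $\bar\bmu_0$. The evaluation map $\pi_0(\xi) \defeq \xi_0$ is continuous from $D_{\calM_F^+(\R_+^2)}[0,\infty)$ into $\calM_F^+(\R_+^2)$ at every path, because the Skorohod-$J_1$ topology forces convergence at time zero. By the continuous mapping theorem, $\bar\bmu_0^{N_k} \Rightarrow \bar\bmu_0$. The hypothesis says $\bar\bmu_0^{N}$ converges weakly to the null measure, and weak limits are unique, so $\bar\bmu_0$ is the null measure almost surely.

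Intersecting the two probability-one events gives, almost surely, that $\bar\bmu \in \calS^+$ with null initial condition. Theorem \ref{the: solution of transport equation for null initial condition} then gives $\bar\bmu = \bar\bmu^\star$ almost surely. Hence the law of $\bar\bmu$ is the Dirac mass at $\bar\bmu^\star$, regardless of which subsequence was chosen, and the subsequence principle yields $\bar\bmu^N \Rightarrow \bar\bmu^\star$.

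The only point needing care is the scope of Theorem \ref{the: solution of transport equation for null initial condition}. It must assert uniqueness within the full class $\calS^+$ of Definition \ref{def: fluid problem}, with properties (a)--(c) included, rather than within some smaller regularity class. The subsequential limits are only guaranteed to lie in $\calS^+$, so uniqueness must hold there. I expect this to be the case, since the paper states that the solution with null initial condition is unique among solutions of the fluid problem.
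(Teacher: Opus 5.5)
Your proof is correct and takes the same route as the paper, which derives the corollary directly from Theorem \ref{the: measure-valued fluid limit} (subsequential limits lie in $\calS^+$) and the uniqueness in Theorem \ref{the: solution of transport equation for null initial condition}. Your subsequence argument and your identification of $\bar\bmu_0$ through continuity of the time-zero projection supply the details the paper leaves implicit, and your concern about scope is resolved because that theorem asserts uniqueness within the whole class $\calS^+$ (with the standing assumptions still in force).
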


In particular, the above corollary provides the limit of a sequence of preemptive LCFS partial service queues that are initially empty, i.e., with no tasks at time zero.

\subsection{Limiting transport equation}
\label{sub: properties of the fluid problem}

We will analyze the fluid problem in Section \ref{sec: analysis of the fluid problem}. The existence of solutions to the fluid problem for a broad class of initial conditions will be proved using the fluid limit. As noted earlier, in the particular case where the initial condition is the null measure, we prove that the solution is unique and we obtain a closed-form expression.

\begin{theorem}
	\label{the: solution of transport equation for null initial condition}
	Consider the threshold value defined as follows:
	\begin{equation}
		\label{eq: equilibrium threshold}
		y_* \defeq \inf \asetc{y \geq 0}{\lambda_0 \int_0^y S_g(u, u)du \geq 1},
	\end{equation}
	where we let $y_* \defeq \infty$ if the integral is strictly less than one for all $y \geq 0$.
	There exists a unique $\bmu \in \calS^+$ such that $\bmu_0$ is the null measure. The threshold function of $\bmu$ is:
	\begin{equation*}
		\bar y_*\left(\bmu_t\right) =
		\begin{cases}
			\infty & \text{if} \quad 0 \leq t < y_*, \\
			y_* & \text{if} \quad y_* < \infty \quad \text{and} \quad t \geq y_*.
		\end{cases}
	\end{equation*}
	Moreover, $\bmu$ is determined by the following integrals:
	\begin{equation}
		\label{eq: solution with null initial condition}
		\angled{\phi}{\bmu_t} = \lambda_0 \int_0^t \phi\left(u \wedge y_*, u\right) S_g\left(u \wedge y_*, u\right)du \quad \text{for all} \quad t \geq 0 \quad \text{and} \quad \phi \in C_b\left(\R_+^2\right).
	\end{equation}
\end{theorem}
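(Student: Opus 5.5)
Existence comes first: I will check directly that the measure-valued function $\bmu$ defined by \eqref{eq: solution with null initial condition} belongs to $\calS^+$. Uniqueness comes second, and is the harder part.

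\textbf{Existence.} Write $\tau(u) \defeq u \wedge y_*$. The measure $\bmu_t$ is the pushforward of $\lambda_0 S_g(\tau(u),u)\,du$ on $[0,t]$ under $u \mapsto (\tau(u),u)$. Its $y$-marginal has density $\lambda_0 S_g(\tau(y),y)\ind{y\le t}$, which gives (b) and continuity in $t$. Condition (a) holds because $\bmu_t$ puts no mass where $S_g=0$. The total mass up to level $y$ is $\lambda_0\int_0^{y\wedge t} S_g(\tau(u),u)du$, and this yields the stated threshold:
\begin{itemize}
\item for $t<y_*$ the total mass is $<1$, so $\bar y_*=\infty$;
\item for $t\ge y_*$, mass $1$ is first reached at $y=y_*$, and $\bar r(\bmu_t,y)=\ind{y\le y_*}$. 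Here I use $S_g(u,u)>0$ near $y_*$ so that the infimum is exactly $y_*$; if $S_g$ vanishes I treat it as a degenerate case.
\end{itemize}
For (c) and (d), take $\varphi\in C^1_c(\R_+^3)$. Then $\angled{\varphi_t}{\bmu_t}=\lambda_0\int_0^t\varphi_t(\tau(u),u)S_g(\tau(u),u)du$. I differentiate in $t$, or more safely integrate the identity with Fubini. Along each characteristic $u\mapsto(\tau(u),u)$ started at time $t-u$, the chain rule gives $\partial_t\varphi+\bar r\partial_x\varphi+\partial_y\varphi$. The derivative of $S_g$ along the characteristic is $\bar r\,\partial_xS_g+\partial_yS_g=-S_g(\bar r h_g^x+h_g^y)$, which is exactly the departure term. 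The boundary term $u=t$ contributes $\lambda_0\varphi_t(0,0)$ after the time shift. Concretely, I rewrite $\angled{\varphi_t}{\bmu_t}=\lambda_0\int_0^t \varphi_t(\tau(t-s),t-s)S_g(\tau(t-s),t-s)ds$, where $s$ is the arrival time, and differentiate under the integral. Condition (c) follows from the same identity: $\int_0^t\angled{\bar rh_g^x+h_g^y}{\bmu_s}ds$ equals the mass lost, which is at most $\lambda_0 t$, applying monotone convergence near $\partial U_g$.

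\textbf{Uniqueness.} Let $\bmu\in\calS^+$ with $\bmu_0=0$. First I will show that $\bmu_t$ is supported on the set $\{(x,y):x\le y\le t\}$. To do this I test \eqref{eq: transport equation} with $\varphi$ vanishing on that region and use a Gronwall-type argument. Next I will show that $\bmu$ is carried by the curve $\{x=y\}$ until the threshold becomes finite. For this I define $Y(t)\defeq\bar y_*(\bmu_t)$ and derive a relation for the $y$-marginal $m_t(dy)$. Taking $\varphi$ independent of $x$, the $\partial_x$ term drops, and I get a one-dimensional transport equation with source $\lambda_0\delta_0$. Its loss term is $\int(\bar r h_g^x+h_g^y)\mu_t(dx,\cdot)$, which depends on the $x$-distribution, so the marginal does not close by itself.

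The approach is a characteristics / Duhamel argument. For fixed $T$ and a test function $\phi\in C^1_c(\R_+^2)$, I solve the backward linear transport equation with velocity $[\bar r(\bmu_s,y)\ 1]$. This velocity is discontinuous in $y$ only across the single curve $y=Y(s)$, and the flow is well defined because $\dot y=1$ always crosses that curve transversally. I also include the multiplicative weight $\exp(-\int(\bar rh_g^x+h_g^y))$ along characteristics. Plugging this dual solution into \eqref{eq: transport equation} gives $\angled{\phi}{\bmu_T}=\lambda_0\int_0^T\phi(X_{s,T},T-s)\,S_g(X_{s,T},T-s)\,ds$, where $X$ is the attained service along the characteristic from the origin. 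This representation depends on $\bmu$ only through $Y(\cdot)$. Thus $\bmu$ is determined by $Y$, and $Y$ in turn must satisfy the self-consistency condition that the mass below $Y(t)$ equals $1$. That gives a fixed-point equation for $Y$, and I will check that $Y\equiv\infty$ on $[0,y_*)$ and $Y\equiv y_*$ afterwards is its only solution. The argument is monotonicity: if $Y(t)<y_*$ at some $t$, then tasks with $y\le Y(t)$ were served at full rate, so their mass is at most $\lambda_0\int_0^{Y(t)}S_g(u,u)du<1$, a contradiction. The reverse inequality is handled similarly.

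\textbf{Main obstacle.} I expect the hardest step to be justifying the dual test function. The backward solution is only Lipschitz, not $C^1$, because of the kink across $y=Y(s)$. Moreover, $Y$ is a priori merely measurable, and $h_g$ may blow up at $\partial U_g$. I would first mollify, then pass to the limit using (b), which makes the kink curve $\bmu$-null, and (c), which controls the weights.
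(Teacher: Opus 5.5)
Your existence argument is correct, and it takes a different route from the paper. The paper never checks \eqref{eq: solution with null initial condition} directly. It gets existence from the fluid limit (Proposition~\ref{prop: existence of solutions} with $\alpha = 0$) and then shows that every solution with null initial condition must have this form. Your direct verification is self-contained and does not depend on the stochastic model. You also do not need $S_g(u,u) > 0$ near $y_*$: the band mass is below $1$ for $y < y_*$ and equals $1$ at $y_*$, so the infimum is $y_*$ regardless.

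\textbf{Main gap: the duality formula for an unknown threshold.} Your uniqueness argument rests entirely on this formula for an a priori unknown, merely measurable $Y$, and its justification fails. The flow itself is explicit and fine. The transversality you invoke to make the backward solution Lipschitz is false in general: nothing a priori rules out a threshold moving at unit speed, say $Y(s) = s + c$ on an interval. In that case the backward solution is $\psi_s(x,y) = \phi(x + (T-s)\ind{y \le s + c}, y + T - s)$, which is discontinuous, not Lipschitz. For measurable $Y$ the nonsmooth set need not be a curve or lie in a closed null set. So mollifying $\psi$ and using (b) to neglect a kink does not apply; that is the mechanism of Lemmas~\ref{lem: approximation using mollifiers} and~\ref{lem: transport equation for nonsmooth functions}, which need bounded one-sided derivatives and a closed null exceptional set.

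One repair is to regularize the velocity rather than the dual function:
\begin{itemize}
\item Replace $\ind{y \le Y(s)}$ by a smooth space-time mollification $r^\epsilon$ with values in $[0,1]$, and solve the backward problem with $r^\epsilon$. This gives a $C^1$ test function whose support stays in a fixed compact by Lemma~\ref{lem: support of psi^t}.
\item Because the velocity does not depend on $x$, $\partial_x\psi^\epsilon$ is bounded by the sup norm of the $x$-derivative of the terminal datum.
\item The error in the rescaled equation of Proposition~\ref{prop: spatial rescaling} is then at most a constant times $\int_0^T \int |\ind{y \le Y(s)} - r^\epsilon_s(y)|\,\bmu_s(dx,dy)\,ds$. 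This vanishes because $r^\epsilon \to \ind{y \le Y(s)}$ Lebesgue-a.e. and $\bmu_s(\R_+ \times dy)\,ds$ is absolutely continuous by (b).
\item The arrival term converges along almost every characteristic line.
\end{itemize}
The paper avoids the issue by ordering the argument differently. It first pins down the threshold using only test functions supported below a constant lower bound of it. There the dual function is the smooth translate $\phi(x + t - s, y + t - s)$, and no regularity of $Y$ is needed (Proposition~\ref{prop: integrals below the threshold}). It uses a nonsmooth dual function only after the threshold is known to equal the constant $y_*$, when the kinks lie on three lines.

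\textbf{Second gap: the fixed-point step is circular.} The claim that ``tasks with $y \le Y(t)$ were served at full rate'' presupposes that earlier thresholds exceeded those tasks' ages, which is what you are trying to prove. You need a continuation argument at a fixed level $y_0 < y_*$, with $m_0 = \lambda_0\int_0^{y_0} S_g(u,u)\,du < 1$. If $t_0$ is the infimum of times with $Y(t) < y_0$, you must show that $\bmu_t(\R_+ \times [0,y_0])$ stays below $1$ slightly beyond $t_0$. The paper does this with three ingredients:
\begin{itemize}
\item Lemma~\ref{lem: mass growth rate}, which says band masses grow at rate at most $\lambda_0$;
\item the first hitting time of total mass $1$, shown to equal $y_*$;
\item an induction over time steps of length $(1 - m_0 - \varepsilon)/\lambda_0$.
\end{itemize}
With your representation you could instead use that a cohort of age $u \le y_0$ at time $t > t_0$ has attained service $X \ge u - (t - t_0)$. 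The remaining inequality $Y(t) \le y_*$ for $t \ge y_*$ then follows directly from your formula, since $X \le u$ and $S_g$ is nonincreasing.
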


Remarkably, the threshold $\bar y_*(\bmu)$ starts at an infinite value and remains infinite forever or attains a finite equilibrium value $y_*$ in finite time, at $t = y_*$. The measures $\bmu_t$ are singular with support in the one-dimensional set $\setc{(u \wedge y_*, u) \in \R_+^2}{u \in \R_+}$ and mass distributed according to the survival function; the total mass depends on $\lambda_0$ as well.

Intuitively, the cases $y_* < \infty$ and $y_* = \infty$ correspond to overloaded and underloaded systems, respectively. Indeed, in the latter case, \eqref{eq: equilibrium threshold} implies that
\begin{equation*}
	\lambda_0\expect*{b \wedge c} = \lambda_0 \int_0^\infty \cprob*{b \geq u, c \geq u}du = \lambda_0 \int_0^\infty S_g(u, u)du \leq 1,
\end{equation*}
where $(b, c)$ is a random vector with joint probability density $g$. The expectation represents the mean amount of time that a task stays in the system if it is served uninterruptedly between its arrival and departure times, as if there were infinitely many servers. Thus, the left-hand side is the rate at which the total service requirement increases and the inequality states that this rate is smaller than the total service rate after normalization. This corresponds to an underloaded system where service capacity covers demand, whereas in the overloaded case $\lambda_0 E[b \wedge c] > 1$, the service capacity is insufficient to cover demand, the threshold stabilizes at a finite value $y^*$ and service curtailing occurs.

The interpretation of \eqref{eq: solution with null initial condition} is that, for large enough $N$, all but a negligible fraction of tasks are served from the moment of their arrival until they have been in the system $y_*$ units of time or have departed, due to abandonment or service completion; in underloaded systems, tasks are served throughout their stay. The main takeaway is that for all but a negligible fraction of tasks, if the service requirement and patience are $(b, c)$, then the task leaves the system with attained service $b \wedge c \wedge y_*$.

Proving that solutions are unique for general initial conditions is difficult due to the nonlinear feedback $\bar r(\bmu)$. However, we prove the following important structural result.

\begin{theorem}
	\label{the: structure of solutions}
	Suppose that $\bmu \in \calS^+$, let $\bnu$ be the unique solution of the fluid problem with initial condition the null measure and consider the measure-valued function $\bxi \defeq \bmu - \bnu$. Then $\bxi_t \in \calM_F^+(\R_+^2)$ and satisfies (a) and (b) of Definition \ref{def: fluid problem} for each $t \geq 0$. Also,
	\begin{equation}
		\label{eq: finite service rate for xi}
		\int_0^t \angled{\bar r(\bmu_s) h_g^x + h_g^y}{\bxi_s}ds < \infty \quad \text{for all} \quad t \geq 0,
	\end{equation}
	and for each $t \geq 0$ and $\varphi \in C_c^1(\R_+^3)$, we have:
	\begin{equation}
		\label{eq: transport equation without arrivals}
		\begin{split}
			\angled{\varphi_t}{\bxi_t} &= \angled{\varphi_0}{\bxi_0} - \int_0^t \angled{\varphi_s\left[\bar r(\bmu_s) h_g^x + h_g^y\right]}{\bxi_s}ds \\
			&+ \int_0^t \angled{\partial_t \varphi_s + \bar r\left(\bmu_s\right) \partial_x \varphi_s + \partial_y \varphi_s}{\bxi_s}ds.
		\end{split}
	\end{equation}
\end{theorem}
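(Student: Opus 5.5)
The plan is to show that $\bnu$ also solves the transport equation \eqref{eq: transport equation} when its own feedback $\bar r(\bnu_s)$ is replaced by $\bar r(\bmu_s)$, and then to identify $\bxi$ with the part of $\bmu$ made of tasks present at time zero. The first step is a comparison of thresholds: for every $s \ge 0$,
\begin{equation*}
\bar y_*(\bmu_s) \ge s \wedge y_*, \qquad \text{and} \qquad \bar y_*(\bmu_s) = y_* \quad \text{when } s \ge y_*.
\end{equation*}

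For the lower bound, suppose $\theta \defeq \bar y_*(\bmu_s) < s \wedge y_*$. Then the mass of $\bmu_s$ in $\R_+ \times [0,\theta]$ can only come from the region $y < s$, i.e.\ from fluid injected after time zero. Up to the level $\theta$, this fluid has been served at unit rate, so it has lost mass only through the hazard field along the diagonal $x=y$. Hence this mass should equal $\lambda_0\int_0^\theta S_g(u,u)\,du < 1$. Because the $y$-marginal is absolutely continuous, this contradicts the definition of $\theta$ as a threshold.

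Conversely, for $s \ge y_*$ the injected fluid already has mass $1$ in $[0,y_*]$, which forces $\bar y_*(\bmu_s) \le y_*$. Making "the mass in $\{y<s\}$ with $y \le \theta$ equals $\lambda_0\int_0^\theta S_g(u,u)du$" rigorous already requires the localization step described below. I would therefore run that step first on the region $\{y < (s\wedge \bar y_*(\bmu_\cdot))\}$, where the velocity is identically $(1,1)$, and obtain the threshold bounds from it.

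Given these bounds, Theorem~\ref{the: solution of transport equation for null initial condition} shows that $\bnu_s$ is supported on $\{(u\wedge y_*,u): u \le s\}$. On this set $\bar r(\bmu_s,y) = \bar r(\bnu_s,y)$ for every $y$, because for $s<y_*$ both rates equal one, while for $s \ge y_*$ both thresholds equal $y_*$. Therefore \eqref{eq: transport equation} holds for $\bnu$ with the feedback $\bar r(\bmu_s)$. Subtracting the two equations cancels the arrival term $\lambda_0\int_0^t\varphi_s(0,0)ds$ and gives \eqref{eq: transport equation without arrivals}. The bound \eqref{eq: finite service rate for xi} follows from (c) of Definition~\ref{def: fluid problem} for $\bmu$ together with the explicit formula \eqref{eq: solution with null initial condition} for $\bnu$. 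The latter makes the corresponding integral for $\bnu$ finite, since $\int_0^t \lambda_0 [h^x_g + h^y_g](u,u) S_g(u,u)\,du$ is bounded by the mass of departures.

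The remaining claims are that $\bxi_t \ge 0$ and that $\bxi_t$ satisfies (a) and (b). The idea is to prove $\bxi_t = \bmu_t|_{\R_+\times[t,\infty)}$, that is, that $\bmu_t$ restricted to $\{y<t\}$ equals $\bnu_t$. The restriction of $\bmu_t$ is nonnegative, vanishes on $U_g^c$, and has an absolutely continuous $y$-marginal, all inherited from $\bmu$. To get this identity, I would prove uniqueness for the \emph{linear} equation \eqref{eq: transport equation without arrivals} with the now-fixed velocity $(\bar r(\bmu_s,y),1)$, restricted to the wedge $\{y<s\}$ where the initial datum is zero.

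Uniqueness would be proved by duality along characteristics. Since $y$ moves at unit speed and the $x$-velocity depends only on $(s,y)$, the characteristics are explicit: $y(\sigma)=y_0+\sigma$ and $x(\sigma)=x_0+\int r(\sigma', y_0+\sigma')d\sigma'$. Along them, mass is damped by $\exp(-\int h)$, which can be written through ratios of $S_g$; this is where Assumption~\ref{ass: standing assumptions 2}(b) enters. For a given terminal $\phi\in C^1_c$, I would build the backward dual solution $\varphi$ along these characteristics and insert it into the equation for the difference $\bmu|_{\{y<s\}}-\bnu$ to conclude that it vanishes.

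The main obstacle is that $\bar r(\bmu_s,\cdot)$ is a discontinuous indicator, so $\varphi$ is not $C^1$ across the curve $y=\bar y_*(\bmu_s)$. I would mollify the indicator in $y$, construct smooth dual solutions, and bound the commutator error by the mass of $\bmu_s$ and $\bnu_s$ in a shrinking strip around the threshold. This error tends to zero because the $y$-marginals are absolutely continuous, by (b) of Definition~\ref{def: fluid problem}. The unbounded hazards are handled by cutting off near $\partial U_g$ with compactly supported test functions and using the integrability in (c).
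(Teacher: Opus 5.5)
Your overall plan (threshold bounds, then replacing $\bar r(\bnu)$ by $\bar r(\bmu)$ on the support of $\bnu$, then subtracting) matches the paper. There is, however, a genuine gap in the step that everything else depends on: the threshold bounds $\bar y_*(\bmu_s)\ge s\wedge y_*$ for all $s$, and $\bar y_*(\bmu_s)=y_*$ for $s\ge y_*$.

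\textbf{The gap.} Your single-time contradiction is circular. Knowing $\theta=\bar y_*(\bmu_s)$ at the one time $s$ says nothing about whether the fluid now at level $y\le\theta$ was served at unit rate in the past. That fluid sat at level $y-(s-s')$ at time $s'$. So you need $\bar y_*(\bmu_{s'})\ge y-(s-s')$ for all $s'\in[s-y,s]$, which is exactly the lower bound you are trying to prove, at all earlier times.

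Running the localization ``on the region $\{y<s\wedge\bar y_*(\bmu_\cdot)\}$'' does not break the circle. The set where backward characteristics see the velocity $(1,1)$ is determined by the whole threshold history, and that history is the unknown. Nor can the wedge duality be run first. Without the bounds, the field $\ind{y\le\bar y_*(\bmu_s)}$ in the wedge is merely Borel in $s$, and mollifying it in $y$ does not give $C^1$ dual functions.

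\textbf{What is missing.} You need a continuation argument in time, which is the content of Lemmas \ref{lem: lower bound for threshold} and \ref{lem: threshold for general initial condition}.
\begin{itemize}
\item Suppose $\bar y_*(\bmu_\tau)\ge\tau$ holds on $[0,t)$ with $t<y_*$. Proposition \ref{prop: integrals below the threshold}(b), which is your ``velocity $(1,1)$'' localization, gives $\bmu_s(\R_+\times[0,s])=\lambda_0\int_0^sS_g(u,u)du$ for $s<t$.
\item Lemma \ref{lem: mass growth rate} says the mass below any fixed level grows at rate at most $\lambda_0$, because fluid only moves upward. Combined with (b) of Definition \ref{def: fluid problem}, this gives $\bmu_t(\R_+\times[0,t])\le\lambda_0\int_0^tS_g(u,u)du<1$.
\item The same growth bound then keeps the mass below level $t+\varepsilon$ under one on $[t,t+\varepsilon]$. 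Hence the set of good times cannot stop before $y_*$.
\item Beyond $y_*$ there is no slack, because the mass below $y_*$ is exactly one. The paper therefore fixes $y_0<y_*$ and sets $m_0=\lambda_0\int_0^{y_0}S_g(u,u)du<1$. It then inducts over time steps of length $(1-m_0-\varepsilon)/\lambda_0$ to show $\bmu_t(\R_+\times[0,y_0])=m_0$ for all $t\ge y_0$.
\end{itemize}
Some bootstrap of this kind is indispensable.

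\textbf{The rest of your plan.} Once the bounds are in place, the remaining steps work. Your wedge duality is a valid alternative to the paper's two-regime nonnegativity proof. The paper uses Proposition \ref{prop: integrals below the threshold}(b) up to $y_*$. After $y_*$ it uses $L_{\bxi_t}(\phi)=L_{\bxi_{y_*}}(\psi^t_{y_*})\ge0$, via Proposition \ref{prop: transport equation with constant threshold} and the nonsmooth test functions of Lemma \ref{lem: transport equation for nonsmooth functions}.

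In your version, inside $\{y<s\}$ the field is the time-independent $\ind{y\le y_*}$. Mollifying it in $y$ gives smooth duals whose $x$-derivative is bounded by $\norm{\partial_x(\phi S_g)}_\infty$. The commutator is then bounded, up to a constant, by $\int_0^t|\bxi_s|(\R_+\times(y_*-\varepsilon,y_*+\varepsilon))ds$, which tends to zero. This treats all $t$ at once, mollifies the velocity instead of the test function, and yields the sharper identity $\bxi_t=\bmu_t|_{\R_+\times(t,\infty)}$.
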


In other words, solutions $\bmu$ of the fluid problem can be expressed as the superposition of two measure-valued functions: one satisfies \eqref{eq: transport equation} with $\lambda_0 = 0$ and prescribed threshold~$\bar y_*(\bmu)$, the other one is the unique solution of the fluid problem with null initial condition. The former function represents the evolution of tasks already present in the system at time zero, whereas the latter function describes the evolution of tasks that arrive afterwards. In the stochastic partial service queue, tasks present at time zero have no impact on the service rate of tasks that arrive after time zero, due to the preemptive LCFS policy. The theorem establishes an analogous property for solutions of the fluid problem.

Using this property, we derive the long-term limit of solutions, without assuming uniqueness of solutions with respect to the initial condition. For the unique solution with null initial condition, the vague limit is a measure $\mu_\infty \in \calM^+(\R_+^2)$, given by 
\begin{equation}
	\label{eq: global attractor}
	\begin{split}
		\angled{\phi}{\mu_\infty} &\defeq \lambda_0 \int_0^\infty \phi\left(u \wedge y_*, u\right) S_g\left(u \wedge y_*, u\right)du \\
		&= \lim_{t \to \infty} \lambda_0 \int_0^t \phi\left(u \wedge y_*, u\right) S_g\left(u \wedge y_*, u\right)du
	\end{split}
\end{equation}
for all $\phi \in C_c(\R_+^2)$. Moreover, we prove that the measure-valued function that represents tasks already present at time zero always converges vaguely to the null measure. Hence, $\mu_\infty$ is the unique fixed point of the fluid problem and a global attractor with respect to the vague topology, and with respect to the weak topology under mild conditions. As noted earlier, the measure $\mu_\infty$ solves the stationary transport equation obtained from the fluid model postulated in \cite{ferragut2025last} by Ferragut, Goldsztajn and Paganini. Thus, the results proved in the present paper provide rigorous support to the conclusions obtained in \cite{ferragut2025last}.

\begin{theorem}
	\label{the: long-term behavior of solutions}
	Suppose that $\bmu \in \calS^+$ and define $\bnu$ and $\bxi$ as in Theorem \ref{the: structure of solutions}. Then the measure $\bxi_t$ converges vaguely to the null measure as $t \to \infty$ and the measures $\bmu_t$ and $\bnu_t$ converge vaguely to the measure $\mu_\infty$ defined in \eqref{eq: global attractor}. Further, $\bxi_t$ in fact converges weakly to the null measure if $y_* < \infty$. If in addition the survival function is such that
	\begin{equation}
		\label{eq: condition for fininte total mass in limit}
		\int_0^\infty S_g(u \wedge y_*, u)du < \infty,
	\end{equation}
	then $\mu_\infty \in \calM_F^+(\R_+^2)$ and the measures $\bmu_t$ and $\bnu_t$ converge weakly to $\mu_\infty$ as $t \to \infty$.
\end{theorem}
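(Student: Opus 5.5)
Given Theorem \ref{the: structure of solutions}, it suffices to show that $\bxi_t$ tends to the null measure, vaguely in general and weakly when $y_*<\infty$. The claims about $\bmu_t=\bnu_t+\bxi_t$ then follow from the explicit formula \eqref{eq: solution with null initial condition} for $\bnu_t$. For $\phi\in C_c(\R_+^2)$, the integrand $\phi(u\wedge y_*,u)S_g(u\wedge y_*,u)$ vanishes for $u$ outside a compact set, so $\angled{\phi}{\bnu_t}\to\angled{\phi}{\mu_\infty}$; this is the vague convergence of $\bnu_t$. Under \eqref{eq: condition for fininte total mass in limit}, dominated convergence applies to every $\phi\in C_b(\R_+^2)$, which gives weak convergence of $\bnu_t$ and $\mu_\infty\in\calM_F^+(\R_+^2)$. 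Adding the convergence of $\bxi_t$ then yields the statements for $\bmu_t$.

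For $\bxi$ I will use equation \eqref{eq: transport equation without arrivals}, which has no source term, and the fact that every point moves with $y$-velocity exactly one. The plan is to test against functions that do not depend on $x$. Take $\varphi_s(x,y)=\psi(y-s)\chi(y)$, or a compactly supported cutoff of it. The $\partial_x$ term then drops out, $\partial_t\varphi+\partial_y\varphi$ involves only the cutoff, and the hazard term is nonpositive. This gives, for each $a\ge 0$,
\begin{equation*}
\bxi_t\bigl(\R_+\times[0,t+a]\bigr)\le \bxi_0\bigl(\R_+\times[0,a]\bigr).
\end{equation*}
Two consequences follow. First, the total mass of $\bxi_t$ is nonincreasing. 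Second, and more importantly, $\bxi_t(\R_+\times[0,t))=0$, because the $y$-marginal of $\bxi_0$ is absolutely continuous by (b) of Definition \ref{def: fluid problem}, so it gives no mass to $\{0\}$. In words, all mass of $\bxi_t$ sits at $y\ge t$. For any compact $K\subset\R_+^2$ this makes $\bxi_t(K)=0$ as soon as $t$ exceeds the $y$-extent of $K$, so $\bxi_t\to 0$ vaguely. The technical work here is justifying the monotonicity: test functions must be in $C_c^1(\R_+^3)$, so I will approximate indicators of the sets $\{y\le s+a\}$ by smooth nonincreasing cutoffs in $y-s$, truncate in $x$ and $y$, and pass to the limit by monotone convergence, using \eqref{eq: finite service rate for xi} to control the hazard term.

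For weak convergence when $y_*<\infty$, I need $\bxi_t(\R_+^2)\to 0$, not just escape of mass to infinity. The idea is that for $t\ge y_*$ the threshold satisfies $\bar y_*(\bmu_t)\le y_*$. Indeed $\bmu_t\ge\bnu_t$, and $\bnu_t$ already has $y$-marginal mass reaching $1$ at $y_*$. Hence all of $\bxi_t$, which lives at $y\ge t>y_*$, receives no service. A particle of $\bxi$ then keeps its $x$-coordinate frozen while $y$ increases, and it dies at rate $h_g^y$. Along such a trajectory the survival weight is therefore $S_g(x,y)/S_g(x,y_0)$.

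I would make this rigorous by testing \eqref{eq: transport equation without arrivals} for $t\ge y_*$ with $\varphi=\phi/S_g$ on $U_g$, suitably truncated, and with $\bar r=0$ on the support. The equation then reduces to $\partial_t\varphi+\partial_y\varphi=\varphi h_g^y$ along characteristics, so $\langle \phi(x,y-(t-s))\,S_g(x,y)^{-1}\cdots\rangle$ is conserved. Equivalently,
\begin{equation*}
\bxi_t(\R_+^2)=\int \frac{S_g(x,y+t-y_*)}{S_g(x,y)}\,\bxi_{y_*}(dx,dy).
\end{equation*}
Since $S_g(x,y')\to 0$ as $y'\to\infty$ (patience times are finite almost surely), dominated convergence gives total mass tending to zero. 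Combined with the vague convergence this yields weak convergence.

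I expect the main obstacle to be this characteristic and duality argument: dividing by $S_g$, which may be unbounded near $\partial U_g$, and working with test functions that are only piecewise smooth. I would truncate to $\{S_g\ge\epsilon\}$, use condition (a) of Definition \ref{def: fluid problem} and the continuity of $h_g$ on $U_g$ from Assumption \ref{ass: standing assumptions 2}, and let $\epsilon\to 0$. As an alternative, I could simply show that the total mass solves a closed inequality driven by the decay of $S_g$.
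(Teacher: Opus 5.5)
Your proof is correct, and for the support statement it takes a genuinely different, more elementary route than the paper.

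\textbf{Vague convergence of $\bxi_t$.} The paper shows that $\bxi_t$ vanishes below height $t$ only when $y_*=\infty$. It does so through Proposition \ref{prop: integrals below the threshold}(b), which needs the threshold lower bound $\bar y_*(\bmu_t)\ge t$ of Lemma \ref{lem: threshold for general initial condition} and the characteristic construction. For $y_*<\infty$ the paper obtains vague convergence only as a byproduct of weak convergence, which comes from Proposition \ref{prop: transport equation with constant threshold}. That proposition rests on Lemmas \ref{lem: approximation using mollifiers} and \ref{lem: transport equation for nonsmooth functions}, because the dual test function has a kink across $y=y_*$. Your $x$-independent test functions $\psi(y-s)$ make the service term vanish identically. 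So $\bxi_t(\R_+\times[0,t])=0$ follows from the nonnegativity of $\bxi$ alone, with no threshold information, and gives vague convergence for every value of $y_*$.

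\textbf{Weak convergence when $y_*<\infty$.} The same support fact simplifies this step. For $s>y_*$, $\bxi_s$ lives strictly above $y_*\ge\bar y_*(\bmu_s)$, so the service term drops out. The characteristics are then pure vertical translations, and the dual function $(\phi S_g)(x,y+t-s)/S_g(x,y)$ is genuinely $C^1$. You therefore avoid the nonsmooth machinery and get the paper's bound $\int S_g(x,y+t-y_*)S_g(x,y)^{-1}\,\bxi_{y_*}(dx,dy)$ as an equality. From there the argument is the paper's: rescale by $S_g$, then apply dominated convergence.

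\textbf{Trade-off.} The paper's route reuses general tools it needs anyway for Theorems \ref{the: solution of transport equation for null initial condition} and \ref{the: structure of solutions}. Yours is more self-contained at this point.

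\textbf{Two small corrections.}
\begin{enumerate}
\item The conserved quantity is $\langle (\phi S_g)(x,y+t-s)\,S_g(x,y)^{-1},\bxi_s\rangle$, with the shift upward, not $\phi(x,y-(t-s))$. Your displayed mass formula is nevertheless right.
\item The feared division by small $S_g$ does not occur for $\phi\in C_c^1(U_g)$. On the support of the test function at times $s\le t$ one has $S_g(x,y)\ge S_g(x,y+t-s)\ge\min_{\supp\phi}S_g>0$. So no truncation to $\{S_g\ge\epsilon\}$ is needed, only a $C^1$ cutoff in $s$ beyond $t$, as in Lemma \ref{lem: support of psi^t}.
\end{enumerate}
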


The weak convergence results can also be obtained in the underloaded case $y_* = \infty$ under mild additional assumptions, which are discussed at the end of Section \ref{sec: analysis of the fluid problem}.

The rest of the paper is devoted to proving the theorems stated in this section. We first prove the measure-valued fluid limit in Section \ref{sec: proof of the fluid limit} and then analyze the fluid problem in Section \ref{sec: analysis of the fluid problem}. However, these sections can be read independently and in any order.

\section{Proof of the fluid limit}
\label{sec: proof of the fluid limit}

We follow the same program as Kaspi and Ramanan in \cite{kaspi2011law} for deriving the fluid limit of the FCFS many-server queue without abandonment. This program can be summarized as follows. We first derive and analyze a stochastic equation for the pre-limit system dynamics. Then we prove tightness for several sequences of measure-valued processes involved in the pre-limit equation, including the process $\bar \bmu^N$ that describes the normalized state of the system. We conclude by characterizing the subsequential limits of these sequences, showing that the pre-limit equation turns into the transport equation \eqref{eq: transport equation} in the limit.

While the overall program is the same as in \cite{kaspi2011law}, the setting considered in this paper presents significant new challenges; also with respect to other papers in the literature, as already noted in Section \ref{sec: introduction}. The main one of these challenges is that the service rate of tasks depends on the measure-valued state of the system. As a result, processes that describe departures from the system and appear in the pre-limit equation are harder to analyze than in \cite{kaspi2011law}; namely, it is substantially more difficult to derive their compensators. In addition, the characterization of subsequential limits involves entirely new arguments for deriving the limits of processes driven by the service rate of tasks, associated with the last two terms of \eqref{eq: transport equation}. Several proofs are also more involved since tasks are regarded as particles in a two-dimensional space, instead of particles in one-dimensional spaces.

\subsection{Outline of the proof}
\label{sub: outline of the proof}

As observed earlier, the first step of the proof is to derive a stochastic equation for describing the pre-limit dynamics of the system. For this purpose, define
\begin{equation*}
	\bA_t^N(\varphi) \defeq \sum_{i = 1}^{\calR^N(t)} \varphi_{a_i^N}\left(0, 0\right) \quad \text{and} \quad \bD_t^N(\varphi) \defeq \sum_{i = 1 - M^N}^{\calR^N(t)} \ind{t \geq d_i^N}\varphi_{d_i^N}\left(\bx_i^N\left(d_i^N\right), \by_i^N\left(d_i^N\right)\right)
\end{equation*}
for all $t \geq 0$ and $\varphi \in C_b(\R_+^3)$. The processes $\bA^N(\varphi)$ and $\bD^N(\varphi)$ are called the arrival and departure processes associated with the test function $\varphi$, respectively; observe that $\calR^N$ is the arrival process associated with the function that is identically equal to one. We establish in Proposition \ref{prop: arrivals-flow-departures decomposition} of Section \ref{sub: pre-limit stochastic equation} that
\begin{equation}
	\label{eq: pre-limit equation}
	\begin{split}
		\angled{\varphi_t}{\bmu_t^N} &= \angled{\varphi_0}{\bmu_0^N} + \bA_t^N(\varphi) - \bD_t^N(\varphi) \\
		&+ \int_0^t \angled{\partial_t\varphi_s + r^N\left(\bmu_s^N\right)\partial_x\varphi_s + \partial_y\varphi_s}{\bmu_s^N}ds
	\end{split}
\end{equation} 
surely. This pre-limit equation has the same structure as the transport equation \eqref{eq: transport equation}, with the second and third terms on the right-hand side representing the effect of arrivals and departures, respectively, in the integral of a test function $\varphi_t$ with respect to $\bmu_t^N$. The last term reflects how this integral changes as time passes and the service of tasks progresses, and has the same form as the last term of the transport equation \eqref{eq: transport equation}.

For fixed $t \geq 0$ and $\varphi \in C_c^1(\R_+^3)$, a law of large numbers argument yields
\begin{equation}
	\label{eq: law of large numbers for arrivals}
	\lim_{N \to \infty} \bA_t^N(\varphi) = \lambda_0 \int_0^t \varphi_s(0, 0)ds 
\end{equation}
almost surely; see Lemma \ref{lem: limit of arrival process} in Section \ref{sub: characterization of subsequential limits}. Analyzing the departure process $\bD^N(\varphi)$ is significantly more challenging. We establish in Theorem \ref{the: compensator of departures-driven process} of Section \ref{sub: departure processes} that
\begin{equation*}
	\bD_t^{N, c}(\varphi) \defeq \int_0^t \angled{\varphi_s\left[r^N\left(\bmu_s^N\right)h_g^x + h_g^y\right]}{\bmu_s^N}ds
\end{equation*}
is the compensator of $\bD^N(\varphi)$, where the integrand should be interpreted as the intensity of departures at time $s$. We also prove that the local martingale $\bD^N(\varphi) - \bD^{N, c}(\varphi)$ is a locally square-integrable martingale, and Corollary \ref{cor: limit of departures martingale} bounds its predictable quadratic variation. We leverage this bound in Section \ref{sub: characterization of subsequential limits} to characterize the limit of the departure processes in terms of the limits of their compensators.

In Section \ref{sub: tightness results}, we normalize all terms of the pre-limit equation \eqref{eq: pre-limit equation}, and consider the measure-valued processes $\bar\bA^N$, $\bar\bD^N$ and $\bar\bD^{N, c}$ defined as:
\begin{equation*}
	\bar\bA_t^N(\varphi) \defeq \frac{\bA_t^N(\varphi)}{N}, \quad \bar\bD^N_t(\varphi) \defeq \frac{\bD_t^N(\varphi)}{N}, \quad \bar\bD_t^{N, c}(\varphi) \defeq \frac{\bD_t^{N, c}(\varphi)}{N}, \quad \text{with} \quad \varphi \in C_b\left(\R_+^3\right).
\end{equation*}
Using Jakubowski's criteria, we prove in Theorem \ref{the: relative compactness of measured-valued processes} that the sequences
\begin{equation*}
	\asetc{\bar\bmu^N}{N \geq 1}, \quad \asetc{\bar\bA^N}{N \geq 1}, \quad \asetc{\bar\bD^N}{N \geq 1} \quad \text{and} \quad \asetc{\bar\bD^{N, c}}{N \geq 1}
\end{equation*}
are tight; the former in $D_{\calM_F^+(\R_+^2)}[0, \infty)$ and the latter three in $D_{\calM_F^+(\R_+^3)}[0, \infty)$. Observe that \eqref{eq: pre-limit equation} can be expressed in terms of the above processes as follows:
\begin{equation}
	\label{eq: pre-limit equation in normalized notation}
	\begin{split}
		\angled{\varphi_t}{\bar\bmu_t^N} &= \angled{\varphi_0}{\bar\bmu_0^N} + \bar\bA_t^N(\varphi) - \bar\bD_t^N(\varphi) \\
		&+ \int_0^t \angled{\partial_t\varphi_s + \bar r\left(\bar\bmu_s^N\right)\partial_x\varphi_s + \partial_y\varphi_s}{\bar\bmu_s^N}ds.
	\end{split}
\end{equation}

Let $\bar\calR^N \defeq \calR^N / N$ denote the normalized arrival process. Then $\bar\calR^N$ converges weakly in the topology of uniform convergence over compacts sets to the function $t \mapsto \lambda_0 t$. This follows from Assumption \ref{ass: standing assumptions 1} and the functional law of large numbers for renewal processes; e.g., see \cite[Theorem 5.10]{chen2001fundamentals}. Therefore, the sequence formed by the processes
\begin{equation*}
	\bar\bX^N \defeq \left(\bar\bmu_0^N, \bar\bmu^N, \bar\bA^N, \bar\bD^N, \bar\bD^{N, c}, \bar\calR^N\right)
\end{equation*}
is tight in $\calM_F^+(\R_+^2) \times D_{\calM_F^+(\R_+^2)}[0, \infty) \times [D_{\calM_F^+(\R_+^3)}[0, \infty)]^3 \times D_\R[0, \infty)$. Prohorov's theorem implies that every subsequence has a further subsequence which converges weakly.

In Section \ref{sub: characterization of subsequential limits}, we fix a convergent subsequence and characterize its limit
\begin{equation*}
	\bar\bX \defeq \left(\bar\bmu_0, \bar\bmu, \bar\bA, \bar\bD, \bar\bD^{c}, \bar\calR\right).
\end{equation*}
Using \eqref{eq: law of large numbers for arrivals}, and leveraging Corollary \ref{cor: limit of departures martingale} to bound the local martingales $\bar\bD^N(\varphi) - \bar\bD^{N, c}(\varphi)$, we show in Proposition \ref{prop: set of probability one} that, almost surely for all $t \geq 0$ and $\varphi \in C_b(\R_+^3)$, we have:
\begin{equation}
	\label{eq: first results on subsequential limits}
	\bar\bA_t(\varphi) = \lambda_0 \int_0^t \varphi_s(0, 0)ds \quad \text{and} \quad \bar\bD_t(\varphi) = \bar\bD_t^c(\varphi).
\end{equation}
Moreover, we establish that the limit $\bar\bmu$ satisfies properties (a) and (b) of Definition \ref{def: fluid problem} in Propositions \ref{prop: support of fluid limit} and \ref{prop: limiting measure of y-intervals}, respectively. For finishing the proof of Theorem \ref{the: measure-valued fluid limit}, we must establish that $\bar\bmu$ satisfies property (c) of Definition \ref{def: fluid problem}, and that the remaining terms of \eqref{eq: pre-limit equation in normalized notation} converge to the corresponding terms of \eqref{eq: transport equation} along the fixed subsequence.

Specifically, in view of \eqref{eq: first results on subsequential limits}, we must consider the following measure-valued processes
\begin{align*}
	&\bar\bD_t^{N, c}(\varphi) \defeq \int_0^t \angled{\varphi_s\left[\bar r\left(\bar\bmu_s^N\right)h_g^x + h_g^y\right]}{\bar\bmu_s^N}ds, \\
	&\bar\bY_t^N(\varphi) \defeq \int_0^t \angled{\partial_t\varphi_s + \bar r\left(\bar\bmu_s^N\right)\partial_x\varphi_s + \partial_y\varphi_s}{\bar\bmu_s^N}ds,
\end{align*}
to obtain the limits of the third and fourth terms on the right-hand side of \eqref{eq: pre-limit equation in normalized notation}. For this purpose, we study integrals of the form $\langle\psi \bar r(\bar \bmu_t^N), \bar \bmu_t^N\rangle$ in Section \ref{subsub: limits involving service rates}, with $\psi$ a continuous and bounded function. While the limit of the threshold process $\bar y_*(\bar\bmu^N)$ could not exist, we derive the limits of the latter integrals in Proposition \ref{prop: limits of integrals involving r}. In Section \ref{subsub: limits involving hazard rates}, we consider integrals involving also the possibly unbounded hazard rates and prove that $\bar\bmu$ satisfies property (c) of Definition \ref{def: fluid problem}. Using the results of Sections \ref{subsub: limits involving service rates} and \ref{subsub: limits involving hazard rates}, we prove in Theorem \ref{the: characterization of subsequential limits} that $\bar\bmu$ satisfies property (d) of Definition \ref{def: fluid problem}, completing the proof.

\subsection{Pre-limit stochastic equation}
\label{sub: pre-limit stochastic equation}

In this section we derive the stochastic equation that describes the pre-limit dynamics of the system. For this purpose, we will fix the number of servers $N$ and omit it from the notation for brevity. The pre-limit equation is obtained in the following proposition.

\begin{proposition}
	\label{prop: arrivals-flow-departures decomposition}
	For each $t \geq 0$ and $\varphi \in C_c^1(\R_+^3)$, we have
	\begin{equation*}
		\angled{\varphi_t}{\bmu_t} = \angled{\varphi_0}{\bmu_0} + \bA_t(\varphi) - \bD_t(\varphi) + \int_0^t \angled{\partial_t\varphi_s + r(\bmu_s)\partial_x\varphi_s + \partial_y\varphi_s}{\bmu_s}ds
	\end{equation*} 
	surely. Recall that the arrival process $\bA(\varphi)$ and departure process $\bD(\varphi)$ are defined as:
	\begin{equation*}
		\bA_t(\varphi) \defeq \sum_{i = 1}^{\calR(t)} \varphi_{a_i}(0, 0) \quad \text{and} \quad \bD_t(\varphi) \defeq \sum_{i = 1 - M}^{\calR(t)} \ind{t \geq d_i}\varphi_{d_i}\left(\bx_i(d_i), \by_i(d_i)\right).
	\end{equation*}
\end{proposition}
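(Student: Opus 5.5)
The identity is pathwise, so I will prove it by decomposing $\angled{\varphi_t}{\bmu_t}$ into a sum over tasks and applying the fundamental theorem of calculus to each task's trajectory between its arrival and departure. Fix a sample path and $t \geq 0$. By \eqref{eq: state of the system},
\begin{equation*}
	\angled{\varphi_t}{\bmu_t} = \sum_{i = 1 - M}^{\calR(t)} \ind{t < d_i}\varphi_t\left(\bx_i(t), \by_i(t)\right).
\end{equation*}
This is a finite sum because $\calR(t) < \infty$ surely (Remark \ref{rem: codomains of arrival process, service requirements and patiences}). For each task $i \in \{1 - M, \dots, \calR(t)\}$, define $\Phi_i(s) \defeq \varphi_s(\bx_i(s), \by_i(s))$ for $s \in [a_i, d_i \wedge t]$. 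By \eqref{eq: attained service and sojourn time evolution}, $\by_i$ is affine with slope one on $[a_i, d_i]$. The function $\bx_i$ is Lipschitz with derivative $r(\bmu_s, \by_i(s)) \in \{0,1\}$ at every $s$ where $s \mapsto r(\bmu_s, \by_i(s))$ is continuous, and in any case for almost every $s$, since $\bx_i$ is the integral of a bounded measurable function. Because $\varphi \in C_c^1(\R_+^3)$, the composition $\Phi_i$ is absolutely continuous on $[a_i, d_i \wedge t]$. The chain rule for a $C^1$ function composed with absolutely continuous paths then gives, for almost every $s$,
\begin{equation*}
	\Phi_i'(s) = \partial_t\varphi_s + r\left(\bmu_s, \by_i(s)\right)\partial_x\varphi_s + \partial_y\varphi_s,
\end{equation*}
with the partial derivatives evaluated at $(\bx_i(s), \by_i(s))$. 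On the boundary of the orthant these are one-sided derivatives, which is harmless since the coordinates never decrease. Integrating gives $\Phi_i(d_i \wedge t) = \Phi_i(a_i) + \int_{a_i \wedge t}^{d_i \wedge t} \Phi_i'(s)\,ds$.

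Next I sum over tasks and sort the terms into three groups. The starting values $\Phi_i(a_i)$ equal $\varphi_0(\bx_i(0), \by_i(0))$ for $i \leq 0$, and their sum is $\angled{\varphi_0}{\bmu_0}$. This uses the assumption $\bx_i(0) < b_i$ and $\by_i(0) < c_i$, which gives $d_i > 0$, so all initial tasks are counted in $\bmu_0$. For $1 \leq i \leq \calR(t)$ the starting value is $\varphi_{a_i}(0,0)$, and these sum to $\bA_t(\varphi)$. The endpoint values split according to whether the task is still present. If $t < d_i$, the endpoint value is $\varphi_t(\bx_i(t), \by_i(t))$, which contributes to $\angled{\varphi_t}{\bmu_t}$. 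If $t \geq d_i$, the endpoint value is $\varphi_{d_i}(\bx_i(d_i), \by_i(d_i))$, and these terms sum to $\bD_t(\varphi)$. The remaining integrals combine as
\begin{equation*}
	\sum_i \int_0^t \ind{a_i \leq s < d_i}\,\Phi_i'(s)\,ds = \int_0^t \angled{\partial_t\varphi_s + r(\bmu_s)\partial_x\varphi_s + \partial_y\varphi_s}{\bmu_s}ds.
\end{equation*}
This equality holds because, for each $s$, the tasks with $a_i \leq s < d_i$ and $i \leq \calR(s)$ are exactly the atoms of $\bmu_s$. Tasks with $a_i = s$ exactly affect only a Lebesgue-null set of times. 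Swapping the finite sum with the integral is immediate. Rearranging the resulting equation yields the claim.

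The main obstacle is to justify the chain rule and the measurability of $s \mapsto r(\bmu_s, \by_i(s))$, which can jump because of the discontinuous threshold $y_*(\bmu_s)$. I would handle it as follows. Measurability and absolute continuity of $\bx_i$ come from the construction in Appendix \ref{app: construction of sample paths}, and I take them as given since \eqref{eq: attained service and sojourn time evolution} is stated there. The chain rule for $\varphi(s, \bx_i(s), \by_i(s))$ with $\varphi$ of class $C^1$ and absolutely continuous arguments is standard; the one-sided derivatives at the boundary do not matter because the integrand only needs to be correct for almost every $s$.
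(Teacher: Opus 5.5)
Your proof is correct, but it takes a different route from the paper's. The paper argues on the aggregate $\bu(t) = \angled{\varphi_t}{\bmu_t}$, using the event times $\tau_k$ of Appendix~\ref{app: construction of sample paths}:
\begin{itemize}
\item on each $(\tau_k, \tau_{k+1})$ the set of present tasks and their service rates are frozen, so $\bu$ is classically differentiable there, with the drift integrand as derivative;
\item $\bu$ is right-continuous, and each jump $\bu(\tau_k) - \bu(\tau_k^-)$ equals arrivals minus departures;
\item one then telescopes over the finitely many events in $[0, t]$, which requires $\tau_k \to \infty$.
\end{itemize}

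You instead apply the fundamental theorem of calculus along each trajectory on $[a_i, d_i \wedge t]$. The boundary terms sort themselves into $\angled{\varphi_0}{\bmu_0}$, $\bA_t(\varphi)$, $\angled{\varphi_t}{\bmu_t}$ and $\bD_t(\varphi)$. The interior terms reassemble into the drift integral, because the tasks with $a_i \le s < d_i$ are exactly the atoms of $\bmu_s$.

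Your particle-by-particle argument needs no event bookkeeping, no right-continuity check and no handling of simultaneous events. It only uses two facts: each $\bx_i$ is absolutely continuous with a bounded measurable rate, and finitely many tasks visit the system in $[0, t]$. So it would carry over to the other policies mentioned after \eqref{eq: service rate function}. That includes policies whose rates change at times other than arrivals and departures, where the paper's piecewise-constant structure is lost.

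What you pay is the a.e.\ chain rule for $\varphi$ along absolutely continuous paths in the closed orthant. You handle this adequately: $\varphi$ is differentiable relative to the orthant, and since coordinates never decrease, only right partial derivatives are probed at the boundary. By contrast, the paper's piecewise-linear trajectories make the calculus trivial. They also expose the jump-plus-continuous-drift structure of the decomposition, which the paper comments on right after the proof.
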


\begin{proof}
	Consider the random times $\tau_k$ defined recursively as follows:
	\begin{equation*}
		\tau_0 \defeq 0 \quad \text{and} \quad \tau_{k + 1} \defeq \min\asetc{a_i, d_j}{i \geq 1, a_i > \tau_k, j \geq 1, d_j > \tau_k}.
	\end{equation*}
	If we refer to the arrivals and departures as the events of the system, then $\tau_k$ is the time of the $k$th event of the system, as in Appendix \ref{app: construction of sample paths}. Hence,
	\begin{equation*}
		\bu(t) \defeq \angled{\varphi_t}{\bmu_t} = \sum_{i = 1 - M}^{\calR(t)} \ind{t < d_i} \varphi_t\left(\bx_i(t), \by_i(t)\right)
	\end{equation*}
	is differentiable in $(\tau_k, \tau_{k + 1})$ as a function of $t$. Indeed, the number of summands remains fixed throughout this interval and $r(\bmu, \by_i)$ is constant in $(\tau_k, \tau_{k + 1})$ for each $i$. By \eqref{eq: attained service and sojourn time evolution},
	\begin{equation}
		\label{eq: time derivative of integral of test function}
		\partial_t \bu(t) = \angled{\partial_t\varphi_t + r(\bmu_t)\partial_x\varphi_t + \partial_y\varphi_t}{\bmu_t} \quad \text{for all} \quad t \in \left(\tau_k, \tau_{k + 1}\right).
	\end{equation}
	
	On the other hand, at each time $\tau_k$, we have:
	\begin{equation*}
		\bu\left(\tau_k\right) - \bu\left(\tau_k^-\right) = \sum_{i = 1}^\infty \ind{a_i = \tau_k}\varphi_{a_i}\left(0, 0\right) - \sum_{i = 1 - M}^\infty \ind{d_i = \tau_k} \varphi_{d_i}\left(\bx_i\left(d_i\right),  \by_i\left(d_i\right)\right).
	\end{equation*}
	We establish in Appendix \ref{sub: properties of the construction} that $\tau_k \to \infty$ as $k \to \infty$. Thus, $k_t \defeq \max\setc{k \geq 0}{\tau_k \leq t}$ is well-defined and finite. It follows that
	\begin{align*}
		\bu(t) - \bu(0) &= \bu(t) - \bu\left(\tau_{k_t}\right) + \sum_{k = 0}^{k_t - 1} \left[\bu\left(\tau_{k + 1}^-\right) - \bu\left(\tau_k\right)\right] + \sum_{k = 1}^{k_t} \left[\bu\left(\tau_k\right) - \bu\left(\tau_k^-\right)\right].
	\end{align*}
	
	Because $\varphi$ is uniformly continuous and $\bmu$ is right-continuous,  the following inequality, which holds for all $0 \leq t \leq s \leq t + 1$ implies that $\bu$ is right-continuous as well:
	\begin{align*}
		\left|\bu(s) - \bu(t)\right| &\leq \left|\angled{\varphi_s - \varphi_t}{\bmu_s}\right| + \left|\angled{\varphi_t}{\bmu_s} - \angled{\varphi_t}{\bmu_t}\right| \\
		&\leq \bmu_s(\R_+^2) \sup_{x, y \in \R_+} \left|\varphi_s(x, y) - \varphi_t(x, y)\right| + \left|\angled{\varphi_t}{\bmu_s} - \angled{\varphi_t}{\bmu_t}\right| \\
		&\leq \left[M + \calR(t + 1)\right] \sup_{x, y \in \R_+} \left|\varphi_s(x, y) - \varphi_t(x, y)\right| + \left|\angled{\varphi_t}{\bmu_s} - \angled{\varphi_t}{\bmu_t}\right|.
	\end{align*}
	Indeed, the right-hand side converges to zero as $s \downarrow t$ while $t$ remains fixed. As a result, recalling that $\bu$ is differentiable over $(\tau_k, \tau_{k + 1})$, it follows from \eqref{eq: time derivative of integral of test function} that
	\begin{equation*}
		\bu(t) - \bu\left(\tau_{k_t}\right) + \sum_{k = 0}^{k_t - 1} \left[\bu\left(\tau_{k + 1}^-\right) - \bu\left(\tau_k\right)\right] = \int_0^t \angled{\partial_t\varphi_s + r(\bmu_s)\partial_x\varphi_s + \partial_y\varphi_s}{\bmu_s}ds.
	\end{equation*}
	
	Recall that $k_t < \infty$ and note that the number of tasks that visit the system in $[0, t]$ is upper bounded by $M + \calR(t)$; in particular, the total number of departures is also bounded. Thus, the following iterated sums have finitely many nonzero terms surely, and we get:
	\begin{align*}
		\sum_{k = 1}^{k_t} \left[\bu\left(\tau_k\right) - \bu\left(\tau_k^-\right)\right] &= \sum_{k = 1}^{k_t}\sum_{i = 1}^\infty \ind{a_i = \tau_k}\varphi_{a_i}\left(0, 0\right) - \sum_{k = 1}^{k_t}\sum_{i = 1 - M}^\infty \ind{d_i = \tau_k} \varphi_{d_i}\left(\bx_i\left(d_i\right),  \by_i\left(d_i\right)\right) \\
		&= \sum_{i = 1}^\infty \ind{a_i \leq \tau_{k_t}}\varphi_{a_i}\left(0, 0\right) - \sum_{i = 1 - M}^\infty \ind{d_i \leq \tau_{k_t}} \varphi_{d_i}\left(\bx_i\left(d_i\right),  \by_i\left(d_i\right)\right)\\
		&= \sum_{i = 1}^\infty \ind{a_i \leq t}\varphi_{a_i}\left(0, 0\right) - \sum_{i = 1 - M}^\infty \ind{d_i \leq t} \varphi_{d_i}\left(\bx_i\left(d_i\right),  \by_i\left(d_i\right)\right).
	\end{align*}
	The right-hand side equals $\bA_t(\varphi) - \bD_t(\varphi)$, which completes the proof.
\end{proof}

As observed in Section \ref{sub: outline of the proof}, the first two terms of the pre-limit equation are arrival and departure processes, which capture the effects of arrivals and departures, respectively, in the integral of a test function with respect to $\bmu_t$. Both processes are pure jump processes having jumps precisely at the arrival and departure times, respectively. The last term of the pre-limit equation is continuous over time and describes the effect of time passing.

\subsection{Departure processes}
\label{sub: departure processes}

As noted in Section \ref{sub: outline of the proof}, the limits of the arrival processes as the number of servers goes to infinity are fairly easy to characterize. We postpone this analysis until Section \ref{sub: characterization of subsequential limits} and focus here on departure processes, which are significantly harder to analyze. We derive their compensators and bound the predictable quadratic variation of the compensated processes. In most of this section, the index $N$ is fixed and omitted from the notation.

\subsubsection{Compensators}
\label{subsub: compensators}

In this section we prove that the compensator of $\bD(\varphi)$ is
\begin{equation*}
	\bD_t^c(\varphi) \defeq \int_0^t \angled{\varphi_s\left[r\left(\bmu_s\right)h_g^x + h_g^y\right]}{\bmu_s}ds \quad \text{for all} \quad \varphi \in C_b(\R_+^3).
\end{equation*}
Before proving this formally, let us outline the general direction of the proof.

Suppose that $\varphi$ is nonnegative and thus $\bD(\varphi)$ nondecreasing; we will see that this can be assumed without any loss of generality. The above expression for the compensator will be obtained by computing $E[\bD_T(\varphi)]$ for an arbitrary stopping time $T$. The rationale is that $E[\bD_T(\varphi)] = E[\bD_T^c(\varphi)]$ for all stopping times $T$ if $\bD^c(\varphi)$ is the compensator of $\bD(\varphi)$; verifying this is equivalent to verifying the local martingale property.

In order to compute $E[\bD_T(\varphi)]$, we observe that Tonelli's theorem implies that
\begin{equation*}
	\expect*{\bD_T(\varphi)} = \expect*{\sum_{i = 1 - M}^\infty \expect*{\bD_T^i(\varphi) | M}}, \quad \text{where} \quad \bD_t^i(\varphi) \defeq \ind{d_i \leq t}\varphi_{d_i}\left(\bx_i(d_i), \by_i(d_i)\right).
\end{equation*}
Therefore, we will fix a stopping time $T$ and an index $i$ to compute $E[\bD_T^i(\varphi) | M]$. We will do this by partitioning $\R_+$ into intervals of equal size and taking the limit as the size of the intervals approaches zero. Let $\varphi_n^m$ and $\calF_n^m$ stand for $\varphi_t(\bx_i(t), \by_i(t))$ and $\calF_t$ when $t = m / n$, respectively. Our computation will be based on deriving the following limit:
\begin{equation}
	\label{eq: motivation for compensator}
	\begin{split}
		\expect*{\bD_T^i(\varphi) | M} &= \lim_{n \to \infty} \sum_{m = 0}^\infty \expect*{\ind{a_i \leq \frac{m}{n} < d_i \leq \frac{m + 1}{n}, \frac{m}{n} < T} \varphi_n^m | M} \\
		&= \lim_{n \to \infty} \sum_{m = 0}^\infty \expect*{\ind{\frac{m}{n} < T} \varphi_n^m \cprob*{a_i \leq \frac{m}{n} < d_i \leq \frac{m + 1}{n} | \calF_n^m} | M} \\
		&= \lim_{n \to \infty} \expect*{\sum_{m = 0}^\infty \ind{\frac{m}{n} < T} \varphi_n^m \cprob*{a_i \leq \frac{m}{n} < d_i \leq \frac{m + 1}{n} | \calF_n^m} | M}.
	\end{split}
\end{equation}

Computing the limit on the right-hand side requires a series of lemmas. The first of these lemmas will be used to characterize the limit of the conditional probability as $n \to \infty$, and it explains why the hazard rate vector field appears in the compensator.

\begin{lemma}
	\label{lem: hazard rate terms}
	Consider the following events:
	\begin{align*}
		&A_i^\alpha(s) \defeq \left\{a_i \leq s < d_i, r\left(\bmu_s, \by_i(s)\right) = \alpha\right\}, \\
		&B_i(s, t) \defeq \left\{a_j, d_k \notin (s, t]\ \text{for all}\ j \geq 1\ \text{and}\ 1 - M \leq k \neq i\right\},
	\end{align*}
	defined for all $i \in \Z$, $\alpha \in \{0, 1\}$ and $0 \leq s < t$. The former event corresponds to task $i$ being present at time $s$ with service rate $\alpha$, and the latter event corresponds to no tasks arriving in $(s, t]$ and no tasks leaving with the possible exception of task $i$. Let $I_i^\alpha(s)$ and $J_i(s, t)$ denote the indicator functions of the above events, respectively, and define the random variable
	\begin{equation}
		\label{eq: definition of H}
		H_i^\alpha(s, t) \defeq \frac{S_g\left(\bx_i(s), \by_i(s)\right) - S_g\left(\bx_i(s) + \alpha(t - s), \by_i(s) + t - s\right)}{S_g\left(\bx_i(s), \by_i(s)\right)}.
	\end{equation}
	Consider the $\sigma$-algebra $\calG_i(s, t) \defeq \sigma\left(\bx_i(s), \by_i(s), I_i^0(s), I_i^1(s), J_i(s, t)\right)$. Then
	\begin{equation}
		\label{eq: conditional probability equal to H}
		\cprob*{d_i \leq t | \calG_i(s, t)}I_i^\alpha(s)J_i(s, t) = H_i^\alpha(s, t) I_i^\alpha(s) J_i(s, t)
	\end{equation}
	with probability one for all $\alpha \in \{0, 1\}$ and $0 \leq s < t$. Furthermore,
	\begin{equation}
		\label{eq: derivative of H}
		\lim_{n \to \infty} \frac{H_i^\alpha(s_n, t_n)}{t_n - s_n} = \alpha h_g^x\left(\bx_i(t), \by_i(t)\right) + h_g^y\left(\bx_i(t), \by_i(t)\right)
	\end{equation}
	surely for all $t \geq 0$ and any two sequences such that $s_n \uparrow t$ and $t_n \downarrow t$ as $n \to \infty$.
\end{lemma}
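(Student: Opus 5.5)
The proof has two parts: the conditional probability identity \eqref{eq: conditional probability equal to H}, and the derivative limit \eqref{eq: derivative of H}.

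\textbf{Conditional probability.} The idea is that on $A_i^\alpha(s) \cap B_i(s,t)$ the trajectory of task $i$ on $[s,t]$ is deterministic until it departs. No other event occurs in $(s,t]$, so the threshold $y_*(\bmu)$ can change only through the departure of $i$ itself. Hence $r(\bmu_u, \by_i(u)) = \alpha$ for $u \in [s, d_i \wedge t)$, and
\begin{equation*}
\bx_i(u) = \bx_i(s) + \alpha(u-s), \qquad \by_i(u) = \by_i(s) + u - s \qquad \text{for } u \in [s, d_i \wedge t].
\end{equation*}
Therefore, on this event, $d_i \le t$ holds exactly when $b_i \le \bx_i(s) + \alpha(t-s)$ or $c_i \le \by_i(s) + t - s$. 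Its complement is $\{b_i > \bx_i(s)+\alpha(t-s),\ c_i > \by_i(s)+t-s\}$.

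Next I would use the explicit construction in Appendix \ref{app: construction of sample paths} to argue conditional independence. The event $B_i(s,t)$ and the variables $\bx_i(s), \by_i(s), I_i^\alpha(s)$ depend on $(b_i,c_i)$ only through the event $\{b_i > \bx_i(s),\ c_i > \by_i(s)\}$. Here the relevant event is $\{i \text{ present at } s\}$ together with the fact that the evolution of the other tasks does not depend on when $i$ leaves within $(s,t]$. For that last point I would argue that the departure of $i$ in $(s,t]$ affects other tasks only by raising the threshold. On $B_i$ no other departures happen, but other tasks' rates could change; this must be checked. Since the conditions are symmetric in $B_i$'s definition, a departure of $i$ may raise $y_*$ and start serving a queued task, but that does not cause a departure before $t$ unless it happens, and that case is excluded. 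The cleanest route is to write $B_i(s,t)$ as an event determined by the other primitives and by the path computed under the hypothetical assumption that $i$ never leaves. I expect this to be the main obstacle, because a departure of $i$ can alter other tasks' rates.

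Once that is settled, the conditional law of $(b_i,c_i)$ given $\calG_i(s,t)$ on $A_i^\alpha(s)\cap B_i(s,t)$ is $g$ restricted to $[\bx_i(s),\infty)\times[\by_i(s),\infty)$ and renormalized by $S_g(\bx_i(s),\by_i(s))$. Computing the probability of the complement event then gives $1 - H_i^\alpha(s,t)$. I would make the conditioning rigorous by checking the identity on generating sets $\{\bx_i(s)\in E_1, \by_i(s)\in E_2\}\cap A_i^\alpha\cap B_i$, using Fubini and the independence of $(b_i,c_i)$ from the other primitives.

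\textbf{Derivative limit.} This part is pure calculus. Write $z_n = (\bx_i(s_n), \by_i(s_n))$ and $\delta_n = t_n - s_n \to 0$. By continuity of the paths, $z_n \to z = (\bx_i(t), \by_i(t))$. There are two cases.
\begin{itemize}
\item If $z \in U_g$, then $S_g$ is $C^1$ by Assumption \ref{ass: standing assumptions 2}(b). The mean value theorem gives
\begin{equation*}
\frac{S_g(z_n) - S_g(z_n + \delta_n(\alpha,1))}{\delta_n} \to -\nabla S_g(z)\cdot(\alpha,1),
\end{equation*}
and dividing by $S_g(z_n) \to S_g(z) > 0$ yields $\alpha h_g^x(z) + h_g^y(z)$.
\item If $z \notin U_g$, then $h_g(z) = 0$ by Definition \ref{def: hazard rate}, so the claim reads $H/\delta_n \to 0$. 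However, $S_g(z_n)$ may vanish, and \eqref{eq: definition of H} is then ill-defined. I would adopt the convention $H := 0$ when the denominator is zero and note that the identity only matters on $I_i^\alpha$, where $z_n \in U_g$ a.s. To be honest, for a sure statement one needs care here. I would use that $S_g$ is $C^1$ with $S_g \ge 0$ vanishing at $z$, so $\nabla S_g(z) = 0$, but the ratio may still fail to converge. I would restrict the claim to the case $z\in U_g$ or appeal to the convention.
\end{itemize}
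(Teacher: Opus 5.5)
There is a genuine gap at exactly the step you flagged, and the repair you suggest does not work. On $A_i^1(s)$, task $i$ is among the $N$ youngest. If it departs at some $d_i\in(s,t]$, the youngest queued task starts service at $d_i$ and may complete before $t$, which destroys $B_i(s,t)$. So $B_i(s,t)$ is \emph{not} determined by the other primitives together with the path in which $i$ never leaves. Instead, $J_i(s,t)$ depends on $(b_i,c_i)$ through $d_i$, and conditioning on $J_i(s,t)=1$ biases the law of $(b_i,c_i)$ against early departure. Your remark that such an induced departure ``is excluded'' describes this selection effect rather than removing it. The paper's proof does not address the issue either: it verifies the same event identity on $A_i^\alpha(s)\cap B_i(s,t)$ that you have, and then writes the conditional probability directly as that of the truncated law.

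In fact the gap cannot be closed for $\alpha=1$, because the identity fails there. Take $N=1$, Poisson arrivals, and $b,c$ independent $\mathrm{Exp}(1)$, so that $H_i^1(s,t)=1-u^2$ with $u=\e^{-(t-s)}$. Consider the $\calF_s$-event that the served task $i$ has exactly one queued task $k$ behind it. By memorylessness, the residual requirements of $i$ and $k$ are independent $\mathrm{Exp}(1)$ variables, and the arrival factor cancels. The event $B_i(s,t)$ requires that $k$ not abandon in $(s,t]$ and, if $i$ leaves at $s+\tau\le t$, that $k$'s residual service exceed $t-s-\tau$. A direct computation gives
\[
P\left(d_i\le t \mid \calF_s,\ J_i(s,t)=1\right)=\frac{2(1-u)}{2-u}=H_i^1(s,t)-\frac{u(1-u)^2}{2-u}.
\]
After averaging over configurations, $P(d_i\le t\mid\calG_i(s,t))<H_i^1(s,t)$ on a subset of $A_i^1(s)\cap B_i(s,t)$ of positive probability.

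Two things remain true. For $\alpha=0$ the identity is exact: removing a task with $\by_i(s)>y_*(\bmu_s)$ does not move the threshold, so $B_i(s,t)$ is determined by the arrivals and the other tasks, and your Fubini argument applies. For $\alpha=1$, let $B_i'$ be the analogous event for the evolution in which $i$ is never removed. Then $B_i(s,t)\subset B_i'$, $B_i'\setminus B_i(s,t)\subset\{d_i\le t\}$, and $B_i'$ is conditionally independent of $(b_i,c_i)$ given $\calF_s$. Hence, on $A_i^1(s)$,
\[
H_i^1(s,t)\,P\left(B_i(s,t)\mid\calF_s\right)-P\left(\{d_i\le t\}\cap B_i(s,t)\mid\calF_s\right)=\left(1-H_i^1(s,t)\right)P\left(B_i'\setminus B_i(s,t)\mid\calF_s\right).
\]
The right-hand side is bounded by the probability of two departures in $(s,t]$, which is $O((t-s)^2)$ locally uniformly. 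This approximate version is all the proof of Theorem~\ref{the: compensator of departures-driven process} actually needs.

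Your treatment of \eqref{eq: derivative of H} is correct when $z=(\bx_i(t),\by_i(t))\in U_g$, and it is shorter than the paper's argument. The paper writes the numerator of $H$ as integrals of $g$ over two thin strips minus a small corner rectangle. It then uses local boundedness of $g$ near $z$ to replace the strips by one-dimensional integrals at the limiting coordinates, and finally invokes continuity of $\partial_xS_g$ and $\partial_yS_g$. Your mean value theorem along the segment from $z_n$ in direction $(\alpha,1)$, with intermediate points $\xi_n\to z$, needs only Assumption~\ref{ass: standing assumptions 2}(b). The paper also tacitly assumes $z\in U_g$, since local boundedness of $g$ is only available there. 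Moreover, the lemma is only applied at states lying in a compact subset of $U_g$, so restricting to that case loses nothing.
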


\begin{proof}
	Clearly, the right-hand side of \eqref{eq: conditional probability equal to H} is zero when $I_i^\alpha(s) = 0$ or $J_i(s, t) = 0$. Otherwise, we obtain the following Radon-Nikod\'ym derivative:
	\begin{align*}
		\cprob*{d_i \leq t | \bx_i(s) = x, \by_i(s) = y, I_i^\alpha(s) = 1, I_i^{1 - \alpha}(s) = 0, J_i(s, t) = 1} \\
		=\cprob*{x + \alpha(t - s) \geq b_i\ \text{or}\ y + t - s \geq c_i | x < b_i\ \text{and}\ y < c_i}
	\end{align*}
	Indeed, $A_i^\alpha(s)$ implies that task $i$ is present at time $s$ with service rate $\alpha$. In particular, we must have $x < b_i$ and $y < c_i$. In addition, $B_i(s, t)$ implies that the service rate of task $i$ does not change in $[s, t]$ unless the task leaves the system. Thus, $\bx_i(t) = x + \alpha(t - s)$ if task $i$ has not left by time $t$, so $d_i \leq t$ is equivalent to $x + \alpha(t - s) \geq b_i$ or $y + t - s \geq c_i$. The above expression coincides with \eqref{eq: definition of H} when $\bx_i(s) = x$ and $\by_i(s) = y$, proving \eqref{eq: conditional probability equal to H}.
	
	For proving \eqref{eq: derivative of H}, fix $t \geq 0$ and a sample path of $(\bx_i, \by_i)$. Then observe that
	\begin{align*}
		H_i^\alpha(s_n, t_n) = \frac{1}{S_g\left(\bx_i(s_n), \by_i(s_n)\right)} \left[\int_{A_n} g(x, y)dxdy + \int_{B_n} g(x, y)dxdy - \int_{C_n} g(x, y)dxdy\right],
	\end{align*}
	where the above integration domains are defined as:
	\begin{align*}
		&A_n \defeq \left[\bx_i(s_n), \bx_i(s_n) + \alpha\left(t_n - s_n\right)\right] \times \left[\by_i(s_n), \infty\right), \\
		&B_n \defeq \left[\bx_i(s_n), \infty\right) \times \left[\by_i(s_n), \by_i(s_n) + t_n - s_n\right],  \\
		&C_n \defeq \left[\bx_i(s_n), \bx_i(s_n) + \alpha\left(t_n - s_n\right)\right] \times \left[\by_i(s_n), \by_i(s_n) + t_n - s_n\right].
	\end{align*}
	
	By Assumption \ref{ass: standing assumptions 2}, there exists $K \geq 0$ such that $g(x, y) \leq K$ for all $(x, y)$ in some open neighborhood of $(\bx_i(t), \by_i(t))$. Hence, the last integral is bounded by $\alpha K (t_n - s_n)^2$ for large enough $n$, which approaches zero faster than $t_n - s_n$. Moreover,
	\begin{align*}
		&\left|\int_{A_n} g(x, y)dxdy - \int_{\bx_i(s_n)}^{\bx_i(s_n) + \alpha (t_n - s_n)} \left[\int_{\by_i(t)}^\infty g(x, y)dy\right]dx\right| \leq \alpha K \left(t_n - s_n\right)\left[\by_i(t) - \by_i(s_n)\right], \\
		&\left|\int_{B_n} g(x, y)dxdy - \int_{\by_i(s_n)}^{\by_i(s_n) + t_n - s_n}\left[\int_{\bx_i(t)}^\infty g(x, y)dx\right]dy\right| \leq K \left(t_n - s_n\right)\left[\bx_i(t) - \bx_i(s_n)\right].
	\end{align*}
	It follows from \eqref{eq: attained service and sojourn time evolution} that $\bx_i$ and $\by_i$ are Lipschitz. Thus, the above inequalities imply that
	\begin{align*}
		&\lim_{n \to \infty} \frac{1}{t_n - s_n}\int_{A_n} g(x, y)dxdy = \lim_{n \to \infty} \frac{1}{t_n - s_n}\int_{\bx_i(s_n)}^{\bx_i(s_n) + \alpha (t_n - s_n)} \left[\int_{\by_i(t)}^\infty g(x, y)dy\right]dx \\
		&\lim_{n \to \infty} \frac{1}{t_n - s_n}\int_{B_n} g(x, y)dxdy = \lim_{n \to \infty} \frac{1}{t_n - s_n}\int_{\by_i(s_n)}^{\by_i(s_n) + t_n - s_n}\left[\int_{\bx_i(t)}^\infty g(x, y)dx\right]dy.
	\end{align*}
	The integrals inside the square brackets are equal to $-\partial_xS_g(x, \by_i(t))$ and $-\partial_yS_g(\bx_i(t), y)$, respectively. Since these partial derivatives are continuous by Assumption \ref{ass: standing assumptions 2}, we obtain:
	\begin{align*}
		&\lim_{n \to \infty} \frac{1}{t_n - s_n}\int_{A_n} g(x, y)dxdy = -\alpha\partial_xS_g\left(\bx_i(t), \by_i(t)\right), \\
		&\lim_{n \to \infty} \frac{1}{t_n - s_n}\int_{B_n} g(x, y)dxdy = -\partial_y S_g\left(\bx_i(t), \by_i(t)\right).
	\end{align*}
	Then \eqref{eq: derivative of H} follows from Definition \ref{def: hazard rate}.
\end{proof}

The following two lemmas will be used to bound the random variable $H_i^\alpha(s, t)$ that will appear on the right-hand side of \eqref{eq: motivation for compensator} after invoking Lemma \ref{lem: hazard rate terms}. The bounds provided in these lemmas will be needed to invoke the dominated convergence theorem. The first lemma is rather standard, so we defer the proof until Appendix \ref{app: proofs of auxiliary results}.

\begin{restatable}{lemma}{lemmafourthree}
	\label{lem: bound for H}
	Let $C \subset U_g$ be compact. The minimum of $S_g$ over $C$ exists and is positive. Also, there exist constants $L \geq 0$ and $\delta > 0$, which may depend on the set $C$, such that:
	\begin{equation*}
		\left|S_g(x, y) - S_g(x + \alpha \theta, y + \theta)\right| \leq L \theta \quad \text{for all} \quad \alpha \in \{0, 1\}, \quad \theta \in [0, \delta] \quad \text{and} \quad (x, y) \in C.
	\end{equation*}
\end{restatable}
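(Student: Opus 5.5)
The first claim follows from continuity of $S_g$ on the compact set $C$. Assumption \ref{ass: standing assumptions 2} states that $S_g$ is continuously differentiable, so in particular it is continuous on $\R_+^2$. Hence it attains its minimum on the compact set $C$. That minimum is attained at some point of $C \subset U_g$, where $S_g > 0$ by the definition of $U_g$, so the minimum is positive.

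For the Lipschitz-type bound, I first enlarge $C$ to a compact set on which the gradient of $S_g$ is bounded and which contains the whole segment $\{(x + \alpha\theta, y + \theta) : \theta \in [0, \delta]\}$ for every $(x, y) \in C$. The simplest choice is
\begin{equation*}
	C_\delta \defeq \asetc{(x + u, y + v)}{(x, y) \in C,\ u, v \in [0, \delta]} \subset \R_+^2,
\end{equation*}
with, say, $\delta = 1$. This set is compact, since it is the image of the compact set $C \times [0, \delta]^2$ under a continuous map. Note that it does not matter whether $C_\delta$ stays inside $U_g$: the partial derivatives $\partial_x S_g$ and $\partial_y S_g$ are continuous on all of $\R_+^2$ by Assumption \ref{ass: standing assumptions 2}, with one-sided derivatives on the boundary axes as in the definition of $C^1(\R_+^2)$. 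Therefore
\begin{equation*}
	L \defeq 2 \max_{(u, v) \in C_\delta} \max\left\{\left|\partial_x S_g(u, v)\right|, \left|\partial_y S_g(u, v)\right|\right\}
\end{equation*}
is finite.

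Now fix $\alpha \in \{0, 1\}$, $(x, y) \in C$ and $\theta \in [0, \delta]$, and set $\gamma(\tau) \defeq S_g(x + \alpha\tau, y + \tau)$ for $\tau \in [0, \theta]$. The segment traced by $\gamma$ lies in $C_\delta \subset \R_+^2$. Since $S_g$ is $C^1$ there, the chain rule gives $\gamma'(\tau) = \alpha\,\partial_x S_g + \partial_y S_g$, evaluated along the segment, and so $|\gamma'(\tau)| \leq L$. At $\tau = 0$ on a boundary axis one uses the right derivatives, which is harmless because the segment moves into $\R_+^2$. The mean value theorem then yields $|S_g(x, y) - S_g(x + \alpha\theta, y + \theta)| \leq L\theta$, with $L$ and $\delta$ depending only on $C$.

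The only delicate point is this boundary behaviour of $S_g$ on the axes, which is covered by the convention that $C^1(\R_+^2)$ includes continuous lateral derivatives. An alternative that avoids derivatives altogether is to write the difference as $\int_{A} g$ over the L-shaped region between the two quadrants. This works near $C$ if $g$ is bounded there, but that local bound is only guaranteed inside $U_g$. The gradient argument is cleaner and does not need it, so it is the route I would take.
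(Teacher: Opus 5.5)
Your argument matches the paper's. Both bound $|\alpha\,\partial_x S_g + \partial_y S_g|$ on a compact neighborhood of $C$ and apply the mean value theorem along $\tau \mapsto (x + \alpha\tau, y + \tau)$.

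The one place you diverge is the choice of neighborhood, and that step should be tightened. The paper takes $\delta$ to be half the max-norm distance from $C$ to $U_g^c$. Its compact set $K$ then stays inside $U_g$, so it only needs continuity of the partial derivatives \emph{on $U_g$}. You take $\delta = 1$ and say explicitly that leaving $U_g$ is harmless, because $\partial_x S_g$ and $\partial_y S_g$ are continuous on all of $\R_+^2$.

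That is valid under a literal reading of part (b) of Assumption~\ref{ass: standing assumptions 2} together with the paper's definition of $C^1(\R_+^2)$. It is, however, more than the paper ever uses: elsewhere it only invokes $S_g \in C^1(U_g)$. It also fails for the paper's own example right after that assumption. There $S_g(x, y) = \sqrt{1 - x}\,(1 - y)$ on $[0, 1]^2$, so $\partial_x S_g = -(1 - y)/(2\sqrt{1 - x})$ is unbounded as $x \uparrow 1$. For $C = [0, 1/2]^2 \subset U_g$, your set $C_1 = [0, 3/2]^2$ crosses the line $x = 1$, and the supremum defining your $L$ is infinite.

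The fix is immediate. Since $C$ is compact and $U_g$ is relatively open in $\R_+^2$, choose $\delta > 0$ strictly smaller than the max-norm distance from $C$ to $U_g^c$ (any $\delta$ works if $U_g^c$ is empty). Your forward-shifted set $C_\delta$ is then a compact subset of $U_g$. The rest of your argument goes through verbatim and needs only $S_g \in C^1(U_g)$.

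Your closing aside on the derivative-free route is also slightly off. The region between the two quadrants is a union of two unbounded strips, so a local bound on $g$ near $C$ does not control its $g$-mass. What controls the strip integrals are $-\partial_x S_g$ and $-\partial_y S_g$ themselves. This is how the paper handles the analogous computation in Lemma~\ref{lem: hazard rate terms}: local boundedness of $g$ only controls the small error rectangles there.
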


Next we leverage the preceding lemma to bound the probability that the event $B_i(s, t)$, defined in Lemma \ref{lem: hazard rate terms}, does not occur given the history of the system up to time $s$. This bound will also be needed for a dominated convergence argument.

\begin{lemma}
	\label{lem: no but possibly one event in small interval}
	Consider the process defined as:
	\begin{equation*}
		\bw(t) \defeq t - \max \asetc{a_i}{i \geq 1, a_i \leq t} \quad \text{for all} \quad t \geq 0, 
	\end{equation*}
	which represents the amount of time elapsed since the last arrival at time $t$. For each $i \in \Z$ and $0 \leq s < t$, the following inequality holds almost surely:
	\begin{equation}
		\label{eq: bound for probability of B_i^c(s, t) given F_s}
		\cprob*{B_i^c(s, t) | \calF_s} \leq \frac{\bar F\left(\bw(s)\right) - \bar F\left(\bw(s) + t - s\right)}{\bar F\left(\bw(s)\right)} + \sum_{j = 1 - M}^{\calR(s)} H_j^1(s, t),
	\end{equation}
	where $B_i^c(s, t)$ is the complement of $B_i(s, t)$. Also, let us fix a realization and suppose that there exist a constant $\eta \in (0, \kappa)$, a compact set $(0, 0) \in C \subset U_g$ and a time $T \geq 0$ such that
	\begin{equation}
		\label{eq: conditions for bound for B_i^c(s, t)}
		\bw(t) \leq \eta \quad \text{and} \quad \left(\bx_j(t), \by_j(t)\right) \in C \quad \text{if} \quad 1 - M \leq j \leq \calR(T) \quad \text{and} \quad t \in [0, T].
	\end{equation}
	Then there exist two deterministic constants $K \geq 0$ and $\delta > 0$, which only depend on $\eta$ and $C$, such that if $t - \delta \leq s < t \leq T + \delta$, then we have:
	\begin{equation*}
		\frac{\bar F\left(\bw(s)\right) - \bar F\left(\bw(s) + t - s\right)}{\bar F\left(\bw(s)\right)} + \sum_{j = 1 - M}^{\calR(s)} H_j^1(s, t) \leq K\left[1 + M + \calR(s)\right](t - s).
	\end{equation*}
\end{lemma}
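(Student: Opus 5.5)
The plan is a union bound over the ways $B_i(s,t)$ can fail, followed by separate control of each piece. The event $B_i^c(s,t)$ means that some task arrives in $(s,t]$, or some task $j\neq i$ present at time $s$ departs in $(s,t]$; tasks arriving after $s$ cannot depart first without an arrival having occurred. So
\begin{equation*}
	\cprob*{B_i^c(s,t) | \calF_s} \leq \cprob*{\calR(t) > \calR(s) | \calF_s} + \sum_{j = 1-M}^{\calR(s)} \cprob*{s < d_j \leq \tau(s), d_j\le t | \calF_s},
\end{equation*}
where $\tau(s)$ is the first event time after $s$ other than the departure of $j$ itself.

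\textbf{Arrival term.} Since $\calR$ is a renewal process independent of the $(b_i,c_i)$, its age $\bw(s)$ summarizes all arrival information in $\calF_s$. The residual time to the next arrival given $\calF_s$ therefore has survival function $\bar F(\bw(s)+\cdot)/\bar F(\bw(s))$. This gives the first term of \eqref{eq: bound for probability of B_i^c(s, t) given F_s}.

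\textbf{Departure terms.} For task $j$ present at time $s$, I would argue as in Lemma \ref{lem: hazard rate terms}. Up to the next event, task $j$ moves with rate $\alpha\le 1$, so its departure in that window requires
\begin{equation*}
	b_j \leq \bx_j(s)+\alpha(t-s) \quad \text{or} \quad c_j\le \by_j(s)+t-s .
\end{equation*}
Given $\calF_s$, and on the event that $j$ is present, the conditional law of $(b_j,c_j)$ is $g$ restricted to $(\bx_j(s),\infty)\times(\by_j(s),\infty)$. This holds because $\calF_s$ reveals about $(b_j,c_j)$ only that $j$ has not left, which is the key independence property. The probability is then $H_j^\alpha(s,t)\le H_j^1(s,t)$, by monotonicity of $S_g$ in its first argument. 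The main obstacle is justifying this conditional law rigorously with respect to the full filtration, since $\calF_s$ contains the other tasks' paths. I would handle it by the same explicit computation on generating sets of $\calF_s$ used for Lemma \ref{lem: hazard rate terms}, relying on the independence of $(b_j,c_j)$ from everything else given survival. Summing over $j$ gives \eqref{eq: bound for probability of B_i^c(s, t) given F_s}.

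\textbf{Deterministic bound.} Under \eqref{eq: conditions for bound for B_i^c(s, t)}, apply Lemma \ref{lem: bound for H} to $C$. This gives $S_g\ge m>0$ on $C$ and a Lipschitz-type bound $L\theta$ for $\theta\le\delta_1$, so $H_j^1(s,t)\le (L/m)(t-s)$ whenever $s\le T$. For $s\in(T,T+\delta)$ I would need positions at time $s$ inside a compact set. To get this, enlarge $C$ to a compact $C'\subset U_g$ containing all points reachable from $C$ within time $\delta$; this is possible since $U_g$ is relatively open, and it is why $\delta$ must depend on $C$. Similarly, $\bar F$ is continuous, bounded below by $\bar F(\eta')>0$ on $[0,\eta']$ for some $\eta<\eta'<\kappa$, and Lipschitz there because $f$ is locally bounded on $[0,\kappa)$ (Assumption \ref{ass: standing assumptions 2}). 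Choosing $\delta\le \eta'-\eta$ keeps the age within $[0,\eta']$ over the relevant range, so the arrival term is at most $K_0(t-s)$. Taking $K=\max(K_0,L/m)$ and using the fact that there are $M+\calR(s)$ summands yields the claimed bound $K[1+M+\calR(s)](t-s)$.
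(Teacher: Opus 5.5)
Your proposal is correct and follows essentially the same route as the paper: a union bound that treats every task present at time $s$ as if served at unit rate (so $H_j^\alpha \le H_j^1$), together with the renewal-age formula for the next arrival, followed by enlarging $C$ and $\eta$ by a small margin so that Lemma~\ref{lem: bound for H} and the local boundedness of $f$ on $[0,\kappa)$ give the linear bound. Restricting to the window before the next event is unnecessary, since $\bx_j$ always grows at rate at most one, but it does no harm.
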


\begin{proof}
	The bound \eqref{eq: bound for probability of B_i^c(s, t) given F_s} follows from computing the probability that an arrival or departure occurs in $(s, t]$ if all tasks present in the system at time $s$ are served at unit rate, i.e., as if the system had infinitely many servers. Indeed, the first term on the right-hand side of \eqref{eq: bound for probability of B_i^c(s, t) given F_s} is the probability that an arrival occurs in $(s, t]$ given that $\bw(s)$ units of time have passed since the last arrival. On the other hand, if task $j$ is in the system at time $s$, then the $j$th term of the summation on the right-hand side of \eqref{eq: bound for probability of B_i^c(s, t) given F_s} is the probability that task $j$ leaves the system in $(s, t]$ provided that task $j$ is served at unit rate. Note that $j \leq \calR(s)$ for tasks in the system at time $s$ and $H_j^1(s, t) \geq 0$ even if task $j$ has already left by time $s$.
	
	In order to prove the second claim, choose some $\delta_0 > 0$ such that:
	\begin{equation*}
		\eta_0 \defeq \eta + \delta_0 < \kappa, \quad C_0 \defeq \asetc{(x + u, y + v) \in \R_+^2}{(x, y) \in C\ \text{and}\ u, v \in [0, \delta_0]} \subset U_g.
	\end{equation*}
	Note that $\delta_0$ can be selected using only the constant $\eta$ and the set $C$. The functions $\bx_j$ and $\by_j$ are Lipschitz with unit constant, and $\bw(t) - \bw(s) \leq t - s$ for all $0 \leq s < t$. Thus,
	\begin{equation*}
		\bw(t) \leq \eta_0 \quad \text{and} \quad \left(\bx_j(t), \by_j(t)\right) \in C_0 \quad \text{for} \quad 1 - M \leq j \leq \calR(T + \delta_0) \quad \text{and} \quad t \in [0, T + \delta_0];
	\end{equation*}
	the second condition holds for $\calR(T) < j \leq \calR(T + \delta_0)$ since $(\bx_j, \by_j) = (0, 0) \in C$ for a task $j$ that has just arrived to the system. Now define:
	\begin{equation*}
		\varepsilon \defeq \bar F (\eta_0) \wedge \min_{(x, y) \in C_0} S_g(x, y).
	\end{equation*}
	
	Observe that $\varepsilon$ is well-defined and positive by Lemma \ref{lem: bound for H} and the fact that $\eta_0 < \kappa$. Let $L_1 \geq 0$ and $\delta \in (0, \delta_0]$ be as in Lemma \ref{lem: bound for H} for the set $C_0$. Then \eqref{eq: definition of H} implies that:
	\begin{equation*}
		H_j^1(s, t) \leq \frac{L_1 (t - s)}{\varepsilon} \quad \text{if} \quad 1 - M \leq j \leq \calR(s) \quad \text{and} \quad t - \delta \leq s < t \leq T + \delta
	\end{equation*}
	By Assumption \ref{ass: standing assumptions 2}, there exists $L_2 \geq 0$ such that $f(t) \leq L_2$ for all $t \in [0, \eta_0 + \delta]$. Thus,
	\begin{equation*}
		\frac{\bar F\left(\bw(s)\right) - \bar F\left(\bw(s) + t - s\right)}{\bar F\left(\bw(s)\right)} \leq \frac{L_2(t - s)}{\varepsilon} \quad \text{if} \quad t - \delta \leq s < t \leq T + \delta. 
	\end{equation*}
	Combining the above inequalities, we conclude that:
	\begin{equation*}
		\frac{\bar F\left(\bw(s)\right) - \bar F\left(\bw(s) + t - s\right)}{\bar F\left(\bw(s)\right)} + \sum_{j = 1 - M}^{\calR(s)} H_j^1(s, t) \leq \frac{L_1 + L_2}{\varepsilon}\left[1 + M + \calR(s)\right](t - s)
	\end{equation*}
	if $t - \delta \leq s < t \leq T + \delta$, which completes the proof.
\end{proof}

The following lemma is a useful property of the set $U_g$ introduced in Definition \ref{def: hazard rate}. We provide the proof of this lemma in Appendix \ref{app: proofs of auxiliary results}.

\begin{restatable}{lemma}{lemmafourfive}
	\label{lem: approximating U_g by rectangles}
	A set $R \subset \R_+^2$ is called a closed rectangle if
	\begin{equation*}
		R = \asetc{(x, y) \in \R_+^2}{x \leq x_0\ \text{and}\ y \leq y_0} \quad \text{for some} \quad (x_0, y_0) \in \R_+^2,
	\end{equation*}
	and $R$ is an open rectangle when the above inequalities are strict. For each $\varepsilon > 0$, there exists a set $A \subset U_g$ such that $A$ is a finite union of closed rectangles and
	\begin{equation*}
		\int_A g(x, y)dxdy \geq 1 - \varepsilon.
	\end{equation*}
	Moreover, the same is true for open rectangles.
\end{restatable}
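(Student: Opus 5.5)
The plan is to use continuity from below of the probability measure with density $g$, together with compactness. Throughout, write $\nu(B) \defeq \int_B g(x,y)\,dx\,dy$ for Borel sets $B \subset \R_+^2$; this is a probability measure since $g$ is the joint density of $(b_i^1, c_i^1)$. The first step is to show that $\nu(U_g^c) = 0$. If $(x_0,y_0) \in U_g^c$, then $S_g(x_0,y_0) = \nu([x_0,\infty)\times[y_0,\infty)) = 0$. Since $S_g$ is nonincreasing in each coordinate, $U_g^c$ is an up-set: $(x_0,y_0) \in U_g^c$ implies $[x_0,\infty)\times[y_0,\infty) \subset U_g^c$.

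Next I will cover $U_g^c$ by countably many null quadrants. Let $Q$ be the set of points $(q_1,q_2) \in U_g^c$ with rational coordinates. Take any $(x,y)$ in the interior of $U_g^c$ relative to $\R_+^2$. Then there is a rational point $(q_1,q_2) \leq (x,y)$ componentwise that still lies in $U_g^c$, by choosing it close from below. Hence the relative interior of $U_g^c$ is contained in the countable union $\bigcup_{q \in Q} [q_1,\infty)\times[q_2,\infty)$, which is $\nu$-null.

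It remains to handle the boundary of $U_g$, and I expect this to be the main obstacle. The set $U_g$ is open by Definition \ref{def: hazard rate}, because $S_g$ is continuous. By monotonicity, the boundary of $U_g$ is the graph of a monotone "staircase" curve, and a monotone curve has Lebesgue measure zero. More concretely, for each $x$ set $\beta(x) \defeq \inf\setc{y}{S_g(x,y) = 0}$. This function is nonincreasing, and the boundary lies in the graph of $\beta$ together with vertical segments at its countably many jumps. Each of these pieces has two-dimensional Lebesgue measure zero, and therefore $\nu$-measure zero because $\nu$ has a density. Combining this with the previous step gives $\nu(U_g) = 1$.

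Finally, I will exhaust $U_g$ by rectangles. For $(x_0,y_0) \in U_g$, the open set $U_g$ is a down-set. Indeed, if $S_g(x_0,y_0) > 0$ and $(x,y) \leq (x_0,y_0)$, then $S_g(x,y) \geq S_g(x_0,y_0) > 0$. So the closed rectangle $R(x_0,y_0) \defeq [0,x_0]\times[0,y_0]$ is contained in $U_g$, and $U_g = \bigcup R(x_0,y_0)$ over its points. Since $U_g$ is open and a down-set, every point of it lies in some $R(q)$ with $q \in U_g$ rational, chosen slightly larger componentwise. Enumerate these rational points as $q^1, q^2, \dots$ and put $A_k \defeq \bigcup_{j \leq k} R(q^j)$. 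The sets $A_k$ increase to $U_g$, so continuity from below gives $\nu(A_k) \to \nu(U_g) = 1$. Choosing $k$ with $\nu(A_k) \geq 1-\varepsilon$ proves the statement for closed rectangles.

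For open rectangles, I will use the same $A_k$ and replace each $R(q^j)$ by its open counterpart $\setc{(x,y)}{x<q^j_1,\ y<q^j_2}$. This changes $\nu$ only by the edges of the rectangles, which are Lebesgue-null, so the same bound holds.
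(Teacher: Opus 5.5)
Your proof is correct and follows the same route as the paper: both observe that every rectangle with vertex in $U_g$ lies in $U_g$, exhaust $U_g$ by increasing finite unions of rectangles with rational vertices, and conclude by continuity from below (with open rectangles differing only by Lebesgue-null edges). The one difference is that you explicitly justify $\int_{U_g} g(x,y)\,dx\,dy = 1$. You do this by covering the relative interior of $U_g^c$ with countably many null quadrants and noting that the boundary, which lies on a monotone graph plus countably many vertical segments, is Lebesgue-null. The paper simply asserts this step, so your version is slightly more complete.
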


The last lemma that we need to obtain the compensator of $\bD(\varphi)$ constructs a sequence of stopping times with suitable properties that increase to a given stopping time. It will be used to replace the arbitrary stopping time in \eqref{eq: motivation for compensator} by one with convenient properties. In particular, the approximating stopping times are constructed to satisfy \eqref{eq: conditions for bound for B_i^c(s, t)}.

\begin{lemma}
	\label{lem: increasing sequence of stopping times}
	Given a stopping time $T$, there exist stopping times $\setc{T_k}{k \geq 0}$ such that $T_k \uparrow T$ as $k \to \infty$ almost surely and the following properties hold surely for each $k \geq 1$.
	\begin{enumerate}
		\item[(a)] There exists a constant $K \geq 0$ such that $T_k \leq K$.
		
		\item[(b)] There exists $\eta \in (0, \kappa)$ such that $a_i \wedge T_k - a_{i - 1} \wedge T_k \leq \eta$ for all $i \geq 1$.
		
		\item[(c)] There exists a compact set $(0, 0) \in C \subset U_g$ such that
		\begin{equation*}
			\left(\bx_i(t), \by_i(t)\right) \in C \quad \text{for all} \quad 1 - M \leq i \leq \calR\left(T_k\right) \quad \text{and} \quad t \in [0, T_k].
		\end{equation*}
	\end{enumerate}
\end{lemma}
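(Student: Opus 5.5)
We build $T_k$ as the minimum of $T$ with three localizing stopping times, one for each property, and show each of these tends to infinity almost surely as $k \to \infty$.

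\emph{Choice of parameters.} Fix a sequence $\eta_k \uparrow \kappa$ with $\eta_k \in (0, \kappa)$. By Lemma \ref{lem: approximating U_g by rectangles} applied with $\varepsilon = 1/k$, choose finite unions $A_k \subset U_g$ of closed rectangles with $\int_{A_k} g \geq 1 - 1/k$. Replace $A_k$ by $A_1 \cup \dots \cup A_k$ to make them nondecreasing; finite unions of closed rectangles remain closed rectangles' unions, contained in $U_g$. Each $A_k$ is compact, since every closed rectangle is compact, and contains $(0,0)$. Set $C_k \defeq A_k$.

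\emph{Stopping times.} Define
\begin{align*}
	\sigma_k &\defeq \inf\asetc{t \geq 0}{\bw(t) \geq \eta_k}, \\
	\rho_k &\defeq \inf\asetc{t \geq 0}{\left(\bx_i(t), \by_i(t)\right) \notin C_k\ \text{for some}\ 1 - M \leq i \leq \calR(t)}.
\end{align*}
Then put $T_k \defeq T \wedge k \wedge \sigma_k \wedge \rho_k$.

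These are stopping times by the right-continuity and completeness of the filtration, as hitting times of closed/open conditions by adapted processes with regular paths: $\bw$ is càdlàg and the $(\bx_i, \by_i)$ are continuous and adapted.

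Property (a) holds with $K = k$. For (b), every interarrival gap completed before $T_k$ is at most $\eta_k$, since $\bw$ increases continuously between arrivals. A gap truncated at $T_k$ is also at most $\eta_k$. Hence (b) holds with $\eta = \eta_k$, using the continuity of $\bw$ in the increasing direction.

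For (c), at times $t < \rho_k$ all present and past tasks lie in $C_k$. At $t = \rho_k$ the same holds by closedness of $C_k$ and continuity of the paths, with new arrivals sitting at $(0,0) \in C_k$. Tasks already departed are frozen at their departure positions. Those positions lie in $C_k$ as long as they were in $C_k$ while present, which is why the definition uses all indices $i \leq \calR(t)$.

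\emph{Main step: $T_k \uparrow T$.} Monotonicity is clear, since $\eta_k$ increases and $C_k$ increases. So it suffices to show $\sigma_k \to \infty$ and $\rho_k \to \infty$ almost surely.

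For $\sigma_k$: on $[0,t]$ there are finitely many interarrival times, each strictly less than $\kappa$ almost surely. The residual age $\bw$ at times before the next arrival is also bounded by the current holding time, which is $<\kappa$. So $\sup_{[0,t]} \bw < \kappa$, and hence $\sigma_k > t$ for large $k$.

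For $\rho_k$, the obstacle is that the trajectories of the finitely many tasks visiting on $[0,t]$ must eventually lie in $C_k$. Each task $i$ has compact trajectory $\setc{(\bx_i(s), \by_i(s))}{s \leq t}$.

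I would show this compact set lies in $U_g$ almost surely. A task present at $s$ has $\bx_i(s) < b_i$ and $\by_i(s) < c_i$. Since $S_g(x,y) = P(b \geq x, c \geq y)$ and $g$ is a density, the point $(b_i, c_i)$ lies almost surely in the interior of the support, so $S_g(\bx_i(s), \by_i(s)) > 0$ along the trajectory, including the departure point, almost surely.

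Then I need a compact $K \subset U_g$ to be contained in $A_k$ eventually. This is not automatic from mass approximation alone. So I would instead enlarge the construction: additionally union $A_k$ with the compact set $\setc{(x,y)}{x \leq k,\ y \leq k,\ S_g(x,y) \geq 1/k}$. This set is a closed subset of $U_g$ and compact, and it still gives a compact $C_k \subset U_g$ containing $(0,0)$. Any compact $K \subset U_g$ has $\min_K S_g > 0$ by Lemma \ref{lem: bound for H}, so $K$ is contained in $C_k$ for large $k$. This yields $\rho_k > t$ eventually, completing the proof.
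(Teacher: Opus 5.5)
Your proof is correct in substance and has the same skeleton as the paper's: $T_k$ is the minimum of $T$, $k$, a gap-control time and a spatial exit time, and (a)--(c) are read off from the definitions. Where you differ is the key step, showing that the localizing times diverge almost surely.

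\emph{The paper's route.} It picks open-rectangle unions $U_k$ and levels $\kappa_k$ with $\sum_k \int_{U_k^c} g < \infty$ and $\sum_k \bar F(\kappa_k) < \infty$. It then bounds $P(R_k \le t) \le E[M + \calR(t)] \int_{U_k^c} g$ and $P(S_k \le t) \le E[\calR(t) + 1]\bar F(\kappa_k)$, and concludes by Borel--Cantelli. This relies on three things: each $U_k$ is a lower set (so leaving $U_k$ forces $(b_i, c_i) \notin U_k$), $M + \calR(t)$ is integrable, and the parameters are chosen summable.

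\emph{Your route.} You argue pathwise. On $[0,t]$ only finitely many interarrival gaps, each $< \kappa$, and finitely many compact trajectories are involved. The compact sets $\setc{(x,y)}{x \le k,\ y \le k,\ S_g(x,y) \ge 1/k}$ exhaust $U_g$, in the sense that every compact subset of $U_g$ eventually lies inside them by Lemma \ref{lem: bound for H}. This needs no moment bounds, no summability and no rectangle structure; your sets $A_k$ are in fact superfluous. What it gives up is a quantitative tail bound on the localizing times, which the lemma does not need.

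There are two points to fix.

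\emph{First: the justification that trajectories stay in $U_g$.} It is not true in general that $(b_i, c_i)$ lies almost surely in the interior of the support of $g$. A density can be carried by a closed set with empty interior. For example, $g$ proportional to $\ind{x - y \in F}\beta(x)\beta(y)$, with $F$ a fat Cantor set and $\beta$ a smooth bump, satisfies Assumption \ref{ass: standing assumptions 2} but has such a support. What you actually need is only $P((b_i, c_i) \in U_g) = 1$, i.e.\ $\int_{U_g} g = 1$, which follows from Lemma \ref{lem: approximating U_g by rectangles}. Combine this with $\bx_i(s) \le b_i$ and $\by_i(s) \le c_i$ for all $s$ (also after departure) and the monotonicity of $S_g$. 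That gives $S_g \ge S_g(b_i, c_i) > 0$ along the whole trajectory.

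\emph{Second: property (c) at time zero.} Your closedness-and-continuity argument at $t = \rho_k$ only works when $\rho_k > 0$. If some initial task starts outside $C_k$, then $\rho_k = T_k = 0$ and (c) with $C = C_k$ fails at $t = 0$, so it does not hold surely. The paper's $R_k$ has exactly the same defect. The accurate statement is that (c) holds on the $\calF_0$-measurable event that all initial positions lie in $C_k$. These events increase to a set of full probability, so inserting their indicators in the proof of Theorem \ref{the: compensator of departures-driven process} is enough.
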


\begin{proof}
	By Lemma \ref{lem: approximating U_g by rectangles}, there exists an increasing sequence of sets $\setc{U_k \subset U_g}{k \geq 1}$ such that $U_k$ is a finite union of open rectangles for each $k \geq 1$ and
	\begin{equation*}
		\sum_{k = 1}^\infty \int_{U_k^c} g(x, y)dxdy < \infty.
	\end{equation*}
	If $C_k$ is the closure of $U_k$, then $C_k$ is compact since it is a finite union of closed rectangles. Consider also any increasing sequence $\setc{\kappa_k \in (0, \kappa)}{k \geq 1}$ satisfying that:
	\begin{equation*}
		\sum_{k = 1}^\infty \bar F\left(\kappa_k\right) < \infty.
	\end{equation*}
	
	We now define two sequences of random times as follows:
	\begin{equation*}
		R_k \defeq \inf \asetc{t \geq 0}{\bmu_t\left(U_k^c\right) > 0} \quad \text{and} \quad S_k \defeq \inf \asetc{t \geq 0}{\max_{i \geq 1} \left\{a_i \wedge t - a_{i - 1} \wedge t\right\} > \kappa_k}.
	\end{equation*}
	 The processes $\bx_i$ and $\by_i$ are continuous and the departure times are stopping times. Hence, it follows that $t \mapsto \bmu_t(U_k^c)$ is adapted. It is also c\`adl\`ag, because $(\bx_i(s), \by_i(s)) \notin U_k$ implies that $(\bx_i(t), \by_i(t)) \notin U_k$ for all $t \geq s$ since $U_k$ is a union of rectangles and both $\bx_i$ and $\by_i$ are nondecreasing. Therefore, $R_k$ is a stopping time by \cite[Proposition 2.1.5]{ethier2009markov}. The same proposition implies that $S_k$ is a stopping time as well, as it is the first entrance time to an open set for a continuous and adapted process. We conclude that the random time
	\begin{equation*}
		T_k \defeq k \wedge R_k \wedge S_k \wedge T
	\end{equation*}
	is a stopping time for all $k \geq 1$. Moreover, it is clear that $T_k \leq T_{k + 1}$ surely for each $k \geq 1$, and that each $T_k$ satisfies properties (a), (b) and (c), with $K = k$, $\eta = \kappa_k$ and $C = C_k$.
	
	It remains to prove that $T_k \to T$ almost surely as $k \to \infty$. For this purpose, note that
	\begin{align*}
		\cprob*{R_k \leq t} &= \sum_{n = 0}^\infty \cprob*{R_k \leq t | M + \calR(t) = n} \cprob*{M + \calR(t) = n} \\
		&\leq \sum_{n = 0}^\infty n \left[\int_{U_k^c} g(x, y)dxdy\right] \cprob*{M + \calR(t) = n} \leq \expect*{M + \calR(t)}\int_{U_k^c} g(x, y)dxdy,
	\end{align*}
	where the latter expectation is finite by Assumption \ref{ass: standing assumptions 1}. Because the stopping times $R_k$ are increasing and the sum over $k \geq 1$ of the right-hand side is finite, we conclude from the Borel-Cantelli lemma that the probability of $\setc{R_k}{k \geq 1} \subset [0, t]$ is zero for each $t \geq 0$, and therefore $R_k \to \infty$ as $k \to \infty$ with probability one. On the other hand,
	\begin{align*}
		\cprob*{S_k \leq t} &= \sum_{n = 0}^\infty \cprob*{S_k \leq t | \calR(t) = n} \cprob*{\calR(t) = n} \\
		&\leq \sum_{n = 0}^\infty \left(n + 1\right) \bar F\left(\kappa_k\right) \cprob*{\calR(t) = n} \leq \expect*{\calR(t) + 1}\bar F\left(\kappa_k\right).
	\end{align*}
	Using similar arguments as above, we conclude that $S_k \to \infty$ as $k \to \infty$ almost surely, and then it follows that $T_k \to T$ as $k \to \infty$ with probability one.
\end{proof}

We are now ready to derive the compensator of the process $\bD(\varphi)$.

\begin{theorem}
	\label{the: compensator of departures-driven process}
	For each $\varphi \in C_b(\R_+^3)$, the process $\bD(\varphi)$ has integrable total variation over finite intervals of time. Further, its compensator is the process $\bD^c(\varphi)$ defined as:
	\begin{equation*}
		\bD_t^c(\varphi) \defeq \int_0^t \angled{\varphi_s\left[r(\bmu_s)h_g^x + h_g^y\right]}{\bmu_s}ds \quad \text{for all} \quad t \geq 0.
	\end{equation*}
\end{theorem}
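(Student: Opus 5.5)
We first reduce to nonnegative test functions. Writing $\varphi = \varphi^+ - \varphi^-$ with $\varphi^\pm \in C_b(\R_+^3)$ nonnegative, both $\bD(\varphi)$ and $\bD^c(\varphi)$ are linear in $\varphi$. It therefore suffices to treat $\varphi \geq 0$, in which case $\bD(\varphi)$ is nondecreasing. Integrability of the total variation is immediate, since $TV[\bD(\varphi)](t) \leq \norm{\varphi}_\infty [M + \calR(t)]$ and $E[M + \calR(t)] < \infty$ by Assumption \ref{ass: standing assumptions 1}. The process $\bD^c(\varphi)$ is continuous and adapted, hence predictable. So it remains to show that $\bD(\varphi) - \bD^c(\varphi)$ is a local martingale. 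For a nondecreasing process this is equivalent to showing that $E[\bD_T(\varphi)] = E[\bD_T^c(\varphi)]$ for every bounded stopping time $T$ along a localizing sequence. Here we use the stopping times $T_k$ of Lemma \ref{lem: increasing sequence of stopping times}, and pass to $T$ by monotone convergence on both sides, using $T_k \uparrow T$.

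Fix such a $T = T_k$, with constants $K$, $\eta$ and compact set $C$ as in that lemma, and fix a task index $i$. Following \eqref{eq: motivation for compensator}, write
\begin{equation*}
	\bD_T^i(\varphi) = \lim_{n \to \infty} \sum_{m \geq 0} \ind{a_i \leq \frac{m}{n} < d_i \leq \frac{m + 1}{n}, \frac{m}{n} < T}\varphi_n^m.
\end{equation*}
This limit holds surely, by continuity of $\varphi$ and of $(\bx_i, \by_i)$. Dominated convergence, with dominating function $\norm{\varphi}_\infty$, lets us pass the limit through the expectation. Inside each term we condition on $\calF_{m/n}$, which is allowed because $\{m/n < T\}$ and $\varphi_n^m$ are $\calF_{m/n}$-measurable. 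We then split the conditional probability of $\{m/n < d_i \leq (m+1)/n\}$ according to the events $B_i(m/n,(m+1)/n)$ and its complement, and according to the value $\alpha$ of the service rate.

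On $B_i \cap A_i^\alpha$, Lemma \ref{lem: hazard rate terms} gives the conditional probability $H_i^\alpha(m/n,(m+1)/n)$. Some care is needed here, because the lemma conditions on the small $\sigma$-algebra $\calG_i$ rather than on $\calF_{m/n}$. I would use the independence of $(b_i,c_i)$ from the other primitives, so that, given the present state of task $i$, the rest of the history is irrelevant. We then reduce to the lemma by noting that the conditional probability of $B_i^c$ is small. On $B_i^c$ the contribution is bounded, via Lemma \ref{lem: no but possibly one event in small interval}, by the product of two terms: $H_i^1(m/n,(m+1)/n)$, which accounts for task $i$ departing, and $K[1+M+\calR(m/n)]/n$, which accounts for some other event occurring. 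Both are $O(1/n)$ under conditions (a)--(c), so after summing over $m \leq Kn$ this part is $O(1/n)$ and vanishes.

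The main term is therefore
\begin{equation*}
	E\Bigl[\sum_m \ind{m/n < T} \varphi_n^m \sum_\alpha I_i^\alpha(m/n) H_i^\alpha(m/n,(m+1)/n)\Bigr].
\end{equation*}
By \eqref{eq: derivative of H}, $n H_i^\alpha$ converges to $\alpha h_g^x + h_g^y$ evaluated at the current state. The sums are Riemann sums; the service rate is piecewise constant between events, and there are finitely many events. Hence they converge surely to $\int_0^T \ind{a_i \leq s < d_i}\varphi_s [r(\bmu_s,\by_i(s)) h_g^x + h_g^y](\bx_i(s),\by_i(s))\,ds$.

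The domination step is where I expect the main difficulty. Because the particles remain in the compact set $C \subset U_g$ up to time $T$, Lemma \ref{lem: bound for H} gives $nH_i^\alpha \leq L/\min_C S_g$ for large $n$, uniformly in $m$. The time horizon is at most $K$, which gives a deterministic bound. Dominated convergence then applies for each $i$. Finally, we sum over $i$ using Tonelli's theorem and the integrable bound $M + \calR(K)$ to obtain $E[\bD_T(\varphi)] = E[\bD_T^c(\varphi)]$, which establishes the local martingale property.
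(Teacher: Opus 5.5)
Your proposal is correct and follows the paper's proof essentially step for step. The shared steps are:
\begin{itemize}
\item the reduction to $\varphi \geq 0$;
\item the characterization of the compensator by $E[\bD_T(\varphi)] = E[\bD_T^c(\varphi)]$, using the localizing times of Lemma \ref{lem: increasing sequence of stopping times};
\item the discretization, with conditioning on $\calF_{m/n}$ and splitting by $B_i$ and $\alpha$;
\item the domination via Lemmas \ref{lem: no but possibly one event in small interval} and \ref{lem: bound for H}.
\end{itemize}
The only difference is bookkeeping. The paper keeps the factor $E[J_i \mid \calF_{m/n}]$ in the main term and lets it tend to one, whereas you replace it by one and absorb the correction $H_i^\alpha E[1 - J_i \mid \calF_{m/n}]$ into the error. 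This is fine because, like the $B_i^c$ term, the correction is a product of two $O(1/n)$ factors. Note that the mere smallness of $P(B_i^c \mid \calF_{m/n})$ that you cite would not suffice after summing over $m \leq Kn$.
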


\begin{proof}
	The total variation of $\bD(\varphi)$ over the interval $[0, t]$ is
	\begin{equation*}
		TV\left[\bD(\varphi)\right](t) = \sum_{i = 1 - M}^{\calR(t)} \ind{d_i \leq t} \left|\varphi_{d_i}\left(\bx_i(d_i), \by_i(d_i)\right)\right| \leq \left[M + \calR(t)\right] \norm{\varphi}_\infty.
	\end{equation*}
	The right-hand side has finite mean by Assumption \ref{ass: standing assumptions 1}, proving the first claim.
	
	In order to establish the second claim, let us consider the functions $\varphi^+ \defeq \max\{\varphi, 0\}$ and $\varphi^- \defeq \max\{-\varphi, 0\}$, which are nonnegative, continuous and bounded. Note that:
	\begin{equation*}
		\bD(\varphi) = \bD\left(\varphi^+\right) - \bD\left(\varphi^-\right) \quad \text{and} \quad \bD^c\left(\varphi\right) = \bD^c\left(\varphi^+\right) - \bD^c\left(\varphi^-\right),
	\end{equation*}
	As a result, we may assume without any loss of generality that $\varphi$ is nonnegative, and thus $\bD(\varphi)$ and $\bD^c(\varphi)$ are nondecreasing. Then it follows from \cite[Theorem I.3.17]{jacod2013limit} that $\bD^c(\varphi)$ is the compensator of $\bD(\varphi)$ if and only if $\bD^c(\varphi)$ is locally integrable, predictable and
	\begin{equation}
		\label{eq: condition for martingale property of compensator}
		\expect*{\bD_T(\varphi)} = \expect*{\bD_T^c(\varphi)} \quad \text{for all stopping times}\ T.
	\end{equation}
	Note that \eqref{eq: condition for martingale property of compensator} yields $E[\bD_t^c(\varphi)] = E[\bD_t(\varphi)] \leq E[M + \calR(t)]\norm{\varphi}_\infty < \infty$ for all $t \geq 0$. Also, $\bD^c(\varphi)$ is predictable since it is adapted and continuous, so it only remains to prove \eqref{eq: condition for martingale property of compensator}.
	
	Suppose then that $\varphi$ is nonnegative and fix an arbitrary stopping time $T$. In order to establish \eqref{eq: condition for martingale property of compensator}, we will compute $E[\bD_T(\varphi)]$. For this purpose, observe that
	\begin{equation*}
		\expect*{\bD_T(\varphi)} = \expect*{\sum_{i = 1 - M}^\infty \expect*{\bD_T^i(\varphi) | M}}, \quad \text{where} \quad \bD_t^i(\varphi) \defeq \ind{d_i \leq t}\varphi_{d_i}\left(\bx_i(d_i), \by_i(d_i)\right),
	\end{equation*}
	by Tonelli's theorem. Therefore, it suffices to fix $i$ and compute $E[\bD_T^i(\varphi) | M]$. Moreover, if $\setc{T_k}{k \geq 1}$ is a sequence of stopping times such that $T_k \uparrow T$ as $k \to \infty$ with probability one, then the monotone convergence theorem implies that
	\begin{equation*}
		\lim_{k \to \infty} \expect*{\bD_{T_k}(\varphi) | M} = \expect*{\bD_T(\varphi) | M} \quad \text{and} \quad \lim_{k \to \infty} \expect*{\bD_{T_k}^c(\varphi) | M} = \expect*{\bD_T^c(\varphi) | M}.
	\end{equation*}
	As a result, it follows from Lemma \ref{lem: increasing sequence of stopping times} that we may assume without any loss of generality that the stopping time $T$ satisfies the following properties surely:
	\begin{enumerate}
		\item[(a)] There exists $K \geq 0$ such that $T \leq K$.
		
		\item[(b)] There exists $\eta \in (0, \kappa)$ such that $a_j \wedge T - a_{j - 1} \wedge T \leq \eta$ for all $j \geq 1$.
		
		\item[(c)] There exists a compact set $(0, 0) \in C \subset U_g$ such that
		\begin{equation*}
			\left(\bx_j(t), \by_j(t)\right) \in C \quad \text{for all} \quad 1 - M \leq j \leq \calR\left(T\right) \quad \text{and} \quad t \in [0, T].
		\end{equation*}
	\end{enumerate}
	
	We proceed to compute $E[\bD_T^i(\varphi)]$ for a stopping time $T$ that satisfies the above three properties. For this purpose, we will partition $\R_+$ into subintervals of length $1 / n$ and we will make use of the following definitions and notation:
	\begin{align*}
		&\calF_n^m \defeq \calF_{\frac{m}{n}}, \quad r_n^m \defeq r\left(\bmu_{\frac{m}{n}}, \by_i\left(\frac{m}{n}\right)\right), \quad \varphi_n^m \defeq \varphi_{\frac{m}{n}}\left(\bx_i\left(\frac{m}{n}\right), \by_i\left(\frac{m}{n}\right)\right), \\
		&\text{and} \quad \bX_t^n \defeq \sum_{m = 0}^\infty \ind{a_i \leq \frac{m}{n} < d_i \leq \frac{m + 1}{n}, \frac{m}{n} < t} \varphi_n^m.
	\end{align*}
	Observe that $\bX_t^n \to \bD_t^i(\varphi)$ for all $t \geq 0$ as $n \to \infty$ surely. Also, $\bX_T^n \leq \norm{\varphi}_\infty$, so it follows from the bounded convergence theorem and Tonelli's theorem that:
	\begin{align*}
		\expect*{\bD_T^i(\varphi) | M} &= \lim_{n \to \infty} \expect*{\bX_T^n | M} \\
		&= \lim_{n \to \infty} \sum_{m = 0}^\infty \expect*{\ind{a_i \leq \frac{m}{n} < d_i \leq \frac{m + 1}{n}, \frac{m}{n} < T} \varphi_n^m | M} \\
		&= \lim_{n \to \infty} \sum_{m = 0}^\infty \expect*{\ind{\frac{m}{n} < T} \varphi_n^m \cprob*{a_i \leq \frac{m}{n} < d_i \leq \frac{m + 1}{n} | \calF_n^m} | M} \\
		&= \lim_{n \to \infty} \expect*{\sum_{m = 0}^\infty \ind{\frac{m}{n} < T} \varphi_n^m \cprob*{a_i \leq \frac{m}{n} < d_i \leq \frac{m + 1}{n} | \calF_n^m} | M}.
	\end{align*}
	
	Let us introduce some additional notation:
	\begin{equation*}
		I_n^m(\alpha) \defeq I_i^\alpha\left(\frac{m}{n}, \frac{m + 1}{n}\right), \quad J_n^m \defeq J_i\left(\frac{m}{n}, \frac{m + 1}{n}\right) \quad \text{and} \quad \calG_n^m \defeq \calG_i\left(\frac{m}{n}, \frac{m + 1}{n}\right),
	\end{equation*}
	where the right-hand sides are defined as in Lemma \ref{lem: hazard rate terms}. Observe that
	\begin{align*}
		\cprob*{a_i \leq \frac{m}{n} < d_i \leq \frac{m + 1}{n} | \calF_n^m} &= \sum_{\alpha \in \{0, 1\}} \expect*{I_n^m(\alpha)J_n^m\ind{d_i \leq \frac{m + 1}{n}} | \calF_n^m} \\
		&+ \sum_{\alpha \in \{0, 1\}} \expect*{I_n^m(\alpha)\left(1 - J_n^m\right)\ind{d_i \leq \frac{m + 1}{n}} | \calF_n^m} \\
		&= \sum_{\alpha \in \{0, 1\}} \expect*{I_n^m(\alpha)J_n^m\cprob*{d_i \leq \frac{m + 1}{n} | \calG_n^m} | \calF_n^m} \\
		&+ \sum_{\alpha \in \{0, 1\}} \expect*{I_n^m(\alpha)\left(1 - J_n^m\right)\ind{d_i \leq \frac{m + 1}{n}} | \calF_n^m}
	\end{align*}
	almost surely; the second equality follows by conditioning with respect to $\calF_n^m \vee \calG_n^m$, the $\sigma$-algebra generated by $\calF_n^m$ and $\calG_n^m$. Moreover, it follows from Lemma \ref{lem: hazard rate terms} that
	\begin{align*}
		\sum_{\alpha \in \{0, 1\}} \expect*{I_n^m(\alpha)J_n^m\cprob*{d_i \leq \frac{m + 1}{n} | \calG_n^m} | \calF_n^m} = \sum_{\alpha \in \{0, 1\}} I_n^m(\alpha)\expect*{J_n^m | \calF_n^m}H_n^m(\alpha) \\
		= \sum_{\alpha \in \{0, 1\}} \ind{a_i \leq \frac{m}{n} < d_i, r_n^m = \alpha}\expect*{J_n^m | \calF_n^m}H_n^m(\alpha),
	\end{align*}
	where $H_n^m(\alpha) \defeq H_i^\alpha(m / n, (m + 1) / n)$, using the notation of Lemma \ref{lem: hazard rate terms}. Further,
	\begin{align*}
		Y_n^m &\defeq \sum_{\alpha \in \{0, 1\}} \expect*{I_n^m(\alpha)\left(1 - J_n^m\right)\ind{d_i \leq \frac{m + 1}{n}} | \calF_n^m} \\
		&= \sum_{\alpha \in \{0, 1\}} \expect*{\expect*{I_n^m(\alpha)\left(1 - J_n^m\right)\ind{d_i \leq \frac{m + 1}{n}} | \calF_n^m \vee \calG_n^m} | \calF_n^m} \\
		&\leq \sum_{\alpha \in \{0, 1\}} \expect*{\expect*{I_n^m(\alpha)\left(1 - J_n^m\right)\ind{\bx_i\left(\frac{m}{n}\right) + \frac{1}{n} \geq b_i\ \text{or}\ \by_i\left(\frac{m}{n}\right) + \frac{1}{n} \geq c_i} | \calF_n^m \vee \calG_n^m} | \calF_n^m} \\
		&= \sum_{\alpha \in \{0, 1\}} \expect*{I_n^m(\alpha)\left(1 - J_n^m\right)\cprob*{\bx_i\left(\frac{m}{n}\right) + \frac{1}{n} \geq b_i\ \text{or}\ \by_i\left(\frac{m}{n}\right) + \frac{1}{n} \geq c_i | \calG_n^m} | \calF_n^m} \\
		&= \sum_{\alpha \in \{0, 1\}} \ind{a_i \leq \frac{m}{n} < d_i, r_n^m = \alpha} \expect*{1 - J_n^m | \calF_n^m} H_n^m(1) \\
		&\leq \sum_{\alpha \in \{0, 1\}} H_n^m(1) \expect*{1 - J_n^m | \calF_n^m} = 2H_n^m(1) \expect*{1 - J_n^m | \calF_n^m}
	\end{align*}
	almost surely. The bound in the third line corresponds to the situation where task $i$ is served at unit rate in $(m / n, (m + 1) / n)$. Also, the factor $H_n^m(1)$ in the fifth line is obtained as in Lemma \ref{lem: hazard rate terms}, noting that $\bx_i(m / n) < b_i$ and $\by_i(m / n) < c_i$ provided that $d_i \geq m / n$.
	
	Consider the processes defined as
	\begin{align*}
		&\bY_t^n \defeq \sum_{m = 0}^\infty \ind{\frac{m}{n} < T} nY_n^m\varphi_n^m\ind{\frac{m}{n} \leq t < \frac{m + 1}{n}}, \\
		&\bZ_t^n \defeq \sum_{m = 0}^\infty \sum_{\alpha \in \{0, 1\}} \ind{a_i \leq \frac{m}{n} < d_i \wedge T, r_n^m = \alpha}\expect*{J_n^m | \calF_n^m}nH_n^m(\alpha)\varphi_n^m\ind{\frac{m}{n} \leq t < \frac{m + 1}{n}},  
	\end{align*}
	for all $t \geq 0$. The right-hand sides are zero if $t > K + 1$ by (a). Hence,
	\begin{align*}
		\expect*{\bD_T^i(\varphi) | M} &= \lim_{n \to \infty} \expect*{\int_0^\infty\left[\bY_t^n + \bZ_t^n\right]dt | M} \\
		&= \lim_{n \to \infty} \expect*{\int_0^{K + 1} \left[\bY_t^n + \bZ_t^n\right]dt | M}. 
	\end{align*}
	For the first equality, note that the integral over $[0, \infty)$ of $n \ind{m / n \leq t < (m + 1) / n}$ is one.
	
	By (a), (b), (c) and Lemma \ref{lem: no but possibly one event in small interval}, there exists $L_1 \geq 0$ and $\delta_1 > 0$ such that
	\begin{equation}
		\label{eq: bound for 1 - J_n^m}
		\ind{\frac{m}{n} < T}\expect*{1 - J_n^m | \calF_n^m} \leq \ind{\frac{m}{n} < T}\frac{L_1}{n}\left[1 + M + \calR(K + 1)\right] \quad \text{for all} \quad n \geq \frac{1}{\delta_1}
	\end{equation}
	with probability one. Then, it follows that, for all $t \geq 0$ and $n \geq 1 / \delta_1$, 
	\begin{align*}
		\bY_t^n &\leq 2\sum_{m = 0}^\infty \ind{\frac{m}{n} < T}nH_n^m(1)\expect*{1 - J_n^m | \calF_n^m}\varphi_n^m\ind{\frac{m}{n} \leq t < \frac{m + 1}{n}} \\
		&\leq 2L_1\left[1 + M + \calR(K + 1)\right]\norm{\varphi}_\infty \sum_{m = 0}^\infty \ind{\frac{m}{n} < T} H_n^m(1)\ind{\frac{m}{n} \leq t < \frac{m + 1}{n}}
	\end{align*}
	almost surely. Further, let $\varepsilon > 0$ be the minimum of $S_g$ over the compact set $C$ and $L_2 \geq 0$ and $\delta_2 > 0$ be the constants in Lemma \ref{lem: bound for H}. Then \eqref{eq: definition of H}, (c) and Lemma \ref{lem: bound for H} yield:
	\begin{equation}
		\label{eq: bound for H_m^n}
		\ind{\frac{m}{n} < T}H_n^m(\alpha) \leq \ind{\frac{m}{n} < T}\frac{L_2}{\varepsilon n} \quad \text{for all} \quad n \geq \frac{1}{\delta_2} \quad \text{and} \quad \alpha \in \{0, 1\}. 
	\end{equation}
	Thus, $\bY^n$ is bounded by $2 L_1 L_2 \left[1 + M + \calR(K + 1)\right] \norm{\varphi}_\infty / \varepsilon$ for all $n \geq \max\{1 / \delta_1, 1 / \delta_2\}$, and converges pointwise to zero almost surely. By the bounded convergence theorem,
	\begin{equation*}
		\lim_{n \to \infty} \expect*{\int_0^{K + 1} \bY_t^ndt | M} = 0.
	\end{equation*}
	
	On the other hand, Lemma \ref{lem: hazard rate terms} and \eqref{eq: bound for 1 - J_n^m} imply that $\bZ_t^n$ converges as $n \to \infty$ to
	\begin{equation*}
		\bZ_t \defeq \varphi_t(\bx_i(t), \by_i(t)) \left[r\left(\bmu_t, \by_i(t)\right) h_g^x\left(\bx_i(t), \by_i(t)\right) + h_g^y\left(\bx_i(t), \by_i(t)\right)\right]
	\end{equation*}
	for all $t \in (a_i \wedge T, d_i \wedge T)$ almost surely, and $\bZ_t^n \to 0$ as $n \to \infty$ for all $t \notin [a_i \wedge T, d_i \wedge T]$, almost surely. Furthermore, $\bZ_t^n \leq L_2 \norm{\varphi}_\infty / \varepsilon$ if $n \geq 1 / \delta_2$ and $t \geq 0$ surely by \eqref{eq: bound for H_m^n}. Thus, the bounded convergence theorem yields
	\begin{equation*}
		\expect*{\bD_T^i(\varphi) | M} = \lim_{n \to \infty} \expect*{\int_0^{K + 1} \bZ_t^ndt | M} = \expect*{\int_{a_i \wedge T}^{d_i \wedge T} \bZ_tdt | M}.
	\end{equation*}
	
	Finally, it follows from the above equality that
	\begin{equation*}
		\expect*{\bD_T(\varphi)} = \expect*{\sum_{i = 1 - M}^\infty \expect*{\bD_T^i(\varphi) | M}} = \expect*{\sum_{i = 1 - M}^\infty \int_{a_i \wedge T}^{d_i \wedge T} \bZ_tdt} = \expect*{\bD_T^c(\varphi)},
	\end{equation*}
	which establishes \eqref{eq: condition for martingale property of compensator} and thereby completes the proof.
\end{proof}

\subsubsection{Predictable quadratic variation}
\label{subsub: predictable quadratic variation}

As mentioned in Section \ref{sub: outline of the proof}, bounds for the predictable quadratic variation of the local martingales $\bD(\varphi) - \bD^c(\varphi)$ will be used to characterize the limits of the departure processes as the number of servers approaches infinity. Specifically, we will show that the departure process $\bD(\varphi)$ can be replaced by its compensator in the limit and after normalization by the number of servers. The predictable quadratic variation is computed in the following proposition. Similar computations are carried out in \cite[Proposition II.2.29]{jacod2013limit}.

\begin{proposition}
	\label{prop: quadratic variations}
	The process $\bD(\varphi) - \bD^c(\varphi)$ is a locally square-integrable martingale for each $\varphi \in C_b(\R_+^3)$ and its predictable quadratic variation is $\langle \bD(\varphi) - \bD^c(\varphi) \rangle = \bD^c(\varphi^2)$.
\end{proposition}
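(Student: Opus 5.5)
We follow the standard route for a counting-type process with a continuous compensator, as in \cite[Proposition II.2.29]{jacod2013limit}. Set $\bF \defeq \bD(\varphi) - \bD^c(\varphi)$.

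\emph{Martingale property.} By Theorem \ref{the: compensator of departures-driven process}, $\bF$ is a local martingale. It is of finite variation and its jumps are $\Delta\bF_t = \Delta\bD_t(\varphi)$, since $\bD^c(\varphi)$ is continuous. Almost surely no two departures coincide, because the $(b_i, c_i)$ have densities. Hence the jumps of $\bF$ are bounded by $\norm{\varphi}_\infty$.

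\emph{Local square-integrability.} Take the localizing sequence $\tau_k \defeq \inf\{t : |\bF_t| \geq k\} \wedge k$. Then $|\bF_{t \wedge \tau_k}| \leq k + \norm{\varphi}_\infty$. So the stopped process is a bounded local martingale, hence a square-integrable martingale.

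\emph{Quadratic variation.} Because $\bF$ has finite variation and $\bD^c(\varphi)$ is continuous,
\begin{equation*}
	[\bF, \bF]_t = \sum_{s \leq t} (\Delta\bF_s)^2 = \sum_{i = 1 - M}^{\calR(t)} \ind{d_i \leq t}\varphi_{d_i}^2\left(\bx_i(d_i), \by_i(d_i)\right) = \bD_t(\varphi^2)
\end{equation*}
almost surely; the middle equality again uses that departures occur at distinct times.

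\emph{Predictable quadratic variation.} The predictable quadratic variation $\angled{\bF}{\bF}$ is the compensator of $[\bF, \bF]$; see \cite[Theorem I.4.52]{jacod2013limit}. Since $\varphi^2 \in C_b(\R_+^3)$, Theorem \ref{the: compensator of departures-driven process} applied to $\varphi^2$ shows that this compensator is $\bD^c(\varphi^2)$. Therefore $\angled{\bF}{\bF} = \bD^c(\varphi^2)$.

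\emph{Main obstacle.} The delicate point is justifying that departures occur at distinct times almost surely, so that each jump of $\bD(\varphi)$ is a single term $\varphi_{d_i}(\cdot)$ and $(\Delta\bD(\varphi))^2 = \Delta\bD(\varphi^2)$. Two things must be excluded. First, simultaneous departures of distinct tasks. Second, departures at arrival times or at times when the threshold changes, since these could synchronize departures.

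We would handle this as follows. Between events, each present task moves along a deterministic line, and its departure time is determined by its own $(b_i, c_i)$. This pair has a density and is independent of the other primitives, conditionally on the history up to the last event. Hence the departure hits any fixed countable set of times with probability zero. The argument would proceed inductively over the event times $\tau_k$ from the proof of Proposition \ref{prop: arrivals-flow-departures decomposition}, using the construction in Appendix \ref{app: construction of sample paths}.

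If this argument proves awkward, an alternative avoids it entirely. Instead of identifying $[\bF,\bF]$, compute $\angled{\bF}{\bF}$ as the compensator of $\sum_{s \leq t} (\Delta\bD_s(\varphi))^2$ directly. Repeat the discretization of Theorem \ref{the: compensator of departures-driven process}: on the event $J_i$, only task $i$ can depart in the small interval, and the $1 - J_i$ terms vanish in the limit exactly as the $\bY^n$ term did there.
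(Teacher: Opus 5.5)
Your proposal is correct and follows essentially the same route as the paper: $\bD(\varphi) - \bD^c(\varphi)$ is a finite-variation local martingale with bounded jumps, hence locally square-integrable; its quadratic variation is $\sum_{s \leq t}(\Delta \bD_s(\varphi))^2 = \bD_t(\varphi^2)$; and $\langle \bD(\varphi) - \bD^c(\varphi)\rangle$ is the compensator of this, namely $\bD^c(\varphi^2)$ by Theorem \ref{the: compensator of departures-driven process}. The one difference is that you rightly flag that the identity $\sum_{s \leq t}(\Delta \bD_s(\varphi))^2 = \bD_t(\varphi^2)$ and the uniform jump bound need almost surely distinct departure times, which the paper asserts ``surely by definition'' (your sketch for this is reasonable); also, the fact that $\langle \bF \rangle$ is the compensator of $[\bF, \bF]$ is \cite[Proposition I.4.50]{jacod2013limit}, while Theorem I.4.52 is what gives $[\bF, \bF] = \sum (\Delta \bF)^2$.
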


\begin{proof}
	Recall that each realization of $\bD(\varphi)$ has finite variation over finite intervals; see the beginning of the proof of Theorem \ref{the: compensator of departures-driven process}. Also, $\bD^c(\varphi)$ has the same property because
	\begin{align*}
		TV\left[\bD^c(\varphi)\right](t) &\leq \int_0^t \angled{\left|\varphi_s\right|\left[h_g^x + h_g^y\right]}{\bmu_s}ds \\
		&\leq \norm{\varphi}_\infty \sum_{i = 1 - M}^{\calR(t)} \int_{a_i \wedge t}^{d_i \wedge t}\left[h_g^x\left(\bx_i(s), \by_i(s)\right) + h_g^y\left(\bx_i(s), \by_i(s)\right)\right]ds
	\end{align*}
	for all $t \geq 0$. The right-hand side is indeed finite since $\setc{(\bx_i(s), \by_i(s))}{s \in [a_i \wedge t, d_i \wedge t]}$ is compact and contained in $U_g$ for each $i$ surely, and $h_g$ is continuous in $U_g$ by Assumption~\ref{ass: standing assumptions 2}. It follows that the local martingale $\bD(\varphi) - \bD^c(\varphi)$ has finite variation and therefore is purely discontinuous by \cite[Lemma I.4.14]{jacod2013limit}. Furthermore, it is also a locally square-integrable martingale because it has bounded jumps and $\bD_0(\varphi) - \bD_0^c(\varphi) = 0$.
	
	Therefore, for all $t \geq 0$, we have:
	\begin{align*}
		\left\langle \bD(\varphi) - \bD^c(\varphi) \right\rangle_t &= \left[\bD(\varphi) - \bD^c(\varphi)\right]_t^c \\
		&= \left(\sum_{s \leq t} \left[\Delta\left(\bD_s(\varphi) - \bD_s^c(\varphi)\right)\right]^2\right)^c = \left(\sum_{s \leq t} \left[\Delta\bD_s(\varphi)\right]^2\right)^c = \bD_t^c(\varphi^2).
	\end{align*}
	Here we are using the following notation: the angled and squared brackets in the first line refer to the predictable quadratic variation and quadratic variation, respectively, the superscript $c$ refers to the compensator of the process below it and $\Delta \bX_t \defeq \bX_t - \bX_{t^-}$ is the jump of $\bX$ at time $t$. The first equality holds by \cite[Proposition I.4.50]{jacod2013limit}, and the second one holds by \cite[Theorem I.4.52]{jacod2013limit} since $\bD(\varphi) - \bD^c(\varphi)$ is a purely discontinuous local martingale. The third equality follows from the continuity $\bD^c(\varphi)$. Finally,
	\begin{equation*}
		\sum_{s \leq t} \left[\Delta\bD_s(\varphi)\right]^2 = \bD_t(\varphi^2) \quad \text{for all} \quad t \geq 0
	\end{equation*}
	surely by definition of $\bD(\varphi)$, which proves the last equality.
\end{proof}

For the last result of this section, we reintroduce the index $N$ into the notation. We prove that the normalized processes $\bar\bD^N(\varphi) - \bar\bD^{N, c}(\varphi)$ converge weakly to zero as $N \to \infty$. Hence, the departure processes can be replaced by their compensators in the limit.

\begin{corollary}
	\label{cor: limit of departures martingale}
	Fix $\varphi \in C_b(\R_+^3)$ and consider the normalized processes
	\begin{equation*}
		\bar\bD^N(\varphi) \defeq \frac{\bD^N(\varphi)}{N}, \quad \bar\bD^{N, c}(\varphi) \defeq \frac{\bD^{N, c}(\varphi)}{N} \quad \text{and} \quad \bX^N(\varphi) \defeq \bar\bD^N(\varphi) - \bar\bD^{N, c}(\varphi).
	\end{equation*}
	Then $\bX^N(\varphi) \Rightarrow 0$ in $D_\R[0, \infty)$ as $N \to \infty$. 
\end{corollary}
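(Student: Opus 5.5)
The plan is to show that $\sup_{s \leq t}|\bX_s^N(\varphi)| \to 0$ in probability for each fixed $t \geq 0$. This implies $\bX^N(\varphi) \Rightarrow 0$ in $D_\R[0, \infty)$, since uniform convergence on compacts to the continuous limit $0$ gives Skorohod convergence.

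By Proposition \ref{prop: quadratic variations}, $\bX^N(\varphi)$ is a locally square-integrable martingale for each $N$, with predictable quadratic variation
\begin{equation*}
	\left\langle \bX^N(\varphi)\right\rangle_t = \frac{1}{N^2}\bD_t^{N, c}\left(\varphi^2\right).
\end{equation*}
Lenglart's inequality (e.g.\ \cite[Lemma I.3.30]{jacod2013limit}) applied to $(\bX^N(\varphi))^2$, which is dominated by $\langle \bX^N(\varphi)\rangle$, then gives for all $\varepsilon, \eta > 0$:
\begin{equation*}
	\cprob*{\sup_{s \leq t}\left|\bX_s^N(\varphi)\right| \geq \varepsilon} \leq \frac{\eta}{\varepsilon^2} + \cprob*{\frac{1}{N^2}\bD_t^{N, c}\left(\varphi^2\right) \geq \eta}.
\end{equation*}
It therefore suffices to show that $\bD_t^{N, c}(\varphi^2)/N^2 \to 0$ in probability.

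For this we use the identity from the proof of Theorem \ref{the: compensator of departures-driven process}, namely $E[\bD_t^{N, c}(\psi)] = E[\bD_t^N(\psi)]$ for nonnegative $\psi \in C_b(\R_+^3)$, applied with $\psi = \varphi^2$. Since $\bD_t^N(\varphi^2) \leq \norm{\varphi}_\infty^2\,[M^N + \calR^N(t)]$, we obtain
\begin{equation*}
	\expect*{\frac{1}{N^2}\bD_t^{N, c}\left(\varphi^2\right)} \leq \frac{\norm{\varphi}_\infty^2}{N^2}\left(\expect*{M^N} + \expect*{\calR_0(Nt)}\right).
\end{equation*}
Assumption \ref{ass: standing assumptions 1}(a) gives $E[M^N] \leq \bar M N$. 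The elementary renewal theorem (or simply finiteness of the renewal function together with $E[\calR_0(s)] \leq C(1 + s)$) gives $E[\calR_0(Nt)] = O(N)$. The right-hand side is thus $O(1/N)$, and Markov's inequality yields the required convergence in probability. Letting $N \to \infty$ and then $\eta \downarrow 0$ completes the argument.

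The only delicate point is justifying the use of Lenglart's inequality and the identity of expectations. Lenglart requires only local square integrability, which Proposition \ref{prop: quadratic variations} provides. The expectation identity was established at deterministic times $T = t$ in the proof of the compensator theorem. The linear bound on the renewal function is standard and causes no difficulty.
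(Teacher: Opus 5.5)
Your proof is correct and follows the paper's argument. Both bound $P(\sup_{s\le t}|\bX_s^N(\varphi)|\ge\varepsilon)$ through the predictable quadratic variation $\bD_t^{N,c}(\varphi^2)/N^2$ from Proposition~\ref{prop: quadratic variations}, and then use $E[\bD_t^{N,c}(\varphi^2)] = E[\bD_t^N(\varphi^2)] \le \norm{\varphi}_\infty^2 E[M^N + \calR^N(t)]$ to get an $O(1/N)$ bound.

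The only difference is the maximal inequality. You use Lenglart's inequality, whereas the paper localizes, applies Markov's and Doob's $L^2$ inequalities to bound the probability directly by $4E[\langle\bX^N(\varphi)\rangle_T]/\varepsilon^2$, and removes the localization by monotone convergence. Since you bound the expectation of the quadratic variation anyway, Lenglart's extra generality is not needed here, but your route is equally valid.
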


\begin{proof}
	The process $\bX^N(\varphi)$ is a locally square-integrable martingale by Proposition \ref{prop: quadratic variations}, so there exists a sequence of stopping times $T_k^N$ such that $T_k^N \uparrow \infty$ as $k \to \infty$ almost surely and $\bX_k^N(t) \defeq \bX_{t \wedge T_k^N}^N(\varphi)$ defines a square-integrable martingale. Then
	\begin{align*}
		\cprob*{\sup_{t \in [0, T]} \left|\bX_k^N(t)\right| \geq \varepsilon} &\leq \frac{1}{\varepsilon^2}\expect*{\sup_{t \in [0, T]} \left|\bX_k^N(t)\right|^2} \\
		&\leq \frac{4}{\varepsilon^2}\expect*{\left|\bX_k^N(T)\right|^2} = \frac{4}{\varepsilon^2}\expect*{\langle\bX_k^N\rangle_T} \quad \text{if} \quad \varepsilon > 0 \quad \text{and} \quad T \geq 0,
	\end{align*}
	by Markov's and Doob's inequalities. By the monotone convergence theorem,
	\begin{align*}
		\cprob*{\sup_{t \in [0, T]} \left|\bX_t^N(\varphi)\right| \geq \varepsilon} &= \lim_{k \to \infty} \cprob*{\sup_{t \in [0, T]} \left|\bX_k^N(t)\right| \geq \varepsilon} \\
		&\leq \lim_{k \to \infty} \frac{4}{\varepsilon^2}\expect*{\langle\bX_k^N\rangle_T} = \frac{4}{\varepsilon^2}\expect*{\langle\bX^N(\varphi)\rangle_T}.
	\end{align*}
	
	On the other hand, it follows from Proposition \ref{prop: quadratic variations} that
	\begin{align*}
		\expect*{\langle\bX^N(\varphi)\rangle_T} = \frac{1}{N^2} \expect*{\bD_T^{N, c}\left(\varphi^2\right)} = \frac{1}{N^2} \expect*{\bD_T^N\left(\varphi^2\right)} &\leq \frac{\norm{\varphi}_\infty^2}{N^2} \expect*{\bD_T^N\left(\indc\right)} \\
		&\leq \frac{\norm{\varphi}_\infty^2}{N^2} \expect*{M^N + \calR^N(T)},
	\end{align*}
	where $\indc$ is the function that is identically equal to one. Here the second equality follows from Theorem \ref{the: compensator of departures-driven process} and the last inequality follows from the fact that $\bD_T^N(\indc)$ is the number of departures in the interval $[0, T]$. We conclude that
	\begin{equation*}
		\lim_{N \to \infty} \cprob*{\sup_{t \in [0, T]} \left|\bX_t^N(\varphi)\right| \geq \varepsilon} \leq \lim_{N \to \infty} \frac{4\norm{\varphi}_\infty^2}{\varepsilon^2 N} \frac{\expect*{M^N + \calR^N(T)}}{N} = 0
	\end{equation*}
	by Assumption \ref{ass: standing assumptions 1} and the law of large numbers for renewal processes. This implies that $\bX^N(\varphi) \Rightarrow 0$ in $D_\R[0, T]$ for all $T \geq 0$, and therefore the limit also holds in $D_\R[0, \infty)$.
\end{proof}

\subsection{Tightness results}
\label{sub: tightness results}

Recall our notation for normalized processes:
\begin{equation*}
\bar\bA_t^N(\varphi) \defeq \frac{\bA_t^N(\varphi)}{N}, \quad \bar\bD_t^N(\varphi) \defeq \frac{\bD_t^N(\varphi)}{N} \quad \text{and} \quad \bar\bD_t^{N, c}(\varphi) \defeq \frac{\bD_t^{N, c}(\varphi)}{N}.
\end{equation*}
Fixing $t \geq 0$ and letting $\varphi \in C_b(\R_+^3)$ vary, we get random variables in $\calM_F^+(\R_+^3)$. Thus, letting time vary as well, we obtain measure-valued processes. These processes are c\`adl\`ag with respect to the weak topology of $\calM_F^+(\R_+^3)$, because for a fixed $\varphi \in C_b(\R_+^3)$, the above expressions define real-valued c\`adl\`ag processes as time varies.

In this section we establish the tightness of the following sequences:
\begin{equation*}
	\asetc{\bar\bmu^N}{N \geq 1}, \quad \asetc{\bar\bA^N}{N \geq 1}, \quad \asetc{\bar\bD^N}{N \geq 1} \quad \text{and} \quad \asetc{\bar\bD^{N, c}}{N \geq 1}.
\end{equation*}
The former consists of measure-valued processes in $D_{\calM_F^+(\R_+^2)}[0, \infty)$ and the latter three of measure-valued processes in $D_{\calM_F^+(\R_+^3)}[0, \infty)$. By Prohorov's theorem, the tightness of the first of these sequences implies the relative compactness claimed in Theorem \ref{the: measure-valued fluid limit}. The tightness of the other three sequences will be used together with the pre-limit stochastic equation to characterize the subsequential limits of the first sequence.

\subsubsection{Standard tightness criteria}
\label{subsub: standard tightness criteria}

In order to establish the tightness of the above sequences of measure-valued processes, we will use the following criteria; see \cite[Theorem 4.6]{jakubowski1986skorokhod} for a proof.

\begin{theorem}[Jakubowski]
	\label{the: Jakubowski's criteria}
	A sequence $\setc{\bnu^N}{N \geq 1}$ of processes in $D_{\calM_F^+(\R_+^d)}[0, \infty)$ is tight if and only if the following two conditions are satisfied.
	\begin{enumerate}
		\item[\normalfont{J1.}] For each $T > 0$ and $\varepsilon > 0$, there exists a compact set $C \subset \calM_F^+(\R_+^d)$ such that
		\begin{equation*}
			\liminf_{N \to \infty} \cprob*{\bnu_t^N \in C\ \text{for all}\ t \in [0, T]} \geq 1 - \varepsilon.
		\end{equation*}
		This is called the compact containment condition.
		
		\item[\normalfont{J2.}] There exists a family $\calH$ of continuous functions from $\calM_F^+(\R_+^d)$ into $\R$ that separates points in $\calM_F^+(\R_+^d)$, is closed under addition and satisfies that
		\begin{equation*}
			\asetc{t \mapsto h\left(\bnu_t^N\right)}{N \geq 1} \quad \text{is tight in}\ D_\R[0, \infty) \quad \text{for all} \quad h \in \calH.
		\end{equation*}
		If this property holds, then $\setc{\bnu^N}{N \geq 1}$ is called $\calH$-weakly tight.
	\end{enumerate}
\end{theorem}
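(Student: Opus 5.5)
The statement is Jakubowski's criterion, \cite[Theorem 4.6]{jakubowski1986skorokhod}, specialized to $E = \calM_F^+(\R_+^d)$. This space is Polish under $d_{BL}$, so the general completely-regular setting is not needed. I would prove the two directions separately, using the standard characterization of tightness in $D_E[0,\infty)$ for Polish $E$ (Ethier–Kurtz, Theorem 3.7.2): compact containment together with control of the modulus $w'$ on compact time intervals.

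\emph{Necessity.} If $\{\bnu^N\}$ is tight, J1 is part of the Ethier–Kurtz characterization. For J2, take $\calH = \{\mu \mapsto \angled{f}{\mu} : f \in C_b(\R_+^d)\}$. This family is closed under addition, separates points of $\calM_F^+(\R_+^d)$, and consists of continuous functions. The map $x \mapsto h \circ x$ from $D_E[0,\infty)$ to $D_\R[0,\infty)$ is continuous for $J_1$ when $h$ is continuous, so tightness is preserved by the continuous mapping theorem.

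\emph{Sufficiency.} Fix $T$ and $\varepsilon$, and let $C_j$ be the compact set given by J1 for $\varepsilon 2^{-j}$; set $K = \bigcup_j C_j$, which is $\sigma$-compact.

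\textbf{Step 1: a countable separating subfamily.} I first extract a countable subfamily $\{h_k\} \subset \calH$ that separates points of $K$. On each $C_j$, for each $m$ the set $\{(\mu,\nu) \in C_j^2 : d_{BL}(\mu,\nu) \ge 1/m\}$ is compact. It is covered by the open sets $\{h(\mu) \neq h(\nu)\}$, $h \in \calH$, so finitely many $h$ suffice. Taking the union over $j$ and $m$ gives the countable family. The map $\Phi = (h_k)_k : C_j \to \R^\infty$ is then a continuous injection from a compact space, hence a homeomorphism onto its image. In particular it is uniformly bicontinuous on $C_j$, which converts $d_{BL}$-oscillation into oscillation of finitely many coordinates, up to a small error.

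\textbf{Step 2: joint tightness of finitely many coordinates.} I show that $(h_1,\dots,h_m)(\bnu^N)$ is tight in $D_{\R^m}[0,\infty)$. Coordinatewise tightness is not enough because of non-aligned jumps. Closedness of $\calH$ under addition gives tightness of every partial sum $h_{k_1} + \dots + h_{k_l}$ of the $h_k$. The standard criterion (Ethier–Kurtz, Problem 3.22) states that tightness of each $x_k$ and of each $x_k + x_l$ in $D_\R$ implies tightness of $(x_1,\dots,x_m)$ in $D_{\R^m}$. By a diagonal argument this extends to tightness in $D_{\R^\infty}[0,\infty)$.

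\textbf{Step 3: lifting back to $E$.} This is the main obstacle. Compact containment must be combined with Steps 1–2 to bound $w'(\bnu^N,\delta,T)$ in $d_{BL}$. On the event $\{\bnu^N_t \in C_j \text{ for all } t \le T\}$, uniform continuity of $\Phi^{-1}$ on $\Phi(C_j)$ yields the following: for every $\eta > 0$ there are $m$ and $\rho > 0$ such that
\begin{equation*}
	\max_{k \le m}|h_k(\mu) - h_k(\nu)| < \rho \quad \Longrightarrow \quad d_{BL}(\mu,\nu) < \eta \quad \text{for all} \quad \mu,\nu \in C_j.
\end{equation*}
Hence $w'(\bnu^N,\delta,T) \le \eta$ whenever the $w'$ modulus of the $\R^m$-valued process $(h_1,\dots,h_m)(\bnu^N)$ is below $\rho$, and the same partition works for both. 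Step 2 makes the latter event have probability close to one for small $\delta$, uniformly in large $N$. Adding the $\varepsilon 2^{-j}$ loss from J1, the Ethier–Kurtz conditions hold, so $\{\bnu^N\}$ is tight.

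The delicate points are that the $w'$ partitions must be shared across coordinates, which is exactly what joint $D_{\R^m}$ tightness provides and why closure under addition is needed, and that the compact $C_j$ depends on $\varepsilon$. I expect the first of these to be the part requiring the most care.
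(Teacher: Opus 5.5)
The paper gives no proof of this statement; it quotes it from \cite[Theorem 4.6]{jakubowski1986skorokhod}, where it is proved under much weaker assumptions on the state space (complete regularity with metrizable compacts), so no metric on $E$ is ever used. Your proposal is a correct proof in the Polish case $E=\calM_F^+(\R_+^d)$, self-contained modulo standard Ethier--Kurtz facts. Its architecture is essentially Jakubowski's own:
\begin{itemize}
	\item on a compact set supplied by J1, finitely many members of $\calH$ already control $d_{BL}$;
	\item closure under addition, via the pairwise-sum lemma, upgrades coordinatewise tightness to joint tightness in $D_{\R^m}[0,\infty)$, so one partition serves all coordinates and non-aligned jumps are excluded;
	\item uniform continuity of the inverse on the compact transfers the $w'$ bound back to $E$.
\end{itemize}
The real difference is that you close the argument with the Ethier--Kurtz modulus criterion, which is available only because $E$ is Polish. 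Your route makes explicit where each hypothesis enters. The citation buys full generality at no cost.

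A few details should be tightened, none of them serious.
\begin{itemize}
	\item The partitions defining $w'(x,\delta,T)$ only satisfy $t_{n-1}<T\le t_n$, so the last block may extend past $T$. After shrinking $t_n$ you may assume $t_n<T+2\delta$, and you should take the compact from J1 on $[0,T+1]$.
	\item Since J1 is stated with a $\liminf$, you need the sequential ($\liminf$/$\limsup$) form of the criterion, not the $\inf_\alpha$/$\sup_\alpha$ form of Theorem 3.7.2.
	\item For necessity, the uniform-in-$t$ containment J1 comes from the characterization of compact subsets of $D_E[0,\infty)$: a compact set of paths has all its values on $[0,T]$ in a common compact. It does not follow from the fixed-time condition (a) of Theorem 3.7.2.
	\item The passage through $\R^\infty$ and the $\sigma$-compact set $K$ is unnecessary. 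For a fixed compact $C$ and $\eta>0$, compactness of $\setc{(\mu,\nu)\in C^2}{d_{BL}(\mu,\nu)\geq\eta}$ directly yields $h_1,\dots,h_m\in\calH$ and $\rho>0$ with the implication you display.
\end{itemize}
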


Similar to the approach of Kaspi and Ramanan in \cite{kaspi2011law}, to prove condition J2, we will consider the following families of functions, given by integrals of test functions:
\begin{equation}
	\label{eq: family of functions for Jakubowski's criteria}
	\calH_2 \defeq \asetc{\mu \mapsto \angled{\phi}{\mu}}{\phi \in C_c^1(\R_+^2)} \quad \text{and} \quad \calH_3 \defeq \asetc{\mu \mapsto \angled{\varphi}{\mu}}{\varphi \in C_b(\R_+^3)}.
\end{equation}
Specifically, we will prove that the sequence of processes $\bar\bmu^N$ are $\calH_2$-weakly tight while the other three sequences of processes are $\calH_3$-weakly tight; in the latter case, observe that $\langle\varphi, \bar\bA_t^N\rangle = \bar\bA_t^N(\varphi)$, and analogously for the departure processes and their compensators. Clearly, the functions in $\calH_d$ are continuous with respect to the weak topology on $\calM_F^+(\R_+^d)$, the family $\calH_d$ is closed under addition and $\calH_d$ separates points in $\calM_F^+(\R_+^d)$.

In order to check the $\calH_d$-weak tightness conditions, we will use the following criteria; we refer to \cite[Theorem 3.8.6 and Remark 3.8.7]{ethier2009markov} for a proof.

\begin{theorem}[Kurtz]
	\label{the: Kurtz's criteria}
	Consider a sequence of processes $\setc{\bX^N}{N \geq 1}$ in $D_\R[0, \infty)$. If the following conditions are satisfied, then the sequence is tight.
	\begin{enumerate}
		\item[\normalfont{K1.}] For each rational $t \geq 0$, we have
		\begin{equation*}
			\lim_{K \to \infty} \limsup_{N \to \infty} P\left(\left|\bX_t^N\right| > K\right) = 0.
		\end{equation*}
		
		\item[\normalfont{K2.}] For each $t > 0$, there exists $\beta > 0$ such that
		\begin{equation*}
			\lim_{\delta \to 0} \limsup_{N \to \infty} E\left|\bX_{t + \delta}^N - \bX_t^N\right|^\beta = 0.
		\end{equation*}
	\end{enumerate}
\end{theorem}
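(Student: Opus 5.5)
The plan is to derive the statement from the tightness characterization in \cite[Corollary 3.7.4]{ethier2009markov}. A sequence in $D_\R[0, \infty)$ is tight if and only if two conditions hold. The first is that for each rational $t$ the laws of $\bX_t^N$ are tight; this is exactly K1. The second is that for each $T > 0$ and $\varepsilon > 0$,
\begin{equation*}
	\lim_{\delta \to 0} \limsup_{N \to \infty} P\left(w'\left(\bX^N, \delta, T\right) > \varepsilon\right) = 0,
\end{equation*}
where $w'$ is the Skorohod modulus. So the whole argument consists of controlling $w'$ by means of K2.

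The standard route to this is \cite[Theorem 3.8.6]{ethier2009markov}. One introduces a family of random variables $\gamma_N(\delta)$ with $\lim_{\delta \to 0}\limsup_{N} E[\gamma_N(\delta)] = 0$ and establishes the three-point conditional bound
\begin{equation*}
	E\left[\left|\bX^N_{t+u} - \bX^N_t\right|^\beta \wedge 1 \,\middle|\, \calF^N_t\right] \left|\bX^N_t - \bX^N_{t-v}\right|^\beta \wedge 1 \leq E\left[\gamma_N(\delta) \,\middle|\, \calF^N_t\right]
\end{equation*}
for $0 \leq t \leq T$ and $0 \leq u, v \leq \delta$. The chaining argument of that theorem then bounds $P(w'(\bX^N,\delta,T) > \varepsilon)$ by a multiple of $E[\gamma_N(\delta)]/\varepsilon^{\beta}$. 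The intended sequence of steps is therefore: first, deduce compact containment at rational times from K1; second, define $\gamma_N(\delta)$ from the increments controlled in K2; third, invoke the chaining estimate; and fourth, conclude with Corollary 3.7.4.

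The main obstacle is the second step, and I expect it to fail for the statement as literally worded. K2 gives control of $E|\bX^N_{t+\delta} - \bX^N_t|^\beta$ for each fixed $t$ only. That says nothing about oscillations concentrated at random times, and it is not uniform in $t$.

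A counterexample shows the gap is real. Let $U$ be uniform on $[0, 1]$ and set
\begin{equation*}
	\bX^N_t = \ind{U \leq t < U + 1/N}.
\end{equation*}
K1 holds trivially. For every $t$ we have $E|\bX^N_{t+\delta} - \bX^N_t| \leq 2(\delta + 1/N)$, so K2 holds with $\beta = 1$. However, $w'(\bX^N, \delta, T) = 1$ whenever $1/N < \delta$ and $U + 1/N \leq T$. Hence the sequence is not $J_1$-tight, and its only candidate limit, the zero process, is not attained.

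For this reason I would carry out steps one, three and four as described, but replace step two by the hypothesis that \cite[Remark 3.8.7]{ethier2009markov} actually requires. This hypothesis is K2 with a supremum over $t \in [0, T]$ inside the limits, or equivalently the conditional form above. I would then check that the later applications in the paper meet it. There the increments of $\langle \phi, \bar\bmu^N\rangle$ and of $\bar\bA^N(\varphi)$, $\bar\bD^N(\varphi)$, $\bar\bD^{N,c}(\varphi)$ over $[t, t+\delta]$ are bounded by $\delta$ times norms of the test function, plus normalized counts of arrivals and departures in the window. These bounds come from \eqref{eq: pre-limit equation in normalized notation} together with the renewal functional law of large numbers for $\bar\calR^N$. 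They hold uniformly in $t \leq T$, so the strengthened condition is what is used in practice.
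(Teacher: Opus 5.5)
Your counterexample is correct, and so is your diagnosis. For $\bX^N_t=\ind{U\le t<U+1/N}$ you even have $E|\bX^N_{t+\delta}-\bX^N_t|\le 2\min\{\delta,1/N\}$ uniformly in $t$. At the same time $w'(\bX^N,\delta,T)=1$ with probability tending to one, so K1 and K2 hold but the sequence is not tight. The paper gives no proof of this statement; it only cites \cite[Theorem 3.8.6 and Remark 3.8.7]{ethier2009markov}. The hypothesis there is a conditional bound: $E[|\bX^N_{t+u}-\bX^N_t|^\beta\wedge 1\mid\calF^N_t]\le E[\gamma_N(\delta)\mid\calF^N_t]$ for all $0\le t\le T$ and $0\le u\le\delta$, where $\gamma_N(\delta)$ is a single random variable not depending on $t,u$ and $\lim_{\delta\to0}\limsup_N E[\gamma_N(\delta)]=0$. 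K2 drops both the conditioning and the $t$-independence. So the statement cannot be proved as written, and you were right not to try. The same gap affects Lemma~\ref{lem: proof of condition K2}, which only establishes fixed-$t$ bounds.

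The genuine flaw is in your repair. You describe the needed hypothesis as ``K2 with a supremum over $t\in[0,T]$ inside the limits, or equivalently the conditional form''. These are not equivalent.

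If the supremum sits outside the expectation, i.e. $\lim_{\delta\to0}\limsup_N\sup_{t\le T}E|\bX^N_{t+\delta}-\bX^N_t|^\beta=0$, your own example satisfies it, because your bound $2(\delta+1/N)$ does not depend on $t$. So this version repairs nothing.

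The conditional form does exclude your example. On $\{\bX^N_t=1\}\in\calF^N_t$ the path returns to $0$ within time $1/N$. Hence $E[\gamma_N(\delta)\mid\calF^N_t]\ge\ind{\bX^N_t=1}$ for every $t\in[0,T]$ once $\delta\ge 1/N$ (take $T\ge1$). Optional sampling at the stopping time $U$ then forces $E[\gamma_N(\delta)]\ge 1$.

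If instead you put the supremum inside the expectation, i.e. $\lim_{\delta\to0}\limsup_N E[\sup_{t\le T,\,u\le\delta}|\bX^N_{t+u}-\bX^N_t|^\beta\wedge 1]=0$, you get a sufficient condition. It implies the conditional form with $\gamma_N(\delta)$ equal to that supremum, and in fact gives C-tightness. But it is strictly stronger, since a single fixed Poisson process violates it. So state the corrected theorem with the conditional form, with Aldous's stopping-time criterion, or with this pathwise modulus.

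One further point: with the three-point product bound you wrote, the product vanishes at $t=0$, so it cannot detect a jump just after time zero (consider $\ind{t\ge 1/N}$). You therefore also need $\lim_{\delta\to0}\limsup_N E[|\bX^N_\delta-\bX^N_0|^\beta\wedge1]=0$. The two-point conditional form of Remark 3.8.7 supplies this automatically.

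Your check of the applications is incomplete for the departure processes. The renewal functional law of large numbers controls arrivals in all windows simultaneously, so it handles $\bar\bA^N(\varphi)$ pathwise. It says nothing about the number of departures in a window. For $\bar\bD^N(\varphi)$ you need two further ingredients:
\begin{itemize}
\item the uniform-in-time estimate for $\bar\bD^N(\varphi)-\bar\bD^{N,c}(\varphi)$ from Corollary~\ref{cor: limit of departures martingale};
\item a pathwise bound on the compensator, obtained from the truncation in Lemma~\ref{lem: bounds for integrals of hazard rate}.
\end{itemize}
For the compensator, the part of the integrand supported in $B_\varepsilon$ is at most $K_\varepsilon\norm{\varphi}_\infty(M^N+\calR^N(T+1))/N$. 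The $(1-\psi)$ part has increments dominated by those of the nondecreasing process $\norm{\varphi}_\infty\bar\bD^{N,c}(1-\psi)$. Hence on every window in $[0,T+1]$ those increments are at most $\norm{\varphi}_\infty\bar\bD^{N,c}_{T+1}(1-\psi)$, whose mean is $O(\varepsilon)$. Choosing $\varepsilon=\varepsilon(\delta)\downarrow0$ with $\delta K_{\varepsilon(\delta)}\to0$ yields a $t$-independent $\gamma_N(\delta)$. The case of $\langle\phi,\bar\bmu^N\rangle$ then follows from the pre-limit equation. With these ingredients the paper's tightness conclusions survive, but through pathwise moduli rather than the fixed-$t$ bounds it actually proves.
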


For each sequence of measure-valued processes, we will use Theorem \ref{the: Kurtz's criteria} to verify the $\calH_d$-weak tightness condition, for the corresponding dimension $d$. Then we will check the compact containment condition and prove tightness by invoking Theorem \ref{the: Jakubowski's criteria}.

\subsubsection{Proofs of tightness results}
\label{subsub: tightness results results}

If we fix $\varphi \in C_b(\R_+^3)$ and $t \geq 0$, then
\begin{align*}
	&E\left|\bar\bD_{t + \delta}^N(\varphi) - \bar\bD_t^N(\varphi)\right| \leq E\left[\bar\bD_{t + \delta}^N(|\varphi|) - \bar\bD_t^N(|\varphi|)\right], \\
	&E\left|\bar\bD_{t + \delta}^{N, c}(\varphi) - \bar\bD_t^{N, c}(\varphi)\right| \leq E\left[\bar\bD_{t + \delta}^{N, c}(|\varphi|) - \bar\bD_t^{N, c}(|\varphi|)\right], 
\end{align*}
for all $\delta > 0$. Note that the right-hand sides are equal by Theorem \ref{the: compensator of departures-driven process}. Thus, to prove K2 for the departure process and its compensator, it suffices to show that the right-hand side of the last inequality vanishes as $\delta \to 0$.
This is not trivial since our expression for the compensator involves an integral where the integrand includes the possibly unbounded hazard rate vector field. The next lemma will be used to address this complication.

\begin{lemma}
	\label{lem: bounds for integrals of hazard rate}
	For each $\varepsilon > 0$, there exist $K_\varepsilon \geq 0$ and compact sets $A_\varepsilon \subset B_\varepsilon \subset U_g$ such that $A_\varepsilon$ is inside the interior in $\R_+^2$ of $B_\varepsilon$ and the following properties hold:
	\begin{equation*}
		\int_{A_\varepsilon} g(x, y)dxdy \geq 1 - \varepsilon \quad \text{and} \quad h_g^x(x, y) + h_g^y(x, y) \leq K_\varepsilon \quad \text{for all} \quad (x, y) \in B_\varepsilon.
	\end{equation*}
	Suppose that $\psi$ is continuous and $\ind{(x, y) \in A_\varepsilon} \leq \psi(x, y) \leq \ind{(x, y) \in B_\varepsilon}$ for all $x, y \in \R_+$, then
	\begin{equation*}
		\max\left\{E\left|\bar\bD_t^N((1 - \psi)\varphi)\right|, E\left|\bar\bD_t^{N, c}((1 - \psi)\varphi)\right|\right\} \leq \varepsilon \left(\bar M + \frac{\expect*{\calR^N(t)}}{N}\right) \norm{\varphi}_\infty.
	\end{equation*}
	for all $t \geq 0$ and $\varphi \in C_b(\R_+^3)$. This inequality in fact holds if $0 \leq \psi \leq 1$ and $\psi = 1$ in~$A_\varepsilon$. In particular, the condition involving $B_\varepsilon$ is not needed to prove the inequality, but will be used when we invoke the lemma in the sequel.
\end{lemma}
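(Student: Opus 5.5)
The plan has two parts: construct the sets $A_\varepsilon\subset B_\varepsilon$ from Lemma \ref{lem: approximating U_g by rectangles}, then bound the two expectations. By Theorem \ref{the: compensator of departures-driven process}, the expectation of the departure process equals that of its compensator, so it suffices to handle the departure process alone.

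\textbf{Construction of the sets.} Apply Lemma \ref{lem: approximating U_g by rectangles} to obtain a finite union $A_\varepsilon\subset U_g$ of closed rectangles with $\int_{A_\varepsilon} g\geq 1-\varepsilon$. This set is compact. Since $U_g$ is relatively open in $\R_+^2$, there is $\rho>0$ such that the $\rho$-enlargement of $A_\varepsilon$, intersected with $\R_+^2$, stays in $U_g$. Let $B_\varepsilon$ be the finite union of the closed rectangles with corners $(x_0+\rho/2,y_0+\rho/2)$. Then $B_\varepsilon$ is compact, lies in $U_g$, and contains $A_\varepsilon$ in its relative interior. Because $h_g$ is continuous on $U_g$ by Assumption \ref{ass: standing assumptions 2}, it is bounded on the compact set $B_\varepsilon$; this gives $K_\varepsilon$.

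\textbf{Reduction to the departure process.} Let $0\leq\psi\leq1$ with $\psi=1$ on $A_\varepsilon$. Then $|(1-\psi)\varphi|\leq \norm{\varphi}_\infty \ind{(x,y)\notin A_\varepsilon}$. Since $\bD^{N,c}$ is monotone in nonnegative test functions,
\[
E\left|\bar\bD_t^{N,c}((1-\psi)\varphi)\right| \leq E\left[\bar\bD_t^{N,c}(|(1-\psi)\varphi|)\right] = E\left[\bar\bD_t^{N}(|(1-\psi)\varphi|)\right],
\]
where the equality is the compensator identity $E[\bD_t]=E[\bD^c_t]$ for nonnegative bounded continuous functions, established in the proof of Theorem \ref{the: compensator of departures-driven process}. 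The direct bound $E|\bar\bD^N_t((1-\psi)\varphi)|\leq E[\bar\bD_t^N(|(1-\psi)\varphi|)]$ is immediate. It therefore remains to show
\[
E\left[\bar\bD^N_t(|(1-\psi)\varphi|)\right]\leq \frac{\norm{\varphi}_\infty}{N}\, E\left[\sum_{i=1-M^N}^{\calR^N(t)} \ind{(\bx_i(d_i),\by_i(d_i))\notin A_\varepsilon}\right].
\]

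\textbf{Main obstacle: the departure-location bound.} The key observation is that task $i$ departs with $\bx_i(d_i)\leq b_i$ and $\by_i(d_i)\leq c_i$. Since $A_\varepsilon$ is a union of rectangles anchored at the origin, it is closed downward. Hence $(\bx_i(d_i),\by_i(d_i))\notin A_\varepsilon$ forces $(b_i,c_i)\notin A_\varepsilon$. The indicator is thus dominated by $\ind{(b_i,c_i)\notin A_\varepsilon}$, whose expectation is at most $\varepsilon$.

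The subtle point is that the index range $1-M^N\leq i\leq \calR^N(t)$ is random. I expect to handle it as follows. For $i\geq1$, the event $\{i\leq \calR^N(t)\}$ depends only on the arrival process, which is independent of $(b_i,c_i)$; this yields $\varepsilon\, E[\calR^N(t)]$. For $i\leq 0$, the initial tasks also satisfy $\bx_i(0)<b_i$. The cleanest route is to condition on $M^N$, using that the $(b_i,c_i)$ are independent of $M^N$ as primitives, which yields $\varepsilon E[M^N]\leq \varepsilon\bar M N$ via Assumption \ref{ass: standing assumptions 1}(a). Dividing by $N$ gives the claimed inequality. The property $\psi\leq\ind{B_\varepsilon}$ is never used, consistent with the remark in the statement.
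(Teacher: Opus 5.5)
Your proposal is correct and follows the paper's proof essentially step for step: the rectangles from Lemma~\ref{lem: approximating U_g by rectangles} are slightly enlarged to get $B_\varepsilon$, the compensator identity $E[\bar\bD_t^{N}(|(1-\psi)\varphi|)]=E[\bar\bD_t^{N,c}(|(1-\psi)\varphi|)]$ reduces both bounds to the departure process, and the downward-closedness of $A_\varepsilon$ turns $(\bx_i(d_i),\by_i(d_i))\notin A_\varepsilon$ into $(b_i,c_i)\notin A_\varepsilon$. Your split of the random index range, with arrivals independent of $\calR^N$ and initial tasks handled by conditioning on $M^N$, just spells out what the paper compresses into ``independently of $M^N$ and $\calR^N(t)$''; it relies on the same implicit independence of $(b_i,c_i)$, $i\le 0$, from $M^N$.
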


\begin{proof}
	By Lemma \ref{lem: approximating U_g by rectangles}, there exist closed rectangles $\setc{R_\varepsilon^i \subset U_g}{i = 1, \dots, k_\varepsilon}$ such that
	\begin{equation*}
		\int_{A_\varepsilon} g(x, y)dxdy \geq 1 - \varepsilon \quad \text{with} \quad A_\varepsilon \defeq \bigcup_{i = 1}^{k_\varepsilon} R_\varepsilon^i.
	\end{equation*}
	Considering slightly larger rectangles, we obtain another compact set $B_\varepsilon \subset U_g$ such that its interior contains $A_\varepsilon$. Because $B_\varepsilon \subset U_g$ is compact, it follows from Assumption \ref{ass: standing assumptions 2} that there exists a constant $K_\varepsilon \geq 0$ such that $h_g^x(x, y) + h_g^y(x, y) \leq K_\varepsilon$ for all $(x, y) \in B_\varepsilon$.
	
	For the second part of the lemma, we first note that:
	\begin{equation*}
		E\left|\bar\bD_t^N\left((1 - \psi)\varphi\right)\right| \leq E\left[\bar\bD_t^N\left((1 - \psi)|\varphi|\right)\right] = E\left[\bar\bD_t^{N, c}\left((1 - \psi)|\varphi|\right)\right] \geq E\left|\bar\bD_t^{N, c}\left((1 - \psi)\varphi\right)\right|
	\end{equation*}
	by Theorem \ref{the: compensator of departures-driven process}. Therefore, it suffices to bound the second expression from the left.
	
	Recall that $A_\varepsilon$ is a finite union of rectangles and $\bx_i^N(d_i^N) \leq b_i^N$ and $\by_i^N(d_i^N) \leq c_i^N$, so $(\bx_i^N(d_i^N), \by_i^N(d_i^N)) \notin A_\varepsilon$ implies that $(b_i^N, c_i^N) \notin A_\varepsilon$. It follows that
	\begin{align*}
		E\left[\bar\bD_t^N\left((1 - \psi)|\varphi|\right)\right] &\leq E\left[\sum_{i = 1 - M^N}^{\calR^N(t)} \ind{t \geq d_i^N, \left(\bx_i^N\left(d_i^N\right), \by_i^N\left(d_i^N\right)\right) \notin A_\varepsilon} \frac{\left|\varphi_{d_i^N}\left(\bx_i^N\left(d_i^N\right), \by_i^N\left(d_i^N\right)\right)\right|}{N}\right] \\
		&\leq \frac{1}{N}E\left[\sum_{i = 1 - M^N}^{\calR^N(t)} \ind{\left(b_i^N, c_i^N\right) \notin A_\varepsilon}\right] \norm{\varphi}_\infty \\
		&\leq \frac{1}{N}\expect*{\varepsilon \left[M^N + \calR^N(t)\right]} \norm{\varphi}_\infty \leq \varepsilon \left(\bar M + \frac{\expect*{\calR^N(t)}}{N}\right) \norm{\varphi}_\infty.
	\end{align*}
	For the third inequality, note that there are at most $M^N + \calR^N(t)$ tasks present at time~$t$, and that independently of $M^N$ and $\calR^N(t)$, for each task $i$, we have $(b_i^N, c_i^N) \notin A_\varepsilon$ with probability less than $\varepsilon$. The last inequality holds by Assumption \ref{ass: standing assumptions 1}
\end{proof}

The following lemma proves K2 for the arrival and departure processes associated with a function $\varphi \in C_b(\R_+^3)$, and for the compensator of this departure process.

\begin{lemma}
	\label{lem: proof of condition K2}
	For each $t \geq 0$ and $\varphi \in C_b(\R_+^3)$, we have:
	\begin{subequations}
		\begin{align}
			&\lim_{\delta \to 0} \limsup_{N \to \infty} E\left|\bar\bA_{t + \delta}^N(\varphi) - \bar\bA_t^N(\varphi)\right| = 0, \label{seq: short interval increment for arrivals} \\
			&\lim_{\delta \to 0} \limsup_{N \to \infty} E\left|\bar\bD_{t + \delta}^N(\varphi) - \bar\bD_t^N(\varphi)\right| = 0, \label{seq: short interval increment for departures} \\
			&\lim_{\delta \to 0} \limsup_{N \to \infty} E\left|\bar\bD_{t + \delta}^{N, c}(\varphi) - \bar\bD_t^{N, c}(\varphi)\right| = 0. \label{seq: short interval increment for departures compensator}
		\end{align}
	\end{subequations}
\end{lemma}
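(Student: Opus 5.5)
We prove the three limits separately. Both \eqref{seq: short interval increment for departures} and \eqref{seq: short interval increment for departures compensator} reduce to one estimate on the compensator. The arrival part is elementary.

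\emph{Arrivals.} By definition,
\begin{equation*}
	\bigl|\bar\bA_{t+\delta}^N(\varphi) - \bar\bA_t^N(\varphi)\bigr| \leq \norm{\varphi}_\infty \bigl[\bar\calR^N(t+\delta) - \bar\calR^N(t)\bigr].
\end{equation*}
Since $\calR^N(t) = \calR_0(Nt)$, the elementary renewal theorem gives $E[\calR_0(Nu)]/N \to \lambda_0 u$ for every $u \geq 0$. Taking expectations and letting $N \to \infty$, the $\limsup$ is therefore at most $\norm{\varphi}_\infty \lambda_0 \delta$, which vanishes as $\delta \to 0$. This proves \eqref{seq: short interval increment for arrivals}.

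\emph{Departures and compensators.} As observed before Lemma \ref{lem: bounds for integrals of hazard rate}, both expectations in \eqref{seq: short interval increment for departures} and \eqref{seq: short interval increment for departures compensator} are bounded by
\begin{equation*}
	E\bigl[\bar\bD_{t+\delta}^{N,c}(|\varphi|) - \bar\bD_t^{N,c}(|\varphi|)\bigr],
\end{equation*}
so it suffices to show that this quantity vanishes in the iterated limit. Fix $\varepsilon > 0$ and take $A_\varepsilon$, $B_\varepsilon$, $K_\varepsilon$ from Lemma \ref{lem: bounds for integrals of hazard rate}. Choose a continuous $\psi$ with $\ind{A_\varepsilon} \leq \psi \leq \ind{B_\varepsilon}$, which exists by Urysohn's lemma since $A_\varepsilon$ lies in the interior of $B_\varepsilon$. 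Split $|\varphi| = \psi|\varphi| + (1-\psi)|\varphi|$.

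For the $(1-\psi)$ part, apply Lemma \ref{lem: bounds for integrals of hazard rate} at times $t+\delta$ and $t$. Using monotonicity of $\bar\bD^{N,c}$ for nonnegative test functions, the increment is at most $E[\bar\bD^{N,c}_{t+\delta}((1-\psi)|\varphi|)]$. This is bounded by $\varepsilon\bigl(\bar M + E[\calR^N(t+\delta)]/N\bigr)\norm{\varphi}_\infty$, whose $\limsup$ in $N$ is at most $\varepsilon(\bar M + \lambda_0(t+1))\norm{\varphi}_\infty$ for $\delta \leq 1$.

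For the $\psi$ part, on $\supp(\psi) \subset B_\varepsilon$ the integrand satisfies $\psi|\varphi|\bigl[\bar r h_g^x + h_g^y\bigr] \leq \norm{\varphi}_\infty K_\varepsilon$, because $\bar r \leq 1$. Hence the increment is at most
\begin{equation*}
	\norm{\varphi}_\infty K_\varepsilon \int_t^{t+\delta} \bar\bmu_s^N(\R_+^2)\,ds \leq \norm{\varphi}_\infty K_\varepsilon\, \delta\, \frac{M^N + \calR^N(t+\delta)}{N}.
\end{equation*}
Its expectation has $\limsup$ at most $\norm{\varphi}_\infty K_\varepsilon \delta(\bar M + \lambda_0(t+1))$ by Assumption \ref{ass: standing assumptions 1}. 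Letting first $\delta \to 0$ and then $\varepsilon \to 0$ concludes the proof.

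\emph{Main obstacle.} The only delicate point is that $h_g$ may be unbounded near $\partial U_g$, so the compensator increment cannot be bounded directly by $\delta$. The truncation via $\psi$ from Lemma \ref{lem: bounds for integrals of hazard rate} handles this. Its uniform-in-$\delta$ control of the tail part is what permits the order of limits $\delta \to 0$ followed by $\varepsilon \to 0$.
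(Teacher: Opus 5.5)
Your proposal is correct and follows the paper's proof essentially step for step. It uses the same renewal-theorem bound for the arrival increments, and the same reduction of both departure estimates to the compensator increment of $|\varphi|$ via Theorem~\ref{the: compensator of departures-driven process}. It also uses the same split $|\varphi| = \psi|\varphi| + (1-\psi)|\varphi|$ from Lemma~\ref{lem: bounds for integrals of hazard rate}, bounding the truncated part by $\delta K_\varepsilon$ and the tail by $\varepsilon$ uniformly in $\delta$.
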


\begin{proof}
	For all $t \geq 0$ and $\delta > 0$, each $\varphi \in C_b(\R_+^3)$ satisfies that:
	\begin{equation*}
		E\left|\bar\bA_{t + \delta}^N(\varphi) - \bar\bA_t^N(\varphi)\right| \leq \frac{1}{N}\expect*{\sum_{i  = \calR^N(t) + 1}^{\calR^N(t + \delta)} \left|\varphi_{a_i^N}(0, 0)\right|} \leq \frac{\expect*{\calR^N(t + \delta) - \calR^N(t)}}{N} \norm{\varphi}_\infty.
	\end{equation*}
	By Assumption \ref{ass: standing assumptions 1} and the elementary renewal theorem, the right-hand side converges to $\lambda_0\delta\norm{\varphi}_\infty$ as $N \to \infty$, and therefore \eqref{seq: short interval increment for arrivals} follows. In addition, recall that
	\begin{equation}
		\label{eq: bound for short interval increments using absolut value}
		\begin{split}
			&E\left|\bar\bD_{t + \delta}^N(\varphi) - \bar\bD_t^N(\varphi)\right| \leq E\left[\bar\bD_{t + \delta}^N(|\varphi|) - \bar\bD_t^N(|\varphi|)\right], \\
			&E\left|\bar\bD_{t + \delta}^{N, c}(\varphi) - \bar\bD_t^{N, c}(\varphi)\right| \leq E\left[\bar\bD_{t + \delta}^{N, c}(|\varphi|) - \bar\bD_t^{N, c}(|\varphi|)\right].
		\end{split}
	\end{equation}
	Further, the right-hand sides of these inequalities are equal by Theorem \ref{the: compensator of departures-driven process}. As a result, both \eqref{seq: short interval increment for departures} and \eqref{seq: short interval increment for departures compensator} can be proved by showing that if $\varphi \in C_b(\R_+^3)$ is nonnegative, then
	\begin{equation}
		\label{eq: expected number of departures in a short interval}
		\lim_{\delta \to 0} \limsup_{N \to \infty} \expect*{\bar\bD_{t + \delta}^{N, c}(\varphi) - \bar\bD_t^{N, c}(\varphi)} = 0 \quad \text{for all} \quad t \geq 0.
	\end{equation}
	
	Given $\varepsilon > 0$, fix a constant $K_\varepsilon \geq 0$, compact sets $A_\varepsilon \subset B_\varepsilon \subset U_g$ and a function $\psi$ as in the statement of Lemma \ref{lem: bounds for integrals of hazard rate}. In addition, consider the following functions:
	\begin{equation*}
		\varphi^1(t, x, y) \defeq \psi(x, y)\varphi(t, x, y) \quad \text{and} \quad \varphi^2(t, x, y) \defeq [1 - \psi(x, y)]\varphi(t, x, y)
	\end{equation*}
	for all $t, x, y \in \R_+$. Since $\varphi = \varphi^1 + \varphi^2$ and $\varphi^2$ is nonnegative, we obtain:
	\begin{align*}
		\expect*{\bar\bD_{t + \delta}^{N, c}(\varphi) - \bar\bD_t^{N, c}(\varphi)} &= \expect*{\bar\bD_{t + \delta}^{N, c}(\varphi^1) - \bar\bD_t^{N, c}(\varphi^1)} + \expect*{\bar\bD_{t + \delta}^{N, c}(\varphi^2) - \bar\bD_{t}^{N, c}(\varphi^2)} \\
		&\leq \expect*{\bar\bD_{t + \delta}^{N, c}(\varphi^1) - \bar\bD_t^{N, c}(\varphi^1)} + \expect*{\bar\bD_{t + \delta}^{N, c}(\varphi^2)}.
	\end{align*}
	
	By Lemma \ref{lem: bounds for integrals of hazard rate} and the fact that $\varphi$ is nonnegative,
	\begin{equation*}
		\expect*{\bar\bD_{t + \delta}^{N, c}(\varphi^2)} = E\left|\bar\bD_{t + \delta}^{N, c}\left((1 - \psi)\varphi\right)\right| \leq \varepsilon\left(\bar M + \frac{\expect*{\calR^N(t + \delta)}}{N}\right)\norm{\varphi}_\infty.
	\end{equation*}
	Moreover, the fact that $\varphi$ is nonnegative and Lemma \ref{lem: bounds for integrals of hazard rate} also imply that:
	\begin{align*}
		\expect*{\bar\bD_{t + \delta}^{N, c}(\varphi^1) - \bar\bD_t^{N, c}(\varphi^1)} &\leq \frac{1}{N}\expect*{\int_t^{t + \delta}\int_{B_\varepsilon} \varphi_s(x, y)\left[h_g^x(x, y) + h_g^y(x, y)\right]\bmu_s^N(dx, dy)ds} \\
		&\leq \frac{\delta}{N} \expect*{M^N + \calR^N(t + \delta)} K_\varepsilon \norm{\varphi}_\infty \\
		&\leq \delta \left(\bar M + \frac{\expect*{\calR^N(t + \delta)}}{N}\right)K_\varepsilon \norm{\varphi}_\infty.
	\end{align*}
	For the first inequality, recall that $\psi = 0$, and thus $\varphi^1 = 0$, outside of $B_\varepsilon$. For the second inequality, note that $\bmu_s^N(\R_+^2) \leq M^N + \calR^N(t + \delta)$ for all $s \leq t + \delta$. In addition, observe that the third inequality follows from Assumption \ref{ass: standing assumptions 1}.
	
	By the elementary renewal theorem, $E[\calR^N(t + \delta)] / N \to \lambda_0(t + \delta)$ as $N \to \infty$. Thus,
	\begin{align*}
		0 \leq \lim_{\delta \to 0} \limsup_{N \to \infty} \expect*{\bar\bD_{t + \delta}^{N, c}(\varphi) - \bar\bD_t^{N, c}(\varphi)} &\leq \lim_{\delta \to 0} \left(\varepsilon + \delta K_\varepsilon\right) \left[\bar M + \lambda_0(t + \delta)\right] \norm{\varphi}_\infty \\
		&= \varepsilon \left[\bar M + \lambda_0t\right] \norm{\varphi}_\infty
	\end{align*}
	for all $\varepsilon > 0$. Hence, the iterated limit equals zero, proving \eqref{eq: expected number of departures in a short interval}.
\end{proof}

Using the previous result, the following lemma establishes the $\calH_2$ and $\calH_3$-weak tightness conditions required to invoke Theorem \ref{the: Jakubowski's criteria}.

\begin{lemma}
	\label{lem: relative compactness of projections by test functions}
	For each $\varphi \in C_b(\R_+^3)$, the sequences
	\begin{equation*}
		\asetc{\bar\bA^N(\varphi)}{N \geq 1}, \quad \asetc{\bar\bD^N(\varphi)}{N \geq 1} \quad \text{and} \quad \asetc{\bar\bD^{N, c}(\varphi)}{N \geq 1}
	\end{equation*}
	are tight in $D_\R[0, \infty)$. Therefore, the corresponding measure-valued processes are $\calH_3$-weakly tight. Also, the sequence $\setc{\langle\phi, \bar\bmu^N\rangle}{N \geq 1}$ is tight in $D_\R[0, \infty)$ for each $\phi \in C_c^1(\R_+^2)$, and thus the corresponding sequence of measure-valued processes is $\calH_2$-weakly tight.
\end{lemma}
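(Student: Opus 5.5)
The plan is to apply Kurtz's criteria (Theorem \ref{the: Kurtz's criteria}) with $\beta = 1$ to each real-valued sequence. Tightness in $D_\R[0, \infty)$ of these projections is then exactly the $\calH_3$- and $\calH_2$-weak tightness required by J2 of Theorem \ref{the: Jakubowski's criteria}, since the families in \eqref{eq: family of functions for Jakubowski's criteria} are built from these very functionals.

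\emph{Arrivals, departures and compensators.} Condition K2 is exactly Lemma \ref{lem: proof of condition K2}, which gives \eqref{seq: short interval increment for arrivals}, \eqref{seq: short interval increment for departures} and \eqref{seq: short interval increment for departures compensator}. For K1, fix $t$ and use Markov's inequality. We have $|\bar\bA_t^N(\varphi)| \le \norm{\varphi}_\infty \calR^N(t)/N$ and $|\bar\bD_t^N(\varphi)| \le \norm{\varphi}_\infty (M^N + \calR^N(t))/N$. For the compensator, $E|\bar\bD_t^{N,c}(\varphi)| \le E[\bar\bD_t^{N,c}(|\varphi|)] = E[\bar\bD_t^N(|\varphi|)]$ by Theorem \ref{the: compensator of departures-driven process}, which is again bounded by $\norm{\varphi}_\infty E[M^N + \calR^N(t)]/N$. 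Assumption \ref{ass: standing assumptions 1}(a) and the elementary renewal theorem bound this last quantity by $\norm{\varphi}_\infty(\bar M + \lambda_0 t + o(1))$. Hence $P(|\cdot| > K) \le C/K$ uniformly in large $N$, which gives K1.

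\emph{The state process.} Fix $\phi \in C_c^1(\R_+^2)$ and regard it as a time-independent element of $C_c^1(\R_+^3)$. The normalized pre-limit equation \eqref{eq: pre-limit equation in normalized notation} gives
\begin{equation*}
\langle \phi, \bar\bmu_t^N\rangle = \langle \phi, \bar\bmu_0^N\rangle + \bar\bA_t^N(\phi) - \bar\bD_t^N(\phi) + \int_0^t \langle \bar r(\bar\bmu_s^N)\partial_x\phi + \partial_y\phi, \bar\bmu_s^N\rangle ds.
\end{equation*}
For K1, $|\langle\phi,\bar\bmu_t^N\rangle| \le \norm{\phi}_\infty (M^N+\calR^N(t))/N$, whose expectation is bounded as above. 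For K2, the increment over $[t,t+\delta]$ is bounded by three pieces:
\begin{enumerate}
\item the arrival increment, controlled by \eqref{seq: short interval increment for arrivals};
\item the departure increment, controlled by \eqref{seq: short interval increment for departures};
\item the integral term, which is at most $\delta(\norm{\partial_x\phi}_\infty + \norm{\partial_y\phi}_\infty)(M^N + \calR^N(t+\delta))/N$, because $0 \le \bar r \le 1$ and $\bar\bmu_s^N(\R_+^2) \le (M^N + \calR^N(t+\delta))/N$ for $s \le t+\delta$.
\end{enumerate}
Taking expectations, then $\limsup_N$, then $\delta \to 0$, each piece vanishes; the third is $O(\delta(\bar M + \lambda_0(t+\delta)))$.

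There is no genuine obstacle here, since the hard work, the unbounded hazard rates in the compensator, was done in Lemmas \ref{lem: bounds for integrals of hazard rate} and \ref{lem: proof of condition K2}. The one point needing care is that $\bar r(\bar\bmu^N)$ is discontinuous, so we must not use any continuity of the flow term. This is harmless because only the bound $|\bar r| \le 1$ enters, and the compactly supported $C^1$ test function keeps $\partial_x\phi$ and $\partial_y\phi$ bounded.
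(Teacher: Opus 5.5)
Your proposal is correct and follows the paper's proof essentially step for step. K1 comes from Markov's inequality with the bound $\norm{\varphi}_\infty E[M^N+\calR^N(t)]/N$, using the compensator identity for $\bar\bD^{N,c}$, and K2 from Lemma~\ref{lem: proof of condition K2}; for $\langle\phi,\bar\bmu^N\rangle$ you use the pre-limit decomposition with the flow term bounded by $\delta(\norm{\partial_x\phi}_\infty+\norm{\partial_y\phi}_\infty)(M^N+\calR^N(t+\delta))/N$. The only nit, which the paper shares, is that a time-independent $\phi$ is not literally in $C_c^1(\R_+^3)$; multiplying by a $C^1$ time cutoff equal to one on $[0,t+\delta]$ fixes this harmlessly.
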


\begin{proof}
	First, fix some $\varphi \in C_b(\R_+^3)$. For each $t \geq 0$, we have
	\begin{equation*}
		E\left|\bar\bA_t^N(\varphi)\right| \leq \frac{1}{N}E\left[\sum_{i = 1}^{\calR^N(t)} \left|\varphi_{a_i^N}\left(0 , 0\right)\right|\right] \leq \frac{\expect*{\calR^N(t)}}{N} \norm{\varphi}_\infty.
	\end{equation*}
	In addition, it follows from Theorem \ref{the: compensator of departures-driven process} that
	\begin{equation*}
		E\left|\bar\bD_t^N(\varphi)\right| \leq E\left[\bar\bD_t^N(|\varphi|)\right] = E\left[\bar\bD_t^{N, c}(|\varphi|)\right] \geq E\left|\bar\bD_t^{N, c}(\varphi)\right|.
	\end{equation*}
	Hence, by bounding the second expression from the left, we obtain:
	\begin{align*}
		\max\left\{E\left|\bar\bD_t^N(\varphi)\right|, E\left|\bar\bD_t^{N, c}(\varphi)\right|\right\} &\leq \frac{1}{N}E\left[\sum_{i = 1 - M^N}^{\calR^N(t)} \left|\varphi_{d_i^N}\left(\bx_i^N\left(d_i^N\right) , \by_i^N\left(d_i^N\right)\right)\right|\right] \\
		&\leq \frac{\expect*{M^N + \calR^N(t)}}{N} \norm{\varphi}_\infty.
	\end{align*}
	
	Recall that $E[M^N] \leq \bar M N$ by Assumption \ref{ass: standing assumptions 1} and observe that $E[\calR^N(t)] / N \to \lambda_0 t$ by the elementary renewal theorem. Therefore, it follows from the above inequalities and an application of Markov's inequality that the following sequences satisfy K1:
	\begin{equation*}
		\asetc{\bar\bA^N(\varphi)}{N \geq 1}, \quad \asetc{\bar\bD^N(\varphi)}{N \geq 1} \quad \text{and} \quad \asetc{\bar\bD^{N, c}(\varphi)}{N \geq 1}
	\end{equation*}
	Moreover, K2 holds by Lemma \ref{lem: proof of condition K2}, so Theorem \ref{the: Kurtz's criteria} yields tightness in $D_\R[0, \infty)$.
	
	Now fix $\phi \in C_c^1(\R_+^2)$ and observe that
	\begin{equation*}
		E\left|\angled{\phi}{\bar\bmu_t^N}\right| \leq \frac{1}{N}E\left[\sum_{i = 1 - M^N}^{\calR^N(t)} \left|\phi\left(\bx_i^N(t), \by_i^N(t)\right)\right|\right] \leq \frac{\expect*{M^N + \calR^N(t)}}{N} \norm{\phi}_\infty.
	\end{equation*}	
	Arguing as above, we conclude that the sequence $\setc{\langle\phi, \bar\bmu^N\rangle}{N \geq 1}$ satisfies K1. In order to prove that K2 holds as well, note that Proposition \ref{prop: arrivals-flow-departures decomposition} yields:
	\begin{align*}
		\angled{\phi}{\bar\bmu_t^N} &= \angled{\phi}{\bar\bmu^N_0} + \bar\bA_t^N(\phi) - \bar\bD_t^N(\phi) + \int_0^t \angled{r^N\left(\bmu_s^N\right)\partial_x\phi + \partial_y\phi}{\bar\bmu_s^N}ds \\
		&= \angled{\phi}{\bar\bmu^N_0} + \bar\bA_t^N(\phi) - \bar\bD_t^N(\phi) + \int_0^t \angled{\bar r\left(\bar \bmu_s^N\right)\partial_x\phi + \partial_y\phi}{\bar\bmu_s^N}ds \quad \text{for all} \quad t \geq 0.
	\end{align*}
	By Lemma \ref{lem: proof of condition K2}, in order to establish that K2 is satisfied by $\setc{\langle\phi, \bar\bmu^N\rangle}{N \geq 1}$, it suffices to prove that it is satisfied by the processes defined as
	\begin{equation*}
		\bX_t^N \defeq \int_0^t \angled{\bar r\left(\bar\bmu_s^N\right)\partial_x\phi + \partial_y\phi}{\bar\bmu_s^N}ds \quad \text{for all} \quad t \geq 0.
	\end{equation*}
	
	This is indeed the case, because
	\begin{align*}
		E\left|\bX_{t + \delta}^N - \bX_t^N\right| &\leq E\left[\int_t^{t + \delta} \angled{\left|\partial_x\phi\right| + \left|\partial_y\phi\right|}{\bar\bmu_s^N}ds\right] \\
		&\leq \delta\left[\norm{\partial_x\phi}_\infty + \norm{\partial_y\phi}_\infty\right]\expect*{\sup_{s \in [t, t + \delta]} \bar\bmu_s^N\left(\R_+^2\right)} \\
		&\leq \frac{\delta}{N} \left[\norm{\partial_x\phi}_\infty + \norm{\partial_y\phi}_\infty\right]\expect*{M^N + \calR^N(t + \delta)}.
	\end{align*}
	By Assumption \ref{ass: standing assumptions 1} and the elementary renewal theorem, the limit superior as $N \to \infty$ of the last expression is bounded by $\delta [\norm{\partial_x \phi}_\infty + \norm{\partial_y\phi}_\infty][\bar M + \lambda_0(t + \delta)]$. This bound vanishes as $\delta \to 0$, so K2 holds, and hence $\setc{\langle\phi, \bar\bmu^N\rangle}{N \geq 1}$ is tight in $D_\R[0, \infty)$.
\end{proof}

We are now ready to prove the main result of this section.

\begin{theorem}
	\label{the: relative compactness of measured-valued processes}
	The sequences
	\begin{equation*}
		\asetc{\bar\bmu^N}{N \geq 1}, \quad \asetc{\bar\bA^N}{N \geq 1}, \quad \asetc{\bar\bD^N}{N \geq 1} \quad \text{and} \quad \asetc{\bar\bD^{N, c}}{N \geq 1}
	\end{equation*}
	are tight, the former in $D_{\calM_F^+\left(\R_+^2\right)}[0, \infty)$ and the latter three in $D_{\calM_F^+\left(\R_+^3\right)}[0, \infty)$.
\end{theorem}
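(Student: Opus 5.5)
By Theorem \ref{the: Jakubowski's criteria}, tightness follows from J1 (compact containment) together with J2 ($\calH_d$-weak tightness). J2 is already provided by Lemma \ref{lem: relative compactness of projections by test functions}: $\calH_2$-weak tightness for $\bar\bmu^N$ and $\calH_3$-weak tightness for the other three sequences. One technical point needs care for $\bar\bmu^N$. The family $\calH_2$ uses compactly supported test functions, but it must separate points of $\calM_F^+(\R_+^2)$ and consist of weakly continuous maps. Both hold, since $C_c^1$ functions determine finite measures and $\mu \mapsto \langle \phi, \mu\rangle$ is weakly continuous for $\phi \in C_c^1(\R_+^2)$. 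The main work is therefore J1.

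For J1 I will use the standard characterization of relatively compact subsets of $\calM_F^+(\R_+^d)$ (Prohorov): a set is relatively compact if total masses are uniformly bounded and there is uniform tightness, meaning for every $\eta>0$ a compact $Q\subset\R_+^d$ with $\nu(Q^c)\le\eta$ for all $\nu$ in the set. I will choose a compact $C$ of the form
\begin{equation*}
	C = \asetc{\nu}{\nu(\R_+^d) \leq L,\ \nu(Q_j^c) \leq 1/j\ \text{for all}\ j \geq 1},
\end{equation*}
where $Q_j$ are suitably chosen compacts. By Markov's inequality and a union bound, it suffices to control the mass and the escaping mass uniformly over $t \in [0, T]$.

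\textbf{Mass bounds.} For all four processes and all $t\le T$, the total mass is bounded by $(M^N+\calR^N(T))/N$. This holds because the departure-type processes and $\bar\bA^N$ are nondecreasing in $t$ for nonnegative test functions, and for $\bar\bD^{N,c}$ it follows from the identity $E[\bar\bD^{N,c}_T(\indc)]=E[\bar\bD^N_T(\indc)]$ together with Markov's inequality. By Assumption \ref{ass: standing assumptions 1} and the elementary renewal theorem, the expectation of this bound is at most $\bar M+\lambda_0T+o(1)$. Markov's inequality then gives $P(\sup_{t\le T}\text{mass}>L)\le\varepsilon$ for large $L$.

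\textbf{Escaping mass.} For $\bar\bA^N$ the atoms lie in $[0,T]\times\{(0,0)\}$, so this is trivial. For $\bar\bD^N$ and $\bar\bD^{N,c}$, I will use Lemma \ref{lem: bounds for integrals of hazard rate} with $\varepsilon_j = \varepsilon 2^{-j}/j$ and a cutoff $\psi_j$ equal to $1$ on $A_{\varepsilon_j}$. It gives
\begin{equation*}
	E\bigl[\bar\bD^{N}_T((1-\psi_j))\bigr] \leq \varepsilon_j(\bar M+E\calR^N(T)/N),
\end{equation*}
and the same bound for $\bar\bD^{N,c}$. Take $Q_j=[0,T]\times B_{\varepsilon_j}$, where $B_{\varepsilon_j}$ is compact. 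Monotonicity in $t$ reduces the supremum over $t\le T$ to time $T$, and summing over $j$ via Markov's inequality handles all $j$ at once.

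\textbf{Escaping mass for $\bar\bmu^N$.} This is the main obstacle, because $\bar\bmu^N_t$ is not monotone in $t$. Since $\bx_i\le\by_i$ and attained service and time in system both grow at most linearly, particles present at time $t\le T$ satisfy $\bx_i(t)\le\by_i(t)\le \by_i(0)+T$. Hence it suffices that the initial measures put little mass at large $y$. I will take $Q=\{(x,y): x\le y\le R+T\}\cup\{(x,y):(x,y)\in K_0 + [0,T]^2\}$, with $K_0$ a compact from the tightness of $\bar\bmu_0^N$ (Assumption \ref{ass: standing assumptions 1}(b)). Then $\sup_{t\le T}\bar\bmu_t^N(Q^c)\le \bar\bmu_0^N(K_0^c)$, because tasks arriving after time zero stay in $\{x\le y\le T\}$ and initial tasks move from $K_0$ by at most $T$ in each coordinate. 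The tightness of $\bar\bmu_0^N$ gives $\liminf_N P(\bar\bmu_0^N(K_{0,j}^c)\le 1/j\ \forall j)\ge 1-\varepsilon$ after choosing nested compacts $K_{0,j}$ with summable failure probabilities, obtained from Prohorov's theorem applied to the laws of $\bar\bmu_0^N$. Combining the mass and escaping-mass bounds verifies J1 for each sequence, and Theorem \ref{the: Jakubowski's criteria} concludes.
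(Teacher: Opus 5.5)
Your proposal is correct and follows essentially the paper's route. J2 comes from Lemma~\ref{lem: relative compactness of projections by test functions}. J1 comes from Markov bounds on total mass, from the initial-state tightness for $\bar\bmu^N$ (your $K_0+[0,T]^2$ plays the role of the paper's $[0,T_\delta+\Delta]^2$), and, for $\bar\bD^N$ and $\bar\bD^{N,c}$, from Lemma~\ref{lem: bounds for integrals of hazard rate} with summable $\varepsilon_j$ plus monotonicity in $t$.

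Three small fixes. The mass of $\bar\bD^{N,c}_T$ is only controlled in expectation, not pathwise, though you then use it correctly. The paper does not assume $\bx_i(0)\le\by_i(0)$ for initial tasks, so the work is done by the compact $K_0+[0,T]^2$, not by a bound on $y$ alone. Finally, you need $\psi_j\le\indc_{B_{\varepsilon_j}}$ so that $\indc_{\R_+\times B_{\varepsilon_j}^c}\le 1-\psi_j$.
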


\begin{proof}
	As noted earlier, the proof consists of verifying the criteria of Theorem \ref{the: Jakubowski's criteria}. For the first sequence, we have already shown in Lemma \ref{lem: relative compactness of projections by test functions} that J2 holds, so it remains to prove that J1 holds as well. For this purpose, we must fix arbitrary $T > 0$ and $\varepsilon > 0$ and prove that there exists a compact set $C \subset \calM_F^+(\R_+^2)$ such that
	\begin{equation}
		\label{eq: compact containment condition for measure-valued state}
		\limsup_{N \to \infty} \cprob*{\bar\bmu_t^N \notin C\ \text{for some}\ t \in [0, T]} \leq \varepsilon.
	\end{equation}
	
	By Assumption \ref{ass: standing assumptions 1}, the sequence of measure-valued initial states $\setc{\bar\bmu_0^N}{N \geq 1}$ is tight in $\calM_F^+(\R_+^2)$. Then there exists a compact set $K \subset \calM_F^+(\R_+^2)$ such that
	\begin{equation}
		\label{eq: tightness of initial states}
		\sup_{N \geq 1} \cprob*{\bar\bmu_0^N \notin K} \leq \frac{\varepsilon}{2}.
	\end{equation}
	Observe that Prohorov's theorem implies that $K$ is tight in $\calM_F^+(\R_+^2)$, because it is compact. Thus, for each $\delta > 0$ there exists $T_\delta \geq 0$ such that
	\begin{equation*}
		\sup_{\mu \in K} \mu\left(\R_+^2 \setminus [0, T_\delta]^2\right) \leq \delta.
	\end{equation*}
	
	In order to prove \eqref{eq: compact containment condition for measure-valued state}, consider the following sets:
	\begin{equation*}
		A_L^\Delta(\delta) \defeq \asetc{\mu \in \calM_F^+\left(\R_+^2\right)}{\mu\left(\R_+^2\right) \leq L,\ \mu\left(\R_+^2 \setminus [0, T_\delta + \Delta]^2\right) \leq \delta}, 
	\end{equation*}
	and let $A_L^\Delta$ be the intersection of the sets $A_L^\Delta(\delta)$ over $\delta > 0$. Then $A_L^\Delta$ is uniformly bounded in the total variation norm and tight, hence relatively compact by Prohorov's theorem. We define $C_L^\Delta$ as the closure of $A_L^\Delta$ in $\calM_F^+(\R_+^2)$, which is a compact set.
	
	By Markov's inequality, Assumption \ref{ass: standing assumptions 1} and the elementary renewal theorem:
	\begin{equation}
		\label{eq: bound for probability of large total variation}
		\begin{split}
			\limsup_{N \to \infty} \cprob*{\sup_{t \in [0, T]} \bar\bmu_t^N\left(\R_+^2\right) > L} &\leq \limsup_{N \to \infty} \cprob*{\frac{M^N + \calR^N(T)}{N} > L} \\
			&\leq \limsup_{N \to \infty} \frac{\expect*{M^N + \calR^N(T)}}{LN} \leq \frac{\bar M + \lambda_0 T}{L},
		\end{split}
	\end{equation}
	where the right-hand side can be made smaller than $\varepsilon / 2$ by increasing $L$. In addition,
	\begin{equation}
		\label{eq: statement about compact containment}
		\bar\bmu_t^N\left(\R_+^2 \setminus [0, T_\delta + \Delta]^2\right) > \delta \quad \text{for some} \quad t \in [0, T]
	\end{equation}
	implies that $\bx_i^N(t) > T_\delta + \Delta$ or $\by_i^N(t) > T_\delta + \Delta$ for some time $t \in [0, T]$ and sufficiently many tasks. Note that $\bx_i^N(t) \leq \bx_i^N(0) + t$ and $\by_i^N(t) \leq \by_i^N(0) + t$, so if $\Delta > T$, then \eqref{eq: statement about compact containment} implies that sufficiently many tasks are present at time zero and satisfy:
	\begin{equation*}
		\bx_i^N(0) > T_\delta + \Delta - t \geq T_\delta + \Delta - T \quad \text{or} \quad \by_i^N(0) > T_\delta + \Delta - t \geq T_\delta + \Delta - T \quad \text{with} \quad t \in [0, T].
	\end{equation*}
	We conclude that \eqref{eq: statement about compact containment} implies that $\bar\bmu_0^N(\R_+^2 \setminus [0, T_\delta + \Delta - T]^2) > \delta$, which clearly implies that $\bar\bmu_0^N(\R_+^2 \setminus [0, T_\delta]^2) > \delta$ if $\Delta > T$. It follows from the above implications that
	\begin{equation}
		\label{eq: bound for probability of escaping mass}
		\begin{split}
			\cprob*{\bigcup_{\delta > 0} \left\{\sup_{t \in [0, T]} \bar\bmu_t^N\left(\R_+^2 \setminus [0, T_\delta + \Delta]^2\right) > \delta\right\}} &\leq \cprob*{\bigcup_{\delta > 0} \left\{\bar\bmu_0^N\left(\R_+^2 \setminus [0, T_\delta]^2\right) > \delta\right\}} \\
			&\leq \cprob*{\bar\bmu_0^N \notin K} \quad \text{for all} \quad N \geq 1.
		\end{split}
	\end{equation}
	
	Hence, by \eqref{eq: tightness of initial states}, \eqref{eq: bound for probability of large total variation} and \eqref{eq: bound for probability of escaping mass}, if $\Delta > T$, then we may select $L \geq 0$ such that:
	\begin{align*}
		\limsup_{N \to \infty} \cprob*{\bar\bmu_t^N \notin C_L^\Delta\ \text{for some}\ t \in [0, T]} \leq \frac{\bar M + \lambda_0 T}{L} + \sup_{N \geq 1} \cprob*{\bar\bmu_0^N \notin K} \leq \varepsilon,
	\end{align*}
	which proves \eqref{eq: compact containment condition for measure-valued state}. Therefore, the sequence $\setc{\bar\bmu^N}{N \geq 1}$ is tight in $D_{\calM_F^+(\R_+^2)}[0, \infty)$.
	
	In order to show that $\setc{\bar\bA^N}{N \geq 1}$ is tight, it is again sufficient to prove that condition J1 holds since Lemma \ref{lem: relative compactness of projections by test functions} implies that condition J2 holds. Thus, we fix arbitrary $T > 0$ and $\varepsilon > 0$ and proceed to establish J1. For this purpose, note that the set
	\begin{equation*}
		\asetc{\mu \in \calM_F^+\left(\R_+^3\right)}{\mu\left(\R_+^3\right) \leq L,\ \mu\left(\R_+^3 \setminus [0, T]^3\right) = 0}
	\end{equation*}
	is bounded in the total variation norm and tight, so its closure $C_L$ is compact in $\calM_F^+(\R_+^3)$ by Prohorov's theorem. The support of $\bar\bA_t^N$ is contained in $[0, T]^3$ for all $t \in [0, T]$. Also, if $\indc$ denotes the function that is identically equal to one, then
	\begin{equation*}
		\limsup_{N \to \infty} \cprob*{\sup_{t \in [0, T]} \bar\bA_t^N(\indc) > L} = \limsup_{N \to \infty} \cprob*{\frac{\calR^N(T)}{N} > L} \leq \limsup_{N \to \infty} \frac{\expect*{\calR^N(T)}}{LN} = \frac{\lambda_0 T}{L}
	\end{equation*}
	by the elementary renewal theorem. Hence, there exists $L \geq 0$ such that the right-hand side is strictly smaller than $\varepsilon$, and it follows that the probability that $\bar\bA_t^N \notin C_L$ for some $t \in [0, T]$ is less than $\varepsilon$ for all large enough $N$, which proves J1.
	
	In order to prove tightness of the remaining two sequences of processes, we proceed as before, fixing arbitrary $T > 0$ and $\varepsilon > 0$ and proving J1. Define
	\begin{equation*}
		\varepsilon_n \defeq \frac{\varepsilon}{n 2^{n + 1} \left(\bar M + \lambda_0 T\right)} \quad \text{for all} \quad n \geq 1.
	\end{equation*}
	Lemma \ref{lem: bounds for integrals of hazard rate} associates to each of the above constants $\varepsilon_n$ a compact set $B_n \subset U_g$ and a function $\psi_n$ with values in $[0, 1]$ vanishing outside $B_n$. Moreover, it implies that
	\begin{align*}
		\max \left\{\expect*{\bar\bD_T^N\left(\indc_{\R_+ \times B_n^c}\right)}, \expect*{\bar\bD_T^{N, c}\left(\indc_{\R_+ \times B_n^c}\right)}\right\} &\leq \\
		\max \left\{\expect*{\bar\bD_T^N\left(1 - \psi_n\right)}, \expect*{\bar\bD_T^{N, c}\left(1 - \psi_n\right)}\right\} &\leq \varepsilon_n\left(\bar M + \frac{\expect*{\calR^N(T)}}{N}\right).
	\end{align*}
	Because $\bar\bD_T^N$ and $\bar\bD_T^{N, c}$ are supported in $[0, T] \times \R_+^2$, we conclude that
	\begin{equation}
		\label{eq: bound on expectations for proving tightness of departure processes}
		\begin{split}
			&\expect*{\bar\bD_T^N\left(\ind{\R_+^3 \setminus \left([0, T] \times B_n\right)}\right)} = \expect*{\bar\bD_T^N\left(\indc_{\R_+ \times B_n^c}\right)} \leq \varepsilon_n\left(\bar M + \frac{\expect*{\calR^N(T)}}{N}\right), \\
			&\expect*{\bar\bD_T^{N, c}\left(\ind{\R_+^3 \setminus \left([0, T] \times B_n\right)}\right)} = \expect*{\bar\bD_T^{N, c}\left(\indc_{\R_+ \times B_n^c}\right)} \leq \varepsilon_n\left(\bar M + \frac{\expect*{\calR^N(T)}}{N}\right).
		\end{split}
	\end{equation}
	
	Given $L \geq 0$ and $n \geq 1$, define
	\begin{equation*}
		C_L^n \defeq \asetc{\mu \in \calM_F^+\left(\R_+^3\right)}{\mu\left(\R_+^3\right) \leq L,\ \mu\left(\R_+^3 \setminus \left([0, T] \times B_n\right)\right) \leq \frac{1}{n}}.
	\end{equation*}
	Then let $C_L$ be the intersection of the sets $C_L^n$ over $n \geq 1$. As before, the set $C_L$ is relatively compact by Prohorov's theorem, and therefore its closure $K_L$ is compact. Below we prove that $L \geq 0$ can be selected in such a way that
	\begin{equation}
		\label{eq: compact containment condition for departure process}
		\limsup_{N \to \infty} \cprob*{\bar\bD_t^N \notin K_L\ \text{for some}\ t \in [0, T]} \leq \varepsilon.
	\end{equation}
	This establishes that J1 holds for $\setc{\bar\bD^N}{N \geq 1}$, and thus that the sequence is tight. We only consider this sequence since the same arguments carry over for $\bar\bD^{N, c}$.
	
	First, note that $\bar\bD_t^N(\varphi)$ is nondecreasing in $t$ for all nonnegative functions $\varphi$. Thus,
	\begin{align*}
		\cprob*{\bigcup_{n \geq 0} \left\{\sup_{t \in [0, T]} \bar\bD_t^N\left(\ind{\R_+^3 \setminus \left([0, T] \times B_n\right)}\right) > \frac{1}{n}\right\}} &= \cprob*{\bigcup_{n \geq 0} \left\{\bar\bD_T^N\left(\ind{\R_+^3 \setminus \left([0, T] \times B_n\right)}\right) > \frac{1}{n}\right\}} \\
		&\leq \sum_{n = 1}^\infty n\expect*{\bar\bD_T^N\left(\ind{\R_+^3 \setminus \left([0, T] \times B_n\right)}\right)} \\
		&\leq \frac{\varepsilon}{2\left(\bar M + \lambda_0 T\right)} \left(\bar M + \frac{\expect*{\calR^N(T)}}{N}\right),
	\end{align*}
	where the first inequality holds by the union bound and Markov's inequality, and the second inequality holds by \eqref{eq: bound on expectations for proving tightness of departure processes}. It follows from the elementary renewal theorem that
	\begin{equation}
		\label{eq: first bound for tightness of departure process}
		\limsup_{N \to \infty} \cprob*{\bigcup_{n \geq 0} \left\{\sup_{t \in [0, T]} \bar\bD_t^N\left(\ind{\R_+^3 \setminus \left([0, T] \times B_n\right)}\right) > \frac{1}{n}\right\}} \leq \frac{\varepsilon}{2}.
	\end{equation}
	
	In addition, recalling that $\indc$ is the function that is identically equal to one, we obtain:
	\begin{equation*}
		\cprob*{\sup_{t \in [0, T]} \bar\bD_t^N(\indc) > L} = \cprob*{\bar\bD_T^N(\indc) > L} \leq \frac{\expect*{\bar\bD_T^N(\indc)}}{L} \leq \frac{\expect*{M^N + \calR^N(T)}}{LN}
	\end{equation*}
	The last inequality holds since the number of departures in $[0, T]$ is bounded by the initial number of tasks and the number of arrivals; note that the last inequality still holds if $\bar\bD^N$ is replaced by $\bar\bD^{N, c}$ since $E[\bar\bD_T^N(\indc)] = E[\bar\bD_T^{N, c}(\indc)]$ by Theorem \ref{the: compensator of departures-driven process}. Then Assumption~\ref{ass: standing assumptions 1} and the elementary renewal theorem imply that there exists $L \geq 0$ such that
	\begin{equation}
		\label{eq: second bound for tightness of departure process}
		\limsup_{N \to \infty} \cprob*{\sup_{t \in [0, T]} \bar\bD_t^N(\indc) > L} \leq \frac{\varepsilon}{2}.
	\end{equation}
	
	It follows from \eqref{eq: first bound for tightness of departure process} and \eqref{eq: second bound for tightness of departure process} that \eqref{eq: compact containment condition for departure process} holds, which completes the proof.
\end{proof}

\subsubsection{Convergence of integrals}
\label{subsub: convergence of integrals}

We conclude this section by establishing that convergence in $D_{\calM_F^+(\R_+^3)}[0, \infty)$ implies convergence of certain integrals in $D_\R[0, \infty)$, a property that will be useful in the next section. We provide the proof of the following lemma in Appendix \ref{app: proofs of auxiliary results}.

\begin{restatable}{lemma}{lemmafoursixteen}
	\label{lem: convergence measure-valued functions implies convergence of projections}
	Suppose that $\bY^N \to \bY$ in $D_{\calM_F^+(\R_+^3)}[0, \infty)$ as $N \to \infty$ and let $\varphi \in C_b(\R_+^3)$ be a Lipschitz function. For each $t \geq 0$, let us denote the integrals of $\varphi$ with respect to $\bY_t^N$ and $\bY_t$ by $\bY_t^N(\varphi)$ and $\bY_t(\varphi)$, respectively. Then $\bY^N(\varphi) \to \bY(\varphi)$ in $D_\R[0, \infty)$.
\end{restatable}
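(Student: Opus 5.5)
The approach uses the Skorohod $J_1$ characterization of convergence: $\bY^N \to \bY$ in $D_{\calM_F^+(\R_+^3)}[0, \infty)$ if and only if there exist continuous, strictly increasing time changes $\lambda_N$ with $\lambda_N(0) = 0$ such that, for each $T > 0$,
\begin{equation*}
	\sup_{t \in [0, T]} |\lambda_N(t) - t| \to 0 \quad \text{and} \quad \sup_{t \in [0, T]} d_{BL}\left(\bY_{\lambda_N(t)}^N, \bY_t\right) \to 0 \quad \text{as} \quad N \to \infty.
\end{equation*}
Here the metric on $\calM_F^+(\R_+^3)$ is the Bounded-Lipschitz metric introduced in Section \ref{sub: notation and terminology}. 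Although the Skorohod topology is defined via some complete metric inducing the weak topology, $d_{BL}$ induces the same topology and is complete. Convergence in $D_{\calM_F^+(\R_+^3)}[0, \infty)$ therefore does not depend on which of these metrics is used, and I will work with $d_{BL}$ throughout.

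The key observation is that a bounded Lipschitz $\varphi$ is, after rescaling, an element of $\calL$. Let $L$ be the Lipschitz constant of $\varphi$ and set $c \defeq \max\{\norm{\varphi}_\infty, L, 1\}$, so that $\varphi / c \in \calL$. The definition of $d_{BL}$ then gives
\begin{equation*}
	\left|\bY_{\lambda_N(t)}^N(\varphi) - \bY_t(\varphi)\right| = c \left|\angled{\varphi / c}{\bY_{\lambda_N(t)}^N} - \angled{\varphi / c}{\bY_t}\right| \leq c\, d_{BL}\left(\bY_{\lambda_N(t)}^N, \bY_t\right)
\end{equation*}
for all $t \geq 0$.

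Taking the supremum over $t \in [0, T]$, the same time changes $\lambda_N$ satisfy both $\sup_{[0, T]} |\lambda_N(t) - t| \to 0$ and $\sup_{[0, T]} |\bY^N_{\lambda_N(t)}(\varphi) - \bY_t(\varphi)| \to 0$ for every $T$. This is precisely $J_1$ convergence of $\bY^N(\varphi)$ to $\bY(\varphi)$ in $D_\R[0, \infty)$. Before concluding, I will check that $t \mapsto \bY_t(\varphi)$ is càdlàg, which holds because $\mu \mapsto \angled{\varphi}{\mu}$ is continuous in the weak topology.

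The only point requiring care is the technical version of the $J_1$ characterization on $[0, \infty)$. The standard statement in \cite[Proposition 3.5.3]{ethier2009markov} allows uniform control only on $[0, T]$ at continuity points, or equivalently via time changes on the whole half-line. I would invoke that proposition in its form with time changes $\lambda_N$ satisfying the above uniform conditions on every compact interval. Since the transfer inequality is pointwise in $t$ and uses the same $\lambda_N$, whatever formulation is chosen transfers verbatim from the measure-valued processes to the real-valued ones, so no genuine obstacle remains.
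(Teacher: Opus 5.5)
Your proof is correct and is essentially the paper's argument: rescale $\varphi$ into $\calL$, bound $|\bY^N_s(\varphi)-\bY_t(\varphi)|$ by a constant times $d_{BL}(\bY^N_s,\bY_t)$ for arbitrary pairs of times, and transfer the Skorohod convergence using the same time changes. The only difference is that you use the time-change characterization directly on $[0,\infty)$, whereas the paper proves $d_T^f \leq K\,d_T^m$ on each $[0,T]$; your version avoids the restriction to $[0,T]$, which is only justified when $T$ is a continuity point of the limit.
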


\subsection{Characterization of subsequential limits}
\label{sub: characterization of subsequential limits}

Consider the space $\calM_F^+(\R_+^2) \times D_{\calM_F^+(\R_+^2)}[0, \infty) \times [D_{\calM_F^+(\R_+^3)}[0, \infty)]^3 \times D_\R[0, \infty)$ endowed with a metric that is compatible with the product topology. Using the usual notation, let $\bar\calR^N \defeq \calR^N / N$ be the normalized arrival process. The sequence
\begin{equation*}
	\asetc{\bar\bX^N \defeq \left(\bar\bmu_0^N, \bar\bmu^N, \bar\bA^N, \bar\bD^N, \bar\bD^{N, c}, \bar\calR^N\right)}{N \geq 1}
\end{equation*}
is tight in the aforementioned space by Assumption \ref{ass: standing assumptions 1}, Theorem \ref{the: relative compactness of measured-valued processes} and the functional law of large numbers for renewal processes; see \cite[Theorem 5.10]{chen2001fundamentals}. By Prohorov's theorem, every subsequence has a further subsequence which converges weakly.

For the rest of this section, we fix an arbitrary convergent subsequence and keep using the index $N$ for conciseness and despite a slight abuse of notation. We denote the limit of this convergent subsequence by $\bar\bX = (\bar\bmu_0, \bar\bmu, \bar\bA, \bar\bD, \bar\bD^c, \bar\calR)$ and observe that $\bar\calR(t) = \lambda_0 t$ by Assumption \ref{ass: standing assumptions 1} and the functional law of large numbers for renewal processes.

By Skorohod's representation theorem, there exist versions of $\bar\bX^N$ and $\bar\bX$ defined on a common probability space $(\Omega, \calF, P)$ for all $N \geq 1$, and such that with probability one:
\begin{equation*}
	\lim_{N \to \infty} \bar\bX^N = \bar\bX \quad \text{in} \quad \calM_F^+\left(\R_+^2\right) \times D_{\calM_F^+\left(\R_+^2\right)}[0, \infty) \times \left[D_{\calM_F^+\left(\R_+^3\right)}[0, \infty)\right]^3 \times D_\R[0, \infty).
\end{equation*}
In the sequel, we consider these versions of $\bar\bX^N$ and $\bar\bX$ and characterize the limit $\bar\bX$ in a set of probability one, completing the program of Section \ref{sub: outline of the proof} for proving Theorem \ref{the: measure-valued fluid limit}.

Our first result in this direction is the following lemma.

\begin{lemma}
	\label{lem: departure process and compensator have same limit}
	The processes $\bar\bD$ and $\bar\bD^c$ are equal with probability one.
\end{lemma}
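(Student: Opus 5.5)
The plan is to combine Corollary \ref{cor: limit of departures martingale} with the almost sure convergence supplied by the Skorohod representation, and then use a countable separating family of test functions. Fix a Lipschitz function $\varphi \in C_b(\R_+^3)$. Since $\bar\bD^N \to \bar\bD$ and $\bar\bD^{N, c} \to \bar\bD^c$ in $D_{\calM_F^+(\R_+^3)}[0, \infty)$ almost surely, Lemma \ref{lem: convergence measure-valued functions implies convergence of projections} gives $\bar\bD^N(\varphi) \to \bar\bD(\varphi)$ and $\bar\bD^{N, c}(\varphi) \to \bar\bD^c(\varphi)$ in $D_\R[0, \infty)$ almost surely.

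The compensator is continuous in time, so $\bar\bD^{N, c}(\varphi)$ is a continuous process and its Skorohod limit $\bar\bD^c(\varphi)$ is continuous. Convergence to a continuous limit in $J_1$ is therefore locally uniform. I will then write $\bar\bD^N(\varphi) = \bX^N(\varphi) + \bar\bD^{N, c}(\varphi)$. By Corollary \ref{cor: limit of departures martingale}, $\bX^N(\varphi) \Rightarrow 0$. Since the limit of the second summand is continuous, addition is continuous at the pair $(0, \bar\bD^c(\varphi))$ in the $J_1$ topology. It follows that $\bar\bD^N(\varphi) \Rightarrow \bar\bD^c(\varphi)$ in distribution.

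The step needing care is going from this convergence in distribution to an almost sure identity. It suffices to show $\sup_{t \leq T} |\bar\bD^N_t(\varphi) - \bar\bD^{N,c}_t(\varphi)| \to 0$ in probability, because the joint law of $(\bar\bD^N, \bar\bD^{N,c})$ is preserved under the Skorohod representation. Indeed, the proof of Corollary \ref{cor: limit of departures martingale} gives exactly this uniform bound via Doob's inequality, and the bound is a statement about that joint law. Along a further subsequence the difference therefore converges to zero uniformly on compacts almost surely. The difference of the two almost sure $J_1$ limits is then zero, since $J_1$ limits are unique and the processes are asymptotically uniformly close. Hence $\bar\bD_t(\varphi) = \bar\bD^c_t(\varphi)$ for all $t$, almost surely.

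Finally, I will take a countable family of Lipschitz functions in $C_b(\R_+^3)$ that separates finite measures, for instance Lipschitz approximations of indicators of rational boxes. Intersecting countably many probability-one events yields $\bar\bD_t = \bar\bD^c_t$ as measures for all $t \geq 0$ simultaneously, with probability one. The main obstacle I anticipate is the transfer of the martingale estimate to the Skorohod versions. If that becomes delicate, my fallback is to note that $d_{J_1}(\bar\bD^N(\varphi), \bar\bD^{N,c}(\varphi)) \to 0$ in probability is a property of the law of the pair, and that it passes to the limit pair $(\bar\bD(\varphi), \bar\bD^c(\varphi))$, whose joint law then concentrates on the diagonal.
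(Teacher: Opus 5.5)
Your proposal is correct and follows essentially the same route as the paper: you project onto Lipschitz test functions via Lemma \ref{lem: convergence measure-valued functions implies convergence of projections}, identify the two limits for each fixed $\varphi$ using Corollary \ref{cor: limit of departures martingale}, and conclude with a countable separating family. Your observation that the Doob bound depends only on the joint law of $(\bar\bD^N, \bar\bD^{N, c})$, and therefore transfers to the Skorohod versions, makes explicit a step the paper leaves implicit; your Lipschitz approximations of rational-box indicators serve as well as the paper's countable dense subset of $C_c^1(\R_+^3)$.
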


\begin{proof}
	If $\varphi \in C_b(\R_+^3)$ is Lipschitz, then Lemma \ref{lem: convergence measure-valued functions implies convergence of projections} implies that
	\begin{equation*}
		\lim_{N \to \infty} \bar\bD^N(\varphi) = \bar\bD(\varphi) \quad \text{and} \quad \lim_{N \to \infty} \bar\bD^{N, c}(\varphi) = \bar\bD^c(\varphi) \quad \text{in} \quad D_\R[0, \infty)
	\end{equation*}
	with probability one. Since $\bar\bD^N(\varphi) - \bar\bD^{N, c}(\varphi) \Rightarrow 0$ in $D_\R[0, \infty)$ by Corollary \ref{cor: limit of departures martingale}, we get:
	\begin{equation*}
		\cprob*{\bar\bD_t(\varphi) = \bar\bD_t^c(\varphi)\ \text{for all}\ t \geq 0} = 1. 
	\end{equation*}
	
	It remains to prove that the above equality holds simultaneously, and almost surely, for all $\varphi$ in a separating class of functions. Using the Stone-Weierstrass theorem, we obtain that the set of polynomials with rational coefficients is dense in $C_c(K)$ for each compact set $K \subset \R_+^3$, so there exists a countable set $\Psi \subset C_c^1(\R_+^3)$ that is dense in $C_c(\R_+^3)$, and
	\begin{equation*}
		\cprob*{\bar\bD_t(\psi) = \bar\bD_t^c(\psi)\ \text{for all}\ t \geq 0\ \text{and}\ \psi \in \Psi} = 1. 
	\end{equation*}
	We conclude that $\bar\bD_t = \bar\bD_t^c$ for all $t \geq 0$ with probability one, because the functions in $C_c(\R_+^3)$ separate the points of $\calM_F^+(\R_+^3)$ and the set $\Psi$ is dense in $C_c(\R_+^3)$.
\end{proof}

The following lemma characterizes the limit of the arrival processes $\bar\bA^N$. We defer the proof of this standard lemma until Appendix \ref{app: proofs of auxiliary results}.

\begin{restatable}{lemma}{lemmafoureighteen}
	\label{lem: limit of arrival process}
	The following property holds with probability one:
	\begin{equation}
		\label{eq: limit of arrival process}
		\bar\bA_t(\varphi) = \lambda_0 \int_0^t \varphi_s(0, 0)ds \quad \text{for all} \quad t \geq 0 \quad \text{and} \quad \varphi \in C_b(\R_+^3).
	\end{equation}
\end{restatable}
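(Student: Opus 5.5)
We aim to show that, almost surely, $\bar\bA_t(\varphi) = \lambda_0 \int_0^t \varphi_s(0,0)\,ds$ holds simultaneously for all $t$ and all $\varphi \in C_b(\R_+^3)$. The approach has two parts: first establish the identity for a fixed Lipschitz test function by a pathwise argument on the arrival times, then extend to all of $C_b(\R_+^3)$ via a countable separating class, exactly as in the proof of Lemma~\ref{lem: departure process and compensator have same limit}.

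\textbf{Step 1: a fixed Lipschitz test function.} Fix $\varphi \in C_b(\R_+^3)$ Lipschitz. By Lemma~\ref{lem: convergence measure-valued functions implies convergence of projections}, $\bar\bA^N(\varphi) \to \bar\bA(\varphi)$ in $D_\R[0,\infty)$ almost surely. It therefore suffices to show that, on the Skorohod probability space, $\bar\bA^N_t(\varphi) \to \lambda_0\int_0^t \varphi_s(0,0)\,ds$ uniformly on compacts. Set $\gamma(s) \defeq \varphi_s(0,0)$, a bounded continuous function of $s$. Since $\bar\calR^N$ increases exactly by $1/N$ at each arrival time $a_i^N$, we have the pathwise identity
\begin{equation*}
\bar\bA^N_t(\varphi) = \frac1N\sum_{i=1}^{\calR^N(t)}\gamma(a_i^N) = \int_{[0,t]}\gamma(s)\,\bar\calR^N(ds),
\end{equation*}
which is a Lebesgue--Stieltjes integral. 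The Skorohod representation gives $\bar\calR^N \to \bar\calR$ in $D_\R[0,\infty)$ with continuous limit $\bar\calR(t) = \lambda_0 t$, so the convergence is uniform on compacts. Nondecreasing functions converging uniformly to a continuous function induce measures on $[0,t]$ converging weakly, and the continuous limit has no atom at $t$. Hence $\int_{[0,t]}\gamma\,d\bar\calR^N \to \lambda_0\int_0^t\gamma(s)\,ds$ for every $t$ almost surely. Monotonicity together with the uniform bound $\|\gamma\|_\infty$ upgrades this to uniform convergence on compacts; alternatively, one can integrate by parts and use the uniform convergence of $\bar\calR^N$ directly. Since two limits of the same sequence in $D_\R[0,\infty)$ coincide, $\bar\bA(\varphi)$ equals the claimed expression for all $t$ almost surely.

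\textbf{Step 2: extension to all of $C_b(\R_+^3)$.} Take the countable set $\Psi \subset C_c^1(\R_+^3)$ of Lemma~\ref{lem: departure process and compensator have same limit}, which is dense in $C_c(\R_+^3)$ and consists of Lipschitz functions. Intersecting countably many events of probability one gives a single event of probability one on which the identity holds for all $\psi \in \Psi$ and all $t$. On this event, both sides of the identity are finite measures on $\R_+^3$ for each fixed $t$: the left side is $\bar\bA_t$, and the right side is the pushforward of $\lambda_0\,ds$ on $[0,t]$ under $s \mapsto (s,0,0)$. Density of $\Psi$ in $C_c$ shows that these two measures agree on $C_c(\R_+^3)$. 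Because $C_c$ separates finite measures, the two measures are equal, and hence the identity holds for every $\varphi \in C_b(\R_+^3)$.

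\textbf{Main obstacle.} The only delicate point is the passage in Step 1 from convergence of $\bar\calR^N$ to convergence of the integrals against $\gamma$ uniformly in $t$. This is routine once one notes that the limit $\lambda_0 t$ is continuous, which rules out atoms and makes the weak convergence hold at every $t$.
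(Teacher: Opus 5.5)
Your proof is correct, but it is organized differently from the paper's.

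\textbf{How the paper argues.}
\begin{itemize}
\item It fixes $\omega$ in the event where $\bar\bX^N(\omega)\to\bar\bX(\omega)$.
\item At every continuity point $t$ of the measure-valued path $\bar\bA$ it has $\bar\bA^N_t\to\bar\bA_t$ weakly in $\calM_F^+(\R_+^3)$. This gives $\bar\bA^N_t(\varphi)\to\bar\bA_t(\varphi)$ for every $\varphi\in C_b(\R_+^3)$ at once.
\item It therefore never restricts to Lipschitz test functions and never needs Lemma~\ref{lem: convergence measure-valued functions implies convergence of projections} or a countable separating class. It extends to all $t$ by right-continuity.
\item To identify the limit, it uses the inverse representation $\bar\bA^N_t(\varphi)=\lambda_0\int_0^{\calR^N(t)/(N\lambda_0)}\bar\varphi^N_s\,ds$, where $\bar\varphi^N$ is piecewise constant and equal to $\varphi_{a_i^N}(0,0)$.
\item It shows $a^N_{\lceil N\lambda_0 s\rceil}\to s$ from the functional law of large numbers and concludes by bounded convergence.
\end{itemize}

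\textbf{How your argument differs.} You view $\bar\bA^N_t(\varphi)$ as a Lebesgue--Stieltjes integral against $d\bar\calR^N$. You then use weak convergence of these measures to $\lambda_0\,ds$ and the absence of an atom at $t$. The two routes are dual formulations of the same fact: convergence of the counting function versus convergence of its inverse. Yours is a bit more conceptual; the paper's handles all of $C_b(\R_+^3)$ directly and so needs no analogue of your Step~2.

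\textbf{Small points in your Step~1.}
\begin{itemize}
\item The upgrade to uniform convergence via monotonicity requires $\gamma\geq 0$. Split $\gamma=\gamma^+-\gamma^-$ first.
\item That upgrade is not actually needed. $J_1$-convergence of $\bar\bA^N(\varphi)$ already gives convergence at continuity points of $\bar\bA(\varphi)$. Combined with your pointwise convergence at every $t$ and right-continuity, this identifies the limit.
\item Lemma~\ref{lem: convergence measure-valued functions implies convergence of projections} is deterministic. So your Step~1 holds for all Lipschitz $\varphi$ simultaneously on the single Skorohod event. Step~2 then only needs the density-and-separation argument, not a countable intersection of events.
\end{itemize}
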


We summarize the above lemmas in the next proposition.

\begin{proposition}
	\label{prop: set of probability one}
	There exists a set of probability one $\Gamma \in \calF$ such that if $\omega \in \Gamma$, then
	\begin{equation*}
		\lim_{N \to \infty} \bar\bX^N(\omega) = \bar\bX(\omega), \quad \bar\bA_t(\omega, \varphi) = \lambda_0 \int_0^t \varphi_s(0, 0)ds \quad \text{and} \quad \bar\bD(\omega) = \bar\bD^c(\omega).
	\end{equation*}
	We may and will define $\Gamma$ such that property (c) of Assumption \ref{ass: standing assumptions 1} holds in $\Gamma$.
\end{proposition}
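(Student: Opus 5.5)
This proposition simply combines the almost sure statements we already have. The plan is to intersect countably many events of probability one and then add the set $\Theta_0$ from Assumption \ref{ass: standing assumptions 1}(c).

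First, let $\Gamma_1$ be the event on which $\bar\bX^N \to \bar\bX$ in the product space. It has probability one because we work with the Skorohod representation fixed above. Second, let $\Gamma_2$ be the event on which \eqref{eq: limit of arrival process} holds simultaneously for all $t \geq 0$ and $\varphi \in C_b(\R_+^3)$. Lemma \ref{lem: limit of arrival process} asserts exactly this uniformity in $\varphi$ and $t$ with probability one, so $\Gamma_2$ needs no further countability argument. Third, let $\Gamma_3$ be the event on which $\bar\bD_t = \bar\bD_t^c$ as measures for every $t \geq 0$. Lemma \ref{lem: departure process and compensator have same limit} gives $P(\Gamma_3) = 1$; its proof already reduced to the countable dense family $\Psi$.

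Property (c) of Assumption \ref{ass: standing assumptions 1} holds on $\Theta_0$, which has probability one. The one subtle point, and the only place needing care, is that after invoking Skorohod's representation we work on a new probability space. There, (c) should be read for the versions $\bar\bmu_0^N$. This requires that the statement in (c) be a property of the joint law of the sequence $\{\bar\bmu_0^N\}_{N \geq 1}$. Such a property transfers to any versions with the same joint law.

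I would handle this by noting that the event ``there exists $\theta_0 \in L^1_{loc}$ with the stated $\limsup$ bound'' is measurable with respect to the sequence. We can take $\theta_0$ to be the density of the $y$-marginal of any minimal dominating object. Alternatively, we can include the whole sequence $\{\bar\bmu_0^N\}$ as a coordinate in the Skorohod coupling, so that the event has the same probability in both spaces. Equivalently, it is enough to check the bound for rational $y_1, y_2$, which makes the event countably described.

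Finally, set $\Gamma \defeq \Gamma_1 \cap \Gamma_2 \cap \Gamma_3 \cap \Theta_0$. As a countable intersection of sets of probability one, $\Gamma$ has probability one, and every $\omega \in \Gamma$ satisfies all the listed properties.
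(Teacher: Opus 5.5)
Your skeleton matches the paper's. The paper states the proposition as a summary of Lemmas \ref{lem: departure process and compensator have same limit} and \ref{lem: limit of arrival process}, intersected with the Skorohod convergence event, and simply asserts that $\Gamma$ may be taken inside a set where (c) of Assumption \ref{ass: standing assumptions 1} holds. You are right that this last point is the delicate one, but none of your justifications for it works.

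Skorohod's representation matches only the law of each $\bar\bX^N$ separately, and the law of the limit. It does not preserve the joint law of the sequence $(\bar\bmu_0^N)_{N \geq 1}$, whereas (c) is a $\limsup$ statement about the whole sequence. So measurability of the event, or reduction to rational $y_1, y_2$, does not make its probability carry over. Appending $S = (\bar\bmu_0^k)_{k \geq 1}$ as an extra coordinate does not help either. The coupling produces a separate copy $\tilde S^N$ for each $N$, converging only coordinatewise. The sequence you actually need is the diagonal $((\tilde S^N)_N)_{N \geq 1}$, and nothing is inherited about it. Your ``minimal dominating object'' is never specified.

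\textbf{A correct route.} On the new space $\bar\bmu_0^N \to \bar\bmu_0$ weakly almost surely. The Portmanteau theorem then gives $\limsup_N \bar\bmu_0^N(\R_+ \times [y_1, y_2]) \leq \bar\bmu_0(\R_+ \times [y_1, y_2])$. Hence (c) holds there with $\theta_0$ the density of the $y$-marginal $\bar\bnu_0$ of $\bar\bmu_0$, provided $\bar\bnu_0$ is almost surely absolutely continuous. That is a property of the law of $\bar\bmu_0$ alone, and it follows from (c) on the original space.

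Take a finite family $\mathcal{G}$ of finite unions of closed intervals, with interiors taken relative to $\R_+$. The set of $\nu$ with $\nu(G^\circ) > c$ for some $G \in \mathcal{G}$ is weakly open. The Portmanteau theorem, the reverse Fatou lemma for events, and pigeonhole over the finite family then give
\begin{equation*}
	P\bigl(\exists G \in \mathcal{G} : \bar\bnu_0(G^\circ) > c\bigr) \leq \limsup_{N \to \infty} P\bigl(\exists G \in \mathcal{G} : \bar\bnu_0^N(G) > c\bigr) \leq P\Bigl(\exists G \in \mathcal{G} : \limsup_{N \to \infty} \bar\bnu_0^N(G) \geq c\Bigr).
\end{equation*}
Here the last two probabilities refer to the original space, where (c) bounds the inner $\limsup$ by $\int_G \theta_0$.

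Now take for $\mathcal{G}$ the unions of dyadic cells of mesh $2^{-n}$ in $[0, T]$ with total length at most $\delta$. Let $n \to \infty$: by inner regularity, any relatively open $U \subset [0, T)$ of length less than $\delta$ with $\bar\bnu_0(U) > c$ contains such a $G^\circ$ with $\bar\bnu_0(G^\circ) > c$ once $n$ is large. Then let $\delta \to 0$, using absolute continuity of the integral of the single function $\theta_0$ on $[0, T]$.

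You obtain that almost surely, for each rational $c > 0$, there is $\delta > 0$ with $\bar\bnu_0(U) \leq c$ for all open $U \subset [0, T)$ of Lebesgue measure less than $\delta$. By outer regularity of Lebesgue measure, $\bar\bnu_0$ then charges no Lebesgue-null set. With this step added, your intersection argument goes through.
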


In the following sections we characterize $\bar\bmu$ and $\bar\bD$, often considering sample paths in the set $\Gamma$ of probability one. First, we show that $\bar\bmu$ satisfies (a) and (b) of Definition \ref{def: fluid problem} in Section \ref{subsub: support and absolute continuity}. Then, we prove (c) and (d) in Sections \ref{subsub: limits involving service rates}, \ref{subsub: limits involving hazard rates} and \ref{subsub: derivation of the transport equation}.

\subsubsection{Support and absolute continuity}
\label{subsub: support and absolute continuity}

The following random continuity set will be used to prove the results in this section.

\begin{definition}
	\label{def: continuity set}
	For each $\omega \in \Omega$, we let $\calC(\omega)$ denote the set of times $t \geq 0$ at which $\bar\bmu(\omega)$ is continuous in $\calM_F^+(\R_+^2)$, i.e., with respect to the weak topology.
\end{definition}

Note that $\calC$ has countable complement surely, because $t \mapsto \bar\bmu_t$ is a c\`adl\`ag function surely. In addition, if $\omega \in \Gamma$ and $t \in \calC(\omega)$, then $\bar\bmu_t^N(\omega) \to \bar\bmu_t(\omega)$ weakly as $N \to \infty$ since $\bar\bmu^N(\omega) \to \bar\bmu(\omega)$ as $N \to \infty$ in the Skorohod-$J_1$ topology.

The following proposition proves that $\bar \bmu$ satisfies (a) of Definition \ref{def: fluid problem}.

\begin{proposition}
	\label{prop: support of fluid limit}
	We have $\bar\bmu_t(U_g^c) = 0$ for all $t \geq 0$ almost surely.
\end{proposition}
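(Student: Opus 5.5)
We will show that, for each fixed $t \geq 0$, the limit gives zero mass to every compact subset of $U_g^c$, and then use the c\`adl\`ag structure to make this hold for all $t$ at once. The key observation is that $U_g^c$ is an upper set. If $S_g(x,y) = 0$, then $S_g(x',y') = 0$ for all $x' \geq x$ and $y' \geq y$, since $S_g$ is nonincreasing in each coordinate. Moreover, every task satisfies $(\bx_i^N(t), \by_i^N(t)) \leq (b_i, c_i)$ coordinatewise while present. Hence a task present at time $t$ with position in $U_g^c$ must have $(b_i,c_i) \in U_g^c$, up to a boundary subtlety: the inequality is strict before departure, so the relevant region is $[x,\infty)\times[y,\infty)$ with positive lengths. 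Because $(b_i,c_i)$ has density $g$ and $S_g \equiv 0$ on $U_g^c$, we get $P((b_i,c_i) \in [x,\infty)\times[y,\infty)) = S_g(x,y) = 0$ for every $(x,y) \in U_g^c$.

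The first step is to show that almost surely no task ever occupies $U_g^c$. Take a countable dense set of points $(x,y) \in U_g^c$; actually $U_g^c$ is a countable union of quadrants $[x,\infty)\times[y,\infty)$ with rational corners in $U_g^c$, together with boundary corners obtained as limits. More simply, the paper already provides in Lemma \ref{lem: approximating U_g by rectangles} finite unions $A$ of closed rectangles in $U_g$ with $\int_A g \geq 1-\varepsilon$. Using Lemma \ref{lem: bounds for integrals of hazard rate}, I will take the compacts $A_\varepsilon \subset U_g$. For a task present at time $t$ with position outside $A_\varepsilon$, monotonicity and the rectangle structure imply $(b_i,c_i) \notin A_\varepsilon$, exactly as argued in the proof of that lemma. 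Therefore
\[
E\bigl[\bar\bmu^N_t(A_\varepsilon^c)\bigr] \leq \varepsilon\,\frac{E[M^N + \calR^N(t)]}{N} \leq \varepsilon\bigl(\bar M + E[\calR^N(t)]/N\bigr).
\]
There is one caveat. This uses that the initial positions also satisfy $(\bx_i(0),\by_i(0)) < (b_i,c_i)$, and I will treat the initial tasks' $(b_i,c_i)$ as having density $g$ independent of $M^N$, as in that lemma.

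Next I pass to the limit. Fix $t \in \calC(\omega)$, so that $\bar\bmu^N_t \to \bar\bmu_t$ weakly. The set $A_\varepsilon^c$ is relatively open in $\R_+^2$ because $A_\varepsilon$ is compact, so the portmanteau theorem gives $\bar\bmu_t(A_\varepsilon^c) \leq \liminf_N \bar\bmu^N_t(A_\varepsilon^c)$. By Fatou's lemma and the elementary renewal theorem, $E[\bar\bmu_t(U_g^c)] \leq E[\bar\bmu_t(A_\varepsilon^c)] \leq \varepsilon(\bar M+\lambda_0 t)$. Letting $\varepsilon\to 0$ gives $\bar\bmu_t(U_g^c)=0$ almost surely for each fixed $t$. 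The continuity set $\calC$ is random, so I will handle it by applying the bound along the Skorohod-convergent versions at deterministic $t$. For deterministic $t$, $P(t\notin\calC)$ is zero except for countably many $t$. Alternatively, I can use the Skorohod-convergence fact that the limit at $t$ is a weak limit of $\bar\bmu^N_{t_N}$ for some $t_N\to t$, and apply the same bound with $t_N$, using monotonicity in time of the bound.

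Finally, I extend to all $t$. The preceding step gives the statement almost surely simultaneously for all rational $t$. For arbitrary $t$, right-continuity gives $\bar\bmu_{t_k}\to\bar\bmu_t$ weakly with rational $t_k\downarrow t$. Since $U_g$ is open, $U_g^c$ is closed, and closedness gives the wrong direction in portmanteau. Instead I use the open sets $A_\varepsilon^c$: for each $k$, $\bar\bmu_t(A_\varepsilon^c) \leq \liminf_k \bar\bmu_{t_k}(A_\varepsilon^c)$. So I need the stronger almost sure statement $\bar\bmu_{s}(A_{\varepsilon}^c) \leq Z_\varepsilon$ for all rational $s\leq T$, with $E Z_\varepsilon\to 0$. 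I expect this step to be the main obstacle. I will obtain it from the observation that the count of tasks ever in $A_\varepsilon^c$ up to time $T$ is bounded by $\#\{i \leq \calR^N(T): (b_i,c_i)\notin A_\varepsilon\}$, which is monotone in time. Its normalized limit is at most $\varepsilon(\bar M+\lambda_0T)$ in expectation by Fatou's lemma, and this bounds all $t\le T$ at once.
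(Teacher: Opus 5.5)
Your argument is correct. It rests on the same geometric fact as the paper's proof: a present task's position is dominated coordinatewise by its $(b_i,c_i)$, so a task in an up-set has $(b_i,c_i)$ in that up-set. The passage to the limit, however, is organized differently.

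The paper works pathwise. It shows that the closed neighbourhoods $C_\varepsilon=\{z:\mathrm{dist}(z,U_g^c)\le\varepsilon\}$ are up-sets. It then applies the Glivenko--Cantelli theorem to get $\langle\phi^\varepsilon,\bar\bmu_t\rangle\le[\bar\bmu_0(\R_+^2)+\lambda_0t]\int_{C_\varepsilon}g$ for every $t\in\calC(\omega)$ and every $\omega$ in a full-probability set, where $\phi^\varepsilon$ is continuous with $\indc_{U_g^c}\le\phi^\varepsilon\le\indc_{C_\varepsilon}$. Finally it extends to all $t$ by right-continuity, which needs no portmanteau step precisely because $\phi^\varepsilon$ is continuous.

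You instead reuse the compact down-set $A_\varepsilon$ and the counting estimate of Lemma~\ref{lem: bounds for integrals of hazard rate}. You dominate all times up to $T$ by one monotone count, and pass to the limit via the portmanteau inequality on the open set $A_\varepsilon^c$ and Fatou's lemma.

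The trade-offs are as follows. The paper's route gives an explicit pathwise bound. Since its law of large numbers is applied along the random index $M^N$, it does not need the initial tasks' $(b_i,c_i)$ to be independent of $M^N$. Yours uses only first moments, at the price of that independence, which Lemma~\ref{lem: bounds for integrals of hazard rate} already assumes.

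Your route can also be made cleaner than the paper's with respect to the Skorohod representation. Take the dominating variable to be $Z^N_\varepsilon=\sup_{s\le T+1}\bar\bmu^N_s(A_\varepsilon^c)$ itself rather than a task count. It is measurable, since $\mu\mapsto\mu(A_\varepsilon^c)$ is lower semicontinuous and the paths are right-continuous. Its expectation bound is then a statement about the law of $\bar\bmu^N$, so it holds verbatim for the Skorohod versions. A task count, like the paper's argument, instead needs the $(b_i,c_i)$ to be carried to the new probability space.

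Two small fixes are needed.
\begin{itemize}
\item A rational $s$ may be a fixed jump time, where $\bar\bmu^N_s\to\bar\bmu_s$ can fail. Use $s\in\calC(\omega)$ instead. Better still, use the $J_1$ time changes: for every $t\le T$ they give times $t_N\to t$ with $\bar\bmu^N_{t_N}\to\bar\bmu_t$ weakly. Hence $\bar\bmu_t(U_g^c)\le\bar\bmu_t(A_\varepsilon^c)\le\liminf_N Z^N_\varepsilon$ for all $t\le T$ at once, and your detour through deterministic times becomes unnecessary.
\item State the ending explicitly. Set $Z_\varepsilon=\liminf_N Z^N_\varepsilon$, so that $E[Z_{\varepsilon_j}]\le\varepsilon_j(\bar M+\lambda_0(T+1))\to0$ along $\varepsilon_j\to0$. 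Some subsequence then satisfies $Z_{\varepsilon_j}\to0$ almost surely, so $\sup_{t\le T}\bar\bmu_t(U_g^c)=0$ almost surely.
\end{itemize}
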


\begin{proof}
	Define $C_\varepsilon \defeq \setc{z \in \R_+^2}{\mathrm{dist}(z, U_g^c) \leq \varepsilon}$ for each $\varepsilon > 0$, where $\mathrm{dist}(z, U_g^c)$ denotes the distance from the point $z$ to the closed set $U_g^c$ with respect to the standard Euclidean norm $\norm{\scdot}_2$. It follows from the Glivenko-Cantelli theorem that 
	\begin{equation}
		\label{eq: glivenko-cantelli}
		\lim_{n \to \infty} \frac{1}{n}\sum_{i = 1}^n \ind{\left(b_i^1, c_i^1\right) \in C_\varepsilon} = \lim_{n \to \infty} \frac{1}{n}\sum_{i = 1}^n \ind{\mathrm{dist}\left(\left(b_i^1, c_i^1\right), U_g^c\right) \leq \varepsilon} = \int_{C_\varepsilon} g(x, y)dxdy
	\end{equation}
	simultaneously for all $\varepsilon > 0$ with probability one. Similarly,
	\begin{equation*}
		\lim_{n \to \infty} \frac{1}{n}\sum_{i = 1 - n}^0 \ind{\left(b_i^1, c_i^1\right) \in C_\varepsilon} = \int_{C_\varepsilon} g(x, y)dxdy \quad \text{for all} \quad \varepsilon > 0 \quad \text{almost surely}.
	\end{equation*}
	
	Fix $\omega \in \Gamma$ in the set of probability one where the above limits hold; in the sequel, $\omega$ is omitted from the notation for conciseness. Also, recall that $\bar\bmu_t^N \to \bar\bmu_t$ weakly as $N \to \infty$ for all $t \in \calC$. Furthermore, we have
	\begin{equation}
		\label{eq: limit of initial number of tasks}
		\lim_{N \to \infty} \frac{M^N}{N} = \lim_{N \to \infty} \bar\bmu_0^N\left(\R_+^2\right) = \bar\bmu_0\left(\R_+^2\right).
	\end{equation}
	
	For each $\varepsilon > 0$, choose any $\phi^\varepsilon \in C_b(\R_+^2)$ such that:
	\begin{equation*}
		\ind{(x, y) \in U_g^c} \leq \phi^\varepsilon (x, y) \leq \ind{(x, y) \in C_\varepsilon} \quad \text{for all} \quad x, y \in \R_+.
	\end{equation*}
	If $(x_0, y_0) \in C_\varepsilon$, then there exists $(u_0, v_0) \in U_g^c$ such that $\norm{(x_0, y_0) - (u_0, v_0)}_2 \leq \varepsilon$. The definition of $U_g$ implies that $(u, v) \in U_g^c$ whenever $u \geq u_0$ and $v \geq v_0$. Suppose that $x_0 \leq b$ and $y_0 \leq c$ for some $(b, c) \in U_g$. Then $(b, c) \in C_\varepsilon$ since
	\begin{align*}
		\mathrm{dist}\left((b, c), U_g^c\right) &\leq \min\asetc{\sqrt{(b - u)^2 + (c - v)^2}}{u \geq u_0, v \geq v_0} \\
		&= \min\asetc{\sqrt{(x - u)^2 + (y - v)^2}}{x \leq b, y \leq c, u \geq u_0, v \geq v_0} \\
		&\leq \norm{(x_0, y_0) - (u_0, v_0)}_2 \leq \varepsilon.
	\end{align*}
	For the equality, observe that $(x, u)$ decouples from $(y, v)$ in the minimization, and that the minimum can always be attained with $x = b$ and $y = c$. Hence, $(\bx_i^N(t), \by_i^N(t)) \in C_\varepsilon$ implies that $(b_i^N, c_i^N) \in C_\varepsilon$ for all tasks and times.
	
	We conclude that
	\begin{equation*}
		\angled{\phi^\varepsilon}{\bar\bmu_t^N} \leq \frac{1}{N}\sum_{i = 1 - M^N}^{\calR^N(t)} \ind{t < d_i^N, \left(\bx_i^N(t), \by_i^N(t)\right) \in C_\varepsilon} \leq \frac{1}{N}\sum_{i = 1 - M^N}^{\calR^N(t)} \ind{\left(b_i^1, c_i^1\right) \in C_\varepsilon}
	\end{equation*}
	for all $t \geq 0$ and $N \geq 1$, where we recall that $(b_i^N, c_i^N) = (b_i^1, c_i^1)$ by Assumption \ref{ass: standing assumptions 1}. Then it follows from \eqref{eq: glivenko-cantelli} and \eqref{eq: limit of initial number of tasks} that for all $t \in \calC$ and $\varepsilon > 0$, we have: 
	\begin{align*}
		\angled{\phi^\varepsilon}{\bar\bmu_t} &= \lim_{N \to \infty} \angled{\phi^\varepsilon}{\bar\bmu_t^N} \\
		&\leq \lim_{N \to \infty} \left[\frac{M^N}{N} \frac{1}{M^N}\sum_{i = 1 - M^N}^{0} \ind{\left(b_i^1, c_i^1\right) \in C_\varepsilon} + \frac{\calR^N(t)}{N} \frac{1}{\calR^N(t)}\sum_{i = 1}^{\calR^N(t)} \ind{\left(b_i^1, c_i^1\right) \in C_\varepsilon}\right] \\
		&= \left[\bar\bmu_0\left(\R_+^2\right) + \lambda_0 t\right] \int_{C_\varepsilon} g(x, y)dx dy.
	\end{align*}
	
	Since $\calC$ has countable complement, for each $t \geq 0$ there exists $\setc{t_n}{n \geq 1} \subset \calC$ such that $t_n \downarrow t$ as $n \to \infty$. Therefore, we obtain:
	\begin{equation*}
		\angled{\phi^\varepsilon}{\bar\bmu_t} = \lim_{n \to \infty} \angled{\phi^\varepsilon}{\bar\bmu_{t_n}} \leq \left[\bar\bmu_0\left(\R_+^2\right) + \lambda_0 t\right] \int_{C_\varepsilon} g(x, y)dx dy \quad \text{for all} \quad \varepsilon > 0,
	\end{equation*}
	by the right-continuity of $\bar\bmu$. Because $\phi^\varepsilon \to \indc_{U_g^c}$ pointwise as $\varepsilon \to 0$, we conclude that
	\begin{equation*}
		\bar\bmu_t\left(U_g^c\right) = \lim_{\varepsilon \to 0} \angled{\phi^\varepsilon}{\bar\bmu_t} \leq \lim_{\varepsilon \to 0} \left[\bar\bmu_0\left(\R_+^2\right) + \lambda_0 t\right] \int_{C_\varepsilon} g(x, y)dx dy = 0
	\end{equation*}
	for all $t \geq 0$, and this completes the proof.
\end{proof}

Next we show that $\bar\bmu$ satisfies (b) of Definition \ref{def: fluid problem}. Consider the following $y$-marginals:
\begin{equation*}
	\bar\bnu_t^N(A) \defeq \bar\bmu_t^N(\R_+ \times A) \quad \text{and} \quad \bar\bnu_t(A) \defeq \bar\bmu_t(\R_+ \times A)
\end{equation*}
for all $t \geq 0$ and Borel $A \subset \R_+$. Let $\bar\by_*^N(t) \defeq y_*^N(\bmu_t^N) = \bar y_*(\bar\bmu_t^N)$ and $\bar\by_*(t) \defeq \bar y_*(\bar\bmu_t)$, i.e.,
\begin{align*}
	\bar\by_*^N(t) = \inf\asetc{y \geq 0}{\bar\bnu_t^N\left([0, y]\right) \geq 1} \quad \text{and} \quad \bar\by_*(t) = \inf\asetc{y \geq 0}{\bar\bnu_t\left([0, y]\right) \geq 1}.
\end{align*}
Furthermore, note that $r^N(\bmu_t^N, y) = \bar r(\bar\bmu_t^N, y) = \ind{y \leq \bar\by_*^N(t)}$ and $\bar r(\bar\bmu_t, y) = \ind{y \leq \bar\by_*(t)}$.

\begin{proposition}
	\label{prop: limiting measure of y-intervals}
	For each $\omega \in \Gamma$, there exists a nonnegative function $\map{\theta(\omega)}{\R}{\R_+}$ such that $\theta(\omega) \in L_{loc}^1(\R)$ and the following property holds:
	\begin{equation}
		\label{eq: absolute continuity with respect to lebesgue measure}
		\bar\bnu_t\left(\omega, [y_1, y_2]\right) = \lim_{N \to \infty} \bar\bnu_t^N\left(\omega, [y_1, y_2]\right) \leq \int_{y_1}^{y_2} \theta(\omega, s - t)ds
	\end{equation}
	for all $t \in \calC(\omega)$ and $0 \leq y_1 \leq y_2$. Moreover,
	\begin{equation*}
		\bar\bnu_t\left(\omega, [y_1, y_2]\right) \leq \int_{y_1}^{y_2} \theta(\omega, s - t)ds
	\end{equation*}
	for all $t \geq 0$ and $0 \leq y_1 \leq y_2$, i.e., even if $t \notin \calC(\omega)$. Hence, the measures $\bar\bnu_t$ are absolutely continuous with respect to the Lebesgue measure for all $t \geq 0$ with probability one.
\end{proposition}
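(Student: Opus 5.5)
Fix $\omega \in \Gamma$, which we omit from the notation. The times in system of tasks present at time $t$ come from two sources. Tasks with $i \le 0$ have $\by_i^N(t) = \by_i^N(0) + t \wedge d_i^N$, and while present this equals $\by_i^N(0) + t$. Tasks with $i \geq 1$ have $\by_i^N(t) = t - a_i^N$ while present. Dropping the departure indicator, we bound, for $0 \le y_1 \le y_2$,
\begin{equation*}
	\bar\bnu_t^N\left([y_1, y_2]\right) \leq \bar\bmu_0^N\left(\R_+ \times [y_1 - t, y_2 - t]\right) + \frac{1}{N}\#\asetc{i \geq 1}{a_i^N \leq t,\ t - a_i^N \in [y_1, y_2]}.
\end{equation*}
In the first term the interval is intersected with $\R_+$. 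The second term equals $\bar\calR^N((t - y_1)^+) - \bar\calR^N((t - y_2)^+)$, up to the left-limit convention at the endpoint $t - y_2$. Since $\omega \in \Gamma$, $\bar\calR^N \to \bar\calR$ with $\bar\calR(s) = \lambda_0 s$, and this limit is continuous, so the convergence is uniform on compacts. Hence the second term converges to $\lambda_0 \left|[t - y_2, t - y_1] \cap \R_+\right|$.

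For the first term we use property (c) of Assumption \ref{ass: standing assumptions 1}, which holds in $\Gamma$ by Proposition \ref{prop: set of probability one}. It gives $\limsup_N \bar\bmu_0^N(\R_+ \times [u_1, u_2]) \le \int_{u_1}^{u_2} \theta_0(u)du$. This leads to the definition
\begin{equation*}
	\theta(u) \defeq \theta_0(u)\ind{u \geq 0} + \lambda_0 \ind{u < 0} \quad \text{for all} \quad u \in \R.
\end{equation*}
It is nonnegative and in $L^1_{loc}(\R)$. The change of variables $s = u + t$ then gives
\begin{equation*}
	\limsup_{N} \bar\bnu_t^N([y_1, y_2]) \le \int_{y_1}^{y_2} \theta(s - t)ds
\end{equation*}
for all $t \geq 0$. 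The arrival part contributes on $s - t \in [-t, 0)$ and the initial part on $s - t \geq 0$. The endpoint $s = t$, where the two parts meet, has measure zero.

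Next we identify the limit for $t \in \calC$. There $\bar\bmu^N_t \to \bar\bmu_t$ weakly, so the $y$-marginals $\bar\bnu_t^N \to \bar\bnu_t$ weakly. By the portmanteau theorem, $\bar\bnu_t(\text{open } (y_1, y_2)) \le \liminf_N$ and $\bar\bnu_t([y_1, y_2]) \ge \limsup_N$. To obtain equality of the limit in \eqref{eq: absolute continuity with respect to lebesgue measure}, we show the boundary has no mass. Applying the closed-set portmanteau bound to the degenerate interval $[y, y]$ gives
\begin{equation*}
	\bar\bnu_t(\{y\}) \le \limsup_N \bar\bnu_t^N(\{y\}) \le \int_y^y \theta(s - t)\,ds = 0.
\end{equation*}
So $[y_1, y_2]$ is a $\bar\bnu_t$-continuity set, $\lim_N \bar\bnu_t^N([y_1,y_2]) = \bar\bnu_t([y_1,y_2])$ exists, and it is bounded by the integral.

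For general $t$, take $t_n \in \calC$ with $t_n \downarrow t$. By right-continuity, $\bar\bmu_{t_n} \to \bar\bmu_t$ weakly. For a closed interval, portmanteau and dominated convergence, which is justified because $\theta$ is locally integrable and $s \mapsto \theta(s - t_n)$ converges to $\theta(s-t)$ in $L^1_{loc}$ by continuity of translations in $L^1$, give
\begin{equation*}
	\bar\bnu_t([y_1, y_2]) \le \limsup_n \bar\bnu_{t_n}([y_1, y_2]) \le \lim_n \int_{y_1}^{y_2} \theta(s - t_n)\,ds = \int_{y_1}^{y_2}\theta(s - t)\,ds.
\end{equation*}
This bound on intervals extends to all Borel sets by outer regularity, since open sets are countable unions of intervals. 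Consequently Lebesgue-null sets are $\bar\bnu_t$-null, which is the claimed absolute continuity.

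The main obstacle is the first step: bounding the empirical count uniformly enough to pass to the limsup. It relies only on the domination by arrival-time and initial-time counts, with departures discarded, so the nonlinear service dynamics play no role. The one point of care is the endpoint conventions of $\calR^N$ when taking limits, which the continuity of $\bar\calR$ resolves.
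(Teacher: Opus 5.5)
Your pre-limit estimate $\limsup_{N\to\infty}\bar\bnu_t^N([y_1,y_2])\le\int_{y_1}^{y_2}\theta(s-t)\,ds$ is correct. Your $\theta$, with $\lambda_0$ only on $(-\infty,0)$, is even a little sharper than the paper's. The problem is the transfer of this bound to the limit measure: you use the portmanteau theorem in the wrong direction, twice.

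For a closed set $F$, portmanteau gives $\limsup_N\bar\bnu_t^N(F)\le\bar\bnu_t(F)$, exactly as you state one line earlier. So the inequality $\bar\bnu_t(\{y\})\le\limsup_N\bar\bnu_t^N(\{y\})$ is unjustified, and it is false in general: with $\delta_{1/N}\to\delta_0$ the limit carries an atom where no pre-limit measure does. Without it you do not know that $[y_1,y_2]$ is a $\bar\bnu_t$-continuity set. What remains, $\bar\bnu_t([y_1,y_2])\ge\limsup_N\bar\bnu_t^N([y_1,y_2])$, is only a \emph{lower} bound on the limit. Hence neither the existence of the limit in the first display of the statement nor the upper bound on $\bar\bnu_t$ has been proved. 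The same reversal occurs in your step for general $t$, where you write $\bar\bnu_t([y_1,y_2])\le\limsup_n\bar\bnu_{t_n}([y_1,y_2])$ for a closed interval.

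The missing step is to enlarge to an open set before passing to the limit. The paper does this with a test function $\phi^\varepsilon$ squeezed between the indicators of $[y_1,y_2]$ and $[(y_1-\varepsilon)^+,y_2+\varepsilon]$. Let $G_\varepsilon=\{y\in\R_+: y_1-\varepsilon<y<y_2+\varepsilon\}$, which is open in $\R_+$. Then $\bar\bnu_t([y_1,y_2])\le\bar\bnu_t(G_\varepsilon)\le\liminf_N\bar\bnu_t^N(G_\varepsilon)\le\int_{y_1-\varepsilon}^{y_2+\varepsilon}\theta(s-t)\,ds$. Letting $\varepsilon\downarrow 0$ and using local integrability of $\theta$ gives the bound on $\bar\bnu_t$, and in particular $\bar\bnu_t(\{y\})=0$. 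Only after this does your continuity-set argument give existence and equality of the limit.

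The general-$t$ step is repaired the same way. Bound $\bar\bnu_t([y_1,y_2])\le\liminf_n\bar\bnu_{t_n}(G_\varepsilon)$, then use your continuity-of-translation remark to replace $\theta(\cdot-t_n)$ by $\theta(\cdot-t)$; the paper's own proof leaves that point implicit. Your final extension from intervals to Lebesgue-null sets is fine.
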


\begin{proof}
	Fix $\omega \in \Gamma$ and recall that:
	\begin{equation*}
		\bar\bmu^N(\omega) \to \bar\bmu(\omega) \quad \text{in} \quad D_{\calM_F^+(\R_+^2)}[0, \infty) \quad \text{and} \quad \bar\calR^N(\omega) \to \bar\calR \quad \text{in} \quad D_\R[0, \infty) \quad \text{as} \quad N \to \infty.
	\end{equation*}
	The second limit holds uniformly over compact sets since $\bar\calR$ is continuous, and if $t \in \calC(\omega)$, then $\bar\bmu_t^N(\omega) \to \bar\bmu_t(\omega)$ weakly. For conciseness, $\omega$ will be omitted from the notation.
	
	Given $t \in \calC$, $0 \leq y_1 \leq y_2$ and $\varepsilon > 0$, there exists $\phi^\varepsilon \in C_b(\R_+^2)$ such that
	\begin{equation*}
		\ind{y_1 \leq y \leq y_2} \leq \phi^\varepsilon(x, y) \leq \ind{(y_1 - \varepsilon)^+ \leq y \leq y_2 + \varepsilon} \quad \text{for all} \quad x, y \in \R_+.
	\end{equation*}	
	A task present in the system at time $t$ has spent $y$ units of time in the system by time $t$ if and only if one of the following conditions holds: the task was present at time zero with elapsed time in the system $y - t$, or the task arrived at time $t - y$. Therefore,
	\begin{align*}
		\lim_{N \to \infty} \angled{\phi^\varepsilon}{\bar\bmu_t^N} &\leq \limsup_{N \to \infty} \bar\bnu_0^N\left(\left[\left((y_1 - \varepsilon)^+ - t\right)^+, (y_2 + \varepsilon - t)^+\right]\right)\\
		&+ \lim_{N \to \infty} \left[\bar\calR^N\left(\left(t - (y_1 - \varepsilon)^+\right)^+\right) - \bar\calR^N\left(\left(t - y_2 - \varepsilon\right)^+\right)\right] \\
		&\leq \int_{(y_1 - \varepsilon - t)^+}^{(y_2 + \varepsilon - t)^+} \theta_0(s)ds + \lambda_0\left[y_2 - y_1 + 2\varepsilon\right] = \int_{y_1 - \varepsilon}^{y_2 + \varepsilon} \theta(s - t)ds,
	\end{align*}
	where $\theta_0$ is the function introduced in Assumption \ref{ass: standing assumptions 1} and $\theta(s) \defeq \theta_0(s)\ind{s \geq 0} + \lambda_0$. The latter function is nonnegative and locally integrable since $\theta_0$ has these properties.
	
	If $t \in \calC$, then $\langle\phi^\varepsilon, \bar\bmu_t^N\rangle \to \langle\phi^\varepsilon, \bar\bmu_t\rangle \geq \bar\bnu_t([y_1, y_2])$ as $N \to \infty$ for all $\varepsilon > 0$, and we obtain:
	\begin{equation*}
		\max\left\{\bar\bnu_t\left([y_1, y_2]\right), \limsup_{N \to \infty} \bar\bnu_t^N\left([y_1, y_2]\right)\right\} \leq \inf_{\varepsilon > 0} \lim_{N \to \infty} \angled{\phi^\varepsilon}{\bar\bmu_t^N} \leq \int_{y_1}^{y_2} \theta(s - t)ds.
	\end{equation*}
	for all $t \in \calC$ and $0 \leq y_1 \leq y_2$, which proves the inequality in \eqref{eq: absolute continuity with respect to lebesgue measure}.
	
	Applying the above inequality to the intervals $[(y_1 - \varepsilon)^+, y_1]$ and $[y_2, y_2 + \varepsilon]$, we get:
	\begin{align*}
		\limsup_{N \to \infty} \left|\bar\bnu_t\left([y_1, y_2]\right) - \bar\bnu_t^N\left([y_1, y_2]\right)\right| &\leq \left|\bar\bnu_t\left([y_1, y_2]\right) - \angled{\phi^\varepsilon}{\bar\bmu_t}\right| \\
		&+ \limsup_{N \to \infty} \left|\angled{\phi^\varepsilon}{\bar\bmu_t^N} - \bar\bnu_t^N\left([y_1, y_2]\right)\right| \\
		&\leq \bar\bnu_t\left([(y_1 - \varepsilon)^+, y_1]\right) + \bar\bnu_t\left([y_2, y_2 + \varepsilon]\right) \\
		&+ \limsup_{N \to \infty} \bar\bnu_t^N\left([(y_1 - \varepsilon)^+, y_1]\right) \\
		&+ \limsup_{N \to \infty} \bar\bnu_t^N\left([y_2, y_2 + \varepsilon]\right) \\
		&\leq 2\int_{y_1 - \varepsilon}^{y_1}\theta(s - t)ds + 2\int_{y_2}^{y_2 + \varepsilon}\theta(s - t)ds.
	\end{align*}
	for all $t \in \calC$ and $0 \leq y_1 \leq y_2$. The first inequality holds since $\langle\phi^\varepsilon, \bar\bmu_t^N\rangle \to \langle\phi^\varepsilon, \bar\bmu_t\rangle$. Note that this completes the proof of \eqref{eq: absolute continuity with respect to lebesgue measure} because $\varepsilon > 0$ is arbitrary.
	
	Finally, observe that for any $t \geq 0$, there exists a sequence $\setc{t_n}{n \geq 1} \subset \calC$ such that $t_n \downarrow t$ as $n \to \infty$; in particular, this holds for times $t \notin \calC$. By right-continuity, $\bar\bnu_{t_n}$ converges weakly to $\bar\bnu_t$ as $n \to \infty$. Therefore, if $0 < y_1 \leq y_2$ and $\varepsilon < y_1$, then
	\begin{equation*}
		\bar\bnu_t\left((y_1 - \varepsilon, y_2 + \varepsilon)\right) \leq \liminf_{n \to \infty} \bar\bnu_{t_n}\left((y_1 - \varepsilon, y_2 + \varepsilon)\right) \leq \int_{y_1 - \varepsilon}^{y_2 + \varepsilon} \theta(s - t)ds.
	\end{equation*}
	by the Portmanteau theorem and \eqref{eq: absolute continuity with respect to lebesgue measure}. Letting $\varepsilon \to 0$, we get:
	\begin{equation*}
		\bar\bnu_t\left([y_1, y_2]\right) \leq \int_{y_1}^{y_2} \theta(s - t)ds \quad \text{for all} \quad 0 < y_1 \leq y_2.
	\end{equation*}
	An analogous argument applies when $y_1 = 0$, except that in this case the closed interval $[y_1, y_2] = [0, y_2]$ needs to be approximated by the intervals $[0, y_2 + \varepsilon)$. Because the latter intervals are open in $\R_+$, we can still invoke the Portmanteau theorem as above.
\end{proof}

\subsubsection{Limits involving service rates}
\label{subsub: limits involving service rates}

This and the following sections complete the characterization of $\bar\bmu$ by analyzing the limiting behavior of the pre-limit equation \eqref{eq: pre-limit equation in normalized notation}. In particular, it remains to characterize the limits of the third and fourth terms on the right-hand side of this equation. The fourth term is an integral involving the service rate function $\bar r^N(\bar \bmu)$. In this section we analyze how integrals of this type behave in the limit, whereas the following section concerns integrals involving hazard rates as well, as in the third term on the right-hand side of \eqref{eq: pre-limit equation in normalized notation}. As a first step, we derive some properties of the processes $\bar\by_*^N$ and $\bar\by_*$.
\begin{lemma}
	\label{lem: limiting threshold}
	For each $t \geq 0$, let
	\begin{equation*}
		\bar\by_*^-(t) \defeq \liminf_{N \to \infty} \bar\by_*^N(t) \quad \text{and} \quad \bar\by_*^+(t) \defeq \limsup_{N \to \infty} \bar\by_*^N(t).
	\end{equation*}
	The following property holds in the set $\Gamma$ of probability one:
	\begin{equation*}
		\bar\by_*(t) \leq \bar\by_*^-(t) \quad \text{and} \quad \bar\bnu_t\left(\left[\bar\by_*(t), \bar\by_*^+(t)\right)\right) = 0 \quad \text{for all} \quad t \in \calC.
	\end{equation*}
	If $\bar\by_*^+(t) < \infty$, then the latter equality also holds for the closed interval $[\bar\by_*(t), \bar\by_*^+(t)]$.
\end{lemma}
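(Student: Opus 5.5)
We fix $\omega \in \Gamma$ and $t \in \calC(\omega)$, and omit $\omega$ from the notation. The only input is Proposition \ref{prop: limiting measure of y-intervals}. It gives $\bar\bnu_t^N([y_1, y_2]) \to \bar\bnu_t([y_1, y_2])$ for every closed interval. The limit measure $\bar\bnu_t$ is absolutely continuous, so the distribution functions $y \mapsto \bar\bnu_t^N([0, y])$ converge pointwise to the continuous function $F(y) \defeq \bar\bnu_t([0, y])$. We then use the definitions $\bar\by_*^N(t) = \inf\{y : \bar\bnu_t^N([0, y]) \geq 1\}$ and $\bar\by_*(t) = \inf\{y : F(y) \geq 1\}$.

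\emph{Step 1: $\bar\by_*(t) \leq \bar\by_*^-(t)$.} If $\bar\by_*^-(t) = \infty$ there is nothing to prove. Otherwise, take $y > \bar\by_*^-(t)$. Along a subsequence we have $\bar\by_*^N(t) < y$, and hence $\bar\bnu_t^N([0, y]) \geq 1$, because the distribution function is right-continuous and nondecreasing. Passing to the limit along the subsequence gives $F(y) \geq 1$, so $\bar\by_*(t) \leq y$. Letting $y \downarrow \bar\by_*^-(t)$ gives the claim.

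\emph{Step 2: $\bar\bnu_t([\bar\by_*(t), \bar\by_*^+(t))) = 0$.} If $\bar\by_*(t) = \infty$ or $\bar\by_*(t) \geq \bar\by_*^+(t)$, the interval is empty. Otherwise, take any $y$ with $\bar\by_*(t) < y < \bar\by_*^+(t)$. Along a subsequence we have $\bar\by_*^N(t) > y$, which means $\bar\bnu_t^N([0, y]) < 1$. Letting $N \to \infty$ gives $F(y) \leq 1$. We also have $F(\bar\by_*(t)) \geq 1$, by continuity of $F$ and the definition of the infimum, since $\bar\by_*(t) < \infty$. Because $F$ is nondecreasing, $F(y) = F(\bar\by_*(t)) = 1$ for all such $y$. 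Hence $\bar\bnu_t((\bar\by_*(t), y]) = 0$. Taking the union over $y \uparrow \bar\by_*^+(t)$ and noting that atoms are null gives the half-open statement.

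If in addition $\bar\by_*^+(t) < \infty$, the endpoint $\{\bar\by_*^+(t)\}$ has $\bar\bnu_t$-measure zero by absolute continuity, so the closed version follows. The same conclusion also follows from continuity of $F$ at $\bar\by_*^+(t)$.

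\emph{Main obstacle.} The only delicate point is to justify that pointwise convergence of distribution functions holds at every $y$, not just at continuity points. This is exactly what \eqref{eq: absolute continuity with respect to lebesgue measure} provides, applied with $y_1 = 0$ for $t \in \calC$. The strict and non-strict inequalities also need careful handling when passing to subsequences.
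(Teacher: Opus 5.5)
Your proposal is correct and follows essentially the same route as the paper. Both arguments use only the convergence $\bar\bnu_t^N([0,y]) \to \bar\bnu_t([0,y])$ from Proposition~\ref{prop: limiting measure of y-intervals} at $t \in \calC$, subsequences along which $\bar\by_*^N(t)$ stays below or above a fixed level, and the monotonicity and continuity of $y \mapsto \bar\bnu_t([0,y])$. The differences are cosmetic. Your Step 1 is the contrapositive of the paper's, which takes $y < \bar\by_*(t)$ and shows $y \leq \bar\by_*^N(t)$ for all large $N$. In Step 2 you show $\bar\bnu_t([0,y]) = \bar\bnu_t([0,\bar\by_*(t)])$ for each $y$ in the interval, whereas the paper compares $\bar\bnu_t([0,\bar\by_*^+(t))) \leq 1$ with $\bar\bnu_t([0,\bar\by_*(t))) = 1$ and subtracts.
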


\begin{proof}
	Fix $\omega \in \Gamma$ and $t \in \calC(\omega)$; for conciseness, we will omit $\omega$ from the notation in the remainder of the proof. Using Proposition \ref{prop: limiting measure of y-intervals} and the definition of $\bar\by_*(t)$, we obtain: 
	\begin{equation*}
		\lim_{N \to \infty} \bar\bnu_t^N\left([0, y]\right) = \bar\bnu_t\left([0, y]\right) < 1 \quad \text{for all} \quad y < \bar\by_*(t).
	\end{equation*}
	We conclude that for all $y < \bar\by_*(t)$ we have $y \leq \bar\by_*^N(t)$ for all large enough $N$, which in turn yields $y \leq \bar\by_*^-(t)$. This proves the first claim of the lemma, i.e., $\bar\by_*(t) \leq \bar\by_*^-(t)$.
	
	It only remains to prove that $\bar\bnu_t([\bar\by_*(t), \bar\by_*^+(t))) = 0$. It is clear that this holds trivially if $\bar\by_*(t) = \infty$ or $\bar\by_*^+(t) = 0$. Hence, assume that neither of these is the case. If $y < \bar\by_*^+(t)$, then along an infinite subsequence $y < \bar\by_*^N(t)$, and thus $\bar\bnu_t^N([0, y]) < 1$ . Therefore,
	\begin{equation*}
		\bar\bnu_t\left(\left[0, \bar\by_*^+(t)\right)\right) = \sup_{y < \bar\by_*^+(t)} \bar\bnu_t\left([0, y]\right) = \sup_{y < \bar\by_*^+(t)} \lim_{N \to \infty} \bar\bnu_t^N\left([0, y]\right) \leq 1,
	\end{equation*}
	where the limit exists by Proposition \ref{prop: limiting measure of y-intervals}. Furthermore, the assumption that $\bar\by_*(t) < \infty$ and Proposition \ref{prop: limiting measure of y-intervals} yield $\bar\bnu_t([0, \bar\by_*(t)]) = \bar\bnu_t([0, \bar\by_*(t)))$ and we obtain:
	\begin{equation*}
		1 \leq \inf_{y > \bar\by_*(t)} \bar\bnu_t\left([0, y]\right) = \bar\bnu_t\left([0, \bar\by_*(t)]\right) = \bar\bnu_t\left([0, \bar\by_*(t))\right) = \sup_{y < \bar\by_*(t)} \bar\bnu_t\left([0, y]\right) \leq 1.
	\end{equation*}
	Hence, $\bar\bnu_t([0, \bar\by_*(t))) = 1$, and it follows that
	\begin{equation*}
		\bar\bnu_t\left(\left[\bar\by_*(t), \bar\by_*^+(t)\right)\right) = \bar\bnu_t\left(\left[0, \bar\by_*^+(t)\right)\right) - \bar\bnu_t\left(\left[0, \bar\by_*(t)\right)\right) \leq 0;
	\end{equation*}
	Since the left-hand side is nonnegative, $\bar\bnu_t([\bar\by_*(t), \bar\by_*^+(t))) = 0$. Moreover, Proposition \ref{prop: limiting measure of y-intervals} implies that the closed interval has measure zero as well whenever $\bar\by_*^+(t)$ is finite.
\end{proof}

Using Proposition \ref{prop: limiting measure of y-intervals} and the above lemma, we will prove the following result.

\begin{proposition}
	\label{prop: limits of integrals involving r}
	The following property holds in the set $\Gamma$ of probability one:
	\begin{equation*}
		\lim_{N \to \infty} \angled{\phi \bar r\left(\bar\bmu_t^N\right)}{\bar\bmu_t^N} = \angled{\phi \bar r\left(\bar\bmu_t\right)}{\bar\bmu_t} \quad \text{for all} \quad \phi \in C_b\left(\R_+^2\right) \quad \text{and} \quad t \in \calC.
	\end{equation*}
\end{proposition}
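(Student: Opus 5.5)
Fix $\omega \in \Gamma$, $t \in \calC(\omega)$ and $\phi \in C_b(\R_+^2)$, and omit $\omega$ from the notation. The integrand $\phi\,\bar r(\bar\bmu_t^N)$ is $\phi$ times the indicator of the half-plane $\{y \leq \bar\by_*^N(t)\}$. The plan is to compare this indicator with the indicator of $\{y \leq \bar\by_*(t)\}$ and to show that the mass of $\bar\bmu_t^N$ in the strip where they differ vanishes as $N \to \infty$. Write
\begin{equation*}
	\angled{\phi \bar r\left(\bar\bmu_t^N\right)}{\bar\bmu_t^N} - \angled{\phi \bar r\left(\bar\bmu_t\right)}{\bar\bmu_t} = \left[\angled{\phi \bar r\left(\bar\bmu_t^N\right)}{\bar\bmu_t^N} - \angled{\phi\ind{y \leq \bar\by_*(t)}}{\bar\bmu_t^N}\right] + \left[\angled{\phi\ind{y \leq \bar\by_*(t)}}{\bar\bmu_t^N} - \angled{\phi\ind{y \leq \bar\by_*(t)}}{\bar\bmu_t}\right].
\end{equation*}

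For the second bracket, weak convergence $\bar\bmu_t^N \to \bar\bmu_t$ holds because $t \in \calC$. The function $\phi\ind{y \leq \bar\by_*(t)}$ is bounded and discontinuous only on the line $\{y = \bar\by_*(t)\}$ (nowhere, if $\bar\by_*(t) = \infty$). By Proposition \ref{prop: limiting measure of y-intervals}, the $y$-marginal $\bar\bnu_t$ is absolutely continuous, so this line is $\bar\bmu_t$-null. The extended Portmanteau theorem (continuous mapping with a null discontinuity set) then shows that the second bracket vanishes. If one prefers to avoid that theorem, the indicator can be sandwiched between continuous cutoffs in $y$ on $[\bar\by_*(t)-\varepsilon,\bar\by_*(t)+\varepsilon]$, with the error controlled by $\int \theta(s-t)\,ds$ over intervals of length $2\varepsilon$.

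For the first bracket, its absolute value is at most $\norm{\phi}_\infty$ times the $\bar\bnu_t^N$-mass of the $y$-interval between $\bar\by_*(t)$ and $\bar\by_*^N(t)$. Lemma \ref{lem: limiting threshold} gives $\bar\by_*(t) \leq \bar\by_*^-(t)$. Hence, for any $\varepsilon > 0$ and all large $N$, we have $\bar\by_*^N(t) \geq \bar\by_*(t) - \varepsilon$, so the part of the interval below $\bar\by_*(t)$ has $\bar\bnu_t^N$-mass at most $\bar\bnu_t^N([\bar\by_*(t)-\varepsilon,\bar\by_*(t)])$. By \eqref{eq: absolute continuity with respect to lebesgue measure}, the limit of this mass is bounded by $\int_{\bar\by_*(t)-\varepsilon}^{\bar\by_*(t)}\theta(s-t)ds$, which tends to $0$ as $\varepsilon\to 0$.

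The part above $\bar\by_*(t)$ is the main obstacle, since $\bar\by_*^N(t)$ may oscillate. We may assume $\bar\by_*(t) < \infty$, since otherwise $\bar\by_*^N(t) \to \infty$ and the first bracket vanishes by the previous bound. There are two cases.

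\begin{enumerate}
	\item[(i)] Suppose $\bar\by_*^+(t) < \infty$. Then for large $N$ the interval is contained in $[\bar\by_*(t), \bar\by_*^+(t)+\varepsilon]$. Proposition \ref{prop: limiting measure of y-intervals} gives that $\bar\bnu_t^N$ of this interval converges to $\bar\bnu_t([\bar\by_*(t),\bar\by_*^+(t)+\varepsilon])$. By Lemma \ref{lem: limiting threshold} this equals $\bar\bnu_t([\bar\by_*^+(t),\bar\by_*^+(t)+\varepsilon])$, which is at most $\int\theta$ over an interval of length $\varepsilon$.
	\item[(ii)] Suppose $\bar\by_*^+(t) = \infty$. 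Lemma \ref{lem: limiting threshold} gives $\bar\bnu_t([\bar\by_*(t),\infty)) = 0$. Total masses converge by weak convergence, so $\bar\bnu_t^N([\bar\by_*(t),\infty)) \to 0$. To justify the convergence for half-lines, use the same sandwich near the endpoint together with convergence of total mass.
\end{enumerate}

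Letting $\varepsilon \to 0$ completes the argument.
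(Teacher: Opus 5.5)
Your proposal is correct and is essentially the paper's argument. Both rest on $\bar\by_*(t)\le\bar\by_*^-(t)$ and $\bar\bnu_t([\bar\by_*(t),\bar\by_*^+(t)))=0$ from Lemma~\ref{lem: limiting threshold}, together with the $\theta$-bounds of Proposition~\ref{prop: limiting measure of y-intervals}; the paper only organizes it differently, sandwiching with continuous cutoffs $\psi^1,\psi^2$ below $\bar\by_*(t)-\varepsilon$ and above $\bar\by_*^+(t)+\varepsilon$ rather than comparing with the limiting indicator through a null-boundary Portmanteau step.

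One small repair is needed when $\bar\by_*(t)=\infty$, where your ``previous bound'' does not literally apply. Argue instead that $\bar\by_*^N(t)\ge K$ for all large $N$ and $\limsup_N\bar\bnu_t^N([K,\infty))\le\bar\bnu_t([K,\infty))\to0$ as $K\to\infty$. The paper's cutoff argument is equally terse in this case.
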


\begin{proof}
	Fix $\omega \in \Gamma$, $\phi \in C_b(\R_+^2)$ and $t \in \calC(\omega)$. For conciseness, we will omit $\omega$ from the notation. Given $\varepsilon > 0$, there exist $\psi^1, \psi^2 \in C_b(\R_+^2)$ such that, for all $x, y \in \R_+$:
	\begin{align*}
		&\ind{y < \left[\bar\by_*(t) - 2\varepsilon\right]^+} \leq \psi^1(x, y) \leq \ind{y < \left[\bar\by_*(t) - \varepsilon\right]^+}, \\
		&\ind{y > \bar\by_*^+(t) + 2\varepsilon} \leq \psi^2(x, y) \leq \ind{y > \bar\by_*^+(t) + \varepsilon}.
	\end{align*}
	By Lemma \ref{lem: limiting threshold}, we have $[\bar\by_*(t) - \varepsilon]^+ \leq \bar\by_*^N(t) \leq \bar\by_*^+(t) + \varepsilon$ for all large enough $N$. Then
	\begin{equation*}
		\angled{\psi^1\phi \bar r\left(\bar\bmu_t^N\right)}{\bar\bmu_t^N} = \angled{\psi^1\phi}{\bar\bmu_t^N} \quad \text{and} \quad \angled{\psi^2\phi \bar r\left(\bar\bmu_t^N\right)}{\bar\bmu_t^N} = 0
	\end{equation*}
	for all sufficiently large $N$, and since $t \in \calC$, we conclude that
	\begin{equation*}
		\lim_{N \to \infty} \angled{\psi^1\phi \bar r\left(\bar\bmu_t^N\right)}{\bar\bmu_t^N} = \angled{\psi^1 \phi}{\bar\bmu_t} \quad \text{and} \quad \lim_{N \to \infty} \angled{\psi^2\phi \bar r\left(\bar\bmu_t^N\right)}{\bar\bmu_t^N} = 0. 
	\end{equation*}
	Moreover, it follows from Proposition \ref{prop: limiting measure of y-intervals} and Lemma \ref{lem: limiting threshold} that
	\begin{align*}
		\limsup_{N \to \infty} \left|\angled{[1 - \psi^1 - \psi^2]\phi \bar r\left(\bar\bmu_t^N\right)}{\bar\bmu_t^N}\right| &\leq \lim_{N \to \infty} \bar\bnu_t^N\left(\left[\left(\bar\by_*(t) - 2\varepsilon\right)^+, \bar\by_*^+(t) + 2\varepsilon\right)\right) \norm{\phi}_\infty \\
		&= \bar\bnu_t\left(\left[\left(\bar\by_*(t) - 2\varepsilon\right)^+, \bar\by_*^+(t) + 2\varepsilon\right)\right) \norm{\phi}_\infty \\
		&\leq \norm{\phi}_\infty \int_{\bar\by_*(t) - 2\varepsilon}^{\bar\by_*(t)} \theta(s - t)ds \\
		&+ \norm{\phi}_\infty \int_{\bar\by_*^+(t)}^{\bar\by_*^+(t) + 2\varepsilon} \theta(s - t)ds.
	\end{align*}
	Combining the three limits, we conclude that
	\begin{equation*}
		\limsup_{N \to \infty} \left|\angled{\phi \bar r\left(\bar\bmu_t^N\right)}{\bar\bmu_t^N} - \angled{\psi^1 \phi}{\bar\bmu_t}\right| \leq \norm{\phi}_\infty \left[\int_{\bar\by_*(t) - 2\varepsilon}^{\bar\by_*(t)} \theta(s - t)ds + \int_{\bar\by_*^+(t)}^{\bar\by_*^+(t) + 2\varepsilon} \theta(s - t)ds\right].
	\end{equation*}
	
	In addition, observe that
	\begin{equation*}
		\left|\angled{\phi \bar r\left(\bar\bmu_t\right)}{\bar\bmu_t} - \angled{\psi^1\phi}{\bar\bmu_t}\right| \leq \bar\bnu_t\left(\left[\left(\bar\by_*(t) - 2\varepsilon\right)^+, \bar\by_*(t)\right)\right) \norm{\phi}_\infty \leq \norm{\phi}_\infty \int_{\bar\by_*(t) - 2\varepsilon}^{\bar\by_*(t)} \theta(s - t)ds
	\end{equation*}
	by Proposition \ref{prop: limiting measure of y-intervals}. As a result, we obtain:
	\begin{align*}
		\limsup_{N \to \infty} \left|\angled{\phi \bar r\left(\bar\bmu_t^N\right)}{\bar\bmu_t^N} - \angled{\phi \bar r\left(\bar\bmu_t\right)}{\bar\bmu_t}\right| &\leq 2 \norm{\phi}_\infty \int_{\bar\by_*(t) - 2\varepsilon}^{\bar\by_*(t)} \theta(s - t)ds \\
		&+ \norm{\phi}_\infty \int_{\bar\by_*^+(t)}^{\bar\by_*^+(t) + 2\varepsilon} \theta(s - t)ds,
	\end{align*}
	and since $\varepsilon > 0$ is arbitrary, it follows that the left-hand side is zero.
\end{proof}

The above proposition leads to the following corollary, which will allow to characterize the limit of the fourth term on the right-hand side of the pre-limit equation \eqref{eq: pre-limit equation in normalized notation}.

\begin{corollary}
	\label{cor: limit of partial derivatives term}
	The following property holds in the set $\Gamma$ of probability one:
	\begin{align*}
		\lim_{N \to \infty} \int_0^t \angled{\varphi_s + \bar r\left(\bar\bmu_s^N\right)\psi_s}{\bar\bmu_s^N}ds = \int_0^t \angled{\varphi_s + \bar r\left(\bar\bmu_s\right)\psi_s}{\bar\bmu_s}ds
	\end{align*}
	for all $\varphi, \psi \in C_b(\R_+^3)$ and all $t \geq 0$.
\end{corollary}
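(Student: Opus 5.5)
We fix $\omega \in \Gamma$, $t \geq 0$ and $\varphi, \psi \in C_b(\R_+^3)$, and omit $\omega$ from the notation. The argument is pointwise convergence in $s$ of the integrands, followed by dominated convergence on $[0, t]$.

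\textbf{Pointwise convergence.} Fix $s \in [0, t] \cap \calC$.

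\begin{itemize}
\item The first term converges, $\angled{\varphi_s}{\bar\bmu_s^N} \to \angled{\varphi_s}{\bar\bmu_s}$. This holds because $\bar\bmu_s^N \to \bar\bmu_s$ weakly for $s \in \calC$, and $(x,y) \mapsto \varphi_s(x,y)$ belongs to $C_b(\R_+^2)$ for fixed $s$.
\item The second term converges, $\angled{\bar r(\bar\bmu_s^N)\psi_s}{\bar\bmu_s^N} \to \angled{\bar r(\bar\bmu_s)\psi_s}{\bar\bmu_s}$. This is Proposition \ref{prop: limits of integrals involving r} applied with $\phi = \psi_s \in C_b(\R_+^2)$.
\end{itemize}

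The complement of $\calC$ is countable, hence Lebesgue-null. So the integrands converge for almost every $s \in [0,t]$.

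\textbf{Domination.} For $s \leq t$ we have $\bar\bmu_s^N(\R_+^2) \leq (M^N + \calR^N(t))/N = \bar\bmu_0^N(\R_+^2) + \bar\calR^N(t)$. On $\Gamma$, this converges to $\bar\bmu_0(\R_+^2) + \lambda_0 t$, by \eqref{eq: limit of initial number of tasks} and the uniform convergence of $\bar\calR^N$ on compacts. It is therefore bounded uniformly in $N$ by a constant $K(\omega, t)$. The integrands are then bounded by $(\norm{\varphi}_\infty + \norm{\psi}_\infty) K$, and bounded convergence on $[0,t]$ gives the limit.

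\textbf{Measurability.} We also need the inner integrals to be Borel measurable in $s$, so that both sides are defined; we take this from Appendix \ref{app: measurability properties}. The result holds simultaneously for all $\varphi, \psi, t$, since the only exceptional set involved is $\Gamma^c$ and the null set $\calC^c$ does not depend on the test functions.

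\textbf{Main obstacle.} Proposition \ref{prop: limits of integrals involving r} is stated for time-independent $\phi$. We handle this by applying it separately at each $s$ to the slice $\psi_s$, which is legitimate because the proposition holds for all $\phi \in C_b$ and all $t \in \calC$ simultaneously on $\Gamma$. No uniformity in $s$ is required, since dominated convergence only needs pointwise convergence almost everywhere together with a uniform bound.
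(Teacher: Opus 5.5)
Your proposal is correct and follows essentially the same route as the paper. The paper likewise gets pointwise convergence at every $s \in \calC$ from Proposition~\ref{prop: limits of integrals involving r}. You spell out the $\varphi$ term separately via weak convergence at continuity times, which the paper leaves implicit. The paper then bounds $\bar\bmu_s^N(\R_+^2)$ by $\bar\bmu_0^N(\R_+^2) + \bar\calR^N(t)$ and applies bounded convergence, using that $\calC^c$ is countable.
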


\begin{proof}
	Fix a sample path $\omega \in \Gamma$, functions $\varphi, \psi \in C_b(\R_+^3)$ and $t \geq 0$. For conciseness, we will omit $\omega$ from the notation in the sequel. It follows from Proposition \ref{prop: limits of integrals involving r} that
	\begin{equation*}
		\lim_{N \to \infty} \angled{\varphi_s + \bar r\left(\bar\bmu_s^N\right)\psi_s}{\bar\bmu_s^N} = \angled{\varphi_s + \bar r\left(\bar\bmu_s\right)\psi_s}{\bar\bmu_s} \quad \text{for all} \quad s \in \calC.
	\end{equation*}
	Furthermore, observe that $\bar\bmu_s^N(\R_+^2) \leq \bar\bmu_0^N(\R_+^2) + \bar\calR^N(t) \leq \bar\bmu_0(\R_+^2) + \lambda_0 t + 1$ for all $s \in [0, t]$ and all large enough $N$. As a result, we obtain:
	\begin{equation*}
		\left|\angled{\varphi_s + \bar r\left(\bar\bmu_s^N\right)\psi_s}{\bar\bmu_s^N}\right| \leq \left[\bar\bmu_0\left(\R_+^2\right) + \lambda_0 t + 1\right]\left(\norm{\varphi}_\infty + \norm{\psi}_\infty\right) \quad \text{for all} \quad s \in [0, t]
	\end{equation*}
	and all sufficiently large $N$. Because the complement of $\calC$ is countable, the claim follows from an application of the bounded convergence theorem.
\end{proof}

\subsubsection{Limits involving hazard rates}
\label{subsub: limits involving hazard rates}

While the previous section concerned integrals involving the service rate function $\bar r^N(\bar\bmu)$, this section considers integrals involving both the service rate function and the hazard rate vector field $h_g$. A characterization of how these integrals behave in the limit will allow to characterize the limit of $\bar\bD^{N, c}$, and thereby the limit of $\bar\bD^N$ by Proposition \ref{prop: set of probability one}. Then we will be able to obtain the limit of the right-hand side of the pre-limit equation \eqref{eq: pre-limit equation in normalized notation}.

First, we prove, in Appendix \ref{app: proofs of auxiliary results}, the following lemma, based on \cite[Theorem A.3.12]{dupuis2011weak}.

\begin{restatable}{lemma}{lemmafourtwentysix}
	\label{lem: limit inferior of integrals involving r}
	The following properties hold  in the set $\Gamma$ of probability one:
	\begin{equation*}
		\angled{\ell}{\bar\bmu_t} \leq \liminf_{N \to \infty} \angled{\ell}{\bar\bmu_t^N} \quad \text{and} \quad \angled{\ell \bar r\left(\bar\bmu_t\right)}{\bar\bmu_t} \leq \liminf_{N \to \infty} \angled{\ell \bar r\left(\bar\bmu_t^N\right)}{\bar\bmu_t^N}
	\end{equation*}
	for all $t \in \calC$ and all lower semicontinuous functions $\map{\ell}{\R_+^2}{[0, \infty)}$.
\end{restatable}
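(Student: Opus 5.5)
The plan is to approximate $\ell$ from below by bounded continuous functions, as in \cite[Theorem A.3.12]{dupuis2011weak}, and then pass to the limit term by term. Fix $\omega \in \Gamma$ and $t \in \calC(\omega)$, so that $\bar\bmu_t^N \to \bar\bmu_t$ weakly. Since $\ell$ is nonnegative and lower semicontinuous on the metric space $\R_+^2$, there is a nondecreasing sequence $\ell_k \in C_b(\R_+^2)$ with $0 \leq \ell_k \uparrow \ell$ pointwise. A concrete choice is the Lipschitz inf-convolution
\begin{equation*}
	\ell_k(z) \defeq \inf_{w \in \R_+^2} \left[(\ell(w) \wedge k) + k\norm{z - w}_2\right].
\end{equation*}

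For the first inequality, fix $k$. Then $\langle \ell_k, \bar\bmu_t^N\rangle \leq \langle \ell, \bar\bmu_t^N\rangle$, and weak convergence at $t \in \calC$ gives
\begin{equation*}
	\angled{\ell_k}{\bar\bmu_t} = \lim_{N \to \infty} \angled{\ell_k}{\bar\bmu_t^N} \leq \liminf_{N \to \infty} \angled{\ell}{\bar\bmu_t^N}.
\end{equation*}
Letting $k \to \infty$ and applying the monotone convergence theorem yields $\langle \ell, \bar\bmu_t\rangle \leq \liminf_N \langle \ell, \bar\bmu_t^N\rangle$.

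The second inequality follows the same scheme, with the weak-convergence step replaced by Proposition \ref{prop: limits of integrals involving r}. That proposition holds for every $\phi \in C_b(\R_+^2)$ and $t \in \calC$ on $\Gamma$. Because $\bar r \in \{0,1\}$ is nonnegative, $\ell_k \bar r(\bar\bmu_t^N) \leq \ell \bar r(\bar\bmu_t^N)$. Hence
\begin{equation*}
	\angled{\ell_k \bar r(\bar\bmu_t)}{\bar\bmu_t} = \lim_{N \to \infty} \angled{\ell_k \bar r(\bar\bmu_t^N)}{\bar\bmu_t^N} \leq \liminf_{N \to \infty} \angled{\ell \bar r(\bar\bmu_t^N)}{\bar\bmu_t^N}.
\end{equation*}
Monotone convergence in $k$ on the left then gives the claim.

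The only real work is constructing $\ell_k$. I need to check three properties.

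\begin{enumerate}
	\item \emph{Continuity and boundedness.} $\ell_k$ is $k$-Lipschitz and satisfies $0 \leq \ell_k \leq k$.
	\item \emph{Monotonicity.} $\ell_k \leq \ell_{k+1}$, since both the truncation level and the slope increase with $k$.
	\item \emph{Pointwise convergence.} I must show $\ell_k(z) \to \ell(z)$; this is the main obstacle. Given $c < \ell(z)$, lower semicontinuity provides $\delta > 0$ with $\ell > c$ on the ball $B(z, \delta)$. Inside the ball the infimand is at least $c \wedge k$. Outside the ball it is at least $k\delta$. So $\ell_k(z) \geq \min(c \wedge k, k\delta) \to c$ as $k \to \infty$. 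This works even when $\ell(z) = \infty$ is excluded, since $\ell$ takes values in $[0, \infty)$.
\end{enumerate}

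Everything holds simultaneously for all $\ell$ and all $t \in \calC$ on $\Gamma$. The reason is that Proposition \ref{prop: limits of integrals involving r} and weak convergence on $\calC$ already hold simultaneously for all test functions.
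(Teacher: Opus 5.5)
Your proof is correct and follows essentially the paper's route. The paper cites \cite[Theorem A.3.12]{dupuis2011weak}, noting that its first proof extends to finite measures, and that proof is the same approximation of $\ell$ from below by bounded continuous functions that you carry out explicitly with the inf-convolution $\ell_k$; for the second inequality, the paper views $\bar r(\bar\bmu_t^N)\,\bar\bmu_t^N$ as measures $\bar\bxi_t^N$ that converge weakly to $\bar r(\bar\bmu_t)\,\bar\bmu_t$ by Proposition~\ref{prop: limits of integrals involving r} and applies the same theorem, which amounts to your direct use of that proposition with $\phi = \ell_k$.
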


The next lemma proves that $\bar\bmu$ satisfies (c) of Definition \ref{def: fluid problem}.

\begin{lemma}
	\label{lem: boundedness of hazard rate integral in the limit}
	The following property holds with probability one:
	\begin{equation*}
		\int_0^t \angled{\bar r\left(\bar\bmu_s\right) h_g^x + h_g^y}{\bar\bmu_s} < \infty \quad \text{for all} \quad t \geq 0.
	\end{equation*}
\end{lemma}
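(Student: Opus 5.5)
The plan is to bound the limiting integrand from below via lower semicontinuity, and to control the pre-limit integrals in expectation through the compensator identity. Since $h_g$ is continuous on $U_g$ and nonnegative, the function $\ell_K \defeq (h_g^x \wedge K)\indc_{U_g}$ need not be lower semicontinuous at $\partial U_g$. So I first work with truncated, continuous, compactly supported approximations. Fix $\varepsilon$ and the compact sets $A_\varepsilon \subset B_\varepsilon \subset U_g$ together with a continuous $\psi$ from Lemma \ref{lem: bounds for integrals of hazard rate}. Then $\psi h_g^x$ and $\psi h_g^y$ are continuous, bounded by $K_\varepsilon$, and nonnegative. By Proposition \ref{prop: limits of integrals involving r}, applied with $\phi = \psi h_g^x$ and with $\phi = \psi h_g^y$, together with weak convergence at $s \in \calC$, we get
\begin{equation*}
\angled{\psi[\bar r(\bar\bmu_s)h_g^x + h_g^y]}{\bar\bmu_s} = \lim_{N\to\infty} \angled{\psi[\bar r(\bar\bmu_s^N)h_g^x + h_g^y]}{\bar\bmu_s^N}, \quad s \in \calC.
\end{equation*}
Corollary \ref{cor: limit of partial derivatives term} with time-independent $\varphi,\psi$ then yields convergence of the time integrals. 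I could instead use Lemma \ref{lem: limit inferior of integrals involving r} with Fatou, but the continuous cutoff makes this direct.

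Next I let $\psi$ increase to $\indc_{U_g}$. I take a sequence $\psi_k$ built from an increasing family of sets $A_{\varepsilon_k}$ exhausting $U_g$; replacing $\psi_k$ by maxima keeps them continuous and increasing, and $\max_{j\le k}\psi_j$ is still supported in the compact $\bigcup_{j \le k} B_{\varepsilon_j} \subset U_g$. Property (a), from Proposition \ref{prop: support of fluid limit}, shows that $\bar\bmu_s$ charges only $U_g$, so monotone convergence gives
\begin{equation*}
\int_0^t \angled{\bar r(\bar\bmu_s)h_g^x + h_g^y}{\bar\bmu_s}ds = \lim_{k\to\infty} \lim_{N\to\infty} \bar\bD_t^{N,c}(\psi_k),
\end{equation*}
where $\bar\bD_t^{N,c}(\psi_k)$ is the compensator integral with the time-independent test function $\psi_k$.

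It therefore suffices to show that $\sup_k \lim_N \bar\bD_t^{N,c}(\psi_k) < \infty$ almost surely. Because $\bar\bD^{N,c}(\psi_k)$ is nondecreasing in $k$ and bounded by $\bar\bD_t^{N,c}(\indc)$, I need a pathwise bound on the limit. Here I use Proposition \ref{prop: set of probability one}, which gives $\bar\bD = \bar\bD^c$ on $\Gamma$. Since $\psi_k$ is bounded and continuous but not necessarily Lipschitz, I either choose the $\psi_k$ Lipschitz (possible, as we have freedom in the construction) and apply Lemma \ref{lem: convergence measure-valued functions implies convergence of projections}, or I use weak convergence of measures at continuity times. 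Either way, $\lim_N \bar\bD_t^{N,c}(\psi_k) = \bar\bD_t^c(\psi_k) = \bar\bD_t(\psi_k) \le \bar\bD_t(\indc)$ for $t$ a continuity point. Moreover, $\bar\bD_t(\indc) = \lim_N \bar\bD^N_t(\indc) \le \limsup_N (M^N + \calR^N(t))/N = \bar\bmu_0(\R_+^2) + \lambda_0 t < \infty$. For general $t$, monotonicity in $t$ extends the bound from continuity points.

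The main obstacle is justifying the interchange in the middle step: evaluating the limit of $\bar\bD^{N,c}_t(\psi_k)$ at a fixed $t$, where $\bar\bD^c$ might jump. Since the limit integral is continuous in $t$, I would expect $\bar\bD^c$ to be continuous. Sidestepping this, it suffices to bound at continuity times $t' > t$ of $\bar\bD$, which are dense, and then use that the left-hand side is nondecreasing in $t$.
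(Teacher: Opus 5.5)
Your proof is correct, but it takes a different route from the paper's.

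\textbf{The paper's route.} It works in expectation. It bounds $E\int_0^t \angled{\bar r(\bar\bmu_s^N)h_g^x + h_g^y}{\bar\bmu_s^N}ds$ uniformly in $N$, using Lemma~\ref{lem: bounds for integrals of hazard rate} and the compensator identity. It then transfers this bound to $\bar\bmu$ with Lemma~\ref{lem: limit inferior of integrals involving r} and Fatou's lemma, applied first pathwise and then in expectation. The conclusion is that the limiting integral has finite mean, hence is almost surely finite.

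\textbf{Your route.} You argue pathwise on $\Gamma$. Corollary~\ref{cor: limit of partial derivatives term} gives exact convergence for the compactly supported cutoffs $\psi_k$. At continuity times of $\bar\bD$, Proposition~\ref{prop: set of probability one} shows this limit equals $\bar\bD_t^c(\psi_k) = \bar\bD_t(\psi_k)$. Monotone convergence then identifies the integral with $\bar\bD_t(\R_+\times U_g) \le \bar\bmu_0(\R_+^2) + \lambda_0 t$.

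\textbf{Comparison.} The paper's argument needs only the convergence of $\bar\bmu^N$ and first-moment bounds. Yours additionally relies on the convergence of $\bar\bD^{N,c}$ and on the martingale argument behind $\bar\bD = \bar\bD^c$. In exchange, it gives an explicit pathwise bound and essentially anticipates Proposition~\ref{prop: limit of hazard rate term}.

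\textbf{Two small remarks.}
\begin{itemize}
\item Your concern about lower semicontinuity is unfounded. The functions $h_g^x$ and $h_g^y$ are nonnegative, continuous on the relatively open set $U_g$, and zero on its complement. Hence they are lower semicontinuous on $\R_+^2$, which is exactly what the paper exploits.
\item The claim that the sets $A_{\varepsilon_k}$ exhaust $U_g$ needs one line of justification. Let $A$ be a finite union of closed rectangles anchored at the origin with $(x_0,y_0)\notin A$. Then $A$ misses $[x_0,\infty)\times[y_0,\infty)$, so $\int_A g(x,y)dxdy \le 1 - S_g(x_0,y_0)$. This forces $(x_0,y_0)\in A_{\varepsilon_k}$ once $\varepsilon_k < S_g(x_0,y_0)$.
\end{itemize}
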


\begin{proof}
	Choose $\varepsilon > 0$ and then fix a constant $K_\varepsilon \geq 0$, compact sets $A_\varepsilon \subset B_\varepsilon \subset U_g$ and a function $\psi$ as in the statement of Lemma \ref{lem: bounds for integrals of hazard rate}. Then observe that
	\begin{equation*}
		\expect*{\int_0^t \angled{\bar r\left(\bar\bmu_s^N\right)h_g^x + h_g^y}{\bar\bmu_s^N}ds} = \expect*{\bar\bD_t^{N, c}(\psi)} + \expect*{\bar\bD_t^{N, c}(1 - \psi)}.
	\end{equation*}
	By Lemma \ref{lem: bounds for integrals of hazard rate}, we have $E[\bar\bD_t^{N, c}(1 - \psi)] \leq \varepsilon (\bar M + E[\bar\calR^N(t)])$. Furthermore,
	\begin{align*}
		\expect*{\bar\bD_t^{N, c}(\psi)} &\leq \expect*{\int_0^t \int_{B_\varepsilon} \left[\bar r\left(\bar\bmu_s^N, y\right)h_g^x(x, y) + h_g^y(x, y)\right] \bar\bmu_s^N(dx, dy)ds} \\
		&\leq K_\varepsilon t \expect*{\sup_{s \in [0, t]} \bar\bmu_s^N\left(\R_+^2\right)} \leq K_\varepsilon t \frac{\expect*{M^N + \calR^N(t)}}{N} \leq K_\varepsilon t \left(\bar M + \expect*{\bar\calR^N(t)}\right).
	\end{align*}
	
	It follows from Assumption \ref{ass: standing assumptions 2}, Lemma \ref{lem: limit inferior of integrals involving r} and Fatou's lemma that
	\begin{align*}
		\int_0^t \angled{\bar r\left(\bar\bmu_s\right)h_g^x + h_g^y}{\bar\bmu_s}ds &\leq \int_0^t \liminf_{N \to \infty} \angled{\bar r\left(\bar\bmu_s^N\right)h_g^x + h_g^y}{\bar\bmu_s^N}ds \\
		&\leq \liminf_{N \to \infty} \int_0^t \angled{\bar r\left(\bar\bmu_s^N\right)h_g^x + h_g^y}{\bar\bmu_s^N}ds
	\end{align*}
	with probability one; note that the hazard rates $h_g^x$ and $h_g^y$ are nonnegative in $U_g$ and thus lower semicontinuous in $\R_+^2$. Another application of Fatou's lemma yields
	\begin{align*}
		\expect*{\int_0^t \angled{\bar r\left(\bar\bmu_s\right)h_g^x + h_g^y}{\bar\bmu_s}ds} \leq \liminf_{N \to \infty} \left(K_\varepsilon t + \varepsilon\right) \left(\bar M + \expect*{\bar\calR^N(t)}\right).
	\end{align*}
	By the elementary renewal theorem, the right-hand side equals $(K_\varepsilon t + \varepsilon) (\bar M + \lambda_0 t) < \infty$. Then the expression inside the expectation sign is almost surely finite.
\end{proof}

Next we characterize the limiting integrals $\bar\bD_t(\varphi)$ for a suitable class of test functions.

\begin{proposition}
	\label{prop: limit of hazard rate term}
	Given any Lipschitz function $\varphi \in C_b(\R_+^3)$, we have:
	\begin{equation*}
		\bar\bD_t(\varphi) = \int_0^t \angled{\varphi_s\left[\bar r\left(\bar\bmu_s\right) h_g^x + h_g^y\right]}{\bar\bmu_s}ds
	\end{equation*}
	for all $t \geq 0$ with probability one.
\end{proposition}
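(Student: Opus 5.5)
By Proposition \ref{prop: set of probability one} we have $\bar\bD = \bar\bD^c$ in $\Gamma$. Combined with Lemma \ref{lem: convergence measure-valued functions implies convergence of projections} (applicable since $\varphi$ is Lipschitz), this gives $\bar\bD^{N,c}(\varphi) \to \bar\bD(\varphi)$ in $D_\R[0,\infty)$ on $\Gamma$. The limit $\bar\bD(\varphi)$ is continuous, because $\bar\bD^c$ is a limit of continuous processes whose increments are controlled by Lemma \ref{lem: proof of condition K2}. Hence pointwise convergence $\bar\bD_t^{N,c}(\varphi) \to \bar\bD_t(\varphi)$ holds for every $t$. It therefore suffices to show that
\begin{equation*}
	\lim_{N \to \infty} \int_0^t \angled{\varphi_s\left[\bar r\left(\bar\bmu_s^N\right) h_g^x + h_g^y\right]}{\bar\bmu_s^N}ds = \int_0^t \angled{\varphi_s\left[\bar r\left(\bar\bmu_s\right) h_g^x + h_g^y\right]}{\bar\bmu_s}ds,
\end{equation*}
possibly only in probability or along a further subsequence, which is enough to identify the limit almost surely. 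Splitting $\varphi = \varphi^+ - \varphi^-$, we may assume $\varphi \geq 0$.

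The plan is to truncate the unbounded hazard rates. Fix $\varepsilon > 0$ and take $A_\varepsilon \subset B_\varepsilon$, $K_\varepsilon$ and a continuous cutoff $\psi$ as in Lemma \ref{lem: bounds for integrals of hazard rate}. Write $\varphi = \psi\varphi + (1-\psi)\varphi$.

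\emph{Truncated part.} The functions $\psi\varphi_s h_g^x$ and $\psi\varphi_s h_g^y$ are continuous and bounded on $\R_+^2$, because $h_g$ is continuous on $U_g \supset B_\varepsilon$ and $\psi$ vanishes outside $B_\varepsilon$. Hence Corollary \ref{cor: limit of partial derivatives term} applies, with the roles $\varphi \leftarrow \psi\varphi h_g^y$ and $\psi \leftarrow \psi\varphi h_g^x$. This gives, on $\Gamma$, convergence of the truncated integrals to their counterparts with $\bar\bmu$.

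\emph{Remainder in the prelimit.} Lemma \ref{lem: bounds for integrals of hazard rate} yields
\[
E\bigl[\bar\bD_t^{N,c}((1-\psi)\varphi)\bigr] \leq \varepsilon\bigl(\bar M + E[\calR^N(t)]/N\bigr)\norm{\varphi}_\infty.
\]
This bound is uniformly small in $N$ as $\varepsilon \to 0$.

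\emph{Remainder in the limit.} Here I need to bound $E\int_0^t \langle (1-\psi)\varphi_s[\bar r(\bar\bmu_s)h_g^x + h_g^y], \bar\bmu_s\rangle ds$ by a comparable quantity. I would use Fatou together with Lemma \ref{lem: limit inferior of integrals involving r}, applied to $\ell = (1-\psi)\varphi_s(h_g^x\text{ or }h_g^y)$. This $\ell$ is lower semicontinuous provided $1-\psi$ is chosen lower semicontinuous. To guarantee that, take $\psi$ continuous with $\psi = 1$ on $A_\varepsilon$; then $1 - \psi$ is continuous, and $h_g$ is lower semicontinuous as noted in the proof of Lemma \ref{lem: boundedness of hazard rate integral in the limit}. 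The conclusion is that the expectation of the limit remainder is at most $\liminf_N$ of the expected prelimit remainders, hence at most $\varepsilon(\bar M + \lambda_0 t)\norm{\varphi}_\infty$.

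Combining the three pieces, $E|\bar\bD_t(\varphi) - \int_0^t\langle\cdots\rangle ds|$ is bounded by a constant times $\varepsilon$. The term-by-term handling is: the truncated parts converge almost surely, and the remainders are controlled in $L^1$ via Fatou on $\liminf|\cdot|$. Letting $\varepsilon \to 0$ gives almost sure equality for each fixed $t$. Both sides are continuous in $t$, so the equality holds for all $t$ simultaneously by restricting to rational $t$.

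The main obstacle is the remainder in the limit, specifically the lower semicontinuity of $\ell$ near $\partial U_g$. There $h_g$ is set to $0$ outside $U_g$ but may blow up inside, so I should double-check that $h_g^x, h_g^y$ are lower semicontinuous on all of $\R_+^2$. If that fails, the fallback is to use $\bar\bmu_t(U_g^c) = 0$ from Proposition \ref{prop: support of fluid limit} and approximate $h_g$ from below by an increasing sequence of continuous compactly supported functions on $U_g$, invoking monotone convergence. A secondary technical point is that the bound in Lemma \ref{lem: bounds for integrals of hazard rate} must be passed through $\bar r$. This is harmless, since $0 \leq \bar r \leq 1$ and $\varphi \geq 0$ make the integrand monotone.
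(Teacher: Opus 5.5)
Your proposal is correct and is essentially the paper's proof. It uses the same cutoff $\psi$ from Lemma~\ref{lem: bounds for integrals of hazard rate} and the same three-way split: the prelimit remainder is bounded by that lemma, the limit remainder by Fatou together with Lemma~\ref{lem: limit inferior of integrals involving r}, and the truncated part by Corollary~\ref{cor: limit of partial derivatives term}. Your fallback is unnecessary, since $h_g$ is nonnegative on $U_g$ and zero off it, hence lower semicontinuous on $\R_+^2$. The only difference is technical. You work at fixed $t$, let the a.s.\ convergence of the truncated part pass inside the $\liminf$ so that no domination is needed, and conclude via continuity of both sides; that continuity holds simply because $J_1$-limits of continuous paths are continuous, so the appeal to Lemma~\ref{lem: proof of condition K2} is not needed. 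The paper instead bounds $E\sup_{t\le T}|\cdot|$, which requires a dominated-convergence step using $E[\bar\bmu_0(\R_+^2)]\le\bar M$.
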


\begin{proof}
	Let $d_T^f$ denote the metric defined in the proof of Lemma \ref{lem: convergence measure-valued functions implies convergence of projections} for the Skorohod-$J_1$ topology in the space $D_\R[0, T]$, and for notational conciseness, let us write
	\begin{equation*}
		I_t\left(\varphi, h_g, \bar\bmu\right) \defeq \angled{\varphi_t\left[\bar r\left(\bar\bmu_t\right)h_g^x + h_g^y\right]}{\bar\bmu_t} \quad \text{for all} \quad t \geq 0,
	\end{equation*}
	with analogous definitions for other arguments than $\varphi$, $h_g$ and $\bar\bmu$. In addition, let
	\begin{equation*}
		\bY_t(\varphi) \defeq \int_0^t I_s\left(\varphi, h_g, \bar\bmu\right)ds \quad \text{for all} \quad t \geq 0.
	\end{equation*}
	
	We will prove that $E[d_T^f(\bar\bD(\varphi), \bY(\varphi))] = 0$ for all $T \geq 0$. Then $\bar\bD_t(\varphi) = \bY_t(\varphi)$ for all $t \in [0, T]$ with probability one, and thus the equality holds almost surely also for all $t \geq 0$. In order to establish the former property, observe that:
	\begin{align*}
		\expect*{d_T^f(\bar\bD(\varphi), \bY(\varphi))} &\leq \liminf_{N \to \infty} \expect*{d_T^f\left(\bar\bD^{N, c}(\varphi), \bY(\varphi)\right)} \\
		&\leq \liminf_{N \to \infty} \expect*{\sup_{t \in [0, T]} \left|\bar\bD_t^{N, c}(\varphi) - \bY_t(\varphi)\right|};
	\end{align*}
	the first inequality holds since $\bar\bD^{N, c} \to \bar\bD$ in $D_{\calM_F^+(\R_+^3)}[0, \infty)$ almost surely as $N \to \infty$, and follows from Lemma \ref{lem: convergence measure-valued functions implies convergence of projections} and Fatou's lemma. Therefore, it suffices to show that
	\begin{equation}
		\label{eq: liminf of expectation of supremum of difference of integrals}
		\liminf_{N \to \infty} \expect*{\sup_{t \in [0, T]} \left|\bar\bD_t^{N, c}(\varphi) - \bY_t(\varphi)\right|} = 0.
	\end{equation}
	
	Given $\varepsilon > 0$, fix $K_\varepsilon \geq 0$, compact sets $A_\varepsilon \subset B_\varepsilon \subset U_g$ and $\psi$ as in Lemma \ref{lem: bounds for integrals of hazard rate}. Then
	\begin{equation}
		\label{eq: bound for supremum over time of hazard rate integrals}
		\begin{split}
			\sup_{t \in [0, T]} \left|\bar\bD_t^{N, c}(\varphi) - \bY_t(\varphi)\right| &= \sup_{t \in [0, T]} \left|\int_0^t I_s\left(\varphi, h_g, \bar\bmu^N\right)ds - \int_0^t I_s\left(\varphi, h_g, \bar\bmu\right)ds\right| \\
			&\leq \int_0^T I_t\left((1 - \psi)|\varphi|, h_g, \bar\bmu^N\right)dt \\
			&+ \int_0^T I_t\left((1 - \psi)|\varphi|, h_g, \bar\bmu\right)dt \\
			&+ \int_0^T\left|I_t\left(\psi\varphi, h_g, \bar\bmu^N\right) - I_t\left(\psi\varphi, h_g, \bar\bmu\right)\right|dt.
		\end{split}
	\end{equation}
	We proceed to bound each of the terms on the right-hand side.
	
	The first term on the right-hand side satisfies
	\begin{equation*}
		E\left[\int_0^T I_t\left((1 - \psi)|\varphi|, h_g, \bar\bmu^N\right)dt\right] = E\left[\bar\bD_T^{N, c}\left((1 - \psi)|\varphi|\right)\right] \leq \varepsilon\left(\bar M + \expect*{\bar\calR^N(T)}\right) \norm{\varphi}_\infty
	\end{equation*}
	for all $N \geq 1$ by Lemma \ref{lem: bounds for integrals of hazard rate}. Moreover, the integrand of second term is such that
	\begin{equation*}
		I_t\left((1 - \psi)|\varphi|, h_g, \bar\bmu\right) \leq \liminf_{N \to \infty} I_t\left((1 - \psi)|\varphi|, h_g, \bar\bmu^N\right) \quad \text{for all} \quad t \in \calC 
	\end{equation*}
	almost surely, by Assumption \ref{ass: standing assumptions 2} and Lemma \ref{lem: limit inferior of integrals involving r}. As a result, Fatou's lemma yields
	\begin{equation*}
		E\left[\int_0^T I_t\left((1 - \psi)|\varphi|, h_g, \bar\bmu\right)dt\right] \leq \liminf_{N \to \infty} E\left[\int_0^T I_t\left((1 - \psi)|\varphi|, h_g, \bar\bmu^N\right)dt\right].
	\end{equation*}
	Therefore, the elementary renewal theorem implies that:
	\begin{equation*}
		\liminf_{N \to \infty} E\left[\int_0^T \left[I_t\left((1 - \psi)|\varphi|, h_g, \bar\bmu^N\right) + I_t\left((1 - \psi)|\varphi|, h_g, \bar\bmu\right)\right]dt\right] \leq 2\varepsilon\left(\bar M + \lambda_0 T\right)\norm{\varphi}_\infty.
	\end{equation*}
	
	For the last term on the right-hand side of \eqref{eq: bound for supremum over time of hazard rate integrals}, note that $\psi\varphi h_g^x, \psi\varphi h_g^y \in C_b(\R_+^3)$ since $h_g$ is continuous in $U_g$ by Assumption \ref{ass: standing assumptions 2} and $\psi$ is supported in $B_\varepsilon$. Then
	\begin{equation*}
		\lim_{N \to \infty} \left|I_t\left(\psi\varphi, h_g, \bar\bmu^N\right) - I_t\left(\psi\varphi, h_g, \bar \bmu\right)\right| = 0 \quad \text{for all} \quad t \in \calC
	\end{equation*}
	with probability one by Corollary \ref{cor: limit of partial derivatives term}. Since $\bar\bmu_t^N(\R_+^2) \leq \bar\bmu_0^N(\R_+^2) + \bar\calR^N(T)$ for all $t \in [0, T]$ surely, we have $\bar\bmu_t(\R_+^2) \leq \bar\bmu_0(\R_+^2) + \lambda_0 T$ for all $t \in [0, T] \cap \calC$ almost surely. Hence,
	\begin{align*}
		\left|I_t\left(\psi\varphi, h_g, \bar\bmu^N\right) - I_t\left(\psi\varphi, h_g, \bar \bmu\right)\right| &\leq K_\varepsilon \norm{\varphi}_\infty \left[\bar\bmu_t^N\left(\R_+^2\right) + \bar\bmu_t\left(\R_+^2\right)\right] \\
		&\leq K_\varepsilon \norm{\varphi}_\infty\left[\bar\bmu_0^N\left(\R_+^2\right) + \bar\calR^N(T) + \bar\bmu_0\left(\R_+^2\right) + \lambda_0 T\right] \\
		&\leq K_\varepsilon \norm{\varphi}_\infty\left[2\bar\bmu_0\left(\R_+^2\right) + 2\lambda_0 T + 1\right]
	\end{align*}
	for all $t \in [0, T] \cap \calC$ and all sufficiently large $N$ almost surely. Here the first inequality holds surely and the third inequality uses that $\bar\bmu_0^N(\R_+^2) + \bar\calR^N(T) \to \bar\bmu_0(\R_+^2) + \lambda_0 T$ as $N \to \infty$ almost surely. The right-hand side of the above inequality is integrable because
	\begin{align*}
		\expect*{\bar\bmu_0\left(\R_+^2\right)} &= \lim_{L \to \infty} \expect*{\bar\bmu_0\left(\R_+^2\right) \ind{\bar\bmu_0\left(\R_+^2\right) \leq L} + L \ind{\bar\bmu_0\left(\R_+^2\right) > L}} \\
		&= \lim_{L \to \infty} \lim_{N \to \infty} \expect*{\bar\bmu_0^N\left(\R_+^2\right) \ind{\bar\bmu_0^N\left(\R_+^2\right) \leq L} + L \ind{\bar\bmu_0^N\left(\R_+^2\right) > L}} \leq \bar M
	\end{align*}
	The limits use the monotone convergence theorem and the fact that $\bar\bmu_0^N(\R_+^2) \Rightarrow \bar\bmu_0(\R_+^2)$ as $N \to \infty$, whereas the inequality follows from $\bar\bmu_0^N(\R_+^2) = M^N / N$ and Assumption \ref{ass: standing assumptions 1}. Then the dominated convergence theorem implies that
	\begin{equation*}
		\lim_{N \to \infty} E\left[\int_0^T\left|I_t\left(\psi\varphi, h_g, \bar\bmu^N\right) - I_t\left(\psi\varphi, h_g, \bar\bmu\right)\right|dt\right] = 0.
	\end{equation*}
	
	Taking the expectation and then the limit inferior as $N \to \infty$ in \eqref{eq: bound for supremum over time of hazard rate integrals}, we obtain:
	\begin{equation*}
		\liminf_{N \to \infty} E\left[\sup_{t \in [0, T]} \left|\bar\bD_t^{N, c}(\varphi) - \bY_t(\varphi)\right|\right] \leq 2\varepsilon\left(\bar M + \lambda_0 T\right)\norm{\varphi}_\infty.
	\end{equation*}
	Because $\varepsilon > 0$ is arbitrary, the left-hand side equals zero, which proves \eqref{eq: liminf of expectation of supremum of difference of integrals}.
\end{proof}

As a corollary, we fully characterize the limit $\bar\bD$.

\begin{corollary}
	\label{cor: limit of hazard rate term}
	The following property holds with probability one:
	\begin{equation}
		\label{eq: limit of departure process}
		\bar \bD_t(\varphi) = \int_0^t \angled{\varphi_s\left[\bar r\left(\bar\bmu_s\right) h_g^x + h_g^y\right]}{\bar\bmu_s}ds
	\end{equation}
	for all $t \geq 0$ and $\varphi \in C_b(\R_+^3)$.
\end{corollary}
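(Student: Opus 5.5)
Proposition \ref{prop: limit of hazard rate term} gives the identity \eqref{eq: limit of departure process} almost surely, for all $t \geq 0$, but only one Lipschitz $\varphi \in C_b(\R_+^3)$ at a time. The plan is to get a single null set that works for a countable family, and then extend the identity by density and monotone class arguments.

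First I fix a countable set $\Psi$ of Lipschitz functions in $C_b(\R_+^3)$. I take the countable family from the proof of Lemma \ref{lem: departure process and compensator have same limit}, polynomials with rational coefficients multiplied by fixed rational cutoffs, so that $\Psi \subset C_c^1(\R_+^3)$ is dense in $C_c(\R_+^3)$. I then intersect the countably many events of probability one given by Proposition \ref{prop: limit of hazard rate term}, together with the event of Lemma \ref{lem: boundedness of hazard rate integral in the limit}. On this intersection the identity holds for every $\psi \in \Psi$ and every $t \geq 0$, and the integral $\int_0^t \angled{\bar r(\bar\bmu_s)h_g^x + h_g^y}{\bar\bmu_s}ds$ is finite.

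Fix $\omega$ in this set and $t \geq 0$. Both sides of \eqref{eq: limit of departure process} define finite nonnegative measures on $\R_+^3$. The left side is $\bar\bD_t$. The right side is the measure $m_t(d s, dx, dy) = \ind{s \leq t}[\bar r(\bar\bmu_s,y)h_g^x + h_g^y]\bar\bmu_s(dx,dy)ds$. It is finite by the Lemma \ref{lem: boundedness of hazard rate integral in the limit} event, and it is well defined by the measurability results of Appendix \ref{app: measurability properties}.

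Next I show the two measures agree on $C_c(\R_+^3)$. Given $f \in C_c(\R_+^3)$, density gives $\psi_n \in \Psi$ with $\psi_n \to f$ uniformly. For the density to hold in sup-norm with supports in a common compact set, I choose $\Psi$ as products of polynomials with a fixed family of smooth cutoffs. Since both measures are finite, the integrals of $\psi_n$ converge to the integrals of $f$ on both sides. Two finite Borel measures on $\R_+^3$ that agree on $C_c$ coincide by the Riesz representation theorem, so $\bar\bD_t = m_t$. Integrating an arbitrary $\varphi \in C_b(\R_+^3)$ against both measures then yields \eqref{eq: limit of departure process}, since $\varphi$ is bounded and both measures are finite.

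The only delicate point is the finiteness of $m_t$, which is needed to pass from $C_c$ to $C_b$ and to make sense of the right-hand side. Lemma \ref{lem: boundedness of hazard rate integral in the limit} supplies it on the same full-measure set, so I expect no real obstacle beyond arranging that all null sets are chosen countably and independently of $t$ and $\varphi$.
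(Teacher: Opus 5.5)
Your proposal is correct and follows essentially the same route as the paper. Both arguments use a countable dense family $\Psi \subset C_c^1(\R_+^3)$ of Lipschitz functions and a single null set coming from Proposition~\ref{prop: limit of hazard rate term} and Lemma~\ref{lem: boundedness of hazard rate integral in the limit}. Both then pass to $C_c(\R_+^3)$ by uniform approximation, using the finiteness of both measures, and to $C_b(\R_+^3)$ because $C_c(\R_+^3)$ is separating. The only cosmetic difference is that the paper also puts the constant function $\indc$ into the countable family to control total masses, whereas you take the finiteness of the right-hand measure directly from the lemma, which works equally well.
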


\begin{proof}
	
	As in the proof of Lemma \ref{lem: departure process and compensator have same limit}, we may construct a countable set $\Psi \subset C_c^1(\R_+^3)$ that is dense in $C_c(\R_+^3)$. It follows from Proposition \ref{prop: limit of hazard rate term} that there exists a set of probability one where Lemma \ref{lem: boundedness of hazard rate integral in the limit} holds and \eqref{eq: limit of departure process} holds simultaneously for all $\psi \in \Psi$ and the function $\indc$ that is identically equal to one. Fix any $\omega \in \Omega$ in this set of probability one; in the remainder of the proof we omit $\omega$ from the notation for conciseness.
	
	For each $\varphi \in C_c(\R_+^3)$, consider a sequence $\setc{\psi^k}{k \geq 1} \subset \Psi$ that converges to $\varphi$, and let $\bY_t(\varphi)$ denote the right-hand side of \eqref{eq: limit of departure process}, which is well-defined by Lemma \ref{lem: boundedness of hazard rate integral in the limit}. Then
	\begin{align*}
		&\left|\bar\bD_t(\varphi) - \bar\bD_t\left(\psi^k\right)\right| \leq \norm{\varphi - \psi^k}_\infty \bar\bD_t(\indc), \\
		&\left|\bY_t(\varphi) - \bY_t\left(\psi^k\right)\right| \leq \norm{\varphi - \psi^k}_\infty \bY_t(\indc),
	\end{align*}
	for all $t \geq 0$. Furthermore, $\bar\bD_t(\psi^k) = \bY_t(\psi^k)$ and $\bar\bD_t(\indc) = \bY_t(\indc) < \infty$ for all $t \geq 0$ by Proposition \ref{prop: limit of hazard rate term} and Lemma \ref{lem: boundedness of hazard rate integral in the limit}. Then \eqref{eq: limit of departure process} is obtained by taking the limit as $k \to \infty$. Further, this identity in fact holds for all $\varphi \in C_b(\R_+^3)$ because $C_c(\R_+^3)$ is separating.
\end{proof}

\subsubsection{Derivation of the transport equation}
\label{subsub: derivation of the transport equation}

Combining the results of Sections \ref{subsub: limits involving service rates} and \ref{subsub: limits involving hazard rates}, the following theorem derives the limit of the right-hand side of the pre-limit equation \eqref{eq: pre-limit equation in normalized notation}. In particular, it proves that $\bar\bmu$ satisfies (c) and (d) of Definition \ref{def: fluid problem}, thereby completing the proof of Theorem \ref{the: measure-valued fluid limit}.

\begin{theorem}
	\label{the: characterization of subsequential limits}
	The following property holds with probability one:
	\begin{equation*}
		\int_0^t \angled{\bar r\left(\bar\bmu_s\right) h_g^x + h_g^y}{\bar\bmu_s}ds < \infty \quad \text{for all} \quad t \geq 0.
	\end{equation*}
	Furthermore, for all $t \geq 0$ and $\varphi \in C_c^1(\R_+^3)$, we have:
	\begin{align*}
		\angled{\varphi_t}{\bar\bmu_t} &= \angled{\varphi_0}{\bar\bmu_0} + \lambda_0 \int_0^t \varphi_s(0, 0)ds - \int_0^t \angled{\varphi_s\left[\bar r(\bar \bmu_s)h_g^x + h_g^y\right]}{\bar\bmu_s}ds \\
		&+ \int_0^t \angled{\partial_t\varphi_s + \bar r\left(\bar \bmu_s\right)\partial_x\varphi_s + \partial_y\varphi_s}{\bar\bmu_s}ds.
	\end{align*}
\end{theorem}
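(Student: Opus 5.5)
The finiteness claim is exactly Lemma \ref{lem: boundedness of hazard rate integral in the limit}, so only the transport equation needs work. I would fix $\omega$ in the intersection of $\Gamma$ from Proposition \ref{prop: set of probability one} with the probability-one sets of Corollary \ref{cor: limit of partial derivatives term}, Corollary \ref{cor: limit of hazard rate term} and Lemma \ref{lem: boundedness of hazard rate integral in the limit}. On this set the normalized pre-limit equation \eqref{eq: pre-limit equation in normalized notation} holds surely for every $N$, and the plan is to pass to the limit term by term along the fixed subsequence. The first step is to do this for $t\in\calC(\omega)$ and a fixed $\varphi\in C_c^1(\R_+^3)$. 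Since $t \in \calC$, weak convergence $\bar\bmu_t^N\to\bar\bmu_t$ gives $\langle\varphi_t,\bar\bmu_t^N\rangle\to\langle\varphi_t,\bar\bmu_t\rangle$. Convergence of the initial conditions in $\calM_F^+(\R_+^2)$ handles $\langle\varphi_0,\bar\bmu_0^N\rangle$. The last term converges by Corollary \ref{cor: limit of partial derivatives term}, applied with $\partial_t\varphi+\partial_y\varphi$ and $\partial_x\varphi$ in the roles of its two bounded continuous functions.

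For arrivals and departures I would use Lemma \ref{lem: convergence measure-valued functions implies convergence of projections}. Functions in $C_c^1(\R_+^3)$ are Lipschitz and bounded, so $\bar\bA^N(\varphi)\to\bar\bA(\varphi)$ and $\bar\bD^N(\varphi)\to\bar\bD(\varphi)$ in $D_\R[0,\infty)$. The limits $\bar\bA(\varphi)$ and $\bar\bD(\varphi)$ are continuous in $t$: the first by Proposition \ref{prop: set of probability one}, the second by Corollary \ref{cor: limit of hazard rate term} together with the integrability of the integrand from Lemma \ref{lem: boundedness of hazard rate integral in the limit}. Hence Skorohod convergence upgrades to pointwise convergence at every $t$. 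The limits are identified as $\lambda_0\int_0^t\varphi_s(0,0)ds$ and $\int_0^t\langle\varphi_s[\bar r(\bar\bmu_s)h_g^x+h_g^y],\bar\bmu_s\rangle ds$, which yields the equation for all $t\in\calC$.

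The second step extends the equation to all $t\ge0$ by right-continuity. The complement of $\calC$ is countable, so any $t$ is the limit of some $t_n\downarrow t$ with $t_n\in\calC$. The right-hand side is continuous in $t$: the Lebesgue integrals have integrable integrands, which are bounded for the transport term because the masses $\bar\bmu_s(\R_+^2)$ are bounded on compacts. The left-hand side is right-continuous because $\bar\bmu$ is c\`adl\`ag in the weak topology and $\varphi$ is uniformly continuous, by the same estimate used in the proof of Proposition \ref{prop: arrivals-flow-departures decomposition}.

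The main obstacle is making this hold simultaneously for all $\varphi$ on one null set. The departure identity is already uniform in $\varphi$ via Corollary \ref{cor: limit of hazard rate term}, but the convergence of $\bar\bA^N(\varphi)$ and $\bar\bD^N(\varphi)$ is stated per $\varphi$. I would avoid any density argument in $C_c^1$ by noting that Lemma \ref{lem: convergence measure-valued functions implies convergence of projections} is a deterministic statement, applicable to every Lipschitz $\varphi$ once $\omega$ is fixed. Since $\bar\bX^N(\omega)\to\bar\bX(\omega)$ on $\Gamma$, all limits above then hold for every $\varphi\in C_c^1(\R_+^3)$ at once.
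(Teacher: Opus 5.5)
Your proposal is correct and follows essentially the same route as the paper. You pass to the limit term by term in the normalized pre-limit equation on a single probability-one set of $\omega$, using Lemma~\ref{lem: convergence measure-valued functions implies convergence of projections}, Proposition~\ref{prop: set of probability one} and Corollaries~\ref{cor: limit of partial derivatives term} and~\ref{cor: limit of hazard rate term}, and then extend from a co-countable set of times to all $t \geq 0$ by continuity of the right-hand side and right-continuity of the left-hand side. The only difference is bookkeeping: you evaluate directly at continuity times of $\bar\bmu$ and use the pointwise-in-$t$ convergence of Corollary~\ref{cor: limit of partial derivatives term}, while the paper first extracts a Skorohod limit of $\bar\bY^N(\varphi)$ by subtraction and identifies it afterwards, so your version is, if anything, slightly cleaner.
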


\begin{proof}
	The first property was established in Lemma \ref{lem: boundedness of hazard rate integral in the limit}, so it only remains to prove the second property. Let us introduce the short-hand notation
	\begin{equation*}
		\bar\bY_t^N(\varphi) \defeq \int_0^t \angled{\partial_t\varphi_s + \bar r\left(\bar\bmu_s^N\right)\partial_x\varphi_s + \partial_y\varphi_s}{\bar\bmu_s^N}ds
	\end{equation*}
	for all $t \geq 0$ and $\varphi \in C_c^1(\R_+^3)$. Fix $\omega \in \Omega$ in the set of probability one where Proposition~\ref{prop: set of probability one} and Corollaries \ref{cor: limit of partial derivatives term} and \ref{cor: limit of hazard rate term} hold and fix $\varphi \in C_c^1(\R_+^3)$; we will omit $\omega$ from the notation.
	
	Using the notation $\bar\bY_t^N(\varphi)$, it follows from Proposition \ref{prop: arrivals-flow-departures decomposition} that
	\begin{equation*}
		\angled{\varphi_t}{\bar\bmu_t^N} = \angled{\varphi_0}{\bar\bmu_0^N} + \bar\bA_t^N(\varphi) - \bar\bD_t^N(\varphi) + \bar\bY_t^N(\varphi) \quad \text{for all} \quad t \geq 0 \quad \text{and} \quad N \geq 1.
	\end{equation*}
	Lemma \ref{lem: convergence measure-valued functions implies convergence of projections} and Proposition \ref{prop: set of probability one} imply that the term on the left-hand side and the first three terms on the right-hand side converge in $D_\R[0, \infty)$. Therefore, the processes $\bar\bY^N(\varphi)$ converge in $D_\R[0, \infty)$ to some process $\bar\bY(\varphi)$, and it follows from Corollary \ref{cor: limit of partial derivatives term} that
	\begin{equation*}
		\bar\bY_t(\varphi) = \int_0^t \angled{\partial_t\varphi_s + \bar r\left(\bar\bmu_s\right)\partial_x\varphi_s + \partial_y\varphi_s}{\bar\bmu_s}ds
	\end{equation*}
	for all continuity points of $\bar\bY(\varphi)$. Recall that $\bar\bmu_t(\R_+^2) \leq \bar\bmu_0(\R_+^2) + \lambda_0 t$ for all but countably many $t \geq 0$. As a result, the inner integral on the right-hand is bounded, which implies that the right-hand side is continuous as a function of $t$. Because $\bar\bY(\varphi)$ is c\`adl\`ag, we conclude that the equality in fact holds for all $t \geq 0$.
	
	Using Lemma \ref{lem: convergence measure-valued functions implies convergence of projections} and Proposition \ref{prop: set of probability one} again, we get:
	\begin{equation*}
		\angled{\varphi_t}{\bar\bmu_t} = \angled{\varphi_0}{\bar\bmu_0} + \lambda_0\int_0^t \varphi_s(0, 0)ds - \bar\bD_t(\varphi) + \bar\bY_t(\varphi)
	\end{equation*}
	for each $t \geq 0$ at which the process on left-hand side and each of the processes on the right-hand side is continuous. Since all these processes are c\`adl\`ag, the equality holds for all times $t \geq 0$. The limit $\bar\bD(\varphi)$ is given in Corollary \ref{cor: limit of hazard rate term}, so this completes the proof.
\end{proof}

\section{Analysis of the fluid problem}
\label{sec: analysis of the fluid problem}

In this section we study the space $\calS^+$ of measure-valued functions that solve the fluid problem in Definition \ref{def: fluid problem}, proving the theorems stated in Section \ref{sub: properties of the fluid problem}. By Theorem \ref{the: measure-valued fluid limit}, the fluid limits of partial service queues with the preemptive LCFS policy are solutions of the fluid problem. Hence, the interpretation of $\bmu \in \calS^+$ is that $\bmu_t$ represents the distribution of the time in the system and attained service for a continuous population of tasks.

Recall that if $\bmu \in \calS^+$, then $\bmu_t(U_g^c) = 0$ and its $y$-marginal is absolutely continuous with respect to the Lebesgue measure for all times $t \geq 0$. In addition, $\bmu$ solves the transport equation \eqref{eq: transport equation}. The term $\bar r(\bmu_t, y)$ on the right-hand side of \eqref{eq: transport equation} is discontinuous in the spatial variable $y$ and possibly in the time variable $t$ as well. Furthermore, $\bar r(\bmu)$ constitutes a nonlinear feedback term in \eqref{eq: transport equation}, as it depends on the unknown measure-valued function $\bmu$. The theory started by DiPerna and Lions in \cite{diperna1989ordinary}, which was further developed by Ambrosio in \cite{ambrosio2004transport}, can be used to analyze certain transport equations with discontinuous velocity fields. Nevertheless, this theory applies to partial differential equations and not directly to integral measure-valued equations like \eqref{eq: transport equation}, where in particular, solutions will be singular with respect to Lebesgue. Moreover, we lack a framework for analyzing transport equations with nonlinear feedback terms, and basic properties, such as the existence and uniqueness of solutions, are difficult to prove or may even not hold.

\subsection{Outline of the analysis}
\label{sub: outline of the analysis of the transport equation}

Despite the aforementioned challenges, we develop in what follows a special-purpose analysis of the fluid problem that enables substantive conclusions.

In Section \ref{sub: existence of solutions} we cover existence of solutions for a broad class of initial conditions, a direct consequence of the fluid limit proved in Theorem \ref{the: measure-valued fluid limit}. In particular, we show that a broad class of initial conditions $\bmu_0$ are the limits of measures describing the normalized initial states $\bar\bmu_0^N$ of partial service queues with a growing number of servers $N$. Then the fluid limit result for the associated measure-valued processes yields the existence of a solution to the fluid problem which has initial condition $\bmu_0$.

While we do not prove that solutions of the fluid problem are unique in general, we do provide a fairly complete characterization of solutions. Section \ref{sub: solution with null initial condition} establishes uniqueness for the null initial condition and gives a closed-form expression, proving Theorem \ref{the: solution of transport equation for null initial condition}. For general initial conditions, Section \ref{sub: structure of solutions} proves Theorem \ref{the: structure of solutions}, decomposing the general solution as the superposition of the unique solution with null initial condition and a measure-valued function representing tasks initially present; this decomposition is nontrivial given the underlying nonlinearity. Finally, Section \ref{sub: long-term behavior of solutions} shows that the latter measure-valued function converges to zero over time, and thereby establishes that the fluid problem has a global attractor which admits a closed-form expression, proving Theorem \ref{the: long-term behavior of solutions}.

We now describe from a high-level the strategy underlying the analysis that sustains these results. Section \ref{sub: elementary properties} covers some elementary technical properties of the fluid solution. Then a key step is taken in Section \ref{sub: prescribed thresholds}, where we consider a modified fluid problem in which the feedback loop is opened and replaced by an exogenous function. Specifically, the term $\bar r(\bmu_t, y) = \ind{y \leq \bar y_*(\bmu_t)}$ is replaced by $\ind{y \leq \bar \by_t}$ for some prescribed threshold function $\bar \by$. For such a modified fluid problem, we first carry out in Proposition \ref{prop: spatial rescaling} a spatial rescaling that simplifies the right-hand side of the transport equation \eqref{eq: transport equation}, and leads to a solution strategy based on a partial differential equation that may be solved using characteristic curves. Through this technique, Proposition \ref{prop: integrals below the threshold} obtains explicit expressions for the integral $\langle\phi, \bmu_t\rangle$ when $\phi$ is supported below the threshold function. Proposition \ref{prop: transport equation with constant threshold} provides expressions for this integral when $\phi$ is general and the threshold function is constant over time. These formulas are key enabling tools in the later proofs of the main theorems.

In particular, Section \ref{sub: solution with null initial condition} shows that if $\bmu \in \calS^+$ and $\bmu_0$ is the null measure, then
\begin{equation*}
	\bar y_*(\bmu_t) = \infty \quad \text{for all} \quad t < y_* \quad \text{and} \quad \bar y_*(\bmu_t) = y_* \quad \text{for all} \quad t \geq y_*,
\end{equation*}
where $y_*$ is the possibly infinite constant in \eqref{eq: equilibrium threshold}. The proof is based on the fact that the initially empty system has an infinite threshold for some positive amount of time. Hence, we can use the formulas $\langle\phi, \bmu_t\rangle$ obtained in Section \ref{sub: prescribed thresholds}, for functions $\phi$ supported below the threshold, to establish that $y_*$ is the first time at which $\bmu_t(\R_+^2) = 1$. Additional arguments are required to show that the threshold is constant over the time interval $[y^*, \infty)$. Once this is done, the expressions of Proposition \ref{prop: transport equation with constant threshold} for solutions with a constant threshold are used to obtain the full solution of the fluid problem, proving Theorem \ref{the: solution of transport equation for null initial condition}.

In the case of general initial conditions, we show in Section \ref{sub: structure of solutions} that if $\bmu \in \calS^+$, then
\begin{equation*}
	\bar y_*(\bmu_t) \geq t \wedge y_* \quad \text{for all} \quad t < y_* \quad \text{and} \quad \bar y_*(\bmu_t) = y_* \quad \text{for all} \quad t \geq y_*;
\end{equation*}
this again requires a refined analysis based on the explicit formulas of Section \ref{sub: prescribed thresholds}.

Given any solution $\bmu \in \calS^+$, we let $\bnu \in \calS^+$ be the solution with null initial condition and let $\bxi \defeq \bmu - \bnu$, as in Theorem \ref{the: structure of solutions}. Using the above characterizations of the threshold functions $\bar y_*(\bmu)$ and $\bar y_*(\bnu)$, we show that the nonlinear feedback terms $\bar r(\bmu_t)$ and $\bar r(\bnu_t)$ are equal almost everywhere with respect to $\bnu_t$ for all $t \geq 0$. Hence, it is possible to replace $\bar r(\bnu)$ by $\bar r(\bmu)$ in the transport equation \eqref{eq: transport equation} for $\bnu$. By doing this and subtracting the equation for $\bnu$ from the equation for $\bmu$, we get an equation similar to \eqref{eq: transport equation} for $\bxi$; this equation depends on $\bar r(\bmu)$ and has arrival rate equal to zero. The interpretation is that $\bxi$ describes the times in the system and attained service times of tasks that were already present at time zero. In principle, $\bxi = \bmu - \bnu$ could be a signed measure, which would not be consistent with the latter interpretation. However, using results from Section \ref{sub: prescribed thresholds}, we show that $\bxi$ is nonnegative, completing the proof of Theorem \ref{the: structure of solutions}. We note that for this reason, Section \ref{sub: prescribed thresholds} considers the more general setting of possibly signed measures.

We conclude our analysis of the fluid problem in Section \ref{sub: long-term behavior of solutions}, where we prove that the measure-valued function $\bxi$ defined above converges vaguely to the null measure over time, and also weakly under some mild additional assumptions. It follows that the long-term limit of any solution coincides with the long-term limit of the solution with null initial condition. Using the closed-form expression for the solution with the null initial condition obtained in Theorem \ref{the: solution of transport equation for null initial condition}, we compute this limit explicitly, proving Theorem \ref{the: long-term behavior of solutions}.

\subsection{Existence of solutions}
\label{sub: existence of solutions}

The fact that \eqref{eq: transport equation} arises from the fluid limit in Theorem \ref{the: measure-valued fluid limit} implies the existence of solutions to the fluid problem for a broad class of initial conditions.

\begin{proposition}
	\label{prop: existence of solutions}
	Let $\rho \in \calM_F^+(\R_+^2)$ be a probability measure that is stochastically smaller than the probability measure with density $g$. Specifically, 
	\begin{equation*}
		\angled{\phi}{\rho} \leq \int_{\R_+^2} \phi(x, y)g(x, y)dxdy
	\end{equation*}
	for each Borel $\map{\phi}{\R_+^2}{\R}$ that is nondecreasing in the componentwise order and bounded. Also, suppose that $\rho$ satisfies (b) of Definition \ref{def: fluid problem}. Then, for each constant $\alpha \geq 0$, there exists a solution $\bmu$ of the fluid problem with initial condition $\bmu_0 = \alpha \rho$. If the variables $x$ and $y$ represent the attained service and time spent in the system for tasks present at time zero, respectively, then the interpretation of the probability measure $\rho$ is that $\rho(A)$ represents the fraction of tasks present at time zero for which $(x, y) \in A$.
\end{proposition}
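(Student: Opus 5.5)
We will realize $\alpha\rho$ as the limit of normalized initial states $\bar\bmu_0^N$ of a sequence of partial service queues that satisfies Assumptions \ref{ass: standing assumptions 1} and \ref{ass: standing assumptions 2}. Theorem \ref{the: measure-valued fluid limit} then supplies a subsequential limit $\bar\bmu$, which solves the fluid problem almost surely. Its initial condition is the weak limit of $\bar\bmu_0^N$, namely $\alpha\rho$, and fixing any realization gives the desired deterministic solution. The whole task is therefore the construction of admissible initial conditions, and the main constraint is a consistency one. The pairs $(b_i, c_i)$ of tasks present at time zero must still be i.i.d.\ with density $g$ and independent of everything else, and they must satisfy $\bx_i(0) < b_i$, $\by_i(0) < c_i$.

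\emph{Coupling.} Take $M^N \defeq \lfloor \alpha N\rfloor$, so (a) holds with $\bar M = \alpha$. Since $\rho$ is stochastically dominated by $g$ in the componentwise (upper-set) sense, Strassen's theorem yields a coupling $(X, Y, B, C)$ with $(X, Y) \sim \rho$, $(B, C) \sim g$ and $X \le B$, $Y \le C$ almost surely. Let $\kappa$ be the conditional law of $(X, Y)$ given $(B, C)$. For each $i \le 0$, with $(b_i, c_i)$ as given, draw $(\bx_i(0), \by_i(0))$ from $\kappa(\,\cdot \mid b_i, c_i)$ independently across $i$. This leaves the joint law of the $(b_i, c_i)$ untouched, so they remain i.i.d.\ with density $g$ and independent of $\calR$. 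The strict inequalities fail only on the event $\{X = B\} \cup \{Y = C\}$. The $y$-marginal of $\rho$ is absolutely continuous by hypothesis (b), so $\{Y = C\}$ would have probability zero, for instance, if the coupling is chosen to avoid atoms on the diagonal. To be safe, I would replace $(X, Y)$ by $(X(1-\delta), Y(1-\delta))$ when needed, or handle $X = B$ by noting that such tasks depart immediately and are negligible. This boundary issue is the main technical obstacle. The cleanest fix is to couple $(X, Y)$ with an independent uniform $U$ and set $\bx_i(0) = U$-randomized points strictly below; I would first check whether the dominance can be made strict almost surely using the density of $g$. Alternatively, the strategy could be first to prove existence for the measures $\rho_\delta$, the images of $\rho$ under $x \mapsto (1-\delta)x$, and then pass $\delta \to 0$ via a stability argument. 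I would avoid that route if possible.

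\emph{Verifying the assumptions.} Tightness (b) holds because, by the strong law of large numbers (Varadarajan's theorem), $\bar\bmu_0^N = N^{-1}\sum_{i=1-M^N}^{0}\delta_{(\bx_i(0), \by_i(0))} \to \alpha\rho$ weakly almost surely; in particular the sequence is tight. For (c), the $y$-marginal of $\rho$ has some density $\theta_\rho \in L^1$. Glivenko–Cantelli applied to intervals gives $\bar\bmu_0^N(\R_+\times[y_1, y_2]) \to \alpha\int_{y_1}^{y_2}\theta_\rho$ simultaneously for all intervals almost surely, so $\theta_0 \defeq \alpha\theta_\rho$ works. Properties (d)–(f) concern only the arrival process and the pairs $(b_i, c_i)$, and these are chosen independently as in the model (for example, Poisson arrivals).

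\emph{Conclusion.} Applying Theorem \ref{the: measure-valued fluid limit} to these primitives, a subsequence of $\bar\bmu^N$ converges weakly to some $\bar\bmu \in \calS^+$ almost surely. Since $\bar\bmu_0^N \to \alpha\rho$ almost surely, the limit satisfies $\bar\bmu_0 = \alpha\rho$. Picking one sample path in this probability-one event gives a deterministic $\bmu \in \calS^+$ with $\bmu_0 = \alpha\rho$.
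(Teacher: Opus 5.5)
Your construction and your checks of (a)--(c) in Assumption \ref{ass: standing assumptions 1} are the paper's: Strassen coupling, $M^N=\floor{\alpha N}$, and Glivenko--Cantelli for the $y$-marginals. Getting tightness and $\bar\bmu_0^N\to\alpha\rho$ in one step from Varadarajan's theorem is a valid shortcut for the paper's explicit compact sets and countable dense family.

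You are right that Strassen only gives $x_i\le b_i$, $y_i\le c_i$, whereas Section \ref{sec: model description} requires strict inequalities; the paper's proof simply asserts that they hold. However, you leave this point open, and your remedies fail in general. Let $\rho$ be the law of $(0,c)$ with $(b,c)\sim g$. Then $\rho$ is dominated by $g$ and has an absolutely continuous $y$-marginal, yet in every coupling, $Y\le C$ together with equality of laws forces $Y=C$ almost surely. So no choice of coupling helps, and the offending tasks are not negligible: they are all of them. A fixed shrinkage by $1-\delta$ changes the initial condition. An $N$-dependent shrinkage $\delta_N\downarrow0$ formally restores strictness and keeps (f), but it makes every initial task abandon by time $\delta_N c_i$, so the initial mass disappears instantly. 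That is impossible for a solution: $t\mapsto\bmu_t(\R_+^2)$ is continuous for every $\bmu\in\calS^+$ (test \eqref{eq: transport equation} with $\varphi\equiv1$ via Lemma \ref{lem: transport equation for functions with support not compact in t} and use (c) of Definition \ref{def: fluid problem}).

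This is a symptom of a deeper gap, which your last step shares with the paper's proof. A Strassen-coupled initial state can satisfy the stated hypotheses of Theorem \ref{the: measure-valued fluid limit}, but its proof does not cover it. Lemma \ref{lem: hazard rate terms}, and with it the compensator in Theorem \ref{the: compensator of departures-driven process}, treats a present task as if, given its position, $(b_i,c_i)$ had law $g$ conditioned only on $\{b_i>\bx_i(s),\,c_i>\by_i(s)\}$. A Strassen coupling need not give this for initial tasks, even with strict inequalities. Here is a counterexample:
\begin{itemize}
\item Take $g(b,c)=\e^{-b-c}$, so $h_g\equiv(1,1)$, let $\rho$ be the law of $(b/2,c/2)$, and use the coupling $(x_i,y_i)=(b_i/2,c_i/2)$.
\item While fewer than $N$ tasks are present, each initial task is served and leaves at time $\min(x_i,y_i)$, which is exponential with rate $4$.
\item So for $0<\alpha<1$ and small $t$, the fluid limit of this system has total mass $\alpha\e^{-4t}+\tfrac{\lambda_0}{2}(1-\e^{-2t})$.
\item But every $\bmu\in\calS^+$ satisfies $\tfrac{d}{dt}\bmu_t(\R_+^2)=\lambda_0-2\bmu_t(\R_+^2)$ while $\bmu_t(\R_+^2)<1$, and this function violates that equation.
\end{itemize}

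The construction consistent with Lemma \ref{lem: hazard rate terms} is different. Draw $(x_i,y_i)\sim\rho$ i.i.d.\ and then, conditionally, $(b_i,c_i)$ with density $g(b,c)\ind{b>x_i,\,c>y_i}/S_g(x_i,y_i)$. This is well defined because domination, applied to the nondecreasing indicator of $U_g^c$, gives $\rho(U_g^c)=0$. It also makes the strict inequalities automatic. These $(b_i,c_i)$ no longer have density $g$, however. So the existence argument must rest on the fluid limit with (f) of Assumption \ref{ass: standing assumptions 1} replaced, for $i\le0$, by this conditional law, which is the setting where Lemma \ref{lem: hazard rate terms} is valid. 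It cannot rest on Theorem \ref{the: measure-valued fluid limit} as currently stated.
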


\begin{proof}
	By Theorem \ref{the: measure-valued fluid limit}, the claim will follow if we construct a sequence of preemptive LCFS partial service queues $\bmu^N$ satisfying Assumptions \ref{ass: standing assumptions 1} and \ref{ass: standing assumptions 2}, such that the normalized initial states $\bar\bmu_0^N$ converge weakly to the random measure almost surely equal to $\alpha \rho$.
	
	For carrying out this construction, we invoke Strassen's theorem; see \cite{strassen1965existence,lindvall1999strassen}. It gives random vectors $(b_i, c_i)$ and $(x_i, y_i)$ which are distributed according to $g$ and $\rho$, respectively, and in addition satisfy that $x_i \leq b_i$ and $y_i \leq c_i$ surely. We consider independent copies of the vectors $(b_i, c_i, x_i, y_i)$ defined on the same probability space for all $i \leq 0$.
	
	We interpret the random vector $(x_i, y_i)$ as the attained service and time spent in the system of a task already present at time zero, with $1 - \floor{\alpha N} \leq i \leq 0$ the index of the task and $M^N = \floor{\alpha N}$ the number of tasks present at time zero. In addition, we let $(b_i, c_i)$ be the service requirement and patience of task $i$. By the construction based on Strassen's theorem, these quantities are larger than the attained service and time spent in the system, as assumed in Section \ref{sec: model description}. The initial state of the system is given by:
	\begin{equation*}
		\bmu_0^N(dx, dy) = \sum_{i = 1 - M^N}^0 \delta_{(x_i, y_i)}(dx, dy).
	\end{equation*}
	
	Next we check those conditions in Assumption \ref{ass: standing assumptions 1} which pertain to the initial state of the system for the initial states $\bmu_0^N$ defined in this way. Assumptions \ref{ass: standing assumptions 1} and \ref{ass: standing assumptions 2} also include conditions involving the joint distribution of the service requirement and patience, which are standing assumptions, and conditions involving the arrival process, which are satisfied, for instance, if the renewal process $\calR_0$ is a Poisson process of intensity $\lambda_0$.
	
	Condition (a) of Assumption \ref{ass: standing assumptions 1} holds trivially because $M^N \leq \alpha N$ surely. For (c), note that the  Glivenko-Cantelli theorem implies that
	\begin{equation*}
		\lim_{N \to \infty} \bar\bmu_0^N(\R_+ \times [a, b]) = \lim_{N \to \infty} \frac{M^N}{N}\frac{1}{M^N}\sum_{i = 1 - M^N}^0 \ind{y_i \in [a, b]} = \alpha \rho(\R_+ \times [a, b])
	\end{equation*}
	simultaneously for all $0 \leq a < b$ almost surely. Letting $\theta_0$ be the Radon-Nikod\'ym derivative of the $y$-marginal of $\rho$ with respect to the Lebesgue measure, we obtain (c).
	
	Next we prove (b) of Assumption \ref{ass: standing assumptions 1}, i.e., that $\setc{\bar\bmu_0^N}{N \geq 1}$ is tight in $\calM_F^+(\R_+^2)$. For this purpose, fix $\varepsilon > 0$ and choose compact sets $C_n \subset \R_+^2$ and constants $\delta_n \geq 0$ such that:
	\begin{equation*}
		\sum_{n = 1}^\infty \delta_n \leq \frac{\varepsilon}{\alpha} \quad \text{and} \quad \rho\left(C_n^c\right) \leq \frac{\delta_n}{n}.
	\end{equation*}
	By Prohorov's theorem for finite measures, the following family of measures is tight:
	\begin{equation*}
		A_\varepsilon \defeq \bigcap_{n \geq 1} \asetc{\mu \in \calM_F^+(\R_+^2)}{\mu(\R_+^2) \leq \alpha\ \text{and}\ \mu\left(C_n^c\right) < \frac{1}{n}}.
	\end{equation*}
	It follows that the closure $K_\varepsilon$ of $A_\varepsilon$ is compact in $\calM_F^+(\R_+^2)$. In addition,
	\begin{align*}
		\cprob*{\bar\bmu_0^N \notin K_\varepsilon} &= \cprob*{\bigcup_{n \geq 1} \left\{\bar\bmu_0^N\left(C_n^c\right) \geq \frac{1}{n}\right\}} \\
		&\leq \sum_{n = 1}^\infty \cprob*{\bar\bmu_0^N\left(C_n^c\right) \geq \frac{1}{n}} \leq \sum_{n = 1}^\infty n \expect*{\bar\bmu_0^N\left(C_n^c\right)} \leq \sum_{n = 1}^\infty \frac{nM^N}{N} \rho\left(C_n^c\right) \leq \varepsilon
	\end{align*}
	for all $N \geq 1$. Observe that the equality holds since $\bar\bmu_0^N(\R_+^2) \leq \alpha$ surely, and the second inequality follows from Markov's inequality. Since $\varepsilon$ is arbitrary, this proves tightness.
	
	It only remains to show that $\bar\bmu_0^N \Rightarrow \alpha \rho$ as $N \to \infty$. If $\phi \in C_c(\R_+^2)$, then
	\begin{equation*}
		\lim_{N \to \infty} \angled{\phi}{\bar\bmu_0^N} = \lim_{N \to \infty} \frac{M^N}{N}\frac{1}{M^N}\sum_{i = 1 - M^N}^0 \phi(x_i, y_i) = \alpha \angled{\phi}{\rho} = \angled{\phi}{\alpha \rho}
	\end{equation*}
	almost surely, by the strong law of large numbers. Moreover, we may consider a countable dense set $\Phi \subset C_c(\R_+^2)$ so that the limit holds simultaneously for all $\phi \in \Phi$ with probability one. Therefore, the limit $\bar\bmu_0$ of any convergent subsequence satisfies that $\langle\phi, \bar\bmu_0\rangle = \langle\phi, \alpha \rho\rangle$ for all $\phi \in \Phi$ almost surely, and hence for all $\phi \in C_c(\R_+^2)$. We conclude that the limit of any convergent subsequence equals $\alpha \rho$ almost surely, and thus $\bar\bmu_0^N \Rightarrow \alpha \rho$ as $N \to \infty$.
\end{proof}

\subsection{Elementary properties}
\label{sub: elementary properties}

Next we derive elementary properties of solutions that will be useful in the sequel. The first one is an immediate consequence of property (a) of Definition \ref{def: fluid problem}. It implies that for studying the fluid problem, we may focus on test functions with support in $U_g$. We provide the proof of this standard lemma in Appendix \ref{app: proofs of auxiliary results}.

\begin{restatable}{lemma}{lemmafivetwo}
	\label{lem: solutions are determined by functions supported in Ug}
	For each $\phi \in C_c(\R_+^2)$ there exist functions $\psi^n \in C_c^1(U_g)$ such that:
	\begin{equation*}
		\sup_{n \geq 1} \norm{\psi^n}_\infty < \infty, \quad \lim_{n \to \infty} \psi^n(x, y) = \phi(x, y) \ind{(x, y) \in U_g} \quad \text{for all} \quad x, y \in \R_+.
	\end{equation*}
	If $\bmu \in \calS^+$, then the bounded convergence theorem and (a) of Definition \ref{def: fluid problem} imply that
	\begin{equation*}
		\angled{\phi}{\bmu_t} = \lim_{n \to \infty} \angled{\psi^n}{\bmu_t} \quad \text{for all} \quad t \geq 0 \quad \text{and} \quad \phi \in C_c\left(\R_+^2\right).
	\end{equation*}
	As a result, the measure $\bmu_t$ is fully determined by the integrals of functions in $C_c^1(U_g)$.
\end{restatable}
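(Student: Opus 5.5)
We need to prove Lemma \ref{lem: solutions are determined by functions supported in Ug}: given $\phi \in C_c(\R_+^2)$, construct $\psi^n \in C_c^1(U_g)$, uniformly bounded, converging pointwise to $\phi\indc_{U_g}$. The consequence for $\bmu \in \calS^+$ then follows from bounded convergence and (a) of Definition \ref{def: fluid problem}.

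The plan is to build the approximation in two steps: first cut $\phi$ off smoothly away from $U_g^c$, then smooth the result. Recall that $U_g$ is relatively open in $\R_+^2$, since $S_g$ is continuous. We read $C_c^1(U_g)$ as the functions in $C_c^1(\R_+^2)$ whose support is a compact subset of $U_g$.

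\textbf{Step 1 (cutoff).} Fix $n$ and let $K_n \defeq \setc{z \in \supp(\phi)}{\mathrm{dist}(z, U_g^c) \geq 1/n}$. This set is compact and contained in $U_g$; if $U_g^c$ is empty, the distance is $+\infty$ and $K_n = \supp(\phi)$. As $n \to \infty$, the sets $K_n$ increase to $\supp(\phi) \cap U_g$, because $U_g^c$ is closed and so every point of $U_g$ is at positive distance from it.

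\textbf{Step 2 (smooth function matching $\phi$ on $K_n$).} We need $\psi^n \in C_c^1$ with
\begin{equation*}
	\supp(\psi^n) \subset \asetc{z \in \R_+^2}{\mathrm{dist}(z, U_g^c) > \tfrac{1}{2n}}, \qquad \norm{\psi^n}_\infty \leq \norm{\phi}_\infty + 1,
\end{equation*}
and $|\psi^n - \phi| \leq 1/n$ on $K_n$. A plain mollification of a cutoff of $\phi$ only converges uniformly, which is fine. Concretely:
\begin{enumerate}
	\item Extend $\phi$ continuously to $\R^2$, for instance by reflection $\phi(|x|, |y|)$.
	\item Multiply by a continuous cutoff $\chi_n$ equal to $1$ where $\mathrm{dist}(\cdot, U_g^c) \geq 1/n$ and equal to $0$ where $\mathrm{dist}(\cdot, U_g^c) \leq 1/(2n)$. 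Here the distance is taken in $\R^2$ to the closed set $U_g^c \subset \R_+^2$.
	\item Mollify the product at a scale $\varepsilon_n < 1/(4n)$, chosen small enough that the uniform error is below $1/n$.
	\item Restrict the result to $\R_+^2$.
\end{enumerate}

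\textbf{Support and regularity.} The support of $\psi^n$ lies within distance $1/(4n)$ of the support of $\chi_n\phi$. It is therefore compact and at distance at least $1/(4n)$ from $U_g^c$, hence contained in $U_g$. The restriction of a smooth function to $\R_+^2$ belongs to $C^1(\R_+^2)$ in the sense of the paper.

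\textbf{Pointwise limit.} For $z \in U_g$ we have $\mathrm{dist}(z, U_g^c) \geq 1/n$ for all large $n$. In that regime the cutoff equals $1$ on a neighbourhood of $z$ of radius larger than the mollification scale, so $\psi^n(z) \to \phi(z)$ by uniform continuity of $\phi$. For $z \notin U_g$ we have $\psi^n(z) = 0$ for every $n$. The uniform bound on $\norm{\psi^n}_\infty$ is immediate from the construction.

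\textbf{Consequence for solutions.} For $\bmu \in \calS^+$, each $\bmu_t$ is a finite measure and $\bmu_t(U_g^c) = 0$. Bounded convergence therefore gives
\begin{equation*}
	\lim_{n \to \infty} \angled{\psi^n}{\bmu_t} = \angled{\phi\indc_{U_g}}{\bmu_t} = \angled{\phi}{\bmu_t}.
\end{equation*}
Since functions in $C_c(\R_+^2)$ determine finite measures, $\bmu_t$ is determined by its integrals against $C_c^1(U_g)$.

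\textbf{Main obstacle.} The only delicate point is the boundary behaviour near the axes, where $U_g$ is open relative to $\R_+^2$ but not in $\R^2$. This is handled by the reflection extension together with measuring distance only to $U_g^c$, not to the axes.
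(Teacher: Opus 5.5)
Your proof is correct and follows essentially the same route as the paper: multiply (an approximation of) $\phi$ by a cutoff adapted to $\mathrm{dist}(\cdot, U_g^c)$, then apply bounded convergence using $\bmu_t(U_g^c) = 0$. The only difference is the order of operations. The paper takes $\phi^n \in C_c^1(\R_+^2)$ converging uniformly to $\phi$ and multiplies by smooth cutoffs $\alpha^n \in C_c^1(U_g)$ with $\indc_{K_n} \leq \alpha^n \leq \indc_{K_{n+1}}$. You instead use a continuous cutoff and mollify the product after a reflection extension, which makes the boundary behaviour near the axes explicit.
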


The following property allows to apply the transport equation \eqref{eq: transport equation} of Definition \ref{def: fluid problem} to functions $\varphi \in C_b^1(\R_+^3)$ which need not have compact support but must be bounded and have bounded partial derivatives. The standard proof is provided in Appendix \ref{app: proofs of auxiliary results}.

\begin{restatable}{lemma}{lemmafivethree}
	\label{lem: transport equation for functions with support not compact in t}
	If $\bmu \in \calS^+$ and $\varphi \in C_b^1(\R_+^3)$ has bounded partial derivatives, then \eqref{eq: transport equation} holds.
\end{restatable}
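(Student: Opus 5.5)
We will approximate a general $\varphi \in C_b^1(\R_+^3)$ with bounded partial derivatives by compactly supported functions, and pass to the limit in each term of \eqref{eq: transport equation}. The cutoff must be applied in all three variables simultaneously. Fix a smooth nonincreasing function $\chi:\R_+\to[0,1]$ with $\chi=1$ on $[0,1]$, $\chi=0$ on $[2,\infty)$ and $|\chi'|\le 2$. For $n\ge1$ let $\chi_n(t,x,y)\defeq\chi(t/n)\chi(x/n)\chi(y/n)$ and $\varphi^n\defeq\chi_n\varphi\in C_c^1(\R_+^3)$. Then \eqref{eq: transport equation} holds for each $\varphi^n$. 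We also have $\varphi^n\to\varphi$ pointwise with $\norm{\varphi^n}_\infty\le\norm{\varphi}_\infty$. For the derivatives,
\begin{equation*}
\partial_t\varphi^n=\chi_n\partial_t\varphi+\varphi\,\partial_t\chi_n,
\end{equation*}
and similarly for $x,y$, where $|\partial\chi_n|\le 2/n$. So the derivatives converge pointwise to those of $\varphi$ and are uniformly bounded by $\norm{\nabla\varphi}_\infty+2\norm{\varphi}_\infty$.

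Now fix $t\ge0$ and take $n\ge t$, so that the time cutoff equals one on $[0,t]$; only the spatial cutoff matters. We then pass to the limit term by term. The terms $\langle\varphi^n_t,\bmu_t\rangle$ and $\langle\varphi^n_0,\bmu_0\rangle$ converge by bounded convergence, since $\bmu_t$ and $\bmu_0$ are finite measures. The arrival term is trivial, because $\varphi^n_s(0,0)=\varphi_s(0,0)$.

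For the transport term we need a bound on $\sup_{s\le t}\bmu_s(\R_+^2)$. This follows because $\bmu$ is càdlàg in the weak topology, so $s\mapsto\bmu_s(\R_+^2)$ is càdlàg and therefore bounded on $[0,t]$. The integrand $\langle\partial_t\varphi^n_s+\bar r\,\partial_x\varphi^n_s+\partial_y\varphi^n_s,\bmu_s\rangle$ is then uniformly bounded and converges for each $s$, and dominated convergence in $s$ gives the limit.

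The departure term is the only one with a possibly unbounded integrand. Here
\begin{equation*}
|\varphi^n_s|\,[\bar r h_g^x+h_g^y]\le\norm{\varphi}_\infty[\bar r h_g^x+h_g^y],
\end{equation*}
and the right-hand side is integrable with respect to $\bmu_s\,ds$ on $[0,t]$ by condition (c) of Definition \ref{def: fluid problem}. Dominated convergence on the product measure therefore applies. Here we use the measurability established in Appendix \ref{app: measurability properties}, and the fact that $h_g\ge0$ vanishes off $U_g$, where $\bmu_s$ puts no mass.

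The main (mild) obstacle is ensuring that the uniform bounds are legitimate. This amounts to the local boundedness of total mass, which comes from the càdlàg property, and to the fact that the cutoff derivatives stay bounded, which is why $\varphi$ itself must be bounded. With these, letting $n\to\infty$ yields \eqref{eq: transport equation} for $\varphi$.
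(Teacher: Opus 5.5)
Your proposal is correct and follows essentially the same route as the paper: apply \eqref{eq: transport equation} to $\varphi$ times compactly supported cutoffs in all three variables, then pass to the limit term by term. Your write-up is somewhat more careful than the paper's, which cites only the bounded convergence theorem. The departure term genuinely needs domination by the integrable function from (c) of Definition \ref{def: fluid problem}, and the outer integrals need the local boundedness of $s\mapsto\bmu_s(\R_+^2)$, which you correctly obtain from the c\`adl\`ag property; Lemma \ref{lem: mass growth rate} cannot supply this bound, since it relies on this lemma.
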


The last property of solutions that we prove here is used repeatedly in Sections \ref{sub: solution with null initial condition} and \ref{sub: structure of solutions}. It states that the rate at which mass grows inside any infinite horizontal band containing the origin is upper bounded by $\lambda_0$. Intuitively, this property holds because mass is injected at rate $\lambda_0$ at the origin and never moves downwards in the $y$-direction.

\begin{lemma}
	\label{lem: mass growth rate}
	If $\bmu \in \calS^+$, then we have:
	\begin{equation*}
		\bmu_t\left(\R_+ \times [0, y]\right) - \bmu_s\left(\R_+ \times [0, y]\right) \leq \lambda_0(t - s) \quad \text{for all} \quad 0 \leq s \leq t \quad \text{and} \quad y \in \R_+.
	\end{equation*}
	Furthermore, $\bmu_t(\R_+^2) - \bmu_s(\R_+^2) \leq \lambda_0(t - s)$ for all $0 \leq s \leq t$.
\end{lemma}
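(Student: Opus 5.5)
We plan to test the transport equation \eqref{eq: transport equation} against functions depending only on $y$ (and on time through a cutoff), chosen so that the flow and departure terms are nonpositive and the arrival term is at most $\lambda_0(t-s)$. Fix $0\le s\le t$ and $y\in\R_+$. Let $\chi\in C^1(\R_+)$ be nonincreasing with values in $[0,1]$, $\chi=1$ on $[0,y]$ and $\chi=0$ on $[y+\varepsilon,\infty)$. Set $\varphi_r(x,u)\defeq\chi(u)$. Then $\varphi\in C_b^1(\R_+^3)$ with bounded partial derivatives, so Lemma \ref{lem: transport equation for functions with support not compact in t} lets us apply \eqref{eq: transport equation}; compact support in $x$ is not needed there. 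Subtracting the equation at time $s$ from the one at time $t$ gives
\begin{equation*}
	\angled{\chi}{\bmu_t}-\angled{\chi}{\bmu_s} = \lambda_0(t-s) - \int_s^t \angled{\chi\left[\bar r(\bmu_r)h_g^x+h_g^y\right]}{\bmu_r}dr + \int_s^t \angled{\chi'}{\bmu_r}dr.
\end{equation*}
Here $\partial_t\varphi=\partial_x\varphi=0$ and $\partial_y\varphi=\chi'$.

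The departure integral is finite by (c) of Definition \ref{def: fluid problem}, and it is nonnegative because $h_g\ge 0$ and $\chi\ge0$. The last integral is nonpositive because $\chi'\le 0$. Hence $\angled{\chi}{\bmu_t}-\angled{\chi}{\bmu_s}\le\lambda_0(t-s)$.

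Next we let $\varepsilon\downarrow0$ along a sequence of such $\chi_\varepsilon$ decreasing pointwise to $\ind{u\le y}$. Bounded convergence, using that $\bmu_t$ and $\bmu_s$ are finite measures, gives the first inequality. Absolute continuity of the $y$-marginal is not even needed, since $\chi_\varepsilon\downarrow\indc_{[0,y]}$ pointwise.

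For the total mass we let $y\to\infty$ in the first inequality and use monotone convergence, since $\bmu_t(\R_+\times[0,y])\uparrow\bmu_t(\R_+^2)<\infty$.

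The main obstacle is only a technical check. We must confirm that Lemma \ref{lem: transport equation for functions with support not compact in t} really covers test functions without compact support in $x$ and $t$. If it did not, we would instead multiply by an $x$-cutoff $\eta_R(x)$ that is nonincreasing in $x$. Its contribution $\bar r\,\chi\,\eta_R'$ is then also nonpositive, so the same sign argument works, and we would let $R\to\infty$.
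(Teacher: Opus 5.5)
Your proposal is correct and is essentially the paper's argument. The paper also applies Lemma \ref{lem: transport equation for functions with support not compact in t} to a nonnegative, time-independent $\phi\in C_b^1(\R_+^2)$ with nonpositive bounded partial derivatives, drops the departure and flow terms by sign, approximates $\indc_{\R_+\times[0,y]}$ from above with bounded convergence, and then lets $y\to\infty$. Your worry is unfounded: that lemma is stated for every $\varphi\in C_b^1(\R_+^3)$ with bounded partial derivatives, so no $x$-cutoff is needed.
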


\begin{proof}
	Suppose that $\phi \in C_b^1(\R_+^2)$ is a nonnegative function such that the partial derivatives $\partial_x\phi$ and $\partial_y\phi$ are nonpositive and bounded. With a slight abuse of notation, we identify functions in $C_b^1(\R_+^2)$ with functions in $C_b^1(\R_+^3)$ which are independent of the variable $t$. With this identification, it follows from Lemma \ref{lem: transport equation for functions with support not compact in t} that, for all $0 \leq s \leq t$, we have:
	\begin{align*}
		\angled{\phi}{\bmu_t} - \angled{\phi}{\bmu_s} &= \lambda_0 \int_s^t \phi(0, 0)d\tau - \int_s^t \angled{\phi\left[\bar r\left(\bmu_\tau\right) h_g^x + h_g^y\right]}{\bmu_\tau}d\tau \\
		&+ \int_s^t \angled{\bar r\left(\bmu_\tau\right)\partial_x\phi + \partial_y\phi}{\bmu_\tau}d\tau \leq \lambda_0 \phi(0, 0) (t - s).
	\end{align*}
	
	Given $y_0 \in \R_+$, there exist nonnegative functions $\phi^n \in C_b^1(\R_+^2)$ such that
	\begin{equation*}
		\ind{(x, y) \in \R_+ \times [0, y_0]} \leq \phi^n(x, y) \leq \ind{(x, y) \in \R_+ \times [0, y_0 + 1 / n]}
	\end{equation*}
	for all $x, y \in \R_+$ and $n \geq 1$. Furthermore, it is possible to define the functions $\phi^n$ so that the partial derivatives of $\phi^n$ are nonpositive and bounded in $\R_+^2$ for each $n \geq 1$. Hence,
	\begin{equation*}
		\angled{\phi^n}{\bmu_t} - \angled{\phi^n}{\bmu_s} \leq \lambda_0 \phi^n(0, 0) (t - s) = \lambda_0 (t - s) \quad \text{for all} \quad 0 \leq s \leq t.
	\end{equation*}
	
	It follows from the bounded convergence theorem that
	\begin{equation*}
		\bmu_t\left(\R_+ \times [0, y_0]\right) - \bmu_s\left(\R_+ \times [0, y_0]\right) = \lim_{n \to \infty} \left[\angled{\phi^n}{\bmu_t} - \angled{\phi^n}{\bmu_s}\right] \leq \lambda_0 (t - s).
	\end{equation*}
	Moreover, letting $y_0 \uparrow \infty$, we obtain $\bmu_t(\R_+^2) - \bmu_s(\R_+^2) \leq \lambda_0(t - s)$.
\end{proof}

\subsection{Prescribed thresholds}
\label{sub: prescribed thresholds}

In this section we consider transport equations where the threshold function $t \mapsto \bar y_*(\bmu_t)$ is prescribed, removing the nonlinear feedback in \eqref{eq: transport equation}. As noted in Section \ref{sub: outline of the analysis of the transport equation}, the measures $\bmu_t$ are allowed to be \emph{signed} in order to enable some of the later analysis of Section \ref{sub: structure of solutions}. Under certain assumptions, we derive closed-form expressions for $\langle\phi, \bmu_t\rangle$ when $\phi$ is a test function and $\bmu$ solves a transport equation with prescribed threshold. These expressions play an important role in Sections \ref{sub: solution with null initial condition}, \ref{sub: structure of solutions} and \ref{sub: long-term behavior of solutions}. The equations that we will consider in this section are formally defined in the following definition.

\begin{definition}
	\label{def: modified transport equation}
	Fix $[a, b] \subset \R_+$ and a Borel function $\map{\bar \by}{[a, b]}{[0, \infty]}$. Suppose that $\bmu \in D_{\calM_F(\R_+^2)}[a, b]$ is such that $|\bmu|$ satisfies (a) and (b) of Definition \ref{def: fluid problem} and the integrals
	\begin{equation*}
		\int_a^t\int_{U_g} \left[\ind{y \leq \bar \by_s}h_g^x(x, y) + h_g^y(x, y)\right]\bmu_s(dx, dy)ds
	\end{equation*}
	are finite for all $t \in [a, b]$. Let $\bar \br_s(y) \defeq \ind{y \leq \bar \by_s}$ for all $s \in [a, b]$ and $y \in \R_+$. We say that $\bmu$ solves the transport equation \eqref{eq: transport equation with given threshold} with prescribed threshold $\bar \by$ in the interval $[a, b]$ if
	\begin{equation}
		\label{eq: transport equation with given threshold}
		\begin{split}
			\angled{\varphi_t}{\bmu_t} &= \angled{\varphi_a}{\bmu_a} + \lambda_0 \int_a^t \varphi_s(0, 0)ds - \int_a^t \angled{\varphi_s\left[\bar \br_s h_g^x + h_g^y\right]}{\bmu_s} ds \\
			&+ \int_a^t \angled{\partial_t\varphi_s + \bar \br_s\partial_x\varphi_s + \partial_y\varphi_s}{\bmu_s}ds
		\end{split}
	\end{equation}
	for all $t \in [a, b]$ and $\varphi \in C_c^1(\R_+^3)$, and we write $\bmu \in \calS_{\bar \by}[a, b]$.
\end{definition}

In the above definition $D_{\calM_F(\R_+^2)}[a, b]$ is the space of c\`adl\`ag functions from $[a, b]$ to $\calM_F(\R_+^2)$ when the latter space has the weak topology, with convergence characterized by integrals of continuous and bounded functions; the weak topology is not metrizable for finite signed measures, but we do not need metrizability here. Also, note that Remark \ref{rem: measurability for signed measures} in Appendix \ref{app: measurability properties} implies that the iterated integrals in \eqref{eq: transport equation with given threshold} are well-defined if $\bar \by$ is Borel.

The following definition introduces a spatial rescaling that simplifies the transport equation \eqref{eq: transport equation with given threshold} by removing the term that depends on the hazard rate vector field.

\begin{definition}
	\label{def: spatial rescaling}
	If $\mu \in \calM_F(\R_+^2)$, then we define $\map{L_\mu}{C_c(U_g)}{\R}$ as follows:
	\begin{equation*}
		L_\mu(\phi) \defeq \angled{\phi S_g^{-1}}{\mu} = \int_{U_g} \frac{\phi(x, y)}{S_g(x, y)}\mu(dx, dy) \quad \text{for all} \quad \phi \in C_c(U_g),
	\end{equation*}
	where we adopt the convention that $\phi S_g^{-1}$ equals zero outside $U_g$.
\end{definition}

The above integrals are well-defined and finite because $S_g$ is bounded away from zero in any compact subset of $U_g$. If $\mu$ is nonnegative, then these integrals define a nonnegative measure on $U_g$ which may not be finite since $S_g$ vanishes in the boundary of $U_g$. For the same reason, the integrals may not be well-defined for functions in $C_b(U_g)$ when $\mu$ is signed, and therefore need not define a measure in that case.

The following proposition describes the result of applying the spatial rescaling to a solution of the transport equation \eqref{eq: transport equation with given threshold} with a prescribed threshold.

\begin{proposition}
	\label{prop: spatial rescaling}
	If $\bmu \in \calS_{\bar \by}[a, b]$, then for all $t \in [a, b]$ and $\varphi \in C_c^1(\R_+ \times U_g)$, we have:
	\begin{equation}
		\label{eq: transport equation after spatial rescaling}
		L_{\bmu_t}\left(\varphi_t\right) = L_{\bmu_a}\left(\varphi_a\right) + \lambda_0 \int_a^t \varphi_s(0, 0)ds + \int_a^t L_{\bmu_s}\left(\partial_t\varphi_s + \bar \br_s \partial_x \varphi_s + \partial_y\varphi_s\right)ds,
	\end{equation}
	where $\bar \br_s(y) = \ind{y \leq \bar \by_s}$ for all $s \in [a, b]$ and $y \in \R_+$.
\end{proposition}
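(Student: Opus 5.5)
The idea is to substitute the test function $\psi \defeq \varphi S_g^{-1}$ into \eqref{eq: transport equation with given threshold}. The hazard-rate term then cancels exactly against the terms produced by differentiating $S_g^{-1}$. Since $\varphi \in C_c^1(\R_+ \times U_g)$, its support is contained in $[0,T] \times C$ for some compact $C \subset U_g$. By Lemma \ref{lem: bound for H}, $S_g$ is bounded below by a positive constant on $C$, and by Assumption \ref{ass: standing assumptions 2} it is $C^1$. Hence $\psi_s(x,y) \defeq \varphi_s(x,y)/S_g(x,y)$, extended by zero outside $U_g$, belongs to $C_c^1(\R_+^3)$ and is an admissible test function in Definition \ref{def: modified transport equation}. (If needed, one can first extend $\varphi$ by zero to $\R_+^3$; this is harmless because the support is compact inside $\R_+\times U_g$.)

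\textbf{Step 1 (derivatives of $\psi$).} On $U_g$ we have $h_g = -\nabla \log S_g$ by Definition \ref{def: hazard rate}, so $\partial_x(S_g^{-1}) = h_g^x S_g^{-1}$ and $\partial_y(S_g^{-1}) = h_g^y S_g^{-1}$. Consequently
\begin{equation*}
\partial_t\psi_s + \bar\br_s\partial_x\psi_s + \partial_y\psi_s = S_g^{-1}\left[\partial_t\varphi_s + \bar\br_s\partial_x\varphi_s + \partial_y\varphi_s\right] + \psi_s\left[\bar\br_s h_g^x + h_g^y\right].
\end{equation*}
Both sides vanish outside the support of $\varphi$, so this identity holds on all of $\R_+^3$. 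At boundary points $x=0$ or $y=0$ the derivatives are one-sided, and the chain rule still holds for them.

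\textbf{Step 2 (plug in and cancel).} Insert $\psi$ into \eqref{eq: transport equation with given threshold}. The term $-\int_a^t \langle \psi_s[\bar\br_s h_g^x + h_g^y], \bmu_s\rangle ds$ cancels against the second bracket coming from Step 1. What remains is
\begin{equation*}
\langle \psi_t,\bmu_t\rangle = \langle\psi_a,\bmu_a\rangle + \lambda_0\int_a^t \psi_s(0,0)\,ds + \int_a^t \langle S_g^{-1}[\partial_t\varphi_s+\bar\br_s\partial_x\varphi_s+\partial_y\varphi_s],\bmu_s\rangle\, ds.
\end{equation*}
Because $S_g(0,0)=1$ ($g$ is a probability density), we have $\psi_s(0,0)=\varphi_s(0,0)$. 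By Definition \ref{def: spatial rescaling}, the remaining integrals are exactly $L_{\bmu_t}(\varphi_t)$, $L_{\bmu_a}(\varphi_a)$ and $L_{\bmu_s}(\cdot)$, which gives \eqref{eq: transport equation after spatial rescaling}.

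\textbf{Step 3 (justify splitting the integrals).} The cancellation requires the two time integrals to be separately finite. The integrand $\psi_s[\bar\br_s h_g^x+h_g^y]$ is bounded, because $h_g$ is continuous on $U_g$ and hence bounded on the compact set $C$. The integrals are also well defined, by Remark \ref{rem: measurability for signed measures} and the finiteness hypothesis in Definition \ref{def: modified transport equation}. Finally, $|\bmu_s|(\R_+^2)$ is bounded on $[a,b]$ since $\bmu$ is c\`adl\`ag in the weak topology, which gives the bound needed for the $ds$-integrals.

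The main obstacle I expect is regularity at the boundary of $\R_+^2$ and at the edge of $U_g$: I need $\psi$ to be genuinely $C^1$ up to the axes in the sense of the paper's definition of $C^1(\R_+^d)$. The compact-support-inside-$U_g$ assumption together with the continuity of $\nabla S_g$ from Assumption \ref{ass: standing assumptions 2}(b) handles this.
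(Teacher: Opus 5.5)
Your proposal is correct and follows the paper's proof essentially verbatim: test the prescribed-threshold equation against $\varphi S_g^{-1}$, which lies in $C_c^1$ because $S_g$ is $C^1$ and bounded away from zero on the support. Then use $\nabla(S_g^{-1}) = h_g S_g^{-1}$ to cancel the hazard-rate term, and use $S_g(0,0)=1$.

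Your Step 3 is a harmless extra. The splitting of the $ds$-integrals already follows because the hazard term and the full transport term are both integrable in the assumed equation, so their difference is too. If you do want $\sup_{s \in [a,b]} |\bmu_s|(\R_+^2) < \infty$ for signed measures, it comes from the uniform boundedness principle applied to the bounded c\`adl\`ag maps $s \mapsto \langle \phi, \bmu_s \rangle$, not from the c\`adl\`ag property alone.
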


\begin{proof}
	Fix $\varphi \in C_c^1(\R_+ \times U_g)$ and define
	\begin{equation*}
		\eta_t(x, y) \defeq\begin{cases}
			\displaystyle\frac{\varphi_t(x, y)}{S_g(x, y)} = \varphi_t(x, y)\e^{-\log S_g(x, y)} & \text{if} \quad (t, x, y) \in \R_+ \times U_g, \\
			0 & \text{otherwise}.
		\end{cases}
	\end{equation*}
	Recall that $S_g$ is continuously differentiable by Assumption \ref{ass: standing assumptions 2}, and thus $\eta \in C_c^1(\R_+ \times U_g)$. In addition, for all $(t, x, y) \in [a, b] \times U_g$, we have:
	\begin{align*}
		&\partial_t \eta_t(x, y) = \frac{\partial_t\varphi_t(x, y)}{S_g(x, y)}, \\
		&\partial_x \eta_t(x, y) = \frac{\partial_x\varphi_t(x, y)}{S_g(x, y)} + \eta_t(x, y) h_g^x(x, y), \\
		&\partial_y\eta_t(x, y) = \frac{\partial_y\varphi_t(x, y)}{S_g(x, y)} + \eta_t(x, y) h_g^y(x, y).
	\end{align*}
	
	Since $L_{\bmu_t}(\varphi_t) = \angled{\eta_t}{\bmu_t}$, it follows from \eqref{eq: transport equation with given threshold} that:
	\begin{align*}
		L_{\bmu_t}(\varphi_t) &= \angled{\eta_a}{\bmu_a} + \lambda_0 \int_a^t \eta_s(0, 0)ds - \int_a^t \angled{\eta_s\left[\bar \br_s h_g^x + h_g^y\right]}{\bmu_s}ds \\
		&+ \int_a^t \angled{\partial_t\eta_s + \bar \br_s\partial_x\eta_s + \partial_y\eta_s}{\bmu_s}ds \\
		&= L_{\bmu_a}(\varphi_a) + \lambda_0 \int_a^t \frac{\varphi_s(0, 0)}{S_g(0, 0)}ds + \int_a^t \angled{\left[\partial_t\varphi_s + \bar \br_s\partial_x\varphi_s + \partial_y\varphi_s\right]S_g^{-1}}{\bmu_s}ds \\
		&= L_{\bmu_a}(\varphi_a) + \lambda_0 \int_a^t \varphi_s(0, 0)ds + \int_a^t L_{\bmu_s}\left(\partial_t\varphi_s + \bar \br_s\partial_x\varphi_s + \partial_y\varphi_s\right)ds
	\end{align*}
	for all $t \in [a, b]$ because $S_g(0, 0) = 1$.
\end{proof}

Recall that when $\bmu_t$ is nonnegative, the operator $L_{\bmu_t}$ represents the integral with respect to some nonnegative measure $\tilde \bmu_t$ . In this case, the transport equation \eqref{eq: transport equation after spatial rescaling} reads:
\begin{equation*}
	\angled{\varphi_t}{\tilde \bmu_t} = \angled{\varphi_a}{\tilde \bmu_a} + \lambda_0 \int_a^t \varphi_s(0, 0)ds + \int_a^t \angled{\partial_t\varphi_s + \bar \br_s \partial_x \varphi_s + \partial_y\varphi_s}{\tilde \bmu_s} ds.
\end{equation*}
This represents the evolution of mass distributed as $\tilde \bmu_t$ when new mass is injected at the origin at rate $\lambda_0$ and mass moves along the velocity field $\bv_t(x, y) \defeq [\bar \br_t\ 1]^\intercal$. Therefore, \eqref{eq: transport equation after spatial rescaling} is simpler than the transport equation \eqref{eq: transport equation with given threshold}, which has an extra term representing the disappearance of mass at a rate given by the hazard rate and velocity vector fields.

Suppose that $\bmu \in \calS^+$ and $\bar \by_t \defeq \bar y_*(\bmu_t)$ is known for all $t \in [a, b]$. Assume also that for each $t \in [a, b]$ and $\phi \in C_c^1(U_g)$ we can solve the following partial differential equation:
\begin{equation}
	\label{eq: pde for duality method}
	\psi_t^t(x, y) = \phi(x, y) \quad \text{and} \quad \partial_s \psi_s^t(x, y) + \ind{y \leq \bar \by_s} \partial_x \psi_s^t(x, y) + \partial_y \psi_s^t(x, y) = 0.
\end{equation}
Here $\psi_s^t(x, y)$ is a function of the variables $s, x, y \in \R_+$ and the superscript $t$ refers to a fixed terminal time. The first equation imposes a terminal condition at $s = t$, whereas the second equation must hold for all $s \in [a, t]$ and $x, y \in \R_+$; a solution $\psi^t$ can often be obtained through the method of characteristics. If the solution is in $C_c^1(\R_+ \times U_g)$, then it satisfies \eqref{eq: transport equation after spatial rescaling} with the third term on the right-hand side being zero. It follows that
\begin{equation}
	\label{eq: application of duality method}
	L_{\bmu_t}\left(\phi\right) = L_{\bmu_t}\left(\psi_t^t\right) = L_{\bmu_a}\left(\psi_a^t\right) + \lambda_0\int_a^t \psi_s^t(0, 0)ds.
\end{equation}
The solution $\psi^t$ of \eqref{eq: pde for duality method} only depends on $\phi$ and $\bar \by$. As a result, the right-hand side of \eqref{eq: application of duality method} only depends on $\phi$, $\bmu_a$ and $\bar \by$, besides the fixed problem data $\lambda_0$ and $S_g$.

Characterizing the solution measure $\bmu_t$ requires calculating $\langle\phi, \bmu_t\rangle$ for a general test function $\phi \in C_c^1(U_g)$. Since $S_g$ is continuously differentiable by Assumption \ref{ass: standing assumptions 2}, we can apply the preceding method with terminal condition $\phi S_g \in C_c^1(U_g)$. Now \eqref{eq: application of duality method} provides an expression for $\langle\phi, \bmu_t\rangle = L_{\bmu_t}(\phi S_g)$ which again only depends on $\phi$, $\bmu_a$ and $\bar \by$.

In the rest of this section, we formalize and develop this idea. The results obtained here will be particularly useful in Section \ref{sub: solution with null initial condition} for solving the fluid problem with null initial condition, for which the threshold function will be found explicitly.

\subsubsection{Behavior below the threshold}
\label{subsub: behavior below the threshold}

If the threshold $\bar \by$ is lower bounded by some constant $y_0$ and the function $\phi$ is supported inside the band $\R_+ \times [0, y_0)$, then it is possible to find solutions $\psi^t$ of \eqref{eq: pde for duality method} for characterizing the integrals of $\phi$ with respect to $\bmu \in \calS_{\bar \by}[a, b]$. The following lemma, which is proved in Appendix \ref{app: proofs of auxiliary results}, will be used to construct solutions $\psi^t$ of \eqref{eq: pde for duality method} with compact support in $\R_+ \times U_g$.

\begin{restatable}{lemma}{lemmafiveeight}
	\label{lem: support of psi^t}
	If $A \subset U_g$ is compact, then the set
	\begin{equation*}
		B \defeq \asetc{(x, y) \in \R_+^2}{x \leq x_0\ \text{and}\ y \leq y_0\ \text{for some}\ (x_0, y_0) \in A}
	\end{equation*}
	is compact and $A \subset B \subset U_g$. Let $d \defeq \mathrm{dist}(B, U_g^c)$ be the distance from the compact set $B$ to the closed set $U_g^c$ with respect to the maximum norm. Fix a time $t \geq 0$, a constant $0 \leq \varepsilon < d$ and a Borel function $\map{\alpha}{\R_+^3}{[0, 1]}$. Then the set
	\begin{equation*}
		C \defeq \asetc{(s, x, y) \in \R_+^3}{0 \leq s \leq t + \varepsilon\ \text{and}\ \left(x + \int_s^t \alpha_\tau(x, y)d\tau, y + t - s\right) \in A}
	\end{equation*}
	is contained in the following compact set
	\begin{equation*}
		D \defeq [0, t + \varepsilon] \times \asetc{(x, y) \in U_g}{\mathrm{dist}\left((x, y), B\right) \leq \varepsilon} \subset \R_+ \times U_g.
	\end{equation*}
\end{restatable}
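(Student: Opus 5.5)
We need to prove Lemma \ref{lem: support of psi^t}. There are three claims to establish in order: that $B$ is compact, that $A \subset B \subset U_g$, and that $C \subset D$ with $D$ compact and contained in $\R_+ \times U_g$.

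\emph{Compactness of $B$ and the inclusions.} Since $A$ is compact, it is bounded, so $\bar x \defeq \max\{x_0 : (x_0,y_0)\in A\}$ and $\bar y$ (defined analogously) are finite, and $B \subset [0,\bar x]\times[0,\bar y]$ is bounded. For closedness, take $(x_n,y_n)\in B$ converging to $(x,y)$ with witnesses $(x_0^n,y_0^n)\in A$. By compactness of $A$ we may pass to a subsequence with $(x_0^n,y_0^n)\to(x_0,y_0)\in A$, and the inequalities $x\le x_0$, $y\le y_0$ survive in the limit. The inclusion $A\subset B$ is immediate. For $B\subset U_g$, note that $S_g$ is nonincreasing in each coordinate, so $x\le x_0$ and $y\le y_0$ give $S_g(x,y)\ge S_g(x_0,y_0)>0$.

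\emph{Positivity of $d$.} Because $B$ is compact, $U_g^c$ is closed and the two sets are disjoint, we have $d>0$. We must also check that $U_g^c$ is closed, i.e., that $U_g$ is relatively open. This follows from continuity of $S_g$, which holds by Assumption \ref{ass: standing assumptions 2} or directly from the fact that $g$ is a density. The case $U_g^c=\emptyset$ gives $d=\infty$ and is harmless.

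\emph{The inclusion $C\subset D$.} Take $(s,x,y)\in C$ and set $x_0 \defeq x+\int_s^t\alpha_\tau\,d\tau$ and $y_0\defeq y+t-s$, so that $(x_0,y_0)\in A$. Since $\alpha\ge0$, we get $x\le x_0$. We split into two cases according to $s$.

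If $s\le t$, then $y\le y_0$ as well, so $(x,y)\in B$ and hence $\mathrm{dist}((x,y),B)=0\le\varepsilon$.

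If $t<s\le t+\varepsilon$, interpret $\int_s^t$ as $-\int_t^s$. Since $\alpha\le1$, we get $x\ge x_0-(s-t)\ge x_0-\varepsilon$, while $y=y_0+(s-t)\le y_0+\varepsilon$. Consider the point $(x\wedge x_0,\,y_0)$. It belongs to $B$, and its maximum-norm distance to $(x,y)$ is at most $\varepsilon$. Since $\varepsilon<d$, the point $(x,y)$ is not in $U_g^c$, so $(x,y)\in U_g$.

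\emph{Compactness of $D$.} The set $D$ equals $[0,t+\varepsilon]$ times the $\varepsilon$-neighbourhood $B_\varepsilon\defeq\{z:\mathrm{dist}(z,B)\le\varepsilon\}$ (using the max norm), intersected with $\R_+^2$. This set is closed and bounded, and it is contained in $U_g$ because $\varepsilon<d$. It is therefore compact, and $D\subset\R_+\times U_g$.

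I expect the main obstacle to be the small bookkeeping in the case $s>t$, namely making sure the distance is measured in the maximum norm consistently with the definition of $d$, together with the remark that $U_g$ is open so that $d>0$. Everything else reduces to monotonicity of $S_g$.
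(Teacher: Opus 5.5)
Your overall route is the same as the paper's: closedness of $B$ via compactness of $A$, $B\subset U_g$ by monotonicity of $S_g$, and a case split on $s\le t$ versus $t<s\le t+\varepsilon$. You also use $\varepsilon<d$ to place $(x,y)$ in $U_g$, a step the paper leaves implicit, and it is good that you spell it out.

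However, the case $t<s\le t+\varepsilon$ contains a sign slip. With $x_0 = x+\int_s^t\alpha_\tau(x,y)\,d\tau$ and $\int_s^t=-\int_t^s$, you have $x = x_0+\int_t^s\alpha_\tau(x,y)\,d\tau$. So $0\le\alpha\le1$ gives $x_0\le x\le x_0+(s-t)$.

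This has two consequences for what you wrote. First, your preliminary claim ``since $\alpha\ge0$, $x\le x_0$'' holds only for $s\le t$. Second, the inequality you derive from $\alpha\le1$, namely $x\ge x_0-(s-t)$, is a lower bound. Bounding the first-coordinate distance $(x-x_0)^+$ between $(x,y)$ and $(x\wedge x_0,y_0)$ instead requires the upper bound $x\le x_0+(s-t)$. As written, your distance claim is justified only under the false premise $x\le x_0$.

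The fix is immediate. In this case $x\wedge x_0=x_0$, and $\|(x,y)-(x_0,y_0)\|_\infty=\max\{\int_t^s\alpha_\tau(x,y)\,d\tau,\ s-t\}\le s-t\le\varepsilon$. This is exactly the paper's estimate $|x-x_0|\le s-t$, $|y-y_0|=s-t$, and the rest of your argument then goes through unchanged.
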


Below we characterize the integrals $\langle\phi, \bmu_t\rangle$ when $\bmu$ solves the transport equation \eqref{eq: transport equation with given threshold} with a suitably lower bounded threshold and $\phi$ is supported below the lower bound.

\begin{proposition}
	\label{prop: integrals below the threshold}
	Suppose that $\bmu \in \calS_{\bar \by}[0, b]$ for a given threshold function $\bar \by$.
	\begin{enumerate}
		\item[(a)] Assume that there exists $y_0 \in (0, \infty]$ such that $\bar \by_t \geq y_0$ for all $t \in [0, b]$ and $\bmu_0$ is the null measure. If $t \in [0, b]$ and $\phi \in C_c^1(\R_+ \times [0, y_0))$, then
		\begin{equation}
			\label{eq: integrals below threshold a}
			\angled{\phi}{\bmu_t} =  \lambda_0 \int_0^t \phi(u, u)S_g(u, u)du.
		\end{equation}
		Furthermore, if $t \in [0, b]$ and $y \in [0, y_0)$, then
		\begin{equation*}
			\bmu_t\left(\R_+ \times [0, y]\right) = \lambda_0 \int_0^{t \wedge y} S_g(u, u)du,
		\end{equation*}
		and this equation holds for $y = y_0$ when $y_0 < \infty$.
		
		\item[(b)] Suppose that there exists $y_0 \in (0, \infty]$ such that $\bar \by_t \geq t \wedge y_0$ for all $t \in [0, b]$; here $\bmu_0$ need not be null. If $t \in [0, b]$ and $\phi \in C_c^1(\R_+ \times [0, t \wedge y_0))$, then
		\begin{equation}
			\label{eq: integrals below threshold b}
			\angled{\phi}{\bmu_t} = \lambda_0 \int_0^t \phi(u, u) S_g(u, u)du.
		\end{equation}
		Moreover, if $t \in [0, b]$ and $y \in [0, t \wedge y_0]$, then
		\begin{equation*}
			\bmu_t\left(\R_+ \times [0, y]\right) = \lambda_0 \int_0^y S_g(u, u)du.
		\end{equation*}
	\end{enumerate}
\end{proposition}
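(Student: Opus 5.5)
We will use the duality method sketched after Proposition \ref{prop: spatial rescaling}: solve \eqref{eq: pde for duality method} explicitly by characteristics in the region below the threshold, and then apply \eqref{eq: application of duality method}.

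Fix $t \in [0,b]$ and $\phi \in C_c^1(\R_+ \times [0, y_0))$ in case (a), or $\phi \in C_c^1(\R_+ \times [0, t\wedge y_0))$ in case (b). By Lemma \ref{lem: solutions are determined by functions supported in Ug}, we may first assume that $\phi$ is supported in $U_g$. We then apply the method to the terminal datum $\phi S_g \in C_c^1(U_g)$, so that $\langle \phi, \bmu_t\rangle = L_{\bmu_t}(\phi S_g)$.

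\textbf{Step 1: the backward solution.} Set $\psi_s^t(x,y) \defeq (\phi S_g)(x + t - s, y + t - s)$ for $s \le t$. This is the characteristic solution with unit speed in both coordinates. It solves \eqref{eq: pde for duality method} wherever $\ind{y \le \bar\by_s} = 1$. Suppose $\psi_s^t(x,y) \neq 0$. Then $y + t - s < y_0$ in case (a), so $y < y_0 \le \bar\by_s$. In case (b) we get $y + t - s < t \wedge y_0$, so $y < s \wedge y_0 \le \bar\by_s$. Hence the transport term $\partial_s\psi + \bar\br_s\partial_x\psi + \partial_y\psi$ vanishes identically on $[0,t]$.

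To obtain a test function in $C_c^1(\R_+\times U_g)$, we extend $\psi^t$ to $s \in [t, t+\varepsilon]$ (for instance by the same formula) and multiply by a smooth cutoff in $s$. Lemma \ref{lem: support of psi^t}, with $\alpha \equiv 1$, shows that the support stays in a compact subset of $\R_+ \times U_g$. Alternatively, since only $s \le t$ matters in \eqref{eq: transport equation after spatial rescaling}, we can use Lemma \ref{lem: transport equation for functions with support not compact in t} applied to $\eta = \psi/S_g$.

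\textbf{Step 2: apply the duality formula.} Proposition \ref{prop: spatial rescaling} then gives
\[
L_{\bmu_t}(\phi S_g) = L_{\bmu_0}(\psi_0^t) + \lambda_0\int_0^t \phi(t-s,t-s)S_g(t-s,t-s)\,ds .
\]
After the substitution $u = t-s$, the last term is exactly the right-hand side of \eqref{eq: integrals below threshold a}. In case (a) the first term vanishes because $\bmu_0 = 0$. In case (b) the function $\psi_0^t(x,y) = (\phi S_g)(x+t, y+t)$ is nonzero only if $y + t < t$, which is impossible. So $\psi_0^t \equiv 0$ and the initial condition drops out. This gives \eqref{eq: integrals below threshold b}.

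\textbf{Step 3: masses of bands.} We extend the identity from $C_c^1$ test functions to indicators of bands $\R_+ \times [0,y]$. Take $\phi^n \in C_c^1$ increasing to $\ind{x \le n,\, y < y'}$ and use monotone convergence. The measure $\bmu$ may be signed here, so we instead use dominated convergence with $|\bmu_t|$, which is finite. This yields
\[
\bmu_t(\R_+\times[0,y')) = \lambda_0\int_0^{t\wedge y'} S_g(u,u)\,du,
\]
where in case (b) the cap $t \wedge y'$ equals $y'$. The $y$-marginal is absolutely continuous by (b) of Definition \ref{def: fluid problem}, so closed and half-open bands have the same mass. Letting $y' \uparrow y_0$ gives the endpoint case $y = y_0$ when $y_0 < \infty$; in case (b) it gives $y = t \wedge y_0$.

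\textbf{Main obstacle.} The delicate point is regularity and support. The function $\psi^t$ must lie in $C_c^1(\R_+ \times U_g)$ even though $\bar\br_s$ is discontinuous. This is fine because $\psi^t$ vanishes in a neighbourhood of the discontinuity line $y = \bar\by_s$, thanks to the strict inequality in the support of $\phi$. It also requires $\phi S_g$ to be $C^1$, which follows from Assumption \ref{ass: standing assumptions 2}.
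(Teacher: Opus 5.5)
Your proposal is correct and follows essentially the paper's own argument. Both use the characteristic solution $\psi^t_s(x,y)=(\phi S_g)(x+t-s,y+t-s)$ with a time cutoff and control its support with Lemma \ref{lem: support of psi^t} for $\alpha\equiv 1$. Both then apply the duality identity \eqref{eq: application of duality method}, drop the initial term (by $\bmu_0=0$ in (a) and by $\psi^t_0\equiv 0$ in (b)), and pass to band indicators using absolute continuity of the $y$-marginal.

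The differences are cosmetic. The paper first derives $L_{\bmu_t}(\phi)$ and then substitutes $\phi S_g$, whereas you go directly to $\phi S_g$. The paper also makes explicit the $C^1$ extension $\tilde\phi$ to $\R^2$ that is needed to define $\psi^t_s$ for $s>t$, where $x+t-s$ can be negative; your ``same formula'' needs this extension. With it, the alternative route via Lemma \ref{lem: transport equation for functions with support not compact in t} is unnecessary.
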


\begin{proof}
	If $\phi \in C_c^1(\R_+ \times [0, y_0))$, then it is possible to approximate the function $\phi\indc_{U_g}$ by a sequence of functions $\psi^n \in C_c^1((\R_+ \times [0, y_0)) \cap U_g)$ which can be constructed in the same way as in the proof of Lemma \ref{lem: solutions are determined by functions supported in Ug} but replacing $U_g$ by $(\R_+ \times [0, y_0)) \cap U_g$. If \eqref{eq: integrals below threshold a} holds for functions in $C_c^1((\R_+ \times [0, y_0)) \cap U_g)$, then we obtain:
	\begin{align*}
		\angled{\phi}{\bmu_t} = \angled{\phi \indc_{U_g}}{\bmu_t} &= \lim_{n \to \infty} \angled{\psi^n}{\bmu_t} \\
		&= \lim_{n \to \infty} \lambda_0 \int_0^t \psi^n(u, u) S_g(u, u)du = \lambda_0 \int_0^t \phi(u, u)S_g(u, u)du
	\end{align*}
	for all $t \in [0, b]$. The first equality holds since $\bmu$ satisfies (a) of Definition \ref{def: fluid problem}, while the second and last equalities hold by the bounded convergence theorem; for the last equality we also use that $S_g$ vanishes outside of $U_g$. Therefore, we need only prove \eqref{eq: integrals below threshold a} for functions $\phi \in C_c^1((\R_+ \times [0, y_0)) \cap U_g)$, and the same applies for \eqref{eq: integrals below threshold b}.
	
	Hence, to prove \eqref{eq: integrals below threshold a}, we fix $t \in [0, b]$ and $\phi \in C_c^1((\R_+ \times [0, y_0)) \cap U_g)$. Let $A \subset U_g$ denote the support of $\phi$ and choose a constant $\varepsilon > 0$ as in the statement of Lemma \ref{lem: support of psi^t}. Also, fix $\tilde\phi \in C_c^1(\R^2)$ and $\beta \in C_c^1(\R_+)$ satisfying that $\tilde\phi|_{\R_+^2} = \phi$ and
	\begin{equation*}
		\ind{s \in [0, t]} \leq \beta_s \leq \ind{s \in [0, t + \varepsilon]} \quad \text{for all} \quad s \in \R_+.
	\end{equation*}
	
	Consider now the function $\map{\psi^t}{\R_+^3}{\R}$ defined as:
	\begin{equation*}
		\psi^t_s(x, y) \defeq \beta_s\tilde\phi(x + t - s, y + t - s) \quad \text{for all} \quad s, x, y \in \R_+,
	\end{equation*}
	If the function $\alpha$ in the statement of Lemma \ref{lem: support of psi^t} is identically equal to one, then the set $C$ in that lemma contains the support of $\psi^t$. It follows that $\psi^t \in C_c^1(\R_+ \times U_g)$ and thus satisfies \eqref{eq: transport equation after spatial rescaling} by Proposition \ref{prop: spatial rescaling}. In order to show that \eqref{eq: pde for duality method} holds as well, observe that:
	\begin{align*}
		&\partial_s\psi_s^t(x, y) = -\partial_x\phi(x + t - s, y + t - s) - \partial_y\phi(x + t - s, y + t - s), \\
		&\partial_x\psi_s^t(x, y) = \partial_x\phi(x + t - s, y + t - s), \\
		&\partial_y\psi_s^t(x, y) = \partial_y\phi(x + t - s, y + t - s),
	\end{align*}
	for all $s \in [0, t]$ and $x, y \in \R_+$. Then, for all $s \in [0, t]$ and $x, y \in\R_+$, we have:
	\begin{equation*}
		\partial_s\psi_s^t(x, y) + \ind{y \leq \bar \by_s} \partial_x\psi_s^t(x, y) + \partial_y\psi_s^t(x, y) = \left[\ind{y \leq \bar \by_s} - 1\right] \partial_x\phi(x + t - s, y + t - s) = 0
	\end{equation*}
	For the last equality, note that $\partial_x \phi(x + t - s, y + t - s) = 0$ if $y > y_0$ since $t - s \geq 0$ and $\phi$ is supported in $\R_+ \times [0, y_0)$, while if $y \leq y_0$, then the indicator is active because $\bar \by_s \geq y_0$ for all $s \in [0, t]$. It is clear that $\psi_t^t(x, y) = \phi(x, y)$ for all $x, y \in \R_+$, so \eqref{eq: pde for duality method} holds.
	
	By \eqref{eq: application of duality method} and the fact that $\bmu_0$ is the null measure, we obtain:
	\begin{equation*}
		L_{\bmu_t}(\phi) = \lambda_0 \int_0^t \phi(t - s, t - s)ds = \lambda_0 \int_0^t \phi(u, u)du.
	\end{equation*}
	The survival function $S_g$ is continuously differentiable by Assumption \ref{ass: standing assumptions 2}, which implies that $\phi S_g \in C_c^1((\R_+ \times [0, y_0)) \cap U_g)$. Hence, we can construct a function $\psi^t \in C_c^1(\R_+ \times U_g)$ which solves \eqref{eq: pde for duality method} with boundary condition $\phi S_g$. Then \eqref{eq: integrals below threshold a} follows:
	\begin{equation*}
		\angled{\phi}{\bmu_t} = L_{\bmu_t}(\phi S_g) = \lambda_0 \int_0^t \phi(u, u)S_g(u, u)du.
	\end{equation*}
	
	Given $y \in [0, y_0)$, it is possible to approximate the indicator of $\R_+ \times [0, y]$ by a uniformly bounded sequence of functions in $C_c^1(\R_+ \times [0, y_0))$. By the bounded convergence theorem,
	\begin{equation*}
		\bmu_t\left(\R_+ \times [0, y]\right) = \lambda_0 \int_0^t \ind{u \leq y}S_g(u, u)du = \lambda_0 \int_0^{t \wedge y} S_g(u, u)du.
	\end{equation*}
	The continuity of the measure $\bmu_t$ and property (b) of Definition \ref{def: fluid problem} further imply that
	\begin{equation*}
		\bmu_t\left(\R_+ \times [0, y_0]\right) = \bmu_t\left(\R_+ \times [0, y_0)\right) = \lim_{y \uparrow y_0} \bmu_t\left(\R_+ \times [0, y]\right) = \lambda_0 \int_0^{t \wedge y_0} S_g(u, u)du
	\end{equation*}
	when $y_0 < \infty$, which completes the proof of property (a).
	
	The proof of (b) is similar, and hence we will focus on the differences with (a). If we fix $t \in [0, b]$ and $\phi \in C_c^1((\R_+ \times [0, t \wedge y_0) \cap U_g)$, then we can define $\psi^t \in C_c^1(\R_+ \times U_g)$ as in the proof of (a), with the same expressions for its partial derivatives. Once more,
	\begin{equation*}
		\partial_s\psi_s^t(x, y) + \ind{y \leq \bar \by_s} \partial_x\psi_s^t(x, y) + \partial_y\psi_s^t(x, y) = \left[\ind{y \leq \bar \by_s} - 1\right] \partial_x\phi(x + t - s, y + t - s) = 0
	\end{equation*} 
	for all $s \in [0, t]$ and $x, y \in \R_+$. Here $y > s \wedge [y_0 - (t - s)]$ implies that $y + t - s > t \wedge y_0$ and thus $\partial_x\phi(x + t - s, y + t - s) = 0$ since the support of $\phi$ is contained in $\R_+ \times [0, t \wedge y_0)$. Otherwise, we have $y \leq s \wedge [y_0 - (t - s)]$ and then $y \leq s \wedge y_0 \leq t \wedge y_0 \leq \bar \by_s$.
	
	The proof of \eqref{eq: integrals below threshold b} is completed invoking \eqref{eq: pde for duality method} and \eqref{eq: application of duality method} as in the proof of \eqref{eq: integrals below threshold a}. The term in \eqref{eq: application of duality method} associated with $\bmu_0$ vanishes since $\phi$ is supported in $\R_+ \times [0, t \wedge y_0)$ and
	\begin{equation*}
		L_{\bmu_0}\left(\psi_0^t\right) = \int_{U_g} \frac{\psi_0^t(x, y)}{S_g(x, y)}\bmu_0(dx, dy) = \int_{U_g} \frac{\phi(x + t, y + t)}{S_g(x, y)}\bmu_0(dx, dy) = 0.
	\end{equation*}
	
	Then the expression for $\bmu_t(\R_+ \times [0, y])$ follows again from approximating the indicator function of the set $\R_+ \times [0, y]$ by a sequence of functions in $C_c^1(\R_+ \times [0, t \wedge y_0])$, and then invoking property (b) of Definition \ref{def: fluid problem} for the boundary case $y = t \wedge y_0$.
\end{proof}

\subsubsection{Solutions with constant threshold}
\label{subsub: solutions with constatn threshold}

Recall our solution method for prescribed thresholds: we are leveraging a solution to the partial differential equation \eqref{eq: pde for duality method}, with boundary condition $\phi S_g$, to compute $\langle\phi, \bmu_t\rangle$ for suitable test functions $\phi$. The preceding Proposition \ref{prop: integrals below the threshold} applies to functions $\phi$ that vanish above the threshold, an essential requirement for the construction of a \emph{smooth} solution $\psi^t$ to \eqref{eq: pde for duality method}. For a general terminal condition $\phi$, this is typically not possible, but we can define a solution that is smooth outside a set of Lebesgue measure zero. However, since the modified transport equation \eqref{eq: transport equation after spatial rescaling} is defined through smooth test functions, we first need to establish its validity for test functions that are not everywhere differentiable. This is the purpose of the following two lemmas, which are proved in Appendix \ref{app: proofs of auxiliary results}.

\begin{restatable}{lemma}{lemmafiveten}
	\label{lem: approximation using mollifiers}
	Suppose that $\varphi \in C_c(\R_+ \times U_g)$ has the following properties.
	\begin{enumerate}
		\item[(a)] There exists a closed set $E \subset \R_+^3$ with Lebesgue measure zero and such that the partial derivatives of $\varphi$ exist and are continuous outside of $E$.
		
		\item[(b)] The lateral partial derivatives of $\varphi$ exist in $\R_+^3$ and for some $L \geq 0$ we have:
		\begin{equation*}
			\max\left\{\left|\partial_i^- \varphi(z)\right|, \left|\partial_i^+ \varphi(z)\right|\right\} \leq L \quad \text{for all} \quad z \in \R_+^3 \quad \text{and} \quad 1 \leq i \leq 3.
		\end{equation*} 
	\end{enumerate}
	Then there exist a compact set $K$, such that $\supp(\varphi) \subset K \subset \R_+ \times U_g$, and a sequence of functions $\varphi^n \in C_c^\infty(\R_+^3)$, with the following properties:
	\begin{equation*}
		\supp\left(\varphi^n\right) \subset K \quad \text{for all} \quad n \geq 1, \quad \lim_{n \to \infty} \norm{\varphi - \varphi^n}_\infty = 0 \quad \text{and} \quad \lim_{n \to \infty} \partial_i\varphi^n(z) = \partial_i\varphi(z)
	\end{equation*}
	for all $z \notin E$ and $1 \leq i \leq 3$. Further, there exists a constant $\tilde L \geq 0$ such that:
	\begin{equation*}
		\left|\partial_i \varphi^n(z)\right| \leq \tilde L \quad \text{for all} \quad z \in \R_+^3, \quad 1 \leq i \leq 3 \quad \text{and} \quad n \geq 1.
	\end{equation*}
\end{restatable}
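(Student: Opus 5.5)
The natural approach is standard mollification, with care taken to keep the supports inside $\R_+ \times U_g$ and to control the derivatives uniformly through the Lipschitz bound (b). The first step is to extend $\varphi$ from $\R_+^3$ to all of $\R^3$ so that a full-space convolution makes sense. Reflection works well here: set $\tilde\varphi(t,x,y) \defeq \varphi(|t|,|x|,|y|)$. This extension is continuous and compactly supported. Its lateral partial derivatives are bounded by $L$ everywhere, since reflection only flips signs of one-sided derivatives. It is therefore Lipschitz on $\R^3$: integrate along coordinate segments, using that a continuous function with bounded Dini derivatives is Lipschitz. Its classical partial derivatives exist and are continuous off the closed set $\tilde E$ given by the reflection of $E$ together with the coordinate hyperplanes; this set is still Lebesgue null. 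For the support, $\supp(\varphi) \subset \R_+ \times U_g$ is compact, so Lemma \ref{lem: support of psi^t} (or simply the compactness of $\supp(\varphi)$ and the openness of $U_g$ in $\R_+^2$) gives some $d>0$ such that the closed $d$-neighbourhood of $\supp(\varphi)$, intersected with $\R_+^3$, is a compact set $K \subset \R_+\times U_g$.

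Next, let $\rho_n$ be a standard mollifier supported in the ball of radius $1/n < d$, and put $\varphi^n \defeq (\tilde\varphi * \rho_n)|_{\R_+^3}$, multiplied if necessary by nothing further, since the support is automatically controlled: $\supp(\varphi^n) \subset \{z : \mathrm{dist}(z,\supp\varphi) \le 1/n\} \cap \R_+^3 \subset K$. Uniform convergence $\norm{\varphi - \varphi^n}_\infty \to 0$ follows from the uniform continuity of $\tilde\varphi$. Because $\tilde\varphi$ is Lipschitz, it is differentiable almost everywhere (Rademacher), and its weak gradient is its a.e.\ gradient, which is bounded by $\sqrt3 L$ or componentwise by $L$. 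Hence $\partial_i \varphi^n = (\partial_i\tilde\varphi) * \rho_n$, and $|\partial_i\varphi^n| \le L \eqdef \tilde L$ uniformly.

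For pointwise convergence of derivatives, fix $z \notin E$ with $z$ in the interior of the orthant. Since $E$ is closed, $\tilde\varphi$ is $C^1$ on a ball around $z$ avoiding $\tilde E$, and there the weak and classical derivatives agree. Continuity of $\partial_i\tilde\varphi$ at $z$ then gives $(\partial_i\tilde\varphi) * \rho_n(z) \to \partial_i\varphi(z)$. The main obstacle is boundary points $z \notin E$ lying on a face of $\R_+^3$, where $\partial_i\varphi$ is understood as the one-sided derivative. There the reflected extension may have a kink, and the mollified derivative tends to the average of the two one-sided derivatives instead of to $\partial_i\varphi(z)$. I would avoid this by replacing reflection with a $C^1$-preserving extension near the faces, for example the standard higher-order reflection $\tilde\varphi(-x) = 3\varphi(x) - 2\varphi(2x)$ (applied coordinatewise, suitably cut off). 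This extension matches the one-sided derivatives at the face, so points of $\R_+^3\setminus E$ become points of $C^1$ regularity of the extension, while Lipschitz bounds and the null-set property are preserved. Alternatively, one could note that the boundary faces have measure zero and enlarge $E$ by them, if the intended use only requires convergence almost everywhere; the statement asks for all $z\notin E$, though, so the $C^1$ extension is the route I would take. The support of the new extension still lies within a $d$-neighbourhood provided the cutoff is chosen inside that neighbourhood.
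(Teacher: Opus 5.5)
Your argument is correct, and in its final form it differs from the paper's proof at the boundary of the orthant. The paper uses the \emph{even} reflection times a cutoff, $\tilde\varphi(t,x,y)=\alpha(t,x,y)\varphi(|t|,|x|,|y|)$, with $\alpha\in C_c^1(\R^3)$ equal to one on $\supp(\varphi)$. It identifies $\partial_i(\tilde\varphi*\zeta^n)=(\partial_i^+\tilde\varphi)*\zeta^n$ by dominated convergence on difference quotients. This avoids Rademacher and weak derivatives, although it tacitly uses the fact you state explicitly: bounded one-sided derivatives give Lipschitz continuity along coordinate lines. It then appeals to convergence of mollifications at continuity points of $\partial_i^+\tilde\varphi$, and claims this for every $z\in\R_+^3\setminus E$.

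That last claim is exactly what you flagged. At a point $z\notin E$ on a face $\{z_i=0\}$, $\partial_i^+\tilde\varphi$ jumps from $-\partial_i\varphi(z)$ to $\partial_i\varphi(z)$ across the reflecting hyperplane. The symmetric mollifications of the normal derivative therefore converge to $0$ rather than to $\partial_i\varphi(z)$. So the paper's argument only covers interior points, and tangential derivatives on faces.

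Your higher-order reflection $3\varphi(x)-2\varphi(2x)$, applied coordinatewise, removes the kink. For $w$ near a face point $z\notin E$, every point at which the extension evaluates $\varphi$ to compute $\tilde\varphi(w)$ lies within $2|w-z|$ of $z$. Hence it lies in a neighbourhood disjoint from the closed set $E$, so the extension is $C^1$ on a full ball around $z$. Lipschitz continuity (with a larger constant) and the $O(1/n)$ localisation of $\supp(\varphi^n)$ are preserved.

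The paper's route is simpler, and in Lemma~\ref{lem: transport equation for nonsmooth functions} it loses nothing on the faces $\{s=0\}$ and $\{y=0\}$: the first is $ds$-null, and the second is $|\bmu_s|$-null by property (b) of Definition~\ref{def: fluid problem}. The face $\{x=0\}$, however, is not made null by the hypotheses there. Your version, which gives convergence at every $z\notin E$ as the statement requires, is the one that fully supports that application.
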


\begin{restatable}{lemma}{lemmafiveeleven}
	\label{lem: transport equation for nonsmooth functions}
	Suppose that $\varphi \in C_c(\R_+ \times U_g)$ and there exists a closed set $F \subset \R_+^2$ of Lebesgue measure zero such that the partial derivatives of $\varphi$ exist and are continuous at each $(t, x, y) \in \R_+ \times U_g$ such that $(t, y) \notin F$. Also, property (b) of Lemma \ref{lem: approximation using mollifiers} holds. Given a threshold function $\bar \by$, assume that $\bmu \in \calS_{\bar \by}[a, b]$ and
	\begin{equation}
		\label{eq: integrable total variation}
		\int_a^b|\bmu_s|\left(\R_+^2\right)ds < \infty.
	\end{equation}
	Then the modified transport equation \eqref{eq: transport equation after spatial rescaling} holds for $\bmu$ and the test function $\varphi$.
\end{restatable}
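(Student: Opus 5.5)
The plan is to mollify $\varphi$ via Lemma \ref{lem: approximation using mollifiers}, apply \eqref{eq: transport equation after spatial rescaling} to the smooth approximants, and pass to the limit with dominated convergence.

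First I check the hypotheses of Lemma \ref{lem: approximation using mollifiers}. Let $E \defeq \setc{(t, x, y) \in \R_+^3}{(t, y) \in F}$, which is closed and has Lebesgue measure zero by Fubini's theorem. On the set $\R_+ \times U_g^c$, $\varphi$ vanishes identically near points outside its support. Since $\supp(\varphi)$ is compact in $\R_+ \times U_g$, it lies at positive distance from $\R_+ \times U_g^c$, so $\varphi$ is smooth off $E$. Lemma \ref{lem: approximation using mollifiers} then provides $\varphi^n \in C_c^\infty(\R_+^3)$ with the following properties: supports in a fixed compact $K \subset \R_+ \times U_g$, uniform convergence to $\varphi$, pointwise convergence of partial derivatives off $E$, and a uniform bound $\tilde L$ on those derivatives. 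Each $\varphi^n$ belongs to $C_c^1(\R_+ \times U_g)$, so Proposition \ref{prop: spatial rescaling} gives \eqref{eq: transport equation after spatial rescaling} for $\varphi^n$.

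Next I pass to the limit term by term. Since $K$ is compact in $\R_+ \times U_g$, $S_g$ is bounded below by some $\varepsilon > 0$ on the projection of $K$ onto $U_g$ (Lemma \ref{lem: bound for H}).

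The terms $L_{\bmu_t}(\varphi^n_t)$ and $L_{\bmu_a}(\varphi^n_a)$ converge because $|L_\mu(\phi)| \leq \norm{\phi}_\infty |\mu|(\R_+^2)/\varepsilon$ and the convergence is uniform. The arrival term converges trivially.

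The main obstacle is the flow term $\int_a^t L_{\bmu_s}(\partial_t\varphi^n_s + \bar\br_s\partial_x\varphi^n_s + \partial_y\varphi^n_s)\,ds$. The integrand is bounded by $3\tilde L/\varepsilon$ times $|\bmu_s|(\R_+^2)$, which is integrable over $[a, b]$ by \eqref{eq: integrable total variation}. To get pointwise convergence I need the limit to exist $|\bmu_s|\otimes ds$-almost everywhere on $[a,t] \times \R_+^2$, that is, I need the set $\setc{(s, x, y)}{(s, y) \in F}$ to be null for the measure $|\bmu_s|(dx, dy)\,ds$.

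This is where property (b) of Definition \ref{def: fluid problem} enters, since the $y$-marginal of $|\bmu_s|$ is absolutely continuous. For each $s$, the integral over that set is $|\bmu_s|(\R_+ \times F_s)$, where $F_s \defeq \setc{y}{(s, y) \in F}$. By Fubini, $F_s$ has Lebesgue measure zero for almost every $s$, so the marginal assigns it zero mass for almost every $s$. Measurability in $s$ follows from Appendix \ref{app: measurability properties} or Remark \ref{rem: measurability for signed measures}.

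Off this null set the derivative limits hold, with $\partial_i\varphi$ interpreted as the classical derivatives. Dominated convergence, applied first in $(x, y)$ for fixed $s$ and then in $s$, yields the limit $\int_a^t L_{\bmu_s}(\partial_t\varphi_s + \bar\br_s\partial_x\varphi_s + \partial_y\varphi_s)\,ds$. This proves \eqref{eq: transport equation after spatial rescaling} for $\varphi$. The delicate points are the joint measurability and the a.e.-in-$s$ argument; the remaining estimates are routine.
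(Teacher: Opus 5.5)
Your proposal is correct and follows essentially the same route as the paper. It mollifies via Lemma \ref{lem: approximation using mollifiers} with $E = \{(t,x,y) \in \R_+^3 : (t,y) \in F\}$, applies Proposition \ref{prop: spatial rescaling} to the approximants, and passes to the limit using that $E$ is null for $|\bmu_s|(dx,dy)\,ds$ (by property (b) and \eqref{eq: integrable total variation}). The paper handles the time integral differently: it packages it into finite measures built from the Jordan decomposition of $\bmu_s$ and applies bounded convergence once, whereas you iterate dominated convergence. Your explicit Fubini argument on the sections $F_s$, and your check that $\varphi$ is continuously differentiable near $\R_+ \times U_g^c$, fill in steps the paper leaves tacit.
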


We now tackle the solution of the transport equation \eqref{eq: transport equation with given threshold} when the threshold function is constant over time. For this purpose, we consider any test function $\phi \in C_c^1(U_g)$ and invoke Lemma \ref{lem: transport equation for nonsmooth functions} to apply \eqref{eq: pde for duality method} and \eqref{eq: application of duality method} although $\psi^t$ is not everywhere differentiable.

\begin{proposition}
	\label{prop: transport equation with constant threshold}
	Fix a constant $y_* \in [0, \infty)$ and let $\bar \by_t \defeq y_*$ for all $t \in [a, b]$. In addition, fix $t \in [a, b]$ and $\phi \in C_c^1(U_g)$, and consider the function defined as:
	\begin{equation*}
		\psi_s^t(x, y) \defeq \phi\left(x + \int_s^t \ind{y + \tau - s \leq y_*}d\tau, y + t - s\right)
	\end{equation*}
	for all $s \in [a, t]$ and $x, y \in \R_+$. If $\bmu \in \calS_{\bar \by}[a, b]$ satisfies \eqref{eq: integrable total variation}, then \eqref{eq: application of duality method} holds:
	\begin{equation*}
		L_{\bmu_t}(\phi) = L_{\bmu_a}(\psi_{a}^t) + \lambda_0 \int_{a}^t \psi_s^t(0, 0)ds.
	\end{equation*}
\end{proposition}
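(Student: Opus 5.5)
The plan is to extend $\psi^t$ to a test function $\varphi$ on $\R_+^3$ with compact support in $\R_+ \times U_g$, show it is regular enough for Lemma \ref{lem: transport equation for nonsmooth functions}, and then read off the identity from \eqref{eq: transport equation after spatial rescaling}. Set $\theta_s(y) \defeq \int_s^t \ind{y + \tau - s \leq y_*}d\tau = \left(t - s\right) \wedge (y_* - y)^+$. Then $\psi_s^t(x, y) = \phi(x + \theta_s(y), y + t - s)$, so $\psi^t$ is just $\phi$ transported backwards along the characteristics of $\bv(x, y) = [\ind{y \leq y_*}\ 1]^\intercal$. As in the proof of Proposition \ref{prop: integrals below the threshold}, I fix $\varepsilon > 0$ as in Lemma \ref{lem: support of psi^t} for $A = \supp(\phi)$, a $C^1$ extension $\tilde\phi$ of $\phi$ to $\R^2$, and a cutoff $\beta \in C_c^1(\R_+)$ with $\ind{s \in [0, t]} \leq \beta_s \leq \ind{s \in [0, t + \varepsilon]}$. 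I then define $\varphi_s(x, y) \defeq \beta_s \tilde\phi(x + \theta_s(y), y + t - s)$ for all $s \geq 0$. The formula for $\theta$ makes sense for $s > t$ as well, with the minimum taken appropriately.

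Applying Lemma \ref{lem: support of psi^t} with $\alpha_\tau(x, y) = \ind{y + \tau - s \leq y_*} \in [0, 1]$ shows that $\supp(\varphi) \subset D \subset \R_+ \times U_g$, so $\varphi \in C_c(\R_+ \times U_g)$. The map $(s, y) \mapsto \theta_s(y)$ is piecewise affine and Lipschitz. It fails to be differentiable only on the closed null set
\begin{equation*}
	F \defeq \asetc{(s, y) \in \R_+^2}{y = y_*\ \text{or}\ y + t - s = y_*}.
\end{equation*}
Off $F$, $\varphi$ is $C^1$, and everywhere its lateral partial derivatives exist and are bounded by a constant times $\norm{\nabla\tilde\phi}_\infty + \norm{\beta'}_\infty$. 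Hence property (b) of Lemma \ref{lem: approximation using mollifiers} holds. Since $\bmu \in D_{\calM_F(\R_+^2)}[a, b]$ is c\`adl\`ag in the weak topology, the assumed \eqref{eq: integrable total variation} covers the integrability hypothesis. Lemma \ref{lem: transport equation for nonsmooth functions} therefore gives \eqref{eq: transport equation after spatial rescaling} for $\varphi$ at the terminal time $t$.

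Next I check that the third term of \eqref{eq: transport equation after spatial rescaling} vanishes, i.e.\ that the PDE \eqref{eq: pde for duality method} holds for $s \in [a, t]$ and all $(s, y) \notin F$, where $\beta_s = 1$. I compute this on the three regions separated by $F$.
\begin{enumerate}
	\item[(i)] If $y > y_*$, then $\theta = 0$ and $\bar\br_s = 0$, so $\partial_s\varphi + \partial_y\varphi = -\partial_y\tilde\phi + \partial_y\tilde\phi = 0$.
	\item[(ii)] If $y + t - s < y_*$, then $\theta = t - s$ and $\bar\br_s = 1$, which gives the same cancellation as in Proposition \ref{prop: integrals below the threshold}.
	\item[(iii)] If $y < y_* < y + t - s$, then $\theta = y_* - y$ and $\bar\br_s = 1$. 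The chain rule gives $\partial_s\varphi = -\partial_y\tilde\phi$, $\partial_x\varphi = \partial_x\tilde\phi$ and $\partial_y\varphi = -\partial_x\tilde\phi + \partial_y\tilde\phi$, which again sum to zero.
\end{enumerate}
Since $\bar\br_s(y) = \ind{y \leq y_*}$ and $F$ has Lebesgue measure zero, the integrand $L_{\bmu_s}(\partial_t\varphi_s + \bar\br_s\partial_x\varphi_s + \partial_y\varphi_s)$ vanishes. Here I use that Lemma \ref{lem: transport equation for nonsmooth functions} interprets the derivatives off $F$, and that the $y$-marginal of $|\bmu_s|$ is absolutely continuous, so the set $\{y = y_*\} \cup \{y = y_* - t + s\}$ is $|\bmu_s|$-null.

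Finally, $\varphi_t = \phi$ and $\varphi_a = \psi_a^t$, and $\varphi_s(0, 0) = \psi_s^t(0, 0)$ for $s \in [a, t]$, so \eqref{eq: transport equation after spatial rescaling} reduces to the claimed identity. The main obstacle is justifying the use of Lemma \ref{lem: transport equation for nonsmooth functions}. One part is the null-set bookkeeping: the derivative of $\varphi$ is defined only off $F$, and one must confirm that the integrals with respect to $\bmu_s$ ignore $F$, which relies on property (b) of Definition \ref{def: fluid problem} for $|\bmu|$. The other part is verifying the uniform Lipschitz bounds on the lateral derivatives near the kinks of $\theta$, which I expect to follow directly from the explicit piecewise-affine form.
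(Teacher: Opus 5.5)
Your proposal is correct and is essentially the paper's proof. It uses the same cutoff extension $\beta_s\tilde\phi(x+\theta_s(y),y+t-s)$, Lemma \ref{lem: support of psi^t} for the support, the piecewise-affine structure of $\theta$ to meet the hypotheses of Lemma \ref{lem: transport equation for nonsmooth functions}, and the same region-by-region check that the transport operator annihilates $\psi^t$ off a null set. The only loose spot is for $s>t$, where $(t-s)\wedge(y_*-y)^+$ no longer equals the integral defining $\theta$, which is $[t-s+(y-y_*)^+]\ind{y-y_*<s-t}$ as in the paper; your $F$ is the full kink set for that integral, but if you literally extend by the $\min$ formula you create a kink along $\{s=t\}$ that must be added to $F$ (the paper includes it), which is harmless since $F$ only needs to be closed and Lebesgue-null.
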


\begin{proof}
	Let $A \subset U_g$ be the support of $\phi$ and select $\varepsilon > 0$ as in Lemma \ref{lem: support of psi^t}. Then fix any two functions $\tilde \phi \in C_c^1(\R^2)$ and $\beta \in C_c^1(\R_+)$ such that $\tilde \phi|_{\R_+^2} = \phi$ and
	\begin{equation*}
		\ind{s \in [0, t]} \leq \beta_s \leq \ind{s \in [0, t + \varepsilon]} \quad \text{for all} \quad s \in \R_+.
	\end{equation*}
	In addition, define $\map{\psi^t}{\R_+^3}{\R}$ such that
	\begin{equation*}
		\psi_s^t(x, y) \defeq \beta_s \tilde \phi\left(x + \int_s^t \ind{y + \tau - s \leq y_*}d\tau, y + t - s\right) \quad \text{for all} \quad s, x, y \in \R_+;
	\end{equation*}
	this expression is obtained by applying the method of characteristics to \eqref{eq: pde for duality method}. Note that $\beta_s = 1$ and $\tilde \phi$ can be replaced by $\phi$ if $s \in [0, t]$. For conciseness, we will write $Z_s^t(x, y)$ to denote the vector-valued argument of $\tilde \phi$ in the above expression.
	
	The support of $\psi^t$ is compact and contained in $\R_+ \times U_g$ by Lemma \ref{lem: support of psi^t}. Also,
	\begin{equation*}
		\gamma(s, y) \defeq \int_s^t \ind{y + \tau - s \leq y_*}d\tau =
		\begin{cases}
			(t - s) \wedge (y_* - y)^+ & \text{if} \quad s \leq t, \\
			\left[t - s + (y - y_*)^+\right] \ind{y - y_* < s - t} & \text{if} \quad s > t. 
		\end{cases}
	\end{equation*}
	It follows that the lateral partial derivatives of $\psi^t$ exist in $\R_+^3$ and are bounded. Moreover, the set of points where the partial derivatives do not exist is contained in
	\begin{equation*}
		F \defeq \asetc{(s, y) \in \R^2}{s = t\ \text{or}\ y = y_*\ \text{or}\ y - s = y_* - t},
	\end{equation*}
	a union of lines, which is closed and has Lebesgue measure zero. In addition,
	\begin{equation*}
		\partial_s \gamma(s, y) = - \ind{y + t - s \leq y_*} \quad \text{and} \quad \partial_y \gamma(s, y) = -\ind{y_* < y + t - s,\ y \leq y_*} = \ind{y + t - s \leq y_*} - \ind{y \leq y_*}
	\end{equation*}
	for all $(s, y) \in [0, t] \times \R_+$ such that $(s, y) \notin F$.
	
	We obtain the following expressions for the partial derivatives of $\psi^t$:
	\begin{align*}
		&\partial_s \psi_s^t(x, y) = -\ind{y + t - s \leq y_*} \partial_x \phi\left(Z_s^t(x, y)\right) - \partial_y\phi\left(Z_s^t(x, y)\right), \\
		&\partial_x\psi_s^t(x, y) = \partial_x \phi\left(Z_s^t(x, y)\right), \\
		&\partial_y\psi_s^t(x, y) = \left[\ind{y + t - s \leq y_*} - \ind{y \leq y_*}\right] \partial_x \phi\left(Z_s^t(x, y)\right) + \partial_y \phi\left(Z_s^t(x, y)\right),
	\end{align*}
	for all $(s, x, y) \in [0, t] \times \R_+^2$ such that $(s, y) \notin F$. It follows that $\psi^t$ satisfies the second equation in \eqref{eq: pde for duality method} for all $(s, x, y) \in [a, t] \times \R_+^2$ such that $(s, y) \notin F$. Indeed,
	\begin{equation*}
		\partial_s \psi_s^t(x, y) + \ind{y \leq \bar \by_s} \partial_x \psi_s^t(x, y) + \partial_y \psi_s^t(x, y) = \left[\ind{y \leq \bar \by_s} - \ind{y \leq y_*}\right] \partial_x \phi(Z_s^t(x, y)) = 0
	\end{equation*}
	for all $(s, x, y) \in [a, t] \times \R_+^2$ such that $(s, y) \notin F$ since $\bar \by = y_*$ in $[a, t]$. Clearly, $\psi_t^t = \phi$, and thus $\psi^t$ also satisfies the boundary condition of \eqref{eq: pde for duality method}. Invoking Lemma \ref{lem: transport equation for nonsmooth functions}, we conclude that \eqref{eq: transport equation after spatial rescaling} holds for the test function $\psi^t$, and therefore \eqref{eq: application of duality method} holds as well.
\end{proof}

As noted earlier, this proposition will be used in the following section to solve the fluid problem when the initial condition is the null measure.

\subsection{Solution with null initial condition}
\label{sub: solution with null initial condition}

By Proposition \ref{prop: existence of solutions}, there exists $\bmu \in \calS^+$ such that $\bmu_0$ is the null measure; note that in this section $\bmu_t$ is again a nonnegative measure for all $t \geq 0$. In principle, there could exist multiple solutions to the fluid problem with null initial condition, but we will show that the solution is unique in this case, and will even provide a closed-form expression. We begin by characterizing the function $t \mapsto \bar y_*(\bmu_t)$, proving the first part of Theorem \ref{the: solution of transport equation for null initial condition}.

\begin{lemma}
	\label{lem: threshold for null initial condition}
	Recall from \eqref{eq: equilibrium threshold} that
	\begin{equation*}
		y_* \defeq \inf\asetc{y \geq 0}{\lambda_0\int_0^y S_g(u, u)du \geq 1},
	\end{equation*}
	where $y_* \defeq \infty$ if the integral is strictly less than one for all $y \geq 0$. Suppose that $\bmu \in \calS^+$ has initial condition the null measure. Then
	\begin{equation*}
		\bar y_*(\bmu_t) = \begin{cases}
			\infty 	& \text{if} \quad 0 \leq t < y_*, \\
			y_* 	& \text{if} \quad y_* < \infty \quad \text{and} \quad t \geq y_*.	
		\end{cases}
	\end{equation*}
\end{lemma}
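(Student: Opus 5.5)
Throughout I write $\bar\by_t \defeq \bar y_*(\bmu_t)$ and $G(y) \defeq \lambda_0 \int_0^y S_g(u, u)du$. By definition of $\bar y_*$, the threshold $\bar\by_t$ is finite if and only if $\bmu_t(\R_+^2) \geq 1$. The plan has two stages: show the threshold is infinite on $[0, y_*)$, then show it equals $y_*$ on $[y_*, \infty)$ when $y_* < \infty$. Both stages use the same bootstrap. Take the first time the claimed description fails, apply Proposition \ref{prop: integrals below the threshold} on the interval before that time to get an explicit formula for the mass in horizontal bands, and then use Lemma \ref{lem: mass growth rate} together with right-continuity of $t \mapsto \bmu_t$ to get a contradiction.

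\emph{Stage 1 ($t < y_*$).} Let $\tau \defeq \inf\setc{t \geq 0}{\bar\by_t < \infty}$. Lemma \ref{lem: mass growth rate} with $\bmu_0 = 0$ gives $\bmu_t(\R_+^2) \leq \lambda_0 t < 1$ for $t < 1/\lambda_0$, so $\tau > 0$. For every $b < \tau$ we have $\bmu \in \calS_{\bar\by}[0, b]$ with $\bar\by \equiv \infty$. Proposition \ref{prop: integrals below the threshold}(a) with $y_0 = \infty$ then gives $\bmu_t(\R_+^2) = G(t)$ for $t < \tau$. Suppose $\tau < y_*$. For $t < \tau \leq s$, Lemma \ref{lem: mass growth rate} gives $\bmu_s(\R_+^2) \leq G(t) + \lambda_0(s - t)$. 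Letting $t \uparrow \tau$ yields $\bmu_s(\R_+^2) \leq G(\tau) + \lambda_0(s - \tau)$, and $G(\tau) < 1$ because $\tau < y_*$. Hence the mass stays below one on some interval $[\tau, \tau + \delta)$, which contradicts the definition of $\tau$ as an infimum. Therefore $\tau \geq y_*$, which is the first case of the lemma.

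\emph{Stage 2 ($y_* < \infty$, $t \geq y_*$).} By Stage 1, $\bar\by_t \geq t \wedge y_*$ holds on $[0, y_*)$. Let $\sigma \defeq \inf\setc{t \geq 0}{\bar\by_t < t \wedge y_*}$, so $\sigma \geq y_*$. For $b < \sigma$, Proposition \ref{prop: integrals below the threshold}(b) applies on $[0, b]$ and gives $\bmu_t(\R_+ \times [0, y]) = G(y)$ for $y \leq t \wedge y_*$. For $t \in [y_*, \sigma)$ this reads $\bmu_t(\R_+ \times [0, y]) = G(y) < 1$ for $y < y_*$ and $= 1$ at $y = y_*$, hence $\bar\by_t = y_*$ exactly. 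It remains to show $\sigma = \infty$, and after that the same identity gives $\bar\by_t = y_*$ for all $t \geq y_*$.

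\emph{Main obstacle: ruling out $\sigma < \infty$.} Lemma \ref{lem: mass growth rate} alone is too crude here. The band mass below $y < y_*$ equals $G(y)$ before $\sigma$, and the lemma only caps its growth rate by $\lambda_0$. That prevents $\bar\by_s$ from dropping far below $y_*$ right after $\sigma$, but not from dropping slightly. My first attempt is a sharper band estimate from the transport equation (via Lemma \ref{lem: transport equation for functions with support not compact in t}) with a test function that depends only on $y$ and moves with the flow, $\varphi_u(x, y) = \chi(y + s - u)$, where $\chi$ is nonincreasing and approximates $\indc_{[0, Y]}$. With this choice the terms $\partial_t\varphi + \partial_y\varphi$ and $\partial_x\varphi$ vanish. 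The resulting inequality says that mass below height $Y$ at time $s$ comes only from mass below $Y - (s - t)$ at time $t$ plus injection during $(t, s]$. The injected mass sits at height at most $s - t$, and Proposition \ref{prop: integrals below the threshold}(b) is used to identify the mass below $Y - (s - t)$ at time $t$ as exactly $G(Y - (s - t))$. Run the argument first for $s$ slightly beyond $\sigma$, where the threshold is still close to $y_*$. The aim is to show that the mass in $\R_+ \times [0, Y]$ stays strictly below one for $Y < y_*$, which forces $\bar\by_s \geq y_*$ and so $\bar\by_s \geq s \wedge y_*$ on a neighborhood of $\sigma$, contradicting the definition of $\sigma$.

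To get strictness I must keep the departure term with $S_g(u, u) < 1$ for $u > 0$, rather than only using $G' \leq \lambda_0$. The crude version of the bound, $G(Y - h) + \lambda_0 h$, does not by itself stay below one as $Y \uparrow y_*$, since $G' \leq \lambda_0$ means it is at least $G(Y)$. If this one-step argument fails, the fallback is to extend Proposition \ref{prop: integrals below the threshold}(b) step by step in time using that threshold lower bound, absolute continuity of the $y$-marginal, and the right-continuity of $\bmu$.
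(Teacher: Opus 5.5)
Your Stage~1 is correct and is essentially the paper's argument. The first part of Stage~2 is also fine: on $[y_*,\sigma)$ the threshold equals $y_*$.

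The gap is the step you flag yourself. You never rule out $\sigma<\infty$, and the sharper band estimate you propose cannot do it. Even if you account exactly for the departures of the mass injected during $(t,s]$, with $h=s-t$ your bound for $\bmu_s(\R_+\times[0,Y])$ is $G(Y-h)+G(h)$. Since $G'(u)=\lambda_0S_g(u,u)$ is nonincreasing, $G$ is concave with $G(0)=0$, hence subadditive. So $G(Y-h)+G(h)\ge G(Y)$, and as $Y\uparrow y_*$ the bound tends to $G(y_*-h)+G(h)$. This is strictly larger than $G(y_*)=1$ for every $h\in(0,y_*)$ unless $S_g(y_*,y_*)=1$.

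To get below one you would have to track the departures of the mass present at time $t$ at heights near $y_*$. After $\sigma$, its service rate is set by exactly the threshold you are trying to control, so the argument is circular. Treating that mass as unserved only enlarges the bound, since $S_g(u,u+h)\ge S_g(u+h,u+h)$. Your fallback, extending Proposition~\ref{prop: integrals below the threshold}(b) step by step at level $t\wedge y_*$, has the same zero slack: every step needs $\bmu_s(\R_+\times[0,Y])<1$ simultaneously for all $Y<y_*$.

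The missing idea is to run the bootstrap at a fixed level $y_0\in(0,y_*)$ instead of at the critical level $y_*$. Then Lemma~\ref{lem: mass growth rate} is \emph{not} too crude, because the slack $1-G(y_0)>0$ is fixed. The argument goes as follows.
\begin{itemize}
\item Let $\sigma_{y_0}\defeq\inf\setc{t\ge0}{\bar\by_t<y_0}$. By your Stage~1, $\sigma_{y_0}\ge y_*>y_0$.
\item For $t<\sigma_{y_0}$, Proposition~\ref{prop: integrals below the threshold}(a) with lower bound $y_0$ gives $\bmu_t(\R_+\times[0,y_0])=G(t\wedge y_0)\le G(y_0)<1$.
\item Your own Stage~1 argument (Lemma~\ref{lem: mass growth rate}, then $t\uparrow\sigma_{y_0}$) shows $\bmu_s(\R_+\times[0,y_0])<1$, hence $\bar\by_s\ge y_0$, for all $s\in[\sigma_{y_0},\sigma_{y_0}+(1-G(y_0))/\lambda_0)$. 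This contradicts the definition of $\sigma_{y_0}$ unless $\sigma_{y_0}=\infty$.
\item So $\bar\by_t\ge y_0$ for all $t$, and letting $y_0\uparrow y_*$ gives $\bar\by_t\ge y_*$.
\item For $t\ge y_*$, the same proposition together with absolute continuity of the $y$-marginal gives $\bmu_t(\R_+\times[0,y_*])=G(y_*)=1$. Hence $\bar\by_t=y_*$.
\end{itemize}
The paper does exactly this, phrased as an induction over the times $t_k=y_*+k(1-m_0-\varepsilon)/\lambda_0$ with $m_0=G(y_0)$.
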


\begin{proof}
	Since $\bmu_0$ is the null measure, Lemma \ref{lem: mass growth rate} implies that
	\begin{equation*}
		\tau_* \defeq \inf\asetc{t \geq 0}{\bmu_t\left(\R_+^2\right) \geq 1} > 0.
	\end{equation*}
	If $t \in [0, \tau_*)$, then $\bar y_*(\bmu_s) = \infty$ for all $s \in [0, t]$. By (a) of Proposition \ref{prop: integrals below the threshold}, we have:
	\begin{equation}
		\label{eq: total mass for infinite threshold}
		\bmu_t\left(\R_+^2\right) = \lambda_0 \int_0^t S_g(u, u)du \quad \text{for all} \quad t \in [0, \tau_*).
	\end{equation}
	We conclude that $\tau_* \leq y_*$. In particular, $\tau_* = \infty$ implies that $y_* = \infty$ and $\bmu_t(\R_+^2) < 1$ for all $t \geq 0$. Hence, $\bar y_*(\bmu_t) = \infty = y_*$ for all $t \geq 0$, proving the lemma when $\tau_* = \infty$. Since this case has been covered, in the following we assume that $\tau_* < \infty$.
	
	Clearly, $\bmu_t(\R_+^2) < 1$ for all $t \in [0, \tau_*)$, and $\bmu_{\tau_*}(\R_+^2) \geq 1$ by the right-continuity of $\bmu$, so
	\begin{equation*}
		\lim_{t \uparrow \tau_*} \left|\bmu_{\tau_*}\left(\R_+^2\right) - \bmu_t\left(\R_+^2\right)\right| = \lim_{t \uparrow \tau_*} \left[\bmu_{\tau_*}\left(\R_+^2\right) - \bmu_t\left(\R_+^2\right)\right] \leq \lim_{t \uparrow \tau_*} \lambda_0\left(\tau_* - t\right) = 0
	\end{equation*}
	by Lemma \ref{lem: mass growth rate}. It follows from \eqref{eq: total mass for infinite threshold} and the latter limit that
	\begin{equation*}
		\lambda_0 \int_0^{\tau_*} S_g(u, u)du = \lim_{t \uparrow \tau_*} \bmu_t\left(\R_+^2\right) = \bmu_{\tau_*}\left(\R_+^2\right) \geq 1.
	\end{equation*}
	Recall that $\tau_* \leq y_*$. Therefore, the above inequality establishes that in fact $\tau_* = y_*$, and in particular this implies that $\bar y_*(\bmu_t) = \infty$ for all $t \in [0, y_*)$.
	
	Fix any $y_0 \in (0, y_*)$. By (a) of Proposition \ref{prop: integrals below the threshold} and $\bar y_*(\bmu) = \infty$ in $[0, y_*)$, we have:
	\begin{equation}
		\label{eq: definition of m0}
		\bmu_s\left(\R_+ \times [0, y_0]\right) = \lambda_0 \int_0^{s} S_g(u, u)du \leq m_0 \defeq \lambda_0 \int_0^{y_0} S_g(u, u)du = \bmu_t\left(\R_+ \times [0, y_0]\right)
	\end{equation}
	for all $0 \leq s \leq y_0 \leq t < y_*$. Moreover, $m_0 < 1$ since $y_0 < y_*$, and Lemma \ref{lem: mass growth rate} yields:
	\begin{equation*}
		\bmu_{y_*} \left(\R_+ \times [0, y_0]\right) \leq \lim_{t \uparrow y_*} \left[\bmu_t\left(\R_+ \times [0, y_0]\right) + \lambda_0\left(y_* - t\right)\right] = m_0 < 1.
	\end{equation*}
	We conclude that $\bar y_*(\bmu_t) \geq y_0$ for all $t \in [0, y_*]$, and thus $\bmu_{y_*}(\R_+ \times [0, y_0]) = m_0$ by (a) of Proposition \ref{prop: integrals below the threshold}. In particular, \eqref{eq: definition of m0} also holds for $t = y_*$.
	
	Fix any $\varepsilon \in (0, 1 - m_0)$ and consider the following sequence of times:
	\begin{equation*}
		t_k \defeq y_* + \frac{k(1 - m_0 - \varepsilon)}{\lambda_0} \quad \text{for all} \quad k \geq 0.
	\end{equation*}
	Next we prove by induction that $\bmu_t(\R_+ \times [0, y_0]) = m_0$ for all $t \in [y_0, t_k]$ and $k \geq 0$, which implies that \eqref{eq: definition of m0} holds for all $t \geq y_0$. We have already established this property for $k = 0$, so we now assume that the property holds for some $k \geq 0$ and prove that then the property must hold for $k + 1$ as well. For this purpose, observe that Lemma \ref{lem: mass growth rate} yields:
	\begin{align*}
		\bmu_t\left(\R_+ \times [0, y_0]\right) - m_0 &= \bmu_t\left(\R_+ \times [0, y_0]\right) - \bmu_{t_k}\left(\R_+ \times [0, y_0]\right) \\
		&\leq \lambda_0\left[t - y_* - \frac{k(1 - m_0 - \varepsilon)}{\lambda_0}\right] \quad \text{for all} \quad t > t_k.
	\end{align*}
	This implies that if $t \in (t_k, t_{k + 1}]$, then $\bmu_t(\R_+ \times [0, y_0]) \leq 1 - \varepsilon$. Thus, $\bar y_*(\bmu_t) \geq y_0$ for all $t \in [0, t_{k + 1}]$ and (a) of Proposition \ref{prop: integrals below the threshold} implies that $\bmu_t(\R_+ \times [0, y_0]) = m_0$ if $t \in [y_0, t_{k + 1}]$. This completes the proof by induction, showing that \eqref{eq: definition of m0} holds for all $t \geq y_0$.
	
	Since $y_0 \in (0, y_*)$ was arbitrary, we have established that
	\begin{equation*}
		\bmu_s\left(\R_+ \times [0, y_0]\right) \leq \bmu_t\left(\R_+ \times [0, y_0]\right) = \lambda_0\int_0^{y_0} S_g(u, u)du < 1
	\end{equation*}
	for all $y_0 \in (0, y_*)$ and $0 \leq s \leq y_0 \leq t$. As noted earlier, if $t < y_* = \tau_*$, then $\bmu_t(\R_+^2) < 1$ and therefore $\bar y_*(\bmu_t) = \infty$. On the other hand, if $t \geq y_*$, then the above equation says that $\bmu_t(\R_+ \times [0, y_0]) < 1$ for all $y_0 < y_*$, and we obtain $\bar y_*(\bmu_t) \geq y_*$. By (b) of Definition \ref{def: fluid problem},
	\begin{equation*}
		\bmu_t\left(\R_+ \times [0, y_*]\right) = \bmu_t\left(\R_+ \times [0, y_*)\right) = \lim_{y_0 \uparrow y_*} \bmu_t\left(\R_+ \times [0, y_0]\right) = \lambda_0 \int_0^{y_*} S_g(u, u)du = 1,
	\end{equation*}
	so in fact $\bar y_*(\bmu_t) = y_*$, which completes the proof.
\end{proof}

\subsubsection{Remainder of the proof of Theorem \ref{the: solution of transport equation for null initial condition}}

The above lemma implies that if $\bmu \in \calS^+$ has the null initial condition, then it solves the transport equation \eqref{eq: transport equation with given threshold} with $\bar \by_t = \infty$ for $t \in [0, y_*)$ and $\bar \by_t = y_*$ for $t \in [y_*, \infty)$. Effectively, we have replaced the nonlinear feedback $\bar r(\bmu)$ in \eqref{eq: transport equation} by an expression independent of $\bmu$, obtaining a transport equation that can be solved using the results of Section \ref{sub: prescribed thresholds}. This allows to complete the proof of Theorem \ref{the: solution of transport equation for null initial condition} by proving \eqref{eq: solution with null initial condition}, which we restate:
\begin{equation*}
	\angled{\phi}{\bmu_t} = \lambda_0 \int_0^t \phi\left(u \wedge y_*, u\right) S_g\left(u \wedge y_*, u\right)du \quad \text{for all} \quad t \geq 0 \quad \text{and} \quad \phi \in C_b\left(\R_+^2\right).
\end{equation*}

\begin{proof}[Proof of Theorem \ref{the: solution of transport equation for null initial condition}]
	If \eqref{eq: solution with null initial condition} holds for all $\phi \in C_c^1(\R_+^2)$, then for each $\phi \in C_b(\R_+^2)$, there exists a sequence of functions in $C_c^1(\R_+^2)$ that is uniformly bounded and converges pointwise to $\phi$, so the bounded convergence theorem implies that \eqref{eq: solution with null initial condition} holds for all $\phi \in C_b(\R_+^2)$. Thus, we only need to establish that \eqref{eq: solution with null initial condition} holds for all functions $\phi \in C_c^1(\R_+^2)$.
	
	Lemma \ref{lem: threshold for null initial condition} and (a) of Proposition \ref{prop: integrals below the threshold} imply that
	\begin{equation}
		\label{eq: solution with null initial condition for small times}
		\angled{\phi}{\bmu_t} = \lambda_0 \int_0^t \phi(u, u) S_g(u, u)du
	\end{equation}
	if $t < y_*$ and $\phi \in C_c^1(\R_+^2)$, which proves \eqref{eq: solution with null initial condition} for all $t \in [0, y_*)$. Moreover, the same lemma and proposition imply that \eqref{eq: solution with null initial condition for small times} holds for all $t \geq 0$ and $\phi \in C_c^1(\R_+ \times [0, y_*))$.
	
	Suppose that $y_* < \infty$ and consider functions $\psi^n \in C_c^1(\R_+^2)$ such that
	\begin{equation*}
		\ind{(x, y) \in [0, n] \times \left[0, y_* - \frac{1}{n}\right]} \leq \psi^n(x, y) \leq \ind{(x, y) \in [0, n + 1] \times \left[0, y_* - \frac{1}{n + 1}\right]} \quad \text{for all} \quad x, y \in \R_+.
	\end{equation*}
	Invoking the bounded convergence theorem and \eqref{eq: solution with null initial condition for small times}, we obtain:
	\begin{equation}
		\label{eq: integrals below the threshold}
		\begin{split}
			\angled{\phi\indc_{\R_+ \times [0, y_*)}}{\bmu_{y_*}} &= \lim_{n \to \infty} \angled{\phi\psi^n}{\bmu_{y_*}} \\
			&= \lim_{n \to \infty} \lambda_0\int_0^{y_*} \phi(u, u)\psi^n(u, u)S_g(u, u)du \\
			&= \lambda_0\int_0^{y_*} \phi(u, u)S_g(u, u)du
		\end{split} 
	\end{equation}
	for all $\phi \in C_b^1(\R_+^2)$. It follows from this equation, Lemma \ref{lem: mass growth rate} and \eqref{eq: solution with null initial condition for small times} that
	\begin{align*}
		 \lambda_0\int_0^{y_*} S_g(u, u)du &= \bmu_{y^*}\left(\R_+ \times [0, y_*)\right) \\
		 &\leq \bmu_{y_*}\left(\R_+^2\right) \\
		 &\leq \lim_{t \uparrow y_*} \left[\bmu_t\left(\R_+^2\right) + \lambda_0(y_* - t)\right] = \lim_{t \uparrow y_*} \lambda_0 \int_0^t S_g(u, u)du = \lambda_0 \int_0^{y_*} S_g(u, u)du.
	\end{align*}
	
	In particular, $\bmu_{y_*}(\R_+ \times [y_*, \infty)) = 0$, and we conclude from \eqref{eq: integrals below the threshold} that
	\begin{equation}
		\label{eq: solution with null initial condition at threshold}
		\angled{\phi}{\bmu_{y_*}} = \angled{\phi\indc_{\R_+ \times [0, y_*)}}{\bmu_{y_*}} = \lambda_0 \int_0^{y_*} \phi(u, u)S_g(u, u)du.
	\end{equation}
	for all $\phi \in C_b^1(\R_+^2)$, which implies that \eqref{eq: solution with null initial condition} holds when $t \in [0, y_*]$.
	
	It remains to prove \eqref{eq: solution with null initial condition} when $y_* < \infty$ and $t > y_*$. Note that $\bmu \in \calS_{\bar \by}[y_*, b]$ with $\bar \by_t = y_*$ for $t \in [y_*, b]$ and any $b > y_*$. Also, \eqref{eq: integrable total variation} holds by Lemma \ref{lem: mass growth rate}, so Proposition \ref{prop: transport equation with constant threshold} gives:
	\begin{equation}
		\label{eq: solution with null initial condition after y*}
		L_{\bmu_t}(\phi) = L_{\bmu_{y_*}}\left(\psi_{y_*}^t\right) + \lambda_0 \int_{y_*}^t \psi_s^t(0, 0)ds \quad \text{for all} \quad t > y_* \quad \text{and} \quad \phi \in C_c^1\left(U_g\right),
	\end{equation}
	where $\psi^t$ is defined in terms of $t$ and $\phi$ as follows:
	\begin{equation*}
		\psi_s^t(x, y) \defeq \phi\left(x + \int_s^t \ind{y + \tau - s \leq y_*}d\tau, y + t - s\right) \quad \text{for all} \quad s \in [y_*, t] \quad \text{and} \quad x, y \in \R_+.
	\end{equation*}
	
	We compute the first term on the right-hand side of \eqref{eq: solution with null initial condition after y*} using \eqref{eq: solution with null initial condition at threshold}, which gives:
	\begin{align*}
		L_{\bmu_{y_*}}\left(\psi_{y_*}^t\right) &= \lambda_0 \int_0^{y_*} \phi\left(u + \int_{y_*}^t\ind{u + \tau - y_* \leq y_*}d\tau, u + t - y_*\right)du \\
		&= \lambda_0 \int_0^{y_*} \phi\left(u + (t - y_*) \wedge (y_* - u), u + t - y_*\right)du \\
		&= \lambda_0 \int_0^{y_*} \phi\left((u + t - y_*) \wedge y_*, u + t - y_*\right)du = \lambda_0 \int_{t - y_*}^t \phi\left(u \wedge y_*, u\right)du.
	\end{align*}
	Next we compute the second term on the right-hand side of \eqref{eq: solution with null initial condition after y*}, which equals:
	\begin{align*}
		\lambda_0 \int_{y_*}^t \psi_s^t(0, 0)ds &= \lambda_0 \int_{y_*}^t \phi\left(\int_s^t \ind{\tau - s \leq y_*}d\tau, t - s\right)ds \\
		&= \lambda_0 \int_{y_*}^t \phi\left((t - s) \wedge y_*, t - s\right)ds = \lambda_0 \int_0^{t - y_*} \phi\left(u \wedge y_*, u\right)du.
	\end{align*}
	Therefore, we conclude from \eqref{eq: solution with null initial condition after y*} that
	\begin{equation*}
		L_{\bmu_t}(\phi) = \lambda_0 \int_0^t \phi(u \wedge y_*, u)du \quad \text{for all} \quad t > y_* \quad \text{and} \quad \phi \in C_c^1(U_g).
	\end{equation*}
	
	If $\phi \in C_c^1(U_g)$, then $\phi S_g \in C_c^1(U_g)$ since $S_g \in C^1(U_g)$ by Assumption \ref{ass: standing assumptions 2}. Hence,
	\begin{equation*}
		\angled{\phi}{\bmu_t} = L_{\bmu_t}\left(\phi S_g\right) = \lambda_0 \int_0^t \phi(u \wedge y_*, u) S_g(u \wedge y_*, u)du
	\end{equation*}
	for all $t > y_*$ and $\phi \in C_c^1(U_g)$. By Lemma \ref{lem: solutions are determined by functions supported in Ug}, this actually holds for all $\phi \in C_c(\R_+^2)$, and the equation holds for all $\phi \in C_b(\R_+^2)$ by the observation at the beginning of the proof.
\end{proof}

\subsection{Structure of solutions}
\label{sub: structure of solutions}

In this section we prove that every solution $\bmu$ of the fluid problem can be expressed as the superposition of two measure-valued functions. Recall that, intuitively, one of these functions describes the evolution of tasks that arrive to the system after time zero, and solves the fluid problem with null initial condition. The other measure-valued function describes the evolution of tasks that where already present at time zero, and solves \eqref{eq: transport equation with given threshold} with $\lambda_0 = 0$ and threshold function $\bar \by = \bar y_*(\bmu)$. In order to derive this decomposition, we first obtain a time-dependent lower bound for the threshold function.

\begin{lemma}
	\label{lem: lower bound for threshold}
	Suppose that $\bmu \in \calS^+$ and define $y_*$ as in \eqref{eq: equilibrium threshold}. Then
	\begin{equation*}
		\bar y_*\left(\bmu_t\right) \geq t \wedge y_* \quad \text{for all} \quad t \geq 0.
	\end{equation*}
\end{lemma}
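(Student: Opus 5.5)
The plan is a continuity (bootstrap) argument on the set of times where the bound holds, with Proposition \ref{prop: integrals below the threshold}(b) supplying exact band masses wherever the bound is already known. Write $m(y) \defeq \lambda_0 \int_0^y S_g(u, u)du$, so that $m(y) < 1$ for $y < y_*$. Define
\begin{equation*}
	\tau \defeq \sup\asetc{b \geq 0}{\bar y_*(\bmu_s) \geq s \wedge y_*\ \text{for all}\ s \in [0, b]}.
\end{equation*}
The goal is to show $\tau = \infty$. The inequality $\bar y_*(\bmu_t) \geq t \wedge y_*$ holds as soon as $\bmu_t(\R_+ \times [0, y]) < 1$ for every $y < t \wedge y_*$. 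So everything reduces to controlling the mass of horizontal bands near the origin.

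The first tool is a shifted version of Lemma \ref{lem: mass growth rate}. Fix $t \geq s$ and take a test function $\varphi_\sigma(x, y) = \chi(y + t - \sigma)$ with $\chi$ smooth, nonincreasing and approximating $\indc_{[0, y]}$. Then $\partial_t \varphi + \partial_y \varphi = 0$ and $\partial_x \varphi = 0$, and the hazard term is nonpositive. Applying Lemma \ref{lem: transport equation for functions with support not compact in t} on $[s, t]$ and passing to the limit with property (b) of Definition \ref{def: fluid problem} should give
\begin{equation*}
	\bmu_t\left(\R_+ \times [0, y]\right) \leq \bmu_s\left(\R_+ \times [0, (y - (t - s))^+]\right) + \lambda_0 (t - s).
\end{equation*}
The interpretation is that mass only moves upward at unit speed, and at most $\lambda_0(t - s)$ is injected.

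I then carry out three steps. First, $\tau > 0$: for small $s$, the shifted bound gives $\bmu_s(\R_+ \times [0, s]) \leq \bmu_0(\R_+ \times \{0\}) + \lambda_0 s = \lambda_0 s < 1$, using absolute continuity of the $y$-marginal. Second, the bound holds at $\tau$ itself when $\tau < \infty$: for every $b < \tau$, Proposition \ref{prop: integrals below the threshold}(b) on $[0, b]$ gives $\bmu_b(\R_+ \times [0, y]) = m(y)$ for $y \leq b \wedge y_*$. Combining this with Lemma \ref{lem: mass growth rate} on $[b, \tau]$ and letting $b \uparrow \tau$ yields $\bmu_\tau(\R_+ \times [0, y]) \leq m(y) < 1$ for $y < \tau \wedge y_*$.

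Third, and this is the main obstacle, I must push the bound slightly past $\tau$ when $\tau < y_*$. The crude estimate $m(y - h) + \lambda_0 h$, with $h = t - \tau$, may exceed $1$ when $y$ is close to $y_*$, because $m(y) - m(y - h) \leq \lambda_0 h$ points the wrong way. The remedy is to exploit that for $\tau < y_*$ only $y < t \wedge y_* \leq \tau + h$ matters. For small $h$, every such $y$ is bounded away from $y_*$, say $y \leq \tau + h \leq y_1 < y_*$. Then the shifted bound yields $\bmu_t(\R_+ \times [0, y]) \leq m(y_1) + \lambda_0 h < 1$ once $h$ is small enough. If $\tau \geq y_*$, then $t \wedge y_* = y_*$ is constant, and the needed estimate is $\bmu_t(\R_+ \times [0, y]) < 1$ for all $y < y_*$ and all $t > \tau$. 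For this I would use the exact formula of Proposition \ref{prop: integrals below the threshold}(b) at $\tau$, namely $\bmu_\tau(\R_+ \times [0, y]) = m(y)$ for $y \leq y_*$, and then mimic the induction over times $t_k$ from the proof of Lemma \ref{lem: threshold for null initial condition}. Fix $y_0 < y_*$. Lemma \ref{lem: mass growth rate} shows the band mass stays below $1$ for a time of order $(1 - m(y_0))/\lambda_0$. During that time Proposition \ref{prop: integrals below the threshold}(b) (or (a), after shifting time) re-establishes the band mass as exactly $m(y_0)$, and the step is iterated. Either case contradicts maximality of $\tau$, so $\tau = \infty$.
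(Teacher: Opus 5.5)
Your proposal is correct and is essentially the paper's proof. Both use a continuity (supremum) argument on $[0, y_*)$ in which Proposition \ref{prop: integrals below the threshold}(b) gives $\bmu_\tau(\R_+ \times [0, y]) \leq m(y)$ at the frontier time, and both then handle $t \geq y_*$ with the $t_k$-induction from Lemma \ref{lem: threshold for null initial condition}. In that induction, Proposition \ref{prop: integrals below the threshold}(b) must be invoked with parameter $y_0$; your alternative ``(a), after shifting time'' is not available, because $\bmu_\tau$ is not null. The one variation is how you step past the frontier: you use the transport-shifted bound from the test function $\chi(y + t - \sigma)$, for which the drift term vanishes identically, whereas the paper uses continuity from above to find $\varepsilon$ with $\bmu_t(\R_+ \times [0, t + \varepsilon]) \leq 1 - \delta/2$ and then applies the unshifted Lemma \ref{lem: mass growth rate}; both work.
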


\begin{proof}
	Consider the set defined as
	\begin{equation*}
		\calT \defeq
		\begin{cases}
			\asetc{t \in [0, y_*]}{\bar y_*(\bmu_t) \geq t} & \text{if} \quad y_* < \infty, \\
			\asetc{t \in [0, y_*)}{\bar y_*(\bmu_t) \geq t} & \text{if} \quad y_* = \infty.
		\end{cases}
	\end{equation*}
	In addition, suppose that $t \in [0, y_*)$ and $[0, t) \subset \calT$; observe that $t = 0$ is possible. First, let us assume that $t > 0$. Then we obtain:
	\begin{equation*}
		\bmu_s\left(\R_+ \times [0, s]\right) = \lambda_0 \int_0^s S_g(u, u)du \quad \text{for all} \quad s \in [0, t)
	\end{equation*}
	by (b) of Proposition \ref{prop: integrals below the threshold} with any $b \in [s, t)$ and $y_0 \geq b$. By Lemma \ref{lem: mass growth rate}, if $s \in [0, t)$, then
	\begin{equation*}
		\bmu_t\left(\R_+ \times [0, s]\right) \leq \bmu_s(\R_+ \times [0, s]) + \lambda_0(t - s) = \lambda_0 \int_0^s S_g(u, u)du + \lambda_0 (t - s).
	\end{equation*}
	Moreover, using (b) of Definition \ref{def: fluid problem} and the continuity from below of $\bmu_t$, we get:
	\begin{equation*}
		\bmu_t\left(\R_+ \times [0, t]\right) = \bmu_t\left(\R_+ \times [0, t)\right) = \lim_{s \uparrow t} \bmu_t\left(\R_+ \times [0, s]\right) \leq \lambda_0 \int_0^t S_g(u, u)du.
	\end{equation*}
	
	The right-hand side of the above inequality is strictly less than one by definition of $y_*$ and because $t < y_*$. Also, the left-most term equals zero if $t = 0$ by (b) of Definition \ref{def: fluid problem}, so it is also strictly less than one in this case as well. Then the following arguments apply both when $t = 0$ and $t > 0$. There exists $\delta > 0$ such that $\bmu_t(\R_+ \times [0, t]) = 1 - \delta$. Further, the continuity from above of the measure $\bmu_t$ implies that there exists $0 < \varepsilon < \delta / (2\lambda_0)$ such that $\bmu_t(\R_+ \times [0, t + \varepsilon]) \leq 1 - \delta / 2$. If $t \leq \tau \leq t + \varepsilon$, then Lemma \ref{lem: mass growth rate} yields:
	\begin{align*}
		\bmu_\tau \left(\R_+ \times [0, \tau]\right) &\leq \bmu_\tau \left(\R_+ \times [0, t + \varepsilon]\right) \\
		&\leq \bmu_t\left(\R_+ \times [0, t + \varepsilon]\right) + \lambda_0(\tau - t) \leq 1 - \frac{\delta}{2} + \lambda_0 \varepsilon < 1.
	\end{align*}
	In particular, we get that $\bar y_*(\bmu_\tau) \geq \tau$ for all $\tau \in [t, t + \varepsilon]$.
	
	We have established that if $t \in [0, y_*)$ and $[0, t) \subset \calT$, then there exists some $\varepsilon > 0$ such that $[0, t + \varepsilon] \subset \calT$. If we assume that $t_0 \defeq \inf \setc{t \geq 0}{t \notin \calT} < y_*$, then $[0, t_0) \subset \calT$ and we get the contradiction that $[0, t_0 + \varepsilon] \subset \calT$ for some $\varepsilon > 0$. So $t_0 = y_*$ and we have:
	\begin{equation*}
		\bar y_*\left(\bmu_t\right) \geq t \quad \text{for all} \quad t \in [0, y_*).
	\end{equation*}
	
	It remains to prove that $\bar y_*(\bmu_t) \geq y_*$ for all $t \geq y_*$ when $y_* < \infty$, and for this we proceed as in the proof of Lemma \ref{lem: threshold for null initial condition}. Specifically, we will prove that if $y_0 < y_*$, then $\bar y_*(\bmu_t) \geq y_0$ for all $t \geq y_*$, which yields the desired property. Fix $y_0 \in (0, y_*)$ and define:
	\begin{equation*}
		m_0 \defeq \lambda_0 \int_0^{y_0} S_g(u, u)du = \bmu_t\left(\R_+ \times [0, y_0]\right);
	\end{equation*}
	the second equality holds for $t \in [y_0, y_*)$ by (b) of Proposition \ref{prop: integrals below the threshold} since $\bar y_*(\bmu_t) \geq t \wedge y_*$ for all $t \in [0, y_*)$. Furthermore, $m_0 < 1$ because $y_0 < y_*$. As in the proof of Lemma \ref{lem: threshold for null initial condition}, we may now fix $\varepsilon \in (0, 1 - m_0)$ and establish by induction that
	\begin{equation}
		\label{eq: induction claim}
		\bmu_t\left(\R_+ \times [0, y_0]\right) = m_0 \quad \text{for all} \quad y_0 \leq t \leq t_k \defeq y_* + \frac{k(1 - m_0 - \varepsilon)}{\lambda_0} \quad \text{and} \quad k \geq 0.
	\end{equation}
	This will prove that $\bmu_t(\R_+ \times [0, y_0]) < 1$ for all $t \geq y_0$ and $y_0 < y_*$, which in turn implies that $\bar y_*(\bmu_t) \geq y_0$ for all $t \geq y_* > y_0$, establishing that $\bar y_*(\bmu_t) \geq y_*$ for all $t \geq y_*$.

	For the base case $k = 0$, observe that
	\begin{equation*}
		\bmu_{y_*}\left(\R_+ \times [0, y_0]\right) \leq \lim_{t \uparrow y_*} \left[\bmu_t\left(\R_+ \times [0, y_0]\right) + \lambda_0(y_* - t)\right] = m_0 < 1.
	\end{equation*}
	It follows that $\bar y_*(\bmu_{y_*}) \geq y_0$ and thus $\bar y_*(\bmu_t) \geq t \wedge y_0$ for all $t \in [0, y_*]$. As a result, (b) of Proposition \ref{prop: integrals below the threshold} implies that $\bmu_{y_*}(\R_+ \times [0, y_0]) = m_0$, which proves the base case.
	
	We now assume now that \eqref{eq: induction claim} holds for $k \geq 0$ and prove it for $k + 1$. Lemma \ref{lem: mass growth rate} yields:
	\begin{align*}
		\bmu_t\left(\R_+ \times [0, y_0]\right) - m_0 &= \bmu_t\left(\R_+ \times [0, y_0]\right) - \bmu_{t_k}\left(\R_+ \times [0, y_0]\right) \\
		&\leq \lambda_0\left[t - y_* - \frac{k(1 - m_0 - \varepsilon)}{\lambda_0}\right] \quad \text{for all} \quad t > t_k.
	\end{align*}
	Then $\bmu_t(\R_+ \times [0, y_0]) \leq 1 - \varepsilon$ for all $t \in (t_k, t_{k + 1}]$, so $\bar y_*(\bmu_t) \geq y_0$ for all $t \in [y_0, t_{k + 1}]$. Thus,
	\begin{equation*}
		\bar y_*(\bmu_t) \geq t \wedge y_0 \quad \text{for all} \quad t \in [0, t_{k + 1}],
 	\end{equation*}
 	and (b) of Proposition \ref{prop: integrals below the threshold} gives $\bmu_t(\R_+ \times [0, y_0]) = m_0$ for all $t \in [y_0, t_{k + 1}]$.
\end{proof}

The following property follows as a corollary.

\begin{lemma}
	\label{lem: threshold for general initial condition}
	If $\bmu \in \calS^+$ and $y_*$ is defined as in \eqref{eq: equilibrium threshold}, then
	\begin{equation*}
		\bar y_*\left(\bmu_t\right) \geq t \quad \text{for all} \quad t \in [0, y_*) \quad \text{and} \quad \bar y_*\left(\bmu_t\right) = y_* \quad \text{for all} \quad t \geq y_*.
	\end{equation*}
\end{lemma}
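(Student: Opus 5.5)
The first inequality is immediate from Lemma \ref{lem: lower bound for threshold}: for $t \in [0, y_*)$ we have $t \wedge y_* = t$, so $\bar y_*(\bmu_t) \geq t$. Likewise, for $t \geq y_*$ with $y_* < \infty$, the same lemma gives $\bar y_*(\bmu_t) \geq y_*$. When $y_* = \infty$ the second claim is vacuous. It therefore only remains to show $\bar y_*(\bmu_t) \leq y_*$ for all $t \geq y_*$ when $y_* < \infty$. By the definition of $\bar y_*$, this reduces to proving $\bmu_t(\R_+ \times [0, y_*]) \geq 1$.

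To compute this mass, I will apply (b) of Proposition \ref{prop: integrals below the threshold} with the constant $y_0 \defeq y_*$. Its hypothesis $\bar y_*(\bmu_t) \geq t \wedge y_*$ holds for all $t \geq 0$ by Lemma \ref{lem: lower bound for threshold}. The prescribed threshold is $\bar\by_t \defeq \bar y_*(\bmu_t)$, and $\bmu \in \calS_{\bar\by}[0,b]$ for every $b$. The one thing to check is that $\bar\by$ is Borel, as Definition \ref{def: modified transport equation} requires. This should follow from the right-continuity of $t \mapsto \bmu_t(\R_+\times[0,y])$; alternatively, the appendix measurability results can be invoked as in the paper. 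The proposition then gives, for $t \geq y_*$ and $y = t \wedge y_* = y_*$,
\begin{equation*}
	\bmu_t\left(\R_+ \times [0, y_*]\right) = \lambda_0 \int_0^{y_*} S_g(u, u)du.
\end{equation*}

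The remaining point is that this integral equals $1$. The map $y \mapsto \lambda_0 \int_0^y S_g(u,u)du$ is continuous, since $S_g \leq 1$. Hence, at the infimum $y_* < \infty$ in \eqref{eq: equilibrium threshold}, its value is exactly $1$. It follows that $\bmu_t(\R_+ \times [0, y_*]) = 1$, so $\bar y_*(\bmu_t) \leq y_*$ and therefore $\bar y_*(\bmu_t) = y_*$ for all $t \geq y_*$.

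I expect no serious obstacle in this argument. The only delicate points are the boundary case $y = t \wedge y_0$ of Proposition \ref{prop: integrals below the threshold}(b), which that proposition explicitly covers, and the measurability of the prescribed threshold.
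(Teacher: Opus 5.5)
Your proposal is correct and follows the paper's proof essentially verbatim. You take the lower bound from Lemma~\ref{lem: lower bound for threshold}, then apply part (b) of Proposition~\ref{prop: integrals below the threshold} with $y_0 = y_*$ to get $\bmu_t(\R_+ \times [0, y_*]) = \lambda_0\int_0^{y_*} S_g(u,u)\,du = 1$ for $t \geq y_*$, whence $\bar y_*(\bmu_t) \leq y_*$; your extra checks (continuity of the integral at $y_*$, and Borel measurability of the threshold, which Lemma~\ref{lem: measurability of rate function} in Appendix~\ref{app: measurability properties} supplies) are details the paper leaves implicit.
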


\begin{proof}
	We know that $\bar y_*(\bmu_t) \geq t \wedge y_*$ for all $t \geq 0$ by Lemma \ref{lem: lower bound for threshold}. In particular, the claim holds trivially if $y_* = \infty$. Suppose then that $y_* < \infty$ and observe that
	\begin{equation*}
		\bmu_t\left(\R_+ \times [0, y_*]\right) = \lambda_0 \int_0^{y_*} S_g(u, u)du = 1 \quad \text{for all} \quad t \geq y_*
	\end{equation*}
	by (b) of Proposition \ref{prop: integrals below the threshold} and the definition of $y_*$. Therefore, $\bar y_*(\bmu_t) \leq y_*$ for all $t \geq y_*$, and hence $\bar y_*(\bmu_t) = y_*$ for all $t \geq y_*$.
\end{proof}

\subsubsection{Remainder of the proof of Theorem \ref{the: structure of solutions}}

The above conditions for the threshold function of a general solution $\bmu \in \calS^+$, combined with the characterization of the solution with null initial condition, are now used to obtain the decomposition for $\bmu$ in Theorem \ref{the: structure of solutions}. We briefly recall this decomposition: let $\bnu \in \calS^+$ be the unique solution with null initial condition and let $\bxi \defeq \bmu - \bnu$. We show that $\bxi_t \in \calM_F^+(\R_+^2)$ and satisfies (a) and (b) of Definition \ref{def: fluid problem} for $t \geq 0$. We also prove \eqref{eq: finite service rate for xi}, i.e.,
\begin{equation*}
	\int_0^t \angled{\bar r\left(\bmu_s\right) h_g^x + h_g^y}{\bxi_s}ds < \infty \quad \text{for all} \quad t \geq 0.
\end{equation*}
In addition, we show that \eqref{eq: transport equation without arrivals} holds as well. Namely, for each $t \geq 0$ and $\varphi \in C_c^1(\R_+^3)$,
\begin{align*}
	\angled{\varphi_t}{\bxi_t} &= \angled{\varphi_0}{\bxi_0} - \int_0^t \angled{\varphi_s \left[\bar r\left(\bmu_s\right) h_g^x + h_g^y\right]}{\bxi_s}ds \\
	&+ \int_0^t \angled{\partial_t \varphi_s + \bar r\left(\bmu_s\right) \partial_x \varphi_s + \partial_y \varphi_s}{\bxi_s}ds.
\end{align*}

\begin{proof}[Proof of Theorem \ref{the: structure of solutions}]
	Because $\bmu$ and $\bnu$ satisfy properties (a) and (b) of Definition \ref{def: fluid problem}, $|\bxi| \leq \bmu + \bnu$ satisfies these properties as well. We will prove later that $\bxi_t$ is a nonnegative measure for all $t \geq 0$, which implies that $\bxi = |\bxi|$ satisfies (a) and (b) of Definition \ref{def: fluid problem}.
	
	Consider a fixed time $t \geq 0$ and observe that:
	\begin{equation*}
		A_t \defeq \asetc{y \in \R_+}{\bar r(\bmu_t, y) \neq \bar r (\bnu_t, y)} \subset (t, \infty) \quad \text{if} \quad t < y_*,
	\end{equation*}
	by Lemmas \ref{lem: threshold for null initial condition} and \ref{lem: threshold for general initial condition}. Indeed, the former lemma gives $\bar r(\bnu_t, y) = 1$ for all $y \geq 0$, and $\bar r(\bmu_t, y) = 1$ for all $y \leq t$ by the latter lemma. Also, $A_t = \emptyset$ if $t \geq y_*$ since $\bar y_*(\bmu_t) = \bar y_*(\bnu_t)$ in this case. It follows from Theorem \ref{the: solution of transport equation for null initial condition} that $\bnu_t(\R_+ \times (t, \infty)) = 0$ for all $t \geq 0$, so we conclude that $A_t$ has measure zero with respect to $\bnu_t$ for all times, which means that $\bar r (\bmu_t) = \bar r(\bnu_t)$ almost everywhere with respect to this measure. As a result,
	\begin{align*}
		\angled{\varphi_t}{\bnu_t} &=  \lambda_0 \int_0^t \varphi_s(0, 0)ds - \int_0^t \angled{\varphi_s\left[\bar r\left(\bmu_s\right) h_g^x + h_g^y\right]}{\bnu_s}ds \\
		&+ \int_0^t \angled{\partial_t\varphi_s + \bar r\left(\bmu_s\right)\partial_x\varphi_s + \partial_y\varphi_s}{\bnu_s}ds \quad \text{for all} \quad t \geq 0 \quad \text{and} \quad \varphi \in C_c^1(\R_+^3),
	\end{align*}
	i.e., it is possible to replace $\bar r (\bnu)$ by $\bar r (\bmu)$ in the transport equation \eqref{eq: transport equation} for $\bnu$. Moreover, property (c) of Definition \ref{def: fluid problem} holds for $\bnu$ with $\bar r(\bnu)$ replaced by $\bar r(\bmu)$. This and (c) of Definition \ref{def: fluid problem} for $\bmu$ imply that the integral in \eqref{eq: finite service rate for xi} is the difference of two finite numbers, thus finite. Further, subtracting the above transport equation for $\bnu$, with threshold given by $\bmu$, from the transport equation \eqref{eq: transport equation} for $\bmu$, we obtain \eqref{eq: transport equation without arrivals}.
	
	It only remains to prove that $\bxi_t$ is a nonnegative measure for $t \geq 0$; observe that $\bxi_t$ is finite since it is the difference of two finite measures. Using standard terminology, we will say that a set $A \subset \R_+^2$ is positive for $\bxi_t$ if all Borel sets $B \subset A$ satisfy that $\bxi_t(B) \geq 0$, and we will say that $A$ is null if all Borel sets $B \subset A$ satisfy $ \bxi_t(B) = 0$, which implies that $A$ is positive. We now fix $t \geq 0$ and prove that $\R_+^2$ is positive for $\bxi_t$. Note that $\R_+ \times (t, \infty)$ is positive for $\bxi_t$ since the support of $\bnu_t$ is contained in $\R_+ \times [0, t]$ by Theorem \ref{the: solution of transport equation for null initial condition}. Also, we have already proved that the $y$-marginal of $|\bxi_t|$ is absolutely continuous with respect to the Lebesgue measure. Therefore, it suffices to show that $\R_+ \times [0, t)$ is positive for $\bxi_t$.

	Suppose that $t \in [0, y_*)$ and let $\bar \by_s \defeq \bar y_*(\bmu_s)$ for all $s \geq 0$. We have already proved that $\bxi \in \calS_{\bar \by}[0, b]$ with $\lambda_0 = 0$ for all $b > 0$, and Lemma \ref{lem: threshold for general initial condition} yields $\bar \by_s \geq s \wedge y_*$ for $s \geq 0$. Then (b) of Proposition \ref{prop: integrals below the threshold} implies that $\langle\phi, \bxi_t\rangle = 0$ for all functions $\phi \in C_c^1(\R_+ \times [0, t))$, so the set $\R_+ \times [0, t)$ is null for $\bxi_t$, and hence $\bxi_t \in \calM_F^+(\R_+^2)$ for all $t \in [0, y_*)$. Furthermore, if $y_* < \infty$, then the same argument shows that $\R_+ \times [0, t)$ is null for $\bxi_t$ if $t \in [0, y_*]$.
	
	Suppose now that $y_* < \infty$ and $t > y_*$. The threshold function $\bar \by$ is constant in $[y_*, \infty)$ by Lemma \ref{lem: threshold for general initial condition}. Also, $\bmu$ and $\bnu$ satisfy \eqref{eq: integrable total variation} over any finite interval by Lemma \ref{lem: mass growth rate} and thus $\bxi$ has the same property. Thus, Proposition \ref{prop: transport equation with constant threshold} implies that if $\phi \in C_c^1(U_g)$, then
	\begin{equation*}
		L_{\bxi_t}(\phi) = L_{\bxi_{y_*}}\left(\psi_{y_*}^t\right) \quad \text{with} \quad \psi_s^t(x, y) \defeq \phi\left(x + \int_s^t \ind{y + \tau - s \leq y_*}d\tau, y + t - s\right)
	\end{equation*}
	for all $s \in [y_*, t]$ and $x, y \in \R_+$. If $\phi$ is nonnegative, then $L_{\bxi_t}(\phi) \geq 0$ because $\psi^t$ and $\bxi_{y_*}$ are nonnegative. Assumption \ref{ass: standing assumptions 2} implies that $S_g$ is continuously differentiable. Therefore, we may replace $\phi$ by $\phi S_g$ to obtain $\langle\phi, \bxi_t\rangle = L_{\bxi_t}(\phi S_g) \geq 0$ for all nonnegative functions $\phi \in C_c^1(U_g)$. Furthermore, it follows from Lemma \ref{lem: solutions are determined by functions supported in Ug} that $\langle\phi, \bxi_t\rangle \geq 0$ in fact holds for all nonnegative $\phi \in C_c(\R_+^2)$ and therefore $\bxi_t$ is nonnegative.
\end{proof}

\subsection{Long-term behavior of solutions}
\label{sub: long-term behavior of solutions}

If $y_*$ is as in \eqref{eq: equilibrium threshold} and  $\phi \in C_c(\R_+^2)$, then the bounded convergence theorem gives:
\begin{equation*}
	\lim_{t \to \infty} \lambda_0 \int_0^t \phi\left(u \wedge y_*, u\right) S_g\left(u \wedge y_*, u\right)du = \lambda_0 \int_0^\infty \phi\left(u \wedge y_*, u\right) S_g\left(u \wedge y_*, u\right)du,
\end{equation*}
as in \eqref{eq: global attractor}. The right-hand side defines the nonnegative Radon measure $\mu_\infty \in \calM^+(\R_+^2)$, which is finite if and only if \eqref{eq: condition for fininte total mass in limit} holds, i.e., if the right-hand side is finite when the test function $\phi$ is replaced by the function that is identically equal to one. The limit implies that the unique solution of the fluid problem with null initial condition converges vaguely to $\mu_\infty$; if $\mu_\infty$ is finite, the limit holds for all $\phi \in C_b(\R_+^2)$ and we get weak convergence.

\subsubsection{Proof of Theorem \ref{the: long-term behavior of solutions}}

Let us recall the statement of the theorem: given any solution $\bmu$ of the fluid problem, we define $\bnu$ and $\bxi$ as in the decomposition of Theorem \ref{the: structure of solutions}. Then we prove that $\bxi_t$ converges vaguely to the null measure as $t \to \infty$ and both $\bmu_t$ and $\bnu_t$ converge vaguely to $\mu_\infty$. Further, if $y_* < \infty$, then $\bxi_t$ in fact converges weakly to the null measure as $t \to \infty$, and if $\mu_\infty$ is a finite measure, then the measures $\bmu_t$ and $\bnu_t$ converge weakly to $\mu_\infty$. Hence, $\mu_\infty$ is the unique fixed point of the fluid problem and a global attractor.

\begin{proof}[Proof of Theorem \ref{the: long-term behavior of solutions}]
	By Theorem \ref{the: structure of solutions}, $\bxi \in \calS_{\bar \by}[0, b]$ with $\lambda_0 = 0$ for all $b > 0$ and the threshold function $\bar \by \defeq \bar y_*(\bmu)$. If $y_* = \infty$, then Lemma \ref{lem: threshold for general initial condition} gives $\bar \by_t \geq t$ for all $t \geq 0$. For each $\phi \in C_c^1(\R_+^2)$, there exists $y \geq 0$ such that the support of $\phi$ is contained in $\R_+ \times [0, y)$. Thus, (b) of Proposition \ref{prop: integrals below the threshold} implies that $\langle\phi, \bxi_t\rangle = 0$ for all $t \geq y$. Since $C_c^1(\R_+^2)$ is dense in $C_c(\R_+^2)$, this holds for all $\phi \in C_c(\R_+^2)$ and we conclude that $\bxi_t$ converges vaguely to the null measure as $t \to \infty$. As already noted before the proof of the theorem, $\bnu_t$ converges vaguely to $\mu_\infty$, and therefore $\bmu_t = \bnu_t + \bxi_t$ has the same property.
	
	Suppose now that $y_* < \infty$ and note that $\bar \by_t = y_*$ for all $t \geq y_*$ by Lemma \ref{lem: threshold for general initial condition}. If $\phi \in C_c^1(U_g)$, then $\phi S_g \in C_c^1(U_g)$ by Assumption \ref{ass: standing assumptions 2}. Hence, for each $t \geq y_*$, we have:
	\begin{equation*}
		\angled{\phi}{\bxi_t} = L_{\bxi_t}\left(\phi S_g\right) = \int_{\R_+^2} \frac{1}{S_g(x, y)} \left(\phi S_g\right)\left(x + \int_{y_*}^t \ind{y + \tau - y_* \leq y_*}d\tau, y + t - y_*\right) \bxi_{y_*}(dx, dy)
	\end{equation*}
	by Proposition \ref{prop: transport equation with constant threshold} applied in the interval $[y_*, t]$ to the test function $\phi S_g$; note that \eqref{eq: integrable total variation} is satisfied by $\bxi$ since it is satisfied by $\bmu$ and $\bnu$ by Lemma \ref{lem: mass growth rate}.
	
	The survival function $S_g(x, y)$ is nonincreasing in $x$ and $y$, so
	\begin{equation*}
		\frac{1}{S_g(x, y)} S_g\left(x + \int_{y_*}^t \ind{y + \tau - y_* \leq y_*}d\tau, y + t - y_*\right) \leq 1 \quad \text{for all} \quad t \geq y_* \quad \text{and} \quad x, y \in \R_+.
	\end{equation*}
	Given a nonnegative $\psi \in C_c(\R_+^2)$, there exist functions $\psi^n \in C_c^1(U_g)$ which approximate $\psi \indc_{U_g}$ pointwise and boundedly as in Lemma \ref{lem: solutions are determined by functions supported in Ug}. Using the bounded convergence theorem, and recalling that $\bxi_t(U_g^c) = 0$ for the first equality below, we obtain:
	\begin{align*}
		\angled{\psi}{\bxi_t} &= \lim_{n \to \infty} \angled{\psi^n}{\bxi_t} \\
		&= \lim_{n \to \infty} \int_{\R_+^2} \frac{1}{S_g(x, y)} \left(\psi^n S_g\right)\left(x + \int_{y_*}^t \ind{y + \tau - y_* \leq y_*}d\tau, y + t - y_*\right) \bxi_{y_*}(dx, dy) \\
		&= \int_{\R_+^2} \frac{1}{S_g(x, y)} \left(\psi \indc_{U_g}S_g\right)\left(x + \int_{y_*}^t \ind{y + \tau - y_* \leq y_*}d\tau, y + t - y_*\right) \bxi_{y_*}(dx, dy) \\
		&\leq \int_{\R_+^2} \frac{1}{S_g(x, y)} \left(\psi S_g\right)\left(x + \int_{y_*}^t \ind{y + \tau - y_* \leq y_*}d\tau, y + t - y_*\right) \bxi_{y_*}(dx, dy)
	\end{align*}
	Moreover, we can approximate the function that is identically equal to one by functions in $C_c(\R_+^2)$ from below. Then the monotone convergence theorem gives:
	\begin{align*}
		\bxi_t(\R_+^2) &\leq \int_{\R_+^2} \frac{1}{S_g(x, y)} S_g\left(x + \int_{y_*}^t \ind{y + \tau - y_* \leq y_*}d\tau, y + t - y_*\right) \bxi_{y_*}(dx, dy) \\
		&\leq \int_{\R_+^2} \frac{S_g\left(x, y + t - y_*\right)}{S_g(x, y)} \bxi_{y_*}(dx, dy) \quad \text{for all} \quad t \geq y_*
	\end{align*}
	
	By the  bounded convergence theorem, we have:
	\begin{equation*}
		\lim_{t \to \infty} \bxi_t(\R_+^2) \leq \lim_{t \to \infty} \int_{\R_+^2} \frac{S_g\left(x, y + t - y_*\right)}{S_g(x, y)} \bxi_{y_*}(dx, dy) = 0,
	\end{equation*}
	which implies that $\bxi_t$ converges weakly to the null measure as $t \to \infty$.
	
	Finally, suppose that $y_* < \infty$ and \eqref{eq: condition for fininte total mass in limit} holds, i.e., $\mu_\infty$ is finite. Then
	\begin{equation*}
		\lim_{t \to \infty} \angled{\phi}{\bnu_t} = \lim_{t \to \infty} \lambda_0 \int_0^t \phi\left(u \wedge y_*, u\right)S_g\left(u \wedge y_*, u\right)du = \angled{\phi}{\mu_\infty} \quad \text{for all} \quad \phi \in C_b\left(\R_+^2\right).
	\end{equation*}
	by Theorem \ref{the: solution of transport equation for null initial condition} and the dominated convergence theorem. Thus, we conclude that $\bnu_t \to \mu_\infty$ weakly as $t \to \infty$. Further, $\bmu_t \to \mu_\infty$ weakly as well, because $\bmu_t = \bnu_t + \bxi_t$ and we have proved that $\bxi_t$ converges weakly to the null measure if $y_* < \infty$.
\end{proof}

If $y_* < \infty$, then $\bar y_*(\bmu_t) = y_*$ for all $t \geq y_*$ and this allows to invoke Proposition \ref{prop: transport equation with constant threshold} for proving weak convergence in the above theorem. In contrast, if $y_* = \infty$, then we only know that $\bar y_*(\bmu_t) \geq t$ for all $t \geq 0$ and lack an expression as that in Proposition \ref{prop: transport equation with constant threshold}. However, under mild additional assumptions, the weak convergence results can still be proved.

\begin{proposition}
	\label{prop: weak convergence underload case}
	Suppose that $\bmu \in \calS^+$ and define $\bnu$ and $\bxi$ as in Theorem \ref{the: structure of solutions}. Assume that there exist $\varepsilon > 0$ and $y_{\varepsilon} \geq 0$ such that $h_g^y(x, y) \geq \varepsilon$ if $y > y_\varepsilon$. Then $\bxi_t$ converges weakly to the null measure as $t \to \infty$, and if \eqref{eq: condition for fininte total mass in limit} holds, then $\bmu_t$ and $\bnu_t$ converge weakly to $\mu_\infty$.
\end{proposition}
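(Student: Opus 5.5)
The plan is to control the total mass of $\bxi_t$ using the transport equation \eqref{eq: transport equation without arrivals} from Theorem \ref{the: structure of solutions}. Once $\bxi_t(\R_+^2) \to 0$ is established, weak convergence of $\bxi_t$ to the null measure is immediate. Under \eqref{eq: condition for fininte total mass in limit}, $\bnu_t \to \mu_\infty$ weakly by the discussion preceding the proof of Theorem \ref{the: long-term behavior of solutions} (dominated convergence in \eqref{eq: solution with null initial condition}). Hence $\bmu_t = \bnu_t + \bxi_t \to \mu_\infty$ weakly as well. The case $y_* < \infty$ is already covered by Theorem \ref{the: long-term behavior of solutions}, but the argument below does not need to distinguish cases.

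\textbf{Step 1: localize $\bxi_t$ at large times $y$.} By Proposition \ref{prop: integrals below the threshold}(b) applied to $\bxi$ with $\lambda_0 = 0$ (as in the proof of Theorem \ref{the: structure of solutions}, using $\bar y_*(\bmu_s) \geq s \wedge y_*$ from Lemma \ref{lem: threshold for general initial condition}), $\bxi_t(\R_+ \times [0, t \wedge y_*)) = 0$. When $y_* = \infty$ this means $\bxi_t$ is supported in $\R_+ \times [t, \infty)$. For $t \geq y_\varepsilon$, the whole mass of $\bxi_t$ therefore lies in the region where $h_g^y \geq \varepsilon$.

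\textbf{Step 2: mass inequality for $\bxi$.} Test \eqref{eq: transport equation without arrivals} with time-independent functions $\phi^n \in C_b^1(\R_+^2)$, $0 \leq \phi^n \uparrow 1$, having nonpositive bounded partial derivatives, exactly as in Lemma \ref{lem: mass growth rate}. The required extension to $C_b^1$ test functions is the analogue of Lemma \ref{lem: transport equation for functions with support not compact in t} for $\bxi$, which follows the same way since $\bxi_s$ has bounded mass on compacts by Lemma \ref{lem: mass growth rate}. Dropping the nonpositive flow term and the $h_g^x$ term gives, for $t \geq s \geq y_\varepsilon$,
\begin{equation*}
	\angled{\phi^n}{\bxi_t} \leq \angled{\phi^n}{\bxi_s} - \int_s^t \angled{\phi^n h_g^y}{\bxi_\tau}d\tau \leq \angled{\phi^n}{\bxi_s} - \varepsilon \int_s^t \angled{\phi^n}{\bxi_\tau}d\tau.
\end{equation*}
The second inequality uses Step 1: on the support of $\bxi_\tau$ we have $y \geq \tau > y_\varepsilon$, so $h_g^y \geq \varepsilon$ there. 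Letting $n \to \infty$ by monotone convergence (the hazard integral is finite by \eqref{eq: finite service rate for xi}) yields
\begin{equation*}
	m(t) \leq m(s) - \varepsilon \int_s^t m(\tau)d\tau, \qquad m(t) \defeq \bxi_t(\R_+^2).
\end{equation*}

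\textbf{Step 3: Gronwall.} Since $m$ is nonnegative, nonincreasing by Step 2, and locally bounded, the integral inequality in Step 2 implies, via a standard Gronwall argument (e.g.\ comparison with $m(y_\varepsilon)\e^{-\varepsilon(t - y_\varepsilon)}$), that
\begin{equation*}
	m(t) \leq m(y_\varepsilon)\e^{-\varepsilon(t - y_\varepsilon)} \to 0.
\end{equation*}

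The main obstacle is the localization in Step 1 when $y_* < \infty$. There Proposition \ref{prop: integrals below the threshold} only confines $\bxi_t$ to $y \geq y_*$, and if $y_* < y_\varepsilon$ the bound $h_g^y \geq \varepsilon$ on the support need not hold. That case, however, is already covered by Theorem \ref{the: long-term behavior of solutions}, so it suffices to treat $y_* = \infty$, where the localization is clean. The remaining technical care is justifying the test-function extension to $C_b^1$ and the limit passage in Step 2.
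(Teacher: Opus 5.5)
Your proposal is correct and follows essentially the paper's argument: reduce to $y_* = \infty$, use (b) of Proposition \ref{prop: integrals below the threshold} to place all mass of $\bxi_t$ above height $t \geq y_\varepsilon$, and test \eqref{eq: transport equation without arrivals} with (approximations of) the constant function to get $m(t) \leq m(s) - \varepsilon \int_s^t m(\tau)d\tau$. The only difference is the closing step. The paper uses this inequality from a fixed starting time to get $\int_{y_\varepsilon}^\infty m(\tau)d\tau < \infty$, and then rules out oscillations with the one-sided bound $m(t) - m(s) \leq \lambda_0(t - s)$ derived from Lemma \ref{lem: mass growth rate}. You instead apply it for every $s \geq y_\varepsilon$, which makes $m$ nonincreasing and, by Gronwall, yields the explicit rate $m(y_\varepsilon)\e^{-\varepsilon(t - y_\varepsilon)}$.
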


\begin{proof}
	The claim holds for $y_* < \infty$ by Theorem \ref{the: long-term behavior of solutions}, so we will assume that $y_* = \infty$. As in Lemma \ref{lem: transport equation for functions with support not compact in t}, we obtain that \eqref{eq: transport equation without arrivals} holds for any test function in $C_b^1(\R_+^3)$ having bounded partial derivatives. Considering the test function that is identically equal to one, we get:
	\begin{align*}
		\bxi_t\left(\R_+^2\right) &= \bxi_0\left(\R_+^2\right) - \int_0^t \int_{U_g} \left[\bar r(\bmu_s, y) h_g^x(x, y) + h_g^y(x, y)\right] \bxi_s(dx, dy)ds \\
		&\leq \bxi_0\left(\R_+^2\right) - \int_0^t \varepsilon \int_{U_g} \ind{y > y_\varepsilon}\bxi_s(dx, dy)ds \\
		&\leq \bxi_0(\R_+^2) - \varepsilon \int_0^t \bxi_s\left(\R_+ \times \left(y_\varepsilon, \infty\right)\right)ds \quad \text{for all} \quad t \geq 0.
	\end{align*}
	
	Lemma \ref{lem: threshold for general initial condition} implies that $\bar y_*(\bmu_t) \geq t$ for all $t \geq 0$. Thus, (b) of Proposition \ref{prop: integrals below the threshold} yields:
	\begin{equation*}
		\bxi_t\left(\R_+ \times \left[0, y_\varepsilon\right]\right) = 0 \quad \text{and} \quad \bxi_t\left(\R_+^2\right) = \bxi_t\left(\R_+ \times \left(y_\varepsilon, \infty\right)\right) \quad \text{for all} \quad t \geq y_\varepsilon,
	\end{equation*}
	because $\bxi$ solves the transport equation \eqref{eq: transport equation with given threshold} with threshold $\bar \by = \bar y_*(\bmu)$ and $\lambda_0 = 0$. Hence,
	\begin{equation*}
		\bxi_t\left(\R_+^2\right) \leq \bxi_0\left(\R_+^2\right) - \varepsilon \int_{y_\varepsilon}^t \bxi_s\left(\R_+^2\right)ds \quad \text{for all} \quad t \geq y_{\varepsilon},
	\end{equation*}
	which implies that the integral converges as $t \to \infty$.
	
	Since $\bxi_t(\R_+^2) \geq 0$ for all $t \geq 0$ and the integral over $t \in [y_\varepsilon, \infty)$ converges, we would like to conclude that $\bxi_t(\R_+^2) \to 0$ as $t \to \infty$. This could not be the case if the function $t \mapsto \bxi_t(\R_+^2)$ had fast oscillations between zero and a positive value. Nevertheless, we now show that such fast oscillations cannot occur. By Lemma \ref{lem: mass growth rate} and Theorem \ref{the: solution of transport equation for null initial condition},
	\begin{align*}
		\bxi_t\left(\R_+^2\right) - \bxi_s\left(\R_+^2\right) &= \bmu_t\left(\R_+^2\right) - \bmu_s\left(\R_+^2\right) - \bnu_t\left(\R_+^2\right) + \bnu_s\left(\R_+^2\right) \\
		&= \bmu_t\left(\R_+^2\right) - \bmu_s\left(\R_+^2\right) - \lambda_0 \int_s^t S_g(u, u)du \leq \lambda_0(t - s)
	\end{align*}
	for all $0 \leq s \leq t$. Therefore, the integral can only converge if $\bxi_t(\R_+^2) \to 0$ as $t \to \infty$.
	
	Then $\bxi_t$ converges weakly to the null measure as $t \to \infty$, and the weak convergence of the measures $\bnu_t$ and $\bmu_t$ to $\mu_\infty$ follows as in the proof of Theorem \ref{the: long-term behavior of solutions}.
\end{proof}


\begin{appendices}

\section{Construction of sample paths}
\label{app: construction of sample paths}

Consider the probability space $(\Omega, \calF, P)$ where the renewal process $\calR$, the random vectors $(b_i, c_i)$ and the random measure $\bmu_0$ are defined. Given $\omega \in \Omega$, below we define
\begin{equation*}
	d_i(\omega), \quad \bx_i(\omega, t) \quad \text{and} \quad \by_i(\omega, t)\quad \text{for all} \quad i \geq 1 - M(\omega) \quad \text{and} \quad t \geq 0,
\end{equation*}
where we note that $M(\omega) = \bmu_0(\omega, \R_+^2)$. The above quantities will be defined in terms of the arrival times $a_i(\omega)$, the vectors $(b_i(\omega), c_i(\omega))$ and the measure $\bmu_0(\omega)$. Also, $\bmu_t(\omega)$ is then given by \eqref{eq: state of the system}. In the sequel we fix and omit $\omega$ from the notation.

\subsection{Events of the system}
\label{sub: events of the system}

First, we construct a sequence of times $\setc{\tau_k}{k \geq 0}$ such that the number of tasks in the system and the service rate assigned to each task are constant in each interval $[\tau_k, \tau_{k + 1})$. For this purpose, we begin with the following definitions:
\begin{equation*}
	\tau_0 \defeq 0 \quad \text{and} \quad r_i^0 \defeq r\left(\bmu_0, \by_i(0)\right) \quad \text{for all} \quad i \geq 1 - M.
\end{equation*}
Here $\bx_i(0) \defeq 0 \eqdef \by_i(0)$ for all $i \geq 1$, while $\bx_i(0)$ and $\by_i(0)$ are encoded in $\bmu_0$ for all $i \leq 0$. For the latter tasks, which are present in the system at time zero, $r_i^0$ represents the service rate of task $i$ selected at time zero, whereas for $i \geq 1$, its value is meaningless. Also, let
\begin{equation*}
	\left[
	\begin{matrix}
		\bx_i^0(t) \\
		\by_i^0(t)
	\end{matrix}
	\right]
	=
	\left[
	\begin{matrix}
		\bx_i(0) \\
		\by_i(0)
	\end{matrix}
	\right]
	+
	t\left[
	\begin{matrix}
		r_i^0 \\
		1
	\end{matrix}
	\right]
	\ind{a_i = 0} \quad \text{for all} \quad i \geq 1 - M \quad \text{and} \quad t \geq 0,
\end{equation*}
which can be thought of as the attained service and time spent in the system of task $i$ if its service rate does not change. Similarly, for a task $i$ in the system at time zero, the following value represents the departure time if the service rate does not change:
\begin{equation*}
	d_i^0 \defeq
	\begin{cases}
		\infty & \text{if} \quad a_i > 0, \\
		c_i - \by_i\left(0\right) & \text{if} \quad a_i = 0,\ r_i^0 = 0, \\
		\min\left\{b_i - \bx_i\left(0\right), c_i - \by_i\left(0\right)\right\} & \text{if} \quad a_i = 0,\ r_i^0 = 1.
	\end{cases}
\end{equation*}

Proceeding inductively, if the above elements are defined for $k \geq 0$, then we set
\begin{equation*}
	\tau_{k + 1} \defeq \inf\asetc{a_i, d_j^k}{i \geq 1, a_i > \tau_k, j \geq 1 - M, d_j^k > \tau_k}.
\end{equation*}
If we refer to the arrivals and departures as the events of the system, then $\tau_{k + 1}$ represents the time of the $k$th event of the system. Observe that $\tau_{k + 1} > \tau_k$ provided that $d_j^k = \infty$ for all but finitely many indices $j$. We further define
\begin{equation*}
	r_i^{k + 1} \defeq r\left(\bmu_{\tau_{k + 1}}^k, \by_i^k\left(\tau_{k + 1}\right)\right),
\end{equation*}
where $\bmu^k$ is defined as in \eqref{eq: state of the system} with $d_i$, $\bx_i$ and $\by_i$ replaced by $d_i^k$, $\bx_i^k$ and $\by_i^k$, respectively; $r_i^{k + 1}$ can be interpreted as the service rate of task $i$ selected at time $\tau_{k + 1}$. Also, let
\begin{equation*}
	d_i^{k + 1} \defeq
	\begin{cases}
		\infty & \text{if} \quad a_i > \tau_{k + 1}, \\
		d_i^k & \text{if} \quad d_i^k \leq \tau_{k + 1}, \\
		\tau_{k + 1} + c_i - \by_i^k\left(\tau_{k + 1}\right) & \text{if} \quad d_i^k > \tau_{k + 1},\ r_i^{k + 1} = 0, \\
		\tau_{k + 1} + \min\left\{b_i - \bx_i^k\left(\tau_{k + 1}\right), c_i - \by_i^k\left(\tau_{k + 1}\right)\right\} & \text{if} \quad d_i^k > \tau_{k + 1},\ r_i^{k + 1} = 1,
	\end{cases}
\end{equation*}
for all $i \geq 1 - M$. The case $d_i^{k + 1} = \infty$ corresponds to the situation where task $i$ has not arrived to the system yet, whereas $d_i^{k + 1} = d_i^k$ indicates that task $i$ has left by time $\tau_{k + 1}$. In the other cases, $d_i^{k + 1}$ would be the departure time of task $i$ if no arrivals occurred after $\tau_{k + 1}$ and the service rate of task $i$ remained fixed after $\tau_{k + 1}$. Finally, we let
\begin{equation*}
	\left[
	\begin{matrix}
		\bx_i^{k + 1}(t) \\
		\by_i^{k + 1}(t)
	\end{matrix}
	\right]
	=
	\left[
	\begin{matrix}
		\bx_i^k(t) \\
		\by_i^k(t)
	\end{matrix}
	\right]
	\ind{t < \tau_{k + 1}}
	+ \left(t - \tau_{k + 1}\right)
	\left[
	\begin{matrix}
		r_i^{k + 1} \\
		1
	\end{matrix}
	\right] \ind{t \geq \tau_{k + 1}, a_i \leq \tau_{k + 1} < d_i^k}.
\end{equation*}
for $i \geq 1 - M$ and $t \geq 0$. Then $\tau_k$, $d_i^k$, $\bx_i^k$ and $\by_i^k$ are defined for all $i \geq 1 - M$ and $k \geq 0$.

\subsection{Properties of the construction}
\label{sub: properties of the construction}

By definition of $d_i^k$, we have $d_i^k = \infty$ when $a_i > \tau_k$. Then it follows from Remark \ref{rem: codomains of arrival process, service requirements and patiences} that $d_i^k = \infty$ for all but finitely many tasks $i$, which corresponds to the intuitively obvious fact that the number of tasks in the system remains finite over time. As observed above, this implies, in particular, that the sequence $\setc{\tau_k}{k \geq 1}$ is strictly increasing.

Another elementary property is that $\tau_k \to \infty$ as $k \to \infty$. In order to derive this property formally, suppose that $\setc{\tau_k}{k \geq 1} \subset [0, T]$ and let $i_T \defeq \min\setc{i \geq 1}{a_i > T}$, which is finite since $a_i \to \infty$ as $i \to \infty$ by Remark \ref{rem: codomains of arrival process, service requirements and patiences}. Observe that $d_i^k = \infty$ for all $i \geq i_T$ and $k \geq 0$. Then there exist infinitely many $k$ such that $\tau_k = d_i^{k - 1}$ for some fixed $i$, which is a contradiction. Indeed, let $k_i^0$ be the smallest $k$ with this property. Then the first identity below holds by assumption and the others by construction:
\begin{equation*}
	\tau_k > \tau_{k_i^0} = d_i^{k_i^0 - 1} = d_i^{k_i^0} = d_i^{k_i^0 + 1} = \dots = d_i^{k - 1} \quad \text{for all} \quad k > k_i^0.
\end{equation*}
Intuitively, we have $\tau_k \to \infty$ as $k \to \infty$ since otherwise the system would have finitely many arrivals and infinitely many departures in a finite interval of time.

Next we establish the following two additional properties.
\begin{enumerate}
	\item[(a)] For each $i \geq 1 - M$, there exists $k_i \geq 1$ such that $d_i^k = d_i^{k_i}$ for all $k \geq k_i$.
	
	\item[(b)] If $k \geq k_i$, then $\bx_i^k(t) = \bx_i^{k_i}(t)$ and $\by_i^k(t) = \by_i^{k_i}(t)$ for all $t \geq 0$.
\end{enumerate}

To prove (a), fix any $i$, let $k_0 \defeq \min\setc{k \geq 1}{\tau_k \geq a_i}$ and observe that $\by_i^k(t) = t - \tau_{k_0}$ if $k \geq k_0$ and $t \geq \tau_{k_0}$. If $d_i^k > \tau_{k + 1}$ for all $k \geq 0$, then $d_i^k \to \infty$ as $k \to \infty$, but
\begin{equation*}
	d_i^{k + 1} \leq \tau_{k + 1} + c_i - \by_i^k\left(\tau_{k + 1}\right) = \tau_{k + 1} + c_i - \left(\tau_{k + 1} - \tau_{k_0}\right) \leq c_i + \tau_{k_0} \quad \text{for all} \quad k \geq 0,
\end{equation*}
a contradiction. Then there exists $k$ such that $d_i^k \leq \tau_{k + 1}$, and it follows that
\begin{equation*}
	k_i \defeq \min\setc{k \geq 0}{d_i^k \leq \tau_{k + 1}}
\end{equation*}
exists and satisfies $d_i^{k + 1} = d_i^k$ for all $k \geq k_i$ by definition of $d_i^{k + 1}$. This proves (a).

By definition, it is clear that:
\begin{equation}
	\label{eq: consistency of constructed attained service and sojourn times}
	\bx_i^l(t) = \bx_i^k(t) \quad \text{and} \quad \by_i^l(t) = \by_i^k(t) \quad \text{for all} \quad l \geq k, \quad i \geq 1 - M \quad \text{and} \quad t < \tau_{k + 1}.
\end{equation}
Furthermore, $\bx_i^{k + 1}(t) = \bx_i^k(\tau_{k + 1})$ and $\by_i^{k + 1}(t) = \by_i^k(\tau_{k + 1})$ if $t \geq \tau_{k + 1}$ and $\tau_{k + 1} \geq d_i^k$. Since the latter condition holds for $k \geq k_i$, we obtain:
\begin{equation*}
	\bx_i^l(t) = \bx_i^{k_i}\left(\tau_{k_i + 1}\right) \quad \text{and} \quad \by_i^l(t) = \by_i^{k_i}\left(\tau_{k_i + 1}\right) \quad \text{for all} \quad l \geq k_i, \quad i \geq 1 \quad \text{and} \quad t \geq \tau_{k_i + 1}.
\end{equation*}
Taking $k = k_i$ in \eqref{eq: consistency of constructed attained service and sojourn times}, we conclude that (b) holds.

\subsection{Sample paths of the system}
\label{sub: sample paths of the system}

Finally, let us define:
\begin{equation*}
	d_i \defeq d_i^{k_i}, \quad \bx_i(t) \defeq \bx_i^{k_i}(t), \quad \text{and} \quad \by_i(t) \defeq \by_i^{k_i}(t) \quad \text{for all} \quad i \geq 1 - M \quad \text{and} \quad t \geq 0.
\end{equation*}
It is easy to check that $\by_i(t) = \by_i(0) + d_i \wedge t - a_i \wedge t$, i.e., the time spent in the system by task $i$ increases at unit rate between the arrival and departure times. This implies that
\begin{equation*}
	r_i^{k + 1} = r\left(\bmu_{\tau_{k + 1}}^k, \by_i^k\left(\tau_{k + 1}\right)\right) = r\left(\bmu_{\tau_{k + 1}}, \by_i\left(\tau_{k + 1}\right)\right) = r\left(\bmu_t, \by_i(t)\right)
\end{equation*}
for all $i \geq 1 - M$ and $\tau_{k + 1} \leq t < \tau_{k + 2}$. The second equality follows from \eqref{eq: consistency of constructed attained service and sojourn times} if $k < k_i$ and from property (b) of Section \ref{sub: properties of the construction} if $k \geq k_i$, while the third equality follows from the definition of $r$ and the fact that no arrivals occur in $(\tau_{k + 1}, \tau_{k + 2})$. Then we obtain:
\begin{equation*}
	\bx_i(t) = \bx_i(0) + \int_{a_i \wedge t}^{d_i \wedge t} r\left(\bmu_s, \by_i(s)\right)ds \quad \text{for all} \quad i \geq 1 - M \quad \text{and} \quad t \geq 0.
\end{equation*}

It only remains to check that $d_i = \inf\setc{t \geq a_i}{\bx_i(t) \geq b_i\ \text{or}\ \by_i(t) \geq c_i}$. For this purpose, note that $\bx_i(\tau_{k + 1}) < b_i$ and $\by_i(\tau_{k + 1}) < c_i$ for all $k < k_i$ since otherwise we would have $d_i^{k + 1} \leq \tau_{k + 1}$ for some $k < k_i$, contradicting the definition of $k_i$. It follows that $\bx_i(t) < b_i$ and $\by_i(t) < c_i$ if $t \leq \tau_{k_i}$, because $\bx_i$ and $\by_i$ are nondecreasing. Further,
\begin{equation*}
	\tau_{k_i + 1} = d_i^{k_i} = d_i =
	\begin{cases}
		\tau_{k_i} + c_i - \by_i\left(\tau_{k_i}\right) & \text{if} \quad r_i^{k_i} = 0, \\
		\tau_{k_i} + \min\left\{b_i - \bx_i\left(\tau_{k_i}\right), c_i - \by_i\left(\tau_{k_i}\right)\right\} & \text{if} \quad r_i^{k_i} = 1,
	\end{cases}
\end{equation*}
where the first equality follows from $d_i^{k_i} \leq \tau_{k_i + 1}$ and the definition of $\tau_{k_i + 1}$. If $r_i^{k_i} = 0$, then $\by_i(d_i) = \by_i(\tau_{k_i + 1}) = c_i$ because $\by_i$ increases at unit rate between $\tau_{k_i}$ and $\tau_{k_i + 1}$. Moreover, both $\bx_i$ and $\by_i$ increase at unit rate between $\tau_{k_i}$ and $\tau_{k_i + 1}$ if $r_i^{k_i} = 1$, which implies that $\bx_i(d_i) = \bx_i(\tau_{k_i + 1}) = b_i$ or $\by_i(d_i) = \by_i(\tau_{k_i + 1}) = c_i$. Therefore, we indeed have
\begin{equation*}
	d_i = \inf\setc{t \geq a_i}{\bx_i(t) \geq b_i\ \text{or}\ \by_i(t) \geq c_i}.
\end{equation*}

\section{Measurability properties}
\label{app: measurability properties}

In this section $\map{\bmu}{\R_+}{\calM_F^+(\R_+^2)}$ is Borel measurable and we prove that
\begin{equation}
	\label{eq: iterated integral}
	\int_0^t \angled{\varphi_s}{\bmu_s}ds
\end{equation}
is well-defined when $\map{\varphi}{\R_+^3}{\R}$ is Borel and the functions $\varphi_s$ are integrable with respect to $\bmu_s$. Specifically, we show that the inner integral is a Borel function of $s$. We further show that the mapping $(s, y) \mapsto \bar r(\bmu_s, y)$ is a Borel function. Hence, the iterated integrals on the right-hand side of the transport equation \eqref{eq: transport equation} are well-defined.

\begin{lemma}
	\label{lem: measurability of inner integral}
	If $\map{\bmu}{\R_+}{\calM_F^+(\R_+^2)}$ and $\map{\varphi}{\R_+^3}{\R}$ are Borel measurable and such that $\varphi_t$ is integrable with respect to $\bmu_t$ for all $t \geq 0$, then $t \mapsto \langle\varphi_t, \bmu_t\rangle$ is a Borel function.
\end{lemma}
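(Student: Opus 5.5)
The plan is a monotone class argument, applied first to indicator-type test functions and then to general Borel $\varphi$. First I will handle bounded Borel $\varphi$. Let $\calH$ be the class of bounded Borel $\map{\varphi}{\R_+^3}{\R}$ for which $t \mapsto \angled{\varphi_t}{\bmu_t}$ is Borel. This class is a vector space, it contains the constants, and it is closed under bounded monotone limits. The closure property follows from the monotone convergence theorem applied for each fixed $t$: the limit of Borel functions of $t$ is Borel. Since each $\bmu_t$ is finite, all the integrals involved are finite, so no integrability issue arises at this stage.

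It therefore suffices, by the functional monotone class theorem, to check a multiplicative class generating the Borel $\sigma$-algebra of $\R_+^3$. I will take products $\varphi(t, x, y) = \alpha(t)\phi(x, y)$ with $\alpha \in C_b(\R_+)$ and $\phi \in C_b(\R_+^2)$. For these,
\begin{equation*}
	\angled{\varphi_t}{\bmu_t} = \alpha(t)\angled{\phi}{\bmu_t}.
\end{equation*}
The map $\mu \mapsto \angled{\phi}{\mu}$ is continuous on $\calM_F^+(\R_+^2)$ with the weak topology, and $\bmu$ is Borel measurable, so $t \mapsto \angled{\phi}{\bmu_t}$ is Borel as a composition. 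This class is closed under multiplication and generates $\mathcal{B}(\R_+) \otimes \mathcal{B}(\R_+^2) = \mathcal{B}(\R_+^3)$, which gives the claim for all bounded Borel $\varphi$.

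Next I extend to unbounded $\varphi$. For nonnegative Borel $\varphi$ I use the truncations $\varphi \wedge n$. Then $\angled{\varphi_t \wedge n}{\bmu_t} \uparrow \angled{\varphi_t}{\bmu_t}$ by monotone convergence, so $t \mapsto \angled{\varphi_t}{\bmu_t}$ is a pointwise limit of Borel functions with values in $[0, \infty]$, and hence Borel. For general $\varphi$, I write $\varphi = \varphi^+ - \varphi^-$. The integrability hypothesis makes both $\angled{\varphi_t^\pm}{\bmu_t}$ finite, so their difference is a well-defined Borel function.

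The step I expect to require the most care is the generating-class step. I need a precise version of the monotone class theorem whose multiplicative family consists of bounded functions and which yields every bounded function measurable with respect to the generated $\sigma$-algebra. I also need to confirm that the Borel $\sigma$-algebra of $\R_+^3$ coincides with the product one, which holds by separability.
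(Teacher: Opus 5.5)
Your proof is correct and follows the same route as the paper. Both arguments apply a functional monotone class argument to bounded Borel $\varphi$, then use the truncations $\varphi \wedge n$ with monotone convergence for nonnegative $\varphi$, and finish with $\varphi = \varphi^+ - \varphi^-$ using integrability.

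The only difference is the generating family, and both choices work equally well:
\begin{itemize}
\item The paper uses indicators of $A \times B$ with $A$ and $B$ closed, which form a $\pi$-system. It obtains measurability of $t \mapsto \bmu_t(B)$ from upper semicontinuity of $\nu \mapsto \nu(B)$ via the Portmanteau theorem.
\item You use the multiplicative class of products $\alpha(t)\phi(x, y)$ with $\alpha$ and $\phi$ bounded and continuous. For these, measurability follows directly from continuity of $\mu \mapsto \angled{\phi}{\mu}$ in the weak topology.
\end{itemize}
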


\begin{proof}
	If $A \subset \R_+^2$ is a closed set, then the mapping $\nu \mapsto \nu(A)$ defined on $\calM_F^+(\R_+^2)$ is upper semi-continuous by the Portmanteau theorem for finite measures. In particular, this mapping is Borel measurable, and it follows that $t \mapsto \bmu_t(A)$ is Borel measurable.
	
	Let $A \subset \R_+$ and $B \subset \R_+^2$ be closed sets. If $\varphi$ is the indicator function of $A \times B$, then
	\begin{equation*}
		t \mapsto \angled{\varphi_t}{\bmu_t} = \ind{t \in A} \bmu_t(B)
	\end{equation*}
	is a Borel function. Let $\calH$ be the space of all Borel functions $\map{\varphi}{\R_+^3}{\R}$ such that the mapping $t \mapsto \angled{\varphi_t}{\bmu_t}$ is Borel measurable, which is clearly a vector space. If a sequence of functions $\varphi^n \in \calH$ are nonnegative and increase pointwise to a bounded function $\varphi$, then
	\begin{equation*}
		\angled{\varphi_t}{\bmu_t} = \lim_{n \to \infty} \angled{\varphi_t^n}{\bmu_t} \quad \text{for all} \quad t \geq 0
	\end{equation*}  
	by the monotone convergence theorem, and thus $\varphi \in \calH$.
	
	The collection of sets $A \times B$ with $A \subset \R_+$ and $B \subset \R_+^2$ closed sets is a $\pi$-system that generates the Borel $\sigma$-algebra of $\R_+^3$. Thus, the monotone class theorem for functions implies that the space $\calH$ contains all bounded Borel functions $\map{\varphi}{\R_+^3}{\R}$. 
	
	If we let $\map{\varphi}{\R_+^3}{\R_+}$ be a nonnegative and possibly unbounded Borel function, then the sequence of functions $\varphi_t^n(x, y) = \varphi_t(x, y) \wedge n$ are in $\calH$ and increase pointwise to $\varphi$ in the limit as $n \to \infty$. By the monotone convergence theorem,
	\begin{equation*}
		\angled{\varphi_t}{\bmu_t} = \lim_{n \to \infty} \angled{\varphi_t^n}{\bmu_t} \quad \text{for all} \quad t \geq 0,
	\end{equation*}
	which implies that $t \mapsto \langle\varphi_t, \bmu_t\rangle$ is Borel measurable. Further, if instead of assuming that $\varphi$ is nonnegative, we assume that $\varphi_t$ is integrable with respect to $\bmu_t$ for all $t \geq 0$, then
	\begin{equation*}
		\angled{\varphi_t}{\bmu_t} = \angled{\varphi_t^+}{\bmu_t} - \angled{\varphi_t^-}{\bmu_t} \quad \text{for all} \quad t \geq 0,
	\end{equation*}
	where $\varphi_t^+(x, y) \defeq [\varphi_t(x, y)]^+$ and $\varphi_t^-(x, y) \defeq [-\varphi_t(x, y)]^+$ are nonnegative. Both terms on the right-hand side are Borel and finite for all $t \geq 0$, so the left-hand side is Borel.
\end{proof}

Next we establish that $(t, y) \mapsto \bar r(\bmu_t, y)$ is Borel measurable.

\begin{lemma}
	\label{lem: measurability of rate function}
	If $\map{\bmu}{\R_+}{\calM_F^+(\R_+^2)}$ is Borel, then $(t, y) \mapsto \bar r (\bmu_t, y)$ is Borel.
\end{lemma}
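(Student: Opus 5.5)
The plan is to reduce Borel measurability of $(t,y)\mapsto \bar r(\bmu_t,y)=\ind{y\le \bar y_*(\bmu_t)}$ to two facts: measurability of $t\mapsto \bar y_*(\bmu_t)$ as a map into $[0,\infty]$, and measurability of the set $\asetc{(t,y)}{y\le \bar y_*(\bmu_t)}$ in $\R_+^2$. The second fact follows from the first. Indeed, if $\by(t)\defeq \bar y_*(\bmu_t)$ is Borel, then $(t,y)\mapsto \by(t)-y$ (with the convention $\infty-y=\infty$) is a Borel map into $(-\infty,\infty]$. The region in question is the preimage of $[0,\infty]$ under this map. Its indicator is therefore Borel.

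To show that $\by$ is Borel, I will express the event $\{\by(t)\le c\}$ through countably many measurable events. By definition $\bar y_*(\mu)=\inf\setc{y\ge0}{\mu(\R_+\times[0,y])\ge 1}$. The map $y\mapsto \mu(\R_+\times[0,y])$ is nondecreasing and right-continuous, so the infimum is attained whenever it is finite. Hence, for each $c\in\R_+$,
\begin{equation*}
	\{t : \by(t)\le c\} = \{t : \bmu_t(\R_+\times[0,c])\ge 1\}.
\end{equation*}
The set $\R_+\times[0,c]$ is closed, so the first step of the proof of Lemma \ref{lem: measurability of inner integral} shows that $t\mapsto\bmu_t(\R_+\times[0,c])$ is Borel. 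Alternatively, the same lemma can be applied with $\varphi_t=\indc_{\R_+\times[0,c]}$, which is bounded. Thus each set $\{\by\le c\}$ is Borel. Since the sets $[0,c]$ with $c\ge 0$, together with $\{\infty\}=\bigcap_n (n,\infty]$, generate the Borel $\sigma$-algebra of $[0,\infty]$, the map $\by$ is Borel.

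For the final step there is also a direct alternative that avoids extended-valued arithmetic. Write
\begin{equation*}
	\{(t,y): y\le \by(t)\}^c=\{(t,y): \by(t)<y\}=\bigcup_{q\in\Q_+}\{t:\by(t)\le q\}\times\{y: y>q\}\ \cup\ \{(t,y):\by(t)=y\}.
\end{equation*}
Here I have to be careful. The strict inequality $\by(t)<y$ holds exactly when some rational $q$ satisfies $\by(t)\le q<y$, so the union over rationals already gives the complement and the graph term is superfluous. This is a countable union of Borel rectangles, so it is Borel.

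The only subtle point is the right-continuity identity $\{\by(t)\le c\}=\{\bmu_t(\R_+\times[0,c])\ge1\}$. It relies on continuity from above of finite measures, which holds because $\bmu_t$ is finite. Beyond that I expect no real obstacle.
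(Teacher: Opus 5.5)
Your proof is correct and follows essentially the same route as the paper: Borel measurability of $t\mapsto\bmu_t(\R_+\times[0,c])$ via closed sets, then of $t\mapsto\bar y_*(\bmu_t)$, then of the indicator. The only variation is that you use $\{\bar y_*(\bmu_t)\le c\}=\{\bmu_t(\R_+\times[0,c])\ge1\}$, which needs right-continuity, whereas the paper writes $\{\bar y_*(\bmu_t)<y_0\}$ as a countable union over rationals, which needs only monotonicity. One small fix in your alternative last step: the graph term $\{(t,y):\by(t)=y\}$ is not superfluous but wrong to include, since points with $y=\by(t)$ satisfy $y\le\by(t)$; the rational union alone is exactly the complement, as you then observe.
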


\begin{proof}
	If we fix some $y \in \R_+$, then the set $\R_+ \times [0, y]$ is closed. Therefore, as in the proof of Lemma \ref{lem: measurability of inner integral}, we conclude that the cumulative distribution function $\bF_t(y) \defeq \bmu_t(\R_+ \times [0, y])$ is Borel measurable with respect $t$. It is also right-continuous and nondecreasing in $y$.
	
	Next we show that $t \mapsto \bar y_*(\bmu_t)$ is Borel measurable, by proving that
	\begin{equation*}
		\asetc{t \geq 0}{\bar y_*\left(\bmu_t\right) < y_0} = \bigcup_{q \in \Q \cap [0, y_0)} \asetc{t \geq 0}{\bF_t(q) \geq 1} 
	\end{equation*}
	for all $y_0 > 0$; note that the left-hand side is the empty set if $y_0 \leq 0$. For the inclusion of the left-hand side in the right-hand side, $\bar y_*(\bmu_t) < y_0$ implies that there exists $y_1 < y_0$ such that $\bF_t(y_1) \geq 1$. Since $\bF_t(y)$ is nondecreasing in $y$, we get $\bF_t(q) \geq 1$ for all $q \in \Q \cap [y_1, y_0)$. Conversely, the existence of $q < y_0$ such that $\bF_t(q) \geq 1$ implies that $\bar y_*(\bmu_t) \leq q < y_0$.
	
	We conclude that the mapping $(t, y) \mapsto (\bar y_*(\bmu_t), y)$ is a Borel measurable function from $\R_+^2$ to $[0, \infty] \times \R_+$. It follows that the function $(t, y) \mapsto \bar r(\bmu_t, y) = \ind{y \leq \bar y_*(\bmu_t)}$ is Borel.
\end{proof}

The following remark is relevant for Section \ref{sub: prescribed thresholds}.

\begin{remark}
	\label{rem: measurability for signed measures}
	Lemma \ref{lem: measurability of inner integral} also holds for Borel functions $\map{\bmu}{\R_+}{\calM_F(\R_+^2)}$ with values in the space of finite signed measures, and the proof follows from the result for nonnegative measures. Specifically, let $|\bmu_t|$ denote the total variation of $\bmu_t$ and define:
	\begin{equation*}
		\bmu_t^- \defeq \frac{\left|\bmu_t\right| - \bmu_t}{2} \in \calM_F^+\left(\R_+^2\right) \quad \text{and} \quad \bmu_t^+ \defeq \frac{\left|\bmu_t\right| + \bmu_t}{2} \in \calM_F^+\left(\R_+^2\right),
	\end{equation*}
	i.e., the Jordan decomposition. Given a Borel function $\map{\varphi}{\R_+^3}{\R}$, if $\varphi_t$ is integrable with respect to $\bmu_t$, then it is integrable with respect to $\bmu_t^-$ and $\bmu_t^+$ by definition, and
	\begin{equation*}
		\angled{\varphi_t}{\bmu_t} = \angled{\varphi_t}{\bmu_t^+} - \angled{\varphi_t}{\bmu_t^-} \quad \text{for all} \quad t \geq 0.
	\end{equation*}
	By Lemma \ref{lem: measurability of inner integral}, to establish that $t \mapsto \langle\varphi_t, \bmu_t\rangle$ is Borel measurable, it suffices to prove that $t \mapsto \bmu_t^-$ and $t \mapsto \bmu_t^+$ are Borel measurable functions from $\R_+$ to $\calM_F^+(\R_+^2)$, and for this it is enough to show that $\map{|\scdot|}{\calM_F(\R_+^2)}{\calM_F^+(\R_+^2)}$ is Borel measurable.
	
	In order to prove this, note that, in the weak topology, the Borel $\sigma$-algebra of $\calM_F^+(\R_+^2)$ is generated by the evaluation maps $\mu \mapsto \langle\phi, \mu\rangle$ with $\phi \in C_b(\R_+^2)$. Now fix any function $\phi \in C_b(\R_+^2)$ and let $\psi^n \in C_c(\R_+^2)$ be nonnegative functions that converge pointwise to one. By the dominated convergence theorem,
	\begin{equation*}
		\lim_{n \to \infty} \angled{\phi \psi^n}{\mu} = \angled{\phi}{\mu} \quad \text{for all} \quad \mu \in \calM_F^+\left(\R_+^2\right).
	\end{equation*}
	This shows that the evaluation map associated with any function $\phi \in C_b(\R_+^2)$ is the limit of evaluation maps associated with functions in $C_c(\R_+^2)$. Therefore, the Borel $\sigma$-algebra of $\calM_F^+(\R_+^2)$ is also generated by the evaluation maps $\mu \mapsto \langle\phi, \mu\rangle$ with $\phi \in C_c(\R_+^2)$, so we can prove that $\map{|\scdot|}{\calM_F(\R_+^2)}{\calM_F^+(\R_+^2)}$ is Borel by showing that its composition with evaluation maps associated with functions in $C_c(\R_+^2)$ is Borel measurable.
	
	If $\mu \in \calM_F(\R_+^2)$ and $\phi \in C_c(\R_+^2)$ is nonnegative, then
	\begin{equation*}
		\angled{\phi}{\left|\mu\right|} = \sup\asetc{\angled{\psi}{\mu}}{\psi \in C_c\left(\R_+^2\right)\ \text{and}\ |\psi| \leq \phi}.
	\end{equation*}
	Because $C_c(\R_+^2)$ is separable, the above supremum can be taken over a fixed countable set of compactly supported functions. Since $\mu \mapsto \langle\psi, \mu\rangle$ is measurable from $\calM_F(\R_+^2)$ to $\R$, it follows that $\mu \mapsto \langle\phi, |\mu|\rangle$ is measurable as well. More generally,
	\begin{equation*}
		\mu \mapsto \angled{\phi}{\left|\mu\right|} = \angled{\phi^+}{\left|\mu\right|} - \angled{\phi^-}{\left|\mu\right|} \quad \text{with} \quad \phi^- \defeq \max\left\{-\phi, 0\right\} \quad \text{and} \quad \phi^+ \defeq \max\left\{\phi, 0\right\}
	\end{equation*}
	is measurable from $\calM_F(\R_+^2)$ to $\R$ for all $\phi \in C_c(\R_+^2)$. Thus, $\map{|\scdot|}{\calM_F(\R_+^2)}{\calM_F^+(\R_+^2)}$ is indeed a Borel measurable mapping.
\end{remark}

\section{Proofs of auxiliary results}
\label{app: proofs of auxiliary results}

In this section we provide the proofs of some auxiliary lemmas.

\subsection{Lemmas stated in Section \ref{sec: proof of the fluid limit}}

\lemmafourthree*

\begin{proof}
	Since $S_g$ is continuous and strictly positive in $C \subset U_g$, its minimum over $C$ exists and is strictly positive. Define $\delta > 0$ such that the distance between $C$ and $U_g^c$ equals $2 \delta$ with respect to the maximum norm, and define $K \subset U_g$ as the compact set consisting of the points at distance at most $\delta$ from $C$ in the maximum norm. Suppose that $(x, y) \in C$, $\theta \in [0, \delta]$ and $\alpha \in \{0, 1\}$. By the mean-value theorem:
	\begin{equation*}
		\left|S_g(x, y) - S_g(x + \alpha \theta, y + \theta)\right| \leq \sup_{(u, v) \in K} \left|\alpha \frac{\partial S_g}{\partial x}(u, v) + \frac{\partial S_g}{\partial y}(u, v)\right| \theta,
	\end{equation*}
	Because $K$ is compact and the partial derivatives of $S_g$ are continuous by Assumption \ref{ass: standing assumptions 2}, the above inequality proves the second claim of the lemma.
\end{proof}

\lemmafourfive*

\begin{proof}
	If $(x_0, y_0) \in U_g$, then $S_g(x, y) \geq S_g(x_0, y_0) > 0$ for all $(x, y)$ in the closed rectangle
	\begin{equation*}
		R \defeq \asetc{(x, y) \in \R_+^2}{x \leq x_0\ \text{and}\ y \leq y_0}.
	\end{equation*}
	It follows that $R \subset U_g$ and that $U_g$ is a countable union of closed rectangles, e.g., with vertexes in those points of $U_g$ having rational coordinates. Therefore, we can express $U_g$ as the union of sets $A_k$ such that $A_k \subset A_{k + 1} \subset U_g$ and $A_k$ is a finite union of closed rectangles for each $k \geq 1$. Then we conclude that
	\begin{equation*}
		\lim_{k \to \infty} \int_{A_k} g(x, y)dxdy = \int_{U_g} g(x, y)dxdy = 1,
	\end{equation*}
	which proves the claim for closed rectangles. The case of open rectangles is analogous.
\end{proof}

\lemmafoursixteen*

\begin{proof}
	Recall that $d_{BL}$ denotes the Bounded-Lipschitz metric in $\calM_F^+(\R_+^3)$. Specifically,
	\begin{equation*}
		d_{BL}\left(\xi, \zeta\right) \defeq \sup_{\psi \in \calL} \left|\angled{\psi}{\xi} - \angled{\psi}{\zeta}\right|,
	\end{equation*}
	where $\calL$ is the space of $\map{\psi}{\R_+^3}{[-1, 1]}$ with unit Lipschitz constant. Also, let $\Gamma_T$ be the space of $\map{\bgamma}{[0, T]}{[0, T]}$ that are continuous, increasing and bijective. The metric 
	\begin{equation*}
		d_T^m\left(\bxi, \bzeta\right) \defeq \inf_{\bgamma \in \Gamma_T} \max\left\{\sup_{t \in [0, T]} \left|\bgamma(t) - t\right|, \sup_{t \in [0, T]} d_{BL}\left(\bxi_t, \bzeta_{\bgamma(t)}\right)\right\}
	\end{equation*}
	is compatible with the Skorohod-$J_1$ topology in $D_{\calM_F^+(\R_+^3)}[0, T]$. Similarly, the Skorohod-$J_1$ topology in $D_\R[0, T]$ is induced by the following metric:
	\begin{equation*}
		d_T^f\left(\bx, \by\right) \defeq \inf_{\bgamma \in \Gamma_T} \max\left\{\sup_{t \in [0, T]} \left|\bgamma(t) - t\right|, \sup_{t \in [0, T]} \left|\bx_t - \by_{\bgamma(t)}\right|\right\}
	\end{equation*}
	
	Fix $\varphi \in C_b(\R_+^3)$ with Lipschitz constant $L > 0$. If we define $K \defeq \max\{\norm{\varphi}_\infty, L\}$, then the function $\psi \defeq \varphi / K$ is in $\calL$, and therefore we obtain:
	\begin{align*}
		&\left|\bY_s^N(\varphi) - \bY_t(\varphi)\right| = K \left|\bY_s^N(\psi) - \bY_t(\psi)\right| \leq K d_{BL}\left(\bY_s^N, \bY_t\right) \quad \text{for all} \quad s, t \geq 0,
	\end{align*}
	by linearity of the integrals. Therefore,
	\begin{equation*}
		d_T^f\left(\bY^N(\varphi), \bY(\varphi)\right) \leq K d_T^m\left(\bY^N, \bY\right) \quad \text{for all} \quad T \geq 0.
	\end{equation*}
	Hence, $\bY^N(\varphi) \to \bY(\varphi)$ in $D_\R[0, T]$ as $N \to \infty$ for all $T \geq 0$, and thus also in $D_\R[0, \infty)$.
\end{proof}

\lemmafoureighteen*

\begin{proof}
	Fix $\omega \in \Omega$ such that $\bar\bX^N(\omega) \to \bar\bX(\omega)$ as $N \to \infty$. Since $\bar\calR(t) = \lambda_0 t$,
	\begin{equation}
		\label{eq: functional law of large numbers for reneal process}
		\lim_{N \to \infty} \frac{\calR^N(\omega, t)}{N\lambda_0} = t \quad \text{for all} \quad t \geq 0.
	\end{equation}
	The above properties hold with probability one, so it suffices to prove \eqref{eq: limit of arrival process} for the fixed $\omega$, which is omitted from the notation in the remainder of the proof.
	
	For each $t \geq 0$ and $\varphi \in C_b(\R_+^3)$, we have:
	\begin{equation*}
		\bar\bA_t^N(\varphi) = \lambda_0\int_0^{\frac{\calR^N(t)}{N\lambda_0}} \bar\varphi_s^Nds \quad \text{with} \quad \bar\varphi_t^N \defeq \sum_{i = 1}^\infty \varphi_{a_i^N}\left(0, 0\right) \ind{\frac{i - 1}{N\lambda_0} < t \leq \frac{i}{N\lambda_0}}.
	\end{equation*}
	If $t > 0$ is any continuity point of $\bar\bA$, then $\bar\bA_t^N \to \bar\bA_t$ in $\calM_F^+(\R_+^3)$ as $N \to \infty$. In particular, this implies that $\bar\bA_t^N(\varphi) \to \bar\bA_t(\varphi)$ as $N \to \infty$. We will prove that
	\begin{equation}
		\label{eq: dominated convergence for arrival process}
		\bar\bA_t(\varphi) = \lim_{N \to \infty} \bar\bA_t^N(\varphi) = \lim_{N \to \infty} \lambda_0\int_0^{\frac{\calR^N(t)}{N\lambda_0}} \bar\varphi_s^Nds = \lambda_0\int_0^t \varphi_s(0, 0)ds.
	\end{equation}
	These limits show that \eqref{eq: limit of arrival process} holds if $t$ is a continuity point of $\bar\bA$. However, \eqref{eq: dominated convergence for arrival process} in fact proves \eqref{eq: limit of arrival process} for all $t \geq 0$ since $\bar\bA$ is c\`adl\`ag and the right-hand side is continuous in $t$.
	
	Let $t > 0$ be any continuity point of $\bar\bA$ and fix a function $\varphi \in C_b(\R_+^3)$. For each $s \geq 0$, we consider the sequence of indices $i_s^N \defeq \ceil{N \lambda_0 s}$. Then
	\begin{equation*}
		\lim_{N \to \infty} \frac{i_s^N}{N \lambda_0} = s \quad \text{and} \quad \lim_{N \to \infty} a_{i_s^N}^N = s \quad \text{for all} \quad s \geq 0.
	\end{equation*}
	The first limit is immediate. For the second limit, assume that the limit superior is strictly larger than $s$ and let us show that this leads to a contradiction. Specifically, suppose that there exists some $\varepsilon > 0$ such that we have $a_{i_s^N}^N > s + \varepsilon$ for all sufficiently large $N$. Then $\calR^N(s + \varepsilon) < i_s^N$ for large enough $N$. Dividing by $N \lambda_0$, letting $N \to \infty$ and using \eqref{eq: functional law of large numbers for reneal process}, we obtain $s + \varepsilon < s$, so the limit superior must be smaller than or equal to $s$. A similar argument shows that the limit inferior is larger than or equal to $s$, proving the limit.
	
	The above limits and the continuity of $\varphi$ imply that $\bar\varphi^N$ converges pointwise to the mapping $s \mapsto \varphi_s(0, 0)$ as $N \to \infty$. Thus, the bounded convergence theorem yields
	\begin{equation*}
		\lim_{N \to \infty} \int_0^t \bar\varphi_s^Nds = \int_0^t \varphi_s(0, 0)ds.
	\end{equation*}
	Furthermore, it follows from \eqref{eq: functional law of large numbers for reneal process} that
	\begin{equation*}
		\lim_{N \to \infty} \left|\int_0^{\frac{\calR^N(t)}{N \lambda_0}} \bar\varphi_s^Nds - \int_0^t \bar\varphi_s^Nds\right| \leq \lim_{N \to \infty} \left|\frac{\calR^N(t)}{N \lambda_0} - t\right|\norm{\varphi}_\infty = 0.
	\end{equation*}
	We conclude that \eqref{eq: dominated convergence for arrival process} holds, which completes the proof.
\end{proof}

\lemmafourtwentysix*

\begin{proof}
	Fix $\omega \in \Gamma$, a function $\ell$ as above and $t \in \calC(\omega)$; we will omit $\omega$ from the notation. Observe that $\bar\bmu_t^N \to \bar\bmu_t$ in $\calM_F^+(\R_+^2)$ as $N \to \infty$ since $t \in \calC$. Then \cite[Theorem A.3.12]{dupuis2011weak} and the fact that $\ell$ is lower semicontinuous and nonnegative imply that:
	\begin{equation*}
		\angled{\ell}{\bar\bmu_t} \leq \liminf_{N \to \infty} \angled{\ell}{\bar\bmu_t^N}.
	\end{equation*}
	Note that \cite[Theorem A.3.12]{dupuis2011weak} considers probability measures. However, \emph{Proof 1} of this theorem carries over without any modifications to the case of finite measures. 
	
	For the second inequality, consider the measures $\bar\bxi_t^N, \bar\bxi_t \in \calM_F^+(\R_+^2)$ such that:
	\begin{align*}
		&\int_{\R_+^2} \phi(x, y) \bar\bxi_t^N(dx, dy) \defeq \int_{\R_+^2} \phi(x, y) \bar r\left(\bar\bmu_t^N, y\right) \bar\bmu_t^N(dx, dy), \\
		&\int_{\R_+^2} \phi(x, y) \bar\bxi_t(dx, dy) \defeq \int_{\R_+^2} \phi(x, y) \bar r\left(\bar\bmu_t, y\right) \bar\bmu_t(dx, dy),
	\end{align*}
	for all $\phi \in C_b(\R_+^2)$. It follows from Proposition \ref{prop: limits of integrals involving r} that $\bar\bxi_t^N \to \bar\bxi_t$ in $\calM_F^+(\R_+^2)$ as $N \to \infty$, so \cite[Theorem A.3.12]{dupuis2011weak} can also be invoked to establish the second inequality.
\end{proof}

\subsection{Lemmas stated in Section \ref{sec: analysis of the fluid problem}}

\lemmafivetwo*

\begin{proof}
	Because $C_c^1(\R_+^2)$ is dense in $C_c(\R_+^2)$, there exists $\setc{\phi^n}{n \geq 1} \subset C_c^1(\R_+^2)$ such that $\phi^n \to \phi$ uniformly as $n \to \infty$. Consider now the following compact sets:
	\begin{equation*}
		K_n \defeq \asetc{z \in [0, n]^2}{\mathrm{dist}(z, U_g^c) \geq 1 / n},
	\end{equation*}
	where $\mathrm{dist}(z, U_g^c)$ is the distance from the point $z$ to the closed set $U_g^c$ with respect to some fixed norm. We may choose functions $\alpha^n \in C_c^1(U_g)$ such that:
	\begin{equation*}
		\ind{(x, y) \in K_n} \leq \alpha^n(x, y) \leq \ind{(x, y) \in K_{n + 1}} \quad \text{for all} \quad x, y \in \R_+.
	\end{equation*}
	
	Observe that $\psi^n \defeq \alpha^n\phi^n \in C_c^1(U_g)$ and converges pointwise to $\phi\indc_{U_g}$ as $n \to \infty$. Also, these functions are uniformly bounded since $\phi^n \to \phi$ uniformly. Therefore,
	\begin{equation*}
		\lim_{n \to \infty} \angled{\psi^n}{\bmu_t} = \angled{\phi\indc_{U_g}}{\bmu_t} = \angled{\phi}{\bmu_t}
	\end{equation*}
	by the bounded convergence theorem and property (a) of Definition \ref{def: fluid problem}.
\end{proof}

\lemmafivethree*

\begin{proof}
	Consider a sequence of functions $\alpha^n \in C_c^1(\R_+^3)$ such that
	\begin{equation*}
		\ind{(t, x, y) \in [0, n]^3} \leq \alpha_t^n(x, y) \leq \ind{(t, x, y) \in [0, n + 1]^3} \quad \text{for all} \quad t, x, y \in \R_+ \quad \text{and} \quad n \geq 1.
	\end{equation*}
	Then \eqref{eq: transport equation} holds for $\varphi \alpha^n$ since $\varphi \alpha^n \in C_c^1(\R_+^3)$ for all $n \geq 1$. In addition, we can choose the functions $\alpha^n$ such that their partial derivatives are bounded over $\R_+^3$ and $n \geq 1$.
	
	Observe that $\varphi \alpha^n$ and its partial derivatives converge pointwise to $\varphi$ and its partial derivatives, respectively, as $n \to \infty$. Because $\varphi$ and its partial derivatives are bounded, we conclude from the bounded convergence theorem that \eqref{eq: transport equation} holds for $\varphi$. For the last two terms of \eqref{eq: transport equation} we use the bounded convergence theorem twice, first for the inner integral with respect to $x$ and $y$ and then for the outer integral with respect to $s$.
\end{proof}

\lemmafiveeight*

\begin{proof}
	Consider a sequence of points $(x_n, y_n) \in B$ that converge to some $(x, y) \in \R_+^2$. By definition of $B$, there exist $(\tilde x_n, \tilde y_n) \in A$ such that $x_n \leq \tilde x_n$ and $y_n \leq \tilde y_n$ for all $n \geq 1$. Because the set $A$ is compact, there exists $(\tilde x, \tilde y) \in A$ such that $(\tilde x_n, \tilde y_n) \to (\tilde x, \tilde y)$ along some subsequence, and by construction: $x \leq \tilde x$ and $y \leq \tilde y$. Therefore, we conclude that $(x, y) \in B$ and that $B$ is closed. It is clear that the compact set $A$ is bounded, and this implies that $B$ is bounded and hence compact. Note that $A \subset B$ by definition of $B$. Also, if $(x_0, y_0) \in A \subset U_g$, then $(x, y) \in U_g$ for all $x \leq x_0$ and $y \leq y_0$ and thus $B \subset U_g$.
	
	It is clear that $D$ is compact and $D \subset \R_+ \times U_g$, so it only remains to show that $C \subset D$. For this purpose, note that if $(s, x, y) \in C$, then there exists $(x_0, y_0) \in A$ such that
	\begin{equation*}
		x_0 - x =\int_s^t \alpha_\tau(x, y) d\tau \quad \text{and} \quad y_0 - y = t - s.
	\end{equation*}
	If $s \leq t$, then $x \leq x_0$ and $y \leq y_0$, which implies that $(x, y) \in B$. Otherwise,
	\begin{equation*}
		s \in (t, t + \varepsilon], \quad |x - x_0| \leq s - t \leq \varepsilon \quad \text{and} \quad |y - y_0| = s - t \leq \varepsilon,
	\end{equation*}
	which implies that $(x, y)$ is at distance less than $\varepsilon$ from $A \subset B$. Hence, $(s, x, y) \in D$.
\end{proof}

\lemmafiveten*

\begin{proof}
	Let $d$ be the Euclidean distance from $\supp(\varphi)$ to $\R_+ \times U_g^c$ and define:
	\begin{equation*}
		\tilde K \defeq \asetc{z \in \R^3}{\mathrm{dist}\left(z, \supp(\varphi)\right) \leq \frac{d}{2}} \quad \text{and} \quad K \defeq \tilde K \cap \R_+^3,
	\end{equation*}
	where $\mathrm{dist}(z, \supp(\varphi))$ denotes the Euclidean distance from the point $z$ to the set $\supp(\varphi)$. It is then clear that $\supp(\varphi) \subset K \subset \R_+ \times U_g$ and that $K$ is compact.
	
	Let $\alpha \in C_c^1(\R^3)$ be any function such that
	\begin{equation*}
		\ind{(t, x, y) \in \supp(\varphi)} \leq \alpha(t, x, y) \leq \ind{(t, x, y) \in \tilde K} \quad \text{for all} \quad t, x, y \in \R,
	\end{equation*}
	and let $\tilde \varphi(t, x, y) = \alpha (t, x, y) \varphi(|t|, |x|, |y|)$ for all $t, x, y \in \R$. Then $\tilde \varphi|_{\R_+^3} = \varphi$, the function $\tilde \varphi$ is continuous, its support is contained in $\tilde K$, the lateral partial derivatives of $\tilde \varphi$ exist at each $z \in \R^3$ and there exists $\tilde L \geq 0$ such that we have:
	\begin{equation}
		\label{eq: tilde L bound}
		\max\left\{\left|\partial_i^- \tilde\varphi(z)\right|, \left|\partial_i^+ \tilde\varphi(z)\right|\right\} \leq \tilde L \quad \text{for all} \quad z \in \R^3 \quad \text{and} \quad 1 \leq i \leq 3.
	\end{equation} 
	In the sequel, we only need the bound for the right-partial derivatives. Nonetheless, since $\tilde \varphi(t, x, y)$ depends on $\varphi(|t|, |x|, |y|)$, in order to obtain \eqref{eq: tilde L bound}, we must require in (b) that both of the lateral partial derivatives of $\varphi$ are uniformly bounded.
	
	Consider the standard mollifier $\map{\zeta}{\R^3}{\R}$, defined as:
	\begin{equation*}
		\zeta(z) \defeq \begin{cases}
			0 & \text{if} \quad \norm{z}_2 \geq 1, \\
			c\e^{\frac{1}{\norm{z}_2^2 - 1}} & \text{if} \quad \norm{z}_2 < 1,
		\end{cases}
	\end{equation*}
	where $c > 0$ is a normalizing constant such that the integral of $\zeta$ over $\R^3$ equals one. Let
	\begin{equation*}
		\zeta^n(z) \defeq n^3\zeta(nz), \quad \tilde\varphi^n(z) \defeq \left(\tilde\varphi * \zeta^n\right)(z) = \int_{\R^3} \tilde\varphi(z - u)\zeta^n(u)du \quad \text{and} \quad \varphi^n \defeq \tilde \varphi^n|_{\R_+^3}
	\end{equation*}
	for all $z \in \R^3$ and $n \geq 1$, where $*$ denotes the convolution. Note that the support of $\zeta^n$ is the ball with center the origin and radius $1 / n$, and thus the support of $\varphi^n$ is contained in $K$ for all $n \geq 4 / d$. Introducing a shift in the index of the sequence if needed, we may assume without any loss of generality that $\supp(\varphi^n) \subset K$ for all $n \geq 1$. It follows from \cite[Proposition 8.10]{folland1999real} that the functions $\tilde \varphi^n$ are infinitely differentiable. Furthermore, since the function $\tilde \varphi$ is bounded and uniformly continuous, \cite[Theorem 8.14]{folland1999real} yields:
	\begin{equation*}
		\lim_{n \to \infty} \norm{\varphi - \varphi^n}_\infty \leq \lim_{n \to \infty} \norm{\tilde \varphi - \tilde \varphi^n}_\infty = 0.
	\end{equation*}
	
	Fix $z \in \R^3$, $1 \leq i \leq 3$ and $n \geq 1$. In addition, define $\map{\psi^k}{\R^3}{\R}$ as:
	\begin{equation*}
		\psi^k(u) \defeq k\left[\tilde\varphi\left(z + \frac{e_i}{k} - u\right) - \tilde\varphi\left(z - u\right)\right] \quad \text{for all} \quad u \in \R^3 \quad \text{and} \quad k \geq 1,
	\end{equation*}
	where $e_i$ is the $i$th vector in the canonical basis of $\R^3$. It follows from \eqref{eq: tilde L bound} that $|\psi^k(u)| \leq \tilde L$ for all $u \in \R^3$. Furthermore, $\psi^k(u) \to \partial_i^+ \tilde\varphi(z - u)$ for all $u \in \R^3$. Hence,
	\begin{equation}
		\label{eq: partial derivatives of convolutions}
		\partial_i\tilde\varphi^n(z) = \lim_{k \to \infty} \int_{\R^3} \psi^k(u)\zeta^n(u)du = \int_{\R^3} \partial_i^+\tilde\varphi(z - u)\zeta^n(u)du = \left[\left(\partial_i^+\tilde\varphi\right) * \zeta^n\right](z),
	\end{equation}
	for all $z \in \R^3$, $1 \leq i \leq 3$ and $n \geq 1$, by the dominated convergence theorem. As a result,
	\begin{equation*}
		\lim_{n \to \infty} \partial_i\varphi^n(z) = \lim_{n \to \infty} \partial_i\tilde\varphi^n(z) = \lim_{n \to \infty} \left[\left(\partial_i^+\tilde\varphi\right) * \zeta^n\right](z) = \partial_i^+\tilde\varphi(z) = \partial_i\varphi(z)
	\end{equation*}
	for all $z \in \R_+^3 \setminus E$ and $1 \leq i \leq 3$ by \cite[Theorem 8.15]{folland1999real}.
	
	Finally, using \eqref{eq: tilde L bound}, we obtain:
	\begin{equation*}
		\left|\partial_i \varphi^n(z)\right| = \left|\partial_i \tilde\varphi^n(z)\right| \leq \int_{\R^3} \left|\partial_i^+\tilde\varphi(z - u)\zeta^n(u)\right|du \leq \tilde L \norm{\zeta^n}_1 = \tilde L
	\end{equation*}
	for all $z \in \R_+^3$, $1 \leq i \leq 3$ and $n \geq 1$ by \eqref{eq: partial derivatives of convolutions}.
\end{proof}

\lemmafiveeleven*

\begin{proof}
	The set $E \defeq \setc{(t, x, y) \in \R_+^3}{(t, y) \in F}$ satisfies (a) of Lemma \ref{lem: approximation using mollifiers}. Then there exist $\tilde L \geq 0$, a compact set $\supp(\varphi) \subset K \subset \R_+ \times U_g$ and functions $\varphi^n \in C_c^\infty(\R_+ \times U_g)$ as in the statement of Lemma \ref{lem: approximation using mollifiers}. For each $n \geq 1$, Proposition \ref{prop: spatial rescaling} gives:
	\begin{equation}
		\label{eq: transport equation after change of variables restated}
		\begin{split}
			L_{\bmu_t}\left(\varphi_t^n\right) &= L_{\bmu_0}\left(\varphi_0^n\right) + \lambda_0 \int_a^t \varphi_s^n(0, 0)ds \\
			&+ \int_a^t L_{\bmu_s}\left(\partial_t\varphi_s^n + \bar \br_s \partial_x \varphi_s^n + \partial_y\varphi_s^n\right)ds \quad \text{for all} \quad t \in [a, b].
		\end{split}
	\end{equation}
	
	We will prove that $\varphi$ satisfies the same equation by taking the limit as $n \to \infty$. Let $K_g \defeq \setc{(x, y) \in \R_+^2}{(t, x, y) \in K\ \text{for some}\ t \geq 0} \subset U_g$ be the projection of $K$ into $\R_+^2$, which is clearly a compact set. Then we obtain:
	\begin{align*}
		\lim_{n \to \infty} \left|L_{\bmu_t}\left(\varphi_t\right) - L_{\bmu_t}\left(\varphi_t^n\right)\right| &\leq \lim_{n \to \infty} \int_{K_g} \frac{\left|\varphi_t(x, y) - \varphi_t^n(x, y)\right|}{S_g(x, y)}|\bmu_t|(dx, dy) \\
		&\leq \max_{(x, y) \in K_g} \frac{|\bmu_t|\left(\R_+^2\right)}{S_g(x, y)} \lim_{n \to \infty} \norm{\varphi - \varphi^n}_\infty = 0 \quad \text{for all} \quad t \geq 0.
	\end{align*}
	This gives the limits of the left-hand side and first term on the right-hand side of \eqref{eq: transport equation after change of variables restated}. For the second term on the right-hand side, observe that:
	\begin{equation*}
		\lim_{n \to \infty} \left|\int_a^t \varphi_s(0 , 0)ds - \int_a^t \varphi_s^n(0, 0)ds\right| \leq (t - a) \lim_{n \to \infty} \norm{\varphi - \varphi^n}_\infty = 0. 
	\end{equation*}
	
	It remains to consider the third term on the right-hand side of \eqref{eq: transport equation after change of variables restated}. First, note that
	\begin{equation}
		\label{eq: e has nu measure zero}
		\begin{split}
			\int_a^t \int_{K_g} &\frac{\ind{(s, y) \in F}}{S_g(x, y)} |\bmu_s|(dx, dy)ds \leq \\
			&\max_{(x, y) \in K_g} \frac{1}{S_g(x, y)} \int_a^t \int_{\R_+^2} \ind{(s, y) \in F} |\bmu_s|(dx, dy)ds = 0.
		\end{split}
	\end{equation}
	Indeed, the $y$-marginal of $|\bmu_s|$ is absolutely continuous with respect to the Lebesgue measure for each $s \geq 0$ by property (b) of Definition \ref{def: fluid problem}, and hence
	\begin{equation*}
		\int_a^t \int_{\R_+^2} \ind{(s, y) \in F} |\bmu_s|(dx, dy)ds = \int_a^t \int_{y \in \R_+} \ind{(s, y) \in F} \int_{x \in \R_+} |\bmu_s|(dx, dy)ds = 0.
	\end{equation*}
	
	For each $s \in [a, t]$, let $\bmu_s = \bmu_s^+ - \bmu_s^-$ be the Jordan decomposition of $\bmu_s$, and define $\bnu_t^+$ and $\bnu_t^-$ as the nonnegative measures determined by the following integrals:
	\begin{align*}
		\angled{\psi}{\bnu_t^+} \defeq \int_a^t \int_{K_g} \frac{\psi_s(x, y)}{S_g(x, y)}\bmu_t^+(dx, dy)ds \quad \text{and} \quad \angled{\psi}{\bnu_t^-} \defeq \int_a^t \int_{K_g} \frac{\psi_s(x, y)}{S_g(x, y)}\bmu_t^-(dx, dy)ds
	\end{align*}
	for all $\psi \in C_b(\R_+^3)$. It follows from \eqref{eq: integrable total variation} that these measures are finite, because
	\begin{equation*}
		\max\left\{\bnu_t^+\left(\R_+^2\right), \bnu_t^-\left(\R_+^2\right)\right\} \leq \int_a^b \max_{(x, y) \in K_g} \frac{\left|\bmu_s\right|\left(\R_+^2\right)}{S_g(x, y)} ds < \infty.
	\end{equation*}
	Also, we have shown in \eqref{eq: e has nu measure zero} that $E$ has measure zero with respect to $\bnu_t^+$ and $\bnu_t^-$. With this new notation, we can express the third term on the right-hand side of \eqref{eq: transport equation after change of variables restated} as follows:
	\begin{equation*}
		\angled{\partial_t\varphi_s^n + \bar \br_s \partial_x \varphi_s^n + \partial_y\varphi_s^n}{\bnu_t^+} - \angled{\partial_t\varphi_s^n + \bar \br_s \partial_x \varphi_s^n + \partial_y\varphi_s^n}{\bnu_t^-}.
	\end{equation*}
	
	By construction of the functions $\varphi^n$, the following limit holds for all $(s, x, y) \notin E$, and therefore almost everywhere with respect to $\bnu_t^+$ and $\bnu_t^-$:
	\begin{align*}
		\lim_{n \to \infty} \left[\partial_t \varphi_s^n(x, y) + \ind{y \leq \bar \by_s} \partial_x \varphi_s^n(x, y) + \partial_y \varphi_s^n (x, y)\right] \\
		= \partial_t \varphi_s(x, y) + \ind{y \leq \bar \by_s} \partial_x \varphi_s(x, y) + \partial_y \varphi_s(x, y)
	\end{align*}
	Moreover, for all $s, x, y \in \R_+$, we have:
	\begin{equation*}
		\left|\partial_t \varphi_s^n(x, y) + \ind{y \leq \bar \by_s} \partial_x \varphi_s^n(x, y) + \partial_y \varphi_s^n(x, y)\right| \leq 3\tilde L.
	\end{equation*}
	Therefore, the bounded convergence theorem, applied to $\bnu_t^+$ and $\bnu_t^-$, implies that for each $t \in [a, b]$ the third term on the right-hand side of \eqref{eq: transport equation after change of variables restated} converges as $n \to \infty$ to
	\begin{align*}
		\angled{\partial_t\varphi_s + \bar \br_s \partial_x \varphi_s + \partial_y\varphi_s}{\bnu_t^+} - \angled{\partial_t\varphi_s + \bar \br_s \partial_x \varphi_s + \partial_y\varphi_s}{\bnu_t^-} \\
		= \int_a^t L_{\bmu_s}\left(\partial_t\varphi_s + \bar \br_s \partial_x \varphi_s + \partial_y\varphi_s\right)ds,
	\end{align*}
	which completes the proof.
\end{proof}

\end{appendices}
	
\newcommand{\noop}[1]{}
\bibliographystyle{IEEEtranS}
\bibliography{bibliography}
	
\end{document}